\documentclass[11 pt, oneside]{amsart}
  \usepackage[utf8]{inputenc}
  \usepackage[margin=3cm]{geometry}
  \setlength{\textheight}{21cm}
  \setlength{\textwidth}{15.5cm}  
  

\setcounter{tocdepth}{1}
\makeatletter
\def\l@subsection{\@tocline{2}{0pt}{2.5pc}{5pc}{}}
\def\l@subsubsection{\@tocline{2}{0pt}{5pc}{7.5pc}{}}
\makeatother

  \usepackage{stmaryrd}
\usepackage{amsmath,amssymb,amsthm, mathrsfs, mathtools, bbold, mathbbol}
\usepackage{amscd,mathtools}
  \usepackage{geometry}
  \usepackage{graphicx,epstopdf}
  \usepackage{color,xcolor}
  \usepackage{enumerate}
  \usepackage{xspace}
  \usepackage{tipa}
  \usepackage[user,titleref]{zref}
  \usepackage{epsfig}

\usepackage{tikz}
\usetikzlibrary{shapes.geometric}
\usetikzlibrary{angles}

\usepackage{caption, subcaption}
\usepackage{tikz-cd}
\usetikzlibrary{calc,decorations.pathreplacing}
\usepackage{tikz}
\usetikzlibrary{calc}
\usetikzlibrary{arrows}
\usetikzlibrary{decorations.pathreplacing}
\usetikzlibrary{intersections}

%
%

\usepackage{mathrsfs}
\usetikzlibrary{matrix}
\usetikzlibrary{positioning}
\DeclareMathAlphabet{\pazocal}{OMS}{zplm}{m}{n}
\tikzset{>=stealth}



  \usepackage{hyperref}

\usepackage{float}



  \newcommand{\calA}{\mathcal{A}}
  \newcommand{\calB}{\mathcal{B}}

  \newcommand{\calE}{\mathcal{E}}

  \newcommand{\calN}{\mathcal{N}}
  \newcommand{\calO}{\mathcal{O}}
  \newcommand{\calP}{\mathcal{P}}

  \newcommand{\calS}{\mathcal{S}}
  
  \newcommand{\calU}{\mathcal{U}}
    \newcommand{\calV}{\mathcal{V}}
  \newcommand{\calW}{\mathcal{W}}
  
  \newcommand{\calY}{\mathcal{Y}}
  \newcommand{\calZ}{\mathcal{Z}}
  

    \newcommand{\Uout}{ \calU_{\rm \, Out}}


  \newcommand{\EE}{\mathbb{E}}

  \newcommand{\NN}{\mathbb{N}}

  \newcommand{\RR}{\mathbb{R}}

  \newcommand{\ZZ}{\mathbb{Z}}

  
  \newcommand{\bfa}{\textbf{a}}
  \newcommand{\bfb}{\textbf{b}}
  \newcommand{\bfc}{\textbf{c}}

  \newcommand{\bfz}{\textbf{z}}  

  
  \newcommand{\gothic}{\mathfrak}
  \newcommand{\Ga}{{\gothic a}}
  \newcommand{\Gb}{{\gothic b}}
  \newcommand{\Gc}{{\gothic c}}
  
  \newcommand{\Ge}{{\gothic e}}

  \newcommand{\go}{{\gothic o}}



  \newtheorem{theorem}{Theorem}[section]
  \newtheorem{proposition}[theorem]{Proposition}
  \newtheorem{corollary}[theorem]{Corollary}
  \newtheorem{lemma}[theorem]{Lemma}

  \newtheorem{question}[theorem]{Question}
  \newtheorem{claim}[theorem]{Claim}
  
  \newtheorem*{claim*}{Claim}

  \newtheorem{introthm}{Theorem}
  \newtheorem{introcor}[introthm]{Corollary}
  \newtheorem{introprop}[introthm]{Proposition} 
  \newtheorem{introques}[introthm]{Question}

  \theoremstyle{definition}
  \newtheorem{definition}[theorem]{Definition}
  
  \newtheorem{example}[theorem]{Example}
  
  \newtheorem*{question*}{Question}
  \newtheorem*{answer*}{Answer}
  \newtheorem*{application*}{Application}
  \newtheorem{notation}[theorem]{Notation}

  \theoremstyle{remark}
  \newtheorem{remark}[theorem]{Remark}
  \newtheorem*{remark*}{Remark}
  


  \newcommand{\secref}[1]{Section~\ref{#1}}
  \newcommand{\thmref}[1]{Theorem~\ref{#1}}
  \newcommand{\corref}[1]{Corollary~\ref{#1}}
  \newcommand{\lemref}[1]{Lemma~\ref{#1}}
  \newcommand{\propref}[1]{Proposition~\ref{#1}}

  \newcommand{\figref}[1]{Figure~\ref{#1}}
  \newcommand{\defref}[1]{Definition~\ref{#1}}
  
  \newcommand{\eqnref}[1]{Equation~\eqref{#1}}


    \DeclareMathOperator{\Sat}{Sat}



  

  \newcommand{\pka}{\partial_{\kappa}}


   

   
   
   
   

   

   
   
   
   
   
    



  \newcommand{\Teich}{{Teichm\"uller }} 
  \hyphenation{geo-desics}



  \newcommand{\sQ}{{\sf Q}}

  \renewcommand{\bfa}{{\sf a}}

  \newcommand{\qq}{{\sf q}}   
  \newcommand{\rr}{{\sf r}}



   \newcommand{\eventual}{{\stackrel{e}{=}}}


\DeclareMathOperator{\Cay}{Cay}

\DeclareMathOperator{\deep}{deep}
\DeclareMathOperator{\trans}{trans}

    
  \newcommand{\param}{{\mathchoice{\mkern1mu\mbox{\raise2.2pt\hbox{$
  \centerdot$}}
  \mkern1mu}{\mkern1mu\mbox{\raise2.2pt\hbox{$\centerdot$}}\mkern1mu}{
  \mkern1.5mu\centerdot\mkern1.5mu}{\mkern1.5mu\centerdot\mkern1.5mu}}}

\DeclarePairedDelimiterX{\norm}[1]{\lvert}{\rvert}{#1}
\DeclarePairedDelimiterX{\Norm}[1]{\lVert}{\rVert}{#1}

  \newcommand{\st}{\mathbin{\mid}} 
  \newcommand{\ST}{\mathbin{\Big|}} 
  \newcommand{\from}{\colon\thinspace}

\newcommand{\CAT}{\ensuremath{\operatorname{CAT}(0)}\xspace}

  \begin{document}

  \title[The quasi-redirecting Boundary]
  {The quasi-redirecting Boundary}
  

 \author{Yulan Qing}
 \address{Department of Mathematics,  University of Tennessee at Knoxville, Knoxville, TN, USA}
 \email{yqing@utk.edu}
 
 \author{Kasra Rafi}
 \address{Department of Mathematics, University of Toronto, Toronto, ON, Canada}
\email{rafi@math.toronto.edu}
 


\begin{abstract} 
We generalize the notion of Gromov boundary to a larger class of metric spaces beyond
Gromov hyperbolic spaces. Points in this boundary are classes of quasi-geodesic rays
and the space is equipped with a topology that is naturally invariant under quasi-isometries. 
It turns out that this boundary is compatible with other notions of boundary in many ways; 
it contains the sublinearly Morse boundary as a topological subspace and it matches 
the Bowditch boundary of relative hyperbolic spaces when the peripheral subgroups have no intrinsic 
hyperbolicity.  We also give a complete description of the boundary of the Croke-Kleiner group
where the quasi-redirecting boundary reveals a new class of QI-invariant, Morse-like quasi-geodesics.
\end{abstract}

\maketitle

\tableofcontents

\section{Introduction}
Our goal in this paper is to organize and understand the space of quasi-geodesic rays in a given metric space. 
A quasi-geodesic ray represents a possible direction towards infinity hence the space of 
quasi-geodesic rays could be thought of as the boundary at infinity of a metric space $X$. 
However, different quasi-geodesics may represent the same direction. This is in direct analogy with the 
Gromov boundary defined for a Gromov hyperbolic metric space where two quasi-geodesics represent 
the same point in the Gromov boundary when they fellow travel each other. 
Our approach is to start from first principles and choose definitions that are intuitively natural  and immediately
invariant under a quasi-isometry. More precisely, we would like a notion of boundary where
\begin{enumerate} 
\item Points in the boundary are equivalence classes of quasi-geodesic rays. 
\item The definition of an equivalence class and the topology rely only on the coarse geometry of $X$ 
and hence are invariant under quasi-isometry. 
\item The boundary is as large as possible. 
\end{enumerate} 

To start, we need to decide when two quasi-geodesic rays $\alpha$ and $\beta$ represent the same 
direction in the metric space $X$. Our intuitive answer is that, if there are quasi-geodesic rays  
with uniform constants that travel along $\alpha$ for arbitrary distances and then change course 
and eventually coincide with $\beta$ then traveling in the direction of $\alpha$ does not move 
one away from the direction defined by $\beta$ and hence $\alpha$ and $\beta$ do not represent distinct 
directions. In this case, we say $\alpha$ can be quasi-redirected to $\beta$ and write $\alpha \preceq \beta$
(see Definitions \ref{Def:Redirection} and \ref{Def:P(X)} for precise definitions). 
However, this turns out to not be symmetric in general and $\alpha \preceq \beta$ does not always imply 
$\beta \preceq \alpha$. We let $P(X)$ be a set of equivalence classes of this relation. Then $\preceq$
induces a partial order in $P(X)$. 

\begin{introprop} 
A quasi-isometry between metric spaces $X$ and $Y$ induces a bijection from $P(X)$ to $P(Y)$
that preserves the partial order. 
\end{introprop}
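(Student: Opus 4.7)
The plan is to use the quasi-isometry $f \colon X \to Y$ to push quasi-geodesic rays forward, verify that the quasi-redirection relation transports through $f$, and then use a quasi-inverse $g \colon Y \to X$ to produce a two-sided inverse. Throughout, I rely on the standard fact that the composition of a $(K,C)$-quasi-isometry with a $(K',C')$-quasi-geodesic ray is again a quasi-geodesic ray, with constants depending only on $K, C, K', C'$.

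I would begin by defining $f_* \colon P(X) \to P(Y)$ by $f_*[\alpha] := [f \circ \alpha]$. Three points need verification: that $f_*$ is well-defined on equivalence classes, that it preserves the partial order $\preceq$, and that it is a bijection. The first two reduce to a single statement: if $\alpha \preceq \beta$ in $X$, then $f \circ \alpha \preceq f \circ \beta$ in $Y$. Unpacking, $\alpha \preceq \beta$ produces a family of quasi-geodesic rays with uniform constants that travel along $\alpha$ for arbitrarily long initial segments and eventually coincide with $\beta$. Pushing this family through $f$ yields a family of rays in $Y$ whose quasi-geodesic constants remain uniform (now depending also on $(K, C)$); since $f$ is coarsely Lipschitz and coarsely injective, they still travel along $f \circ \alpha$ initially and eventually track $f \circ \beta$, so $f \circ \alpha \preceq f \circ \beta$. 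Both well-definedness and order preservation follow.

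For bijectivity I would define $g_* \colon P(Y) \to P(X)$ in the same way from a quasi-inverse $g$ of $f$, and show that $g_* \circ f_*$ is the identity on $P(X)$ (the other composition being symmetric). Because $g \circ f$ is at uniformly bounded distance from the identity of $X$, the rays $g \circ f \circ \alpha$ and $\alpha$ remain at finite Hausdorff distance for every quasi-geodesic ray $\alpha$.

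The main obstacle is then a single auxiliary lemma: two quasi-geodesic rays at bounded Hausdorff distance in $X$ represent the same class in $P(X)$. Intuitively this is clear — one redirects between the rays via a uniformly bounded jump and continues along the other — but making it precise requires verifying that such a jump can be realized inside a quasi-geodesic ray with uniform constants, and this depends on the exact quantifier structure in Definitions~\ref{Def:Redirection} and~\ref{Def:P(X)}. Once this lemma is established, $g \circ f \circ \alpha \sim \alpha$ gives $g_* \circ f_* = \mathrm{id}_{P(X)}$, and a symmetric argument handles $f_* \circ g_*$, completing the proof that $f_*$ is an order-preserving bijection.
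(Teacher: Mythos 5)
Your proposal is correct and follows essentially the same route as the paper: the proof of \propref{PXisQIinvariant} establishes order preservation exactly as you do, by pushing the uniform family of redirecting rays through the quasi-isometry and taming the images with \lemref{Lem:Tame}, and bijectivity comes from the quasi-inverse as you describe. The auxiliary lemma you flag as the main obstacle is supplied by \lemref{onestep}: since $g\circ f$ is within bounded distance of the identity you in fact get the stronger parametrized bound $d\big(g(f(\alpha(t))),\alpha(t)\big)\le Q'$ for all $t$ (not merely finite Hausdorff distance), which is exactly the hypothesis under which that surgery produces a uniform-constant ray redirecting one ray to the other at every radius.
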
 

We think of $P(X)$ as the set of directions in $X$. One could force the relation to be symmetric. 
However, this runs counter to the idea of making the boundary as large as possible. In fact, as we shall see,  
the asymmetry highlights interesting features of shape of the metric space $X$ at infinity 
which is recorded in the set $P(X)$. 

To form a boundary at infinity, we need to put a topology on the set of all directions. However, this cannot 
be done in the setting of general proper metric spaces as they can be quite untamed.  Our main motivation 
is always to study finitely generated groups or spaces quasi-isometric to them. Thus we put some technical 
assumptions on the metric space $X$ to allow for a cone-like topology to be defined on $P(X)$. 
These are marked as Assumption 0, 1 and 2 and we make it clear throughout the paper which assumptions
are used where. Assumption 0 holds for all finitely genereted groups. To our knowledge, we do not know of a  finitely generated group whose Cayley graph does not 
satisfy Assumptions 1 and 2 and we check the validity of these assumptions for a large class of groups. 

\begin{introthm}
Let $X$ be a proper, quasi-geodesic metric space satisfying Assumptions 0, 1 and 2. Then 
the space of directions in $X$ can be organized into a topological space which we denote
by $\partial X$. A quasi-isometry from a metric space $X$ to a metric space $Y$ induces 
a homeomorphism between $\partial X$ to $\partial Y$. 
\end{introthm}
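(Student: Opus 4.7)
The plan is to construct a cone-like topology on $P(X)$ based at a fixed point $\fo\in X$ and to verify that this topology is invariant under quasi-isometry via the bijection supplied by the earlier proposition.

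First, I would fix a basepoint $\fo$ and, for each class $\alpha\in P(X)$ and each radius $r>0$, define a basic neighborhood $U_r(\alpha)$ to consist of those classes $\beta$ admitting a quasi-geodesic representative (with a fixed choice of quasi-geodesic constants) that tracks a representative of $\alpha$ on the ball $B_r(\fo)$ and then eventually redirects to $\beta$. Assumption~0 guarantees enough quasi-geodesic rays with controlled constants to make this definition non-vacuous. Assumptions~1 and~2 would be invoked to obtain an Arzelà--Ascoli style compactness for quasi-geodesic rays with bounded constants emanating from $\fo$, so that the sets $U_r(\alpha)$ nest coherently as $r$ grows and their intersection over $r$ recovers the single class $\alpha$.

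Next, I would verify the basis axioms. The main content is to show that if $\gamma\in U_r(\alpha)\cap U_s(\beta)$ then there exists a radius $t$ with $U_t(\gamma)\subseteq U_r(\alpha)\cap U_s(\beta)$. The idea is a concatenation argument: a representative of $\gamma$ tracks representatives of $\alpha$ and $\beta$ on large balls, and any quasi-geodesic that in turn tracks $\gamma$ out to a sufficiently large radius $t\gg r,s$ must, by uniform fellow-traveling on the shared initial segment, also track the chosen representatives of $\alpha$ and $\beta$ on $B_r(\fo)$ and $B_s(\fo)$. Hausdorffness follows because distinct classes $[\alpha]\neq [\beta]$ must separate outside some finite ball, otherwise the definition of $\preceq$ would identify them. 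This whole step is where the coarse-geometric assumptions do the real work: they replace the automatic Morse stability available in the Gromov-hyperbolic setting.

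Finally, for quasi-isometric invariance, let $f\colon X\to Y$ be a $(K,C)$-quasi-isometry with quasi-inverse $g$, and let $f_\ast\colon P(X)\to P(Y)$ be the induced bijection (which is already shown to exist and respect $\preceq$). Because $f$ sends uniform quasi-geodesic rays to uniform quasi-geodesic rays and $f(B_r(\fo_X))$ sits inside $B_{Kr+C}(\fo_Y)$ up to bounded Hausdorff error, one checks directly that $f_\ast(U_r(\alpha))\subseteq U_{r'}(f_\ast(\alpha))$ for $r'=Kr+C'$ with $C'$ depending only on the quasi-isometry constants and on the chosen quasi-geodesic constants. Applying the same argument to $g$ yields continuity in the opposite direction, producing the desired homeomorphism.

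The main obstacle will be the middle step, namely proving that the family $\{U_r(\alpha)\}$ really is a basis for a topology rather than merely a filter at each point. The asymmetry of $\preceq$ means one cannot simply paraphrase the Gromov-boundary proof, and one must delicately combine the existence of redirecting quasi-geodesics with the uniform compactness of rays through $\fo$. Precisely calibrating Assumptions~1 and~2 so that they deliver this compactness while still being satisfied by the desired examples (finitely generated groups, relatively hyperbolic groups, the Croke--Kleiner group) is what makes the argument non-formal.
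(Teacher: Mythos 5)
Your overall architecture (a cone-like system of neighbourhoods indexed by radius, verified against the neighbourhood axioms, with quasi-isometry invariance coming from the order-preserving bijection on $P(X)$ plus control of constants) is the right shape, but the key definition is reversed, and the paper shows this reversal is fatal rather than cosmetic. You declare $\beta\in U_r(\alpha)$ when there is a ray that tracks $\alpha$ out to radius $r$ and then eventually coincides with $\beta$ --- i.e.\ when $\alpha$ can be redirected \emph{to} $\beta$ at radius $r$. The paper's neighbourhood $\calU(\bfa,r)$ is the opposite: $\bfb\in\calU(\bfa,r)$ when every $\qq$--ray of $\bfb$ can be $F_\bfa(\qq)$--redirected \emph{to} the central element $\alpha_0$ at radius $r$, where $F_\bfa$ is built from the uniform redirecting function of Assumption~2. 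Because $\preceq$ is not symmetric, these are genuinely different, and Section~\ref{Sec:Out-Topology} (the hairy parking lot) shows that your ``outward'' version fails the nesting axiom you identify as the main obstacle: one cannot arrange $\Uout(\bfb,R)\subset\Uout(\bfa,r)$ for $\bfb$ near $\bfa$, and the resulting ``topology'' is not even independent of the choice of constant functions (enlarging $F$ by $(1,0)$ changes the neighbourhood filter no matter how large $R$ is taken). That independence --- \lemref{Lem:F}, which rests on the compactness statements of \propref{Prop:Landing} --- is exactly what makes the quasi-isometry invariance work, since $\Psi$ distorts quasi-geodesic constants and one must know the topology is unchanged when $F_\bfa$ is replaced by a larger function. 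Your QI step asserts $f_\ast(U_r(\alpha))\subseteq U_{r'}(f_\ast(\alpha))$ ``directly,'' but with your definition this inclusion is false in the parking lot example.

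Separately, your claim that Hausdorffness ``follows because distinct classes must separate outside some finite ball'' is not true under Assumptions~0, 1 and~2 alone, and the theorem does not assert it: $\partial X$ is non-Hausdorff for the Croke--Kleiner group (\thmref{Thm:CK}), where every class lies in every neighbourhood of the maximal class $\bfz$. Hausdorffness requires the additional symmetry Assumption~3 (\thmref{sym-Haus}). What the paper actually proves is only that the sets $\calB(\Ga)$ satisfy Bourbaki's four pretopology axioms (\propref{Prop:PreTopology}), the hardest of which is property~(IV); this is where the Arzel\`a--Ascoli limiting arguments you allude to are genuinely used, via part~(III) of \lemref{Lemma:Ubeta} and \propref{Prop:Landing}, but only for the inward-pointing definition.
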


The quasi-isometry invariance allows us to write $\partial G$ for all finitely generated groups $G$. We also 
check the compatibility of this boundary with other generalization of the Gromov boundary. 
The notion of the sublinearly Morse boundary was developed in \cite{QRT1, QRT2}. This is a family 
of boundaries $\partial_\kappa X$ for every sublinear function $\kappa$.  A point in the 
sublinearly Morse boundary is a class of quasi-geodesic rays that resemble Morse geodesics 
where the Morse gauge is allowed to tend to infinity sublinearly with the radius. Two quasi-geodesics are
in the same class if they fellow travel each other sublinearly. The sublinearly Morse boundary was shown to 
be large enough to be used as a topological model for the Poisson boundary in many settings 
(see \cite{QRT1, QRT2}) as well large enough for other measures of genericity \cite{GQR22}.
We show that the quasi-redirecting boundary is an enlargement of the sublinearly Morse boundary.  
 
\begin{introthm}
Let $X$ be a proper, geodesic metric space satisfying Assumptions 0, 1 and 2
and let $\kappa$ be a sublinear function. Then, for every $\kappa$--Morse quasi-geodesic $\alpha$,
the class of quasi-geodesics that sublinearly fellow travel $\alpha$ is the same as the quasi-redirecting 
class of $\alpha$ and hence $\partial_\kappa X \subset \partial X$. In fact, $\pka X$ is a topological 
subspace of $\partial X$.
\end{introthm}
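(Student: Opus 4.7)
The plan is to prove the theorem in two stages: first I establish the set-theoretic equality of the sublinear fellow-traveling class and the quasi-redirecting class of any $\kappa$-Morse ray, and then I upgrade this to an equality of topologies. The central tool throughout is the defining feature of a $\kappa$-Morse quasi-geodesic $\alpha$: any $(q,Q)$-quasi-geodesic subsegment with endpoints lying near $\alpha$ is trapped inside a tube whose radius at distance $r$ from the basepoint is bounded by $m(q,Q)\,\kappa(r)$, where $m(q,Q)$ depends only on the quasi-geodesic constants and the Morse gauge of $\alpha$.

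For the inclusion of the sublinear fellow-traveling class into the quasi-redirecting class, suppose $\beta$ sublinearly fellow travels a $\kappa$-Morse ray $\alpha$. To witness $\alpha\preceq\beta$, for each $R>0$ I construct a ray $\gamma_R$ by concatenating $\alpha|_{[0,R]}$, a geodesic segment from $\alpha(R)$ to a nearest point $\beta(t_R)$ (of length $o(R)$ by the sublinear fellow-traveling hypothesis), and the remaining tail of $\beta$. A standard concatenation estimate, exploiting that the middle bridge is short compared to the preceding segment, shows that all $\gamma_R$ are uniformly $(q,Q)$-quasi-geodesics, yielding $\alpha\preceq\beta$; the symmetric construction yields $\beta\preceq\alpha$, so the two rays share a quasi-redirecting class.

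For the reverse inclusion, suppose $\alpha\preceq\beta$ and $\beta\preceq\alpha$ with $\alpha$ being $\kappa$-Morse. From $\alpha\preceq\beta$ I obtain uniformly quasi-geodesic rays $\gamma_R$ that track $\alpha$ out to distance at least $R$ and eventually coincide with $\beta$. Fixing $r<R$ and slicing $\gamma_R$ at the point where it is last close to $\alpha(r)$, the Morse property confines this initial subsegment to an $m(q,Q)\,\kappa$-tube around $\alpha|_{[0,r]}$. I then propagate this control to $\beta$ itself by tracking the merger parameter $t_R$ as $R\to\infty$, which forces $\beta$ to lie within a sublinear neighborhood of $\alpha$ at every radius, i.e.\ sublinear fellow-traveling. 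This interpolation step is the technical heart of the argument and is where I expect the principal difficulty: the Morse bound is phrased in terms of distance to the basepoint, and translating it into a fellow-traveling estimate with respect to the arc-length parameterizations of $\alpha$ and $\beta$ requires a careful diagonal argument together with a properness input to ensure $t_R\to\infty$ at a controlled rate.

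Finally, to show that $\pka X$ is a topological subspace of $\partial X$, I identify the subspace topology inherited from $\partial X$ with the sublinearly Morse topology on $\pka X$. The previous paragraphs show the inclusion of sets is well-defined and injective. A basic sublinearly Morse neighborhood of $[\alpha]$ fits inside a redirecting neighborhood via the bridge construction of the first direction, giving continuity of the inclusion. Conversely, any $[\beta]\in\pka X$ lying in a redirecting neighborhood of $[\alpha]$ admits the rays $\gamma_R$ witnessing $\alpha\preceq\beta$ out to some radius, and the Morse estimate from the reverse inclusion converts this into sublinear fellow-traveling of $\beta$ with $\alpha$ to comparable radius, placing $[\beta]$ in a basic sublinearly Morse neighborhood. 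Hence the two topologies on $\pka X$ agree and $\pka X$ is a topological subspace of $\partial X$.
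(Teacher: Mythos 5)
Your overall architecture (set equality via the Morse tube plus surgery, then a two-sided comparison of neighborhood bases) matches the paper's, but the key step — the inclusion of the quasi-redirecting class into the sublinear fellow-traveling class — is argued in the wrong direction and does not work as written. You propose to use $\alpha\preceq\beta$: the witnessing rays $\gamma_R$ coincide with $\alpha$ out to radius $R$ and only \emph{eventually} coincide with $\beta$, i.e.\ they contain merely a tail $\beta|_{[t_R,\infty)}$ beyond a landing time $t_R$ that in general tends to infinity with $R$. Such a ray carries no information about $\beta$ at any fixed radius, and the only subsegments of $\gamma_R$ with both endpoints on $\alpha$ lie inside $\alpha|_{[0,R]}$ itself, so the weakly Morse property (\defref{Def:Weakly-Morse}) gives you nothing about $\beta$; the ``propagation via the merger parameter'' you describe is exactly the step that cannot be carried out, and applying the strongly Morse characterization instead would require already knowing that $\beta$ comes sublinearly close to $\alpha$, which is the conclusion. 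The correct argument (the paper's \propref{SubMorsemin}) uses the \emph{other} hypothesis, $\beta\preceq\alpha$: the redirecting rays $\gamma_r$ satisfy $\gamma_r|_r=\beta|_r$ and land on $\alpha_0$, so up to the landing time they are quasi-geodesic segments with both endpoints on $\alpha_0$ (starting at $\go=\alpha_0(0)$); weak Morseness then traps $\beta|_r\subset\calN_\kappa(\alpha_0,m(\cdot))$ for every $r$, which yields sublinear fellow traveling. Note this also proves the stronger statement that $\kappa$--Morse classes are minimal in $P(X)$. The same reversal infects your topology comparison: a class $\bfb$ lies in $\calU(\bfa,R)$ when rays of $\bfb$ redirect \emph{to} $\alpha_0$, not when $\alpha$ redirects to them, and it is that direction which feeds into the strongly Morse property to produce the containment $\calU(\bfa,R)\cap\pka X\subseteq\calU_\kappa(\bfa,r)$.

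Two smaller points. First, in the forward direction your bridge from $\alpha(R)$ to a nearest point of $\beta$ followed by the tail of $\beta$ is not justified by ``a standard concatenation estimate'': one must rule out the concatenation backtracking toward earlier points of $\alpha$, and the paper instead takes a geodesic $P$ realizing the \emph{set} distance between $\alpha|_{2r}$ and the tail of $\beta$ beyond radius $3r$, applies the nearest-point-projection surgery (\lemref{surgery}) twice, and uses the Morse tube to show the resulting foot point has norm at least $r$; this is also what produces the explicit constant $(9q,Q)$ later used for the redirecting function $f_\bfa$. Second, the reduction to a geodesic central element $\alpha_0\in[\alpha]_s$ (which the paper imports from \cite{QRT2}) should be stated, since both Morse gauges and the neighborhood bases are anchored to it.
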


The benefit of this enlargement is that, unlike $\partial_\kappa X$,  $\partial X$ is often compact. 
Also, there are quasi-redirecting classes which exhibit Morse-like properties that are not sublinearly Morse 
(see \secref{Sec:CK}) and hence $\partial X$ encodes strictly more information than the sublinearly 
Morse boundary. The cost of the enlargement is that $\partial X$ is not always metrizable
(in our usage, compact does not imply metrizable). 

\begin{introques} 
Let $X$ be a Cayley graph of a finitely generated group. Is $\partial X$ always defined? Is $\partial X$ always compact? 
\end{introques} 

\subsection*{Relatively hyperbolic groups}
When $X$ exhibits no hyperbolicity, $\partial X$ is trivial (has only one point). 
 This includes spaces with a product structure (Proposition~\ref{product}) and the
Cayley graphs of Baumslag--Solitar groups \cite{Asha}. We refer to such groups and spaces as \emph{mono-directional}.  

The main class of examples we consider is the class of proper geodesic metric spaces that are asymptotically tree-graded with 
respect to mono-directional sets (ATM spaces).  Asymptotically tree-graded spaces were first introduced by Drutu-Sapir in \cite{DS05} and 
systematically studied also in \cite{DS05, DS08} and \cite{sis12}, among others. The idea also appears in \cite{HK05} in similar setting. 

\begin{introthm}
Let $X$ be an ATM space. Then:
\begin{enumerate}[(I)]
\item The space $X$ satisfies Assumptions 0 1, and 2, thus $\partial X$ is defined. In fact, $\preceq$ is 
a symmetric relation on $P(X)$.
\item The boundary $\partial X$ is compact, metrizable and second countable. 
\end{enumerate}
\end{introthm}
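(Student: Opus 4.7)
The plan is to exploit the tree-graded structure of an ATM space $X$ in order to reduce every claim to the mono-directional behaviour inside each peripheral piece. Write $\calP$ for the collection of mono-directional peripheral subsets witnessing that $X$ is asymptotically tree-graded. By the work of Drutu--Sapir and Sisto, every quasi-geodesic in $X$ has a canonical coarse decomposition into transit arcs (transverse to $\calP$) and deep excursions into individual pieces $P\in\calP$. The first step I would take is to attach to each quasi-geodesic ray $\alpha$ its \emph{itinerary}: either a finite sequence of pieces followed by a tail deep in a single piece, or an infinite sequence of pieces that $\alpha$ enters deeply, together with the combinatorics of how these pieces are connected by transit arcs. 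A short argument using the defining property of ATM spaces shows that the itinerary is a coarse invariant of $\alpha$.

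For Part (I), I first verify Assumptions $0$, $1$, $2$. Assumption $0$ is automatic for proper geodesic spaces. Assumptions $1$ and $2$, which govern the existence of redirecting quasi-geodesics with uniform constants, follow from the bounded coset penetration / projection properties of ATM spaces: transit arcs are uniformly quasi-geodesic, while within any mono-directional piece $P$ the triviality of $P(P)$ forces any two quasi-geodesic rays in $P$ to redirect to one another with constants depending only on $P$ and the ATM data. Concatenating in-piece redirections with transit arcs yields the uniform families required by Assumptions $1$ and $2$.

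For the symmetry of $\preceq$, suppose $\alpha \preceq \beta$. The uniformity of the redirecting rays, together with the projection technology, forces the itinerary of $\beta$ to extend the eventual itinerary of $\alpha$ (and vice-versa after shifting); in particular, the two itineraries must agree asymptotically. If the common tail lies inside a single piece $P\in\calP$, then mono-directionality of $P$ gives $\beta \preceq \alpha$ directly. Otherwise both itineraries are infinite and identical, and I construct the reverse redirection by mirroring the argument of the previous paragraph: one travels along $\beta$ through the common pieces and transit arcs, using the in-piece redirections of $P\in\calP$ to swing onto the corresponding in-piece segments of $\alpha$, then follows $\alpha$ out. This gives $\beta \preceq \alpha$ with uniform constants.

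For Part (II), I would identify $\partial X$ set-theoretically with the disjoint union of one point per piece $P \in \calP$ (for rays eventually absorbed in $P$) and the space of infinite itineraries modulo the tree-graded equivalence, and show that the $\partial X$-topology agrees with the natural topology on this Bowditch-type completion. A countable basis is obtained from finite itinerary prefixes together with shadows of balls centred at a basepoint; compactness then follows by a diagonal extraction over prefixes, and metrizability follows from Urysohn once second countability and regularity are established. The step I expect to be hardest is the symmetry claim in Part (I): one must rule out the possibility that $\alpha$ carries strictly more transverse branching than $\beta$, and this requires careful control of how the uniform redirecting family in $\alpha \preceq \beta$ interacts with the deep excursions, which I anticipate demands a quantitative use of the projection lemma rather than a soft argument.
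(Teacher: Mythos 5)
Your overall strategy for Part (I) --- decompose quasi-geodesics into transit arcs and deep excursions, use mono-directionality to redirect inside pieces, and split into ``absorbed'' versus ``infinite itinerary'' rays --- is essentially the paper's route (which phrases it via Sisto's transition points and the transient/non-transient dichotomy). Two corrections there. First, Assumption 0 is \emph{not} automatic for proper geodesic spaces: it includes the no-dead-ends condition that every point lies on an infinite $\qq_0$--ray, which fails for general proper geodesic spaces; for ATM spaces it holds only because it is built into \defref{Def:ATM}. Second, your symmetry argument is stated too loosely (``the itinerary of $\beta$ must extend the eventual itinerary of $\alpha$''): what one actually proves, via the relative fellow-traveling of transition points (\corref{RFT}), is that if $\beta\preceq\alpha$ and $\alpha$ is transient then $\beta$ has transition points within uniformly bounded distance of those of $\alpha$ at arbitrarily large radii, after which Part (II) of \lemref{surgery} reverses the redirection; in the non-transient case both rays are absorbed by the \emph{same} piece $A$ and mono-directionality of $A$ finishes. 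Your sketch is compatible with this but the quantitative step you flag as hard is exactly the transition-point fellow-traveling, not a new projection estimate.

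The genuine gap is in Part (II). You propose to identify $\partial X$ with a Bowditch-type itinerary space and deduce compactness ``by diagonal extraction over prefixes.'' This fails as stated: the incidence graph of pieces can branch infinitely at every vertex, so a sequence of itineraries need not have a subsequence with stabilizing prefixes, and the prefix topology on infinite itineraries is neither compact nor the topology of $\partial X$ (cone points must be limits of sequences of itineraries passing through pieces that escape to infinity). Moreover, showing that the $\calU(\bfa,r)$--topology agrees with such a completion is itself the content of a separate, harder theorem in the paper (the Bowditch boundary comparison), so you would be assuming the hard part. The paper instead proves compactness intrinsically: it takes geodesic representatives of a sequence of classes, applies Arzel\`a--Ascoli in the proper space $X$ to extract a pointwise limit geodesic, and uses an inclusion criterion (\lemref{Lem:Inclusion-Criterion}, built on transition points and the quasi-convexity of saturations) to show the classes converge to the class of the limit in $\partial X$; second countability comes from finitely many $\qq_0$--rays through $1$--dense nets on spheres of radii $r_i\to\infty$ together with the countably many non-transient classes, and metrizability then follows from regularity, Hausdorffness (via symmetry) and Urysohn. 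You would need to replace the diagonal-extraction step with an argument of this kind.
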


These spaces are metric analogues of relative hyperbolic groups. In fact, when $X$ is a Cayley graph of a relative 
hyperbolic group, this theorem gives an alternative description of the Bowditch boundary that is purely based on the 
coarse geometry of $X$, and does not use the algebraic structure of relative hyperbolic groups. 

\begin{introthm}
Let $G$ is a relatively hyperbolic group equipped with the word metric associated to a finite generating set 
such that the peripheral subgroups are mono-directional. 
Then $\partial G$ is homeomorphic to the Bowditch boundary of $G$.  
\end{introthm}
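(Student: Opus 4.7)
The plan proceeds in two stages: first recognize $G$ as an instance of the previous theorem, then construct and verify a homeomorphism with the Bowditch boundary.

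First, I would verify that the Cayley graph $X=\Cay(G)$ is an ATM space. By a theorem of Drutu--Sapir, a finitely generated group $G$ is hyperbolic relative to a collection of subgroups $\{H_1,\dots,H_k\}$ if and only if $X$ is asymptotically tree-graded with respect to the collection of all left cosets $\{gH_i\}$. Since left multiplication by $g$ is an isometry from $H_i$ onto $gH_i$, and since mono-directionality is invariant under quasi-isometry (a consequence of the first introductory proposition, applied to the trivial set $P(\cdot)$), every peripheral coset $gH_i$ is mono-directional. Hence $X$ is ATM, and the previous theorem grants that $\partial X$ is defined, compact, metrizable, second countable, and that $\preceq$ is symmetric.

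Second, I would construct the bijection $\Phi\colon \partial G \to \partial_B G$. Recall that the Bowditch boundary may be realized as the Gromov boundary of the Groves--Manning cusped space $\hat X$, obtained from $X$ by attaching a combinatorial horoball to each peripheral coset; its points split into parabolic fixed points (one per coset $gH_i$) and conical limit points. The ATM machinery of Drutu--Sapir and Sisto provides a canonical trichotomy for any quasi-geodesic ray $\alpha$ in $X$: either $\alpha$ eventually stays within a bounded neighborhood of some single coset $gH_i$, or it leaves every coset and descends to a quasi-geodesic ray in $\hat X$ escaping every horoball. I would define $\Phi([\alpha])$ to be the parabolic fixed point associated to $gH_i$ in the first case and the Gromov boundary point of the induced cusped ray in the second. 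Well-definedness on classes uses mono-directionality of cosets (all rays captured by a single $gH_i$ are $\preceq$-equivalent) together with the fact that two cusped rays with the same Gromov boundary point fellow travel, so that by extracting their passages through peripheral pieces one can explicitly redirect one to the other within uniform quasi-geodesic constants. Injectivity follows by checking that distinct Bowditch points yield rays that cannot be mutually redirected: for two parabolic points, the uniqueness of deep penetration into a given coset forces non-redirectability; for two conical points, the hyperbolic geometry of $\hat X$ separates them at bounded distance from the basepoint. Surjectivity onto parabolic points is immediate from constant rays in a coset; surjectivity onto conical points follows by lifting a cusped geodesic to a quasi-geodesic ray in $X$ with controlled constants, again using the ATM structure.

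Third, I would verify continuity. Both $\partial G$ and $\partial_B G$ are compact Hausdorff, so it suffices to show that $\Phi$ is continuous. This reduces to translating the cone-neighborhood basis defining the topology of $\partial G$ (built out of spheres in $X$ and quasi-geodesic prefixes crossing them) into the shadow-neighborhood basis defining the topology of $\partial_B G$ (built out of spheres in $\hat X$). The bounded penetration property of Drutu--Sapir, combined with the uniform quasi-geodesic constants provided by the ATM structure, gives the required matching: deep penetration into a horoball in $\hat X$ corresponds to a long prefix of $\alpha$ in $X$ traveling along a single peripheral coset, while escape from every horoball corresponds to prefixes in $X$ that cross pieces transversely.

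The main obstacle, which I would expect to occupy most of the technical work, is precisely this last topology-matching step. The delicate point is controlling how cone neighborhoods in $X$ interact with the horoball decomposition of $\hat X$: a ball of radius $R$ in $\hat X$ corresponds to a quite different region in $X$ (the Euclidean horoballs are logarithmically distorted), so one must carefully calibrate the radii and use bounded penetration uniformly across all pieces. The remaining steps are largely a formal consequence of the ATM theorem combined with standard facts about relatively hyperbolic groups and their Bowditch boundaries.
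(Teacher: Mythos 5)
Your proposal is correct in substance and follows the same strategy as the paper: establish the ATM property via Drutu--Sapir/Osin--Sapir, split quasi-redirecting classes into those eventually trapped in a peripheral coset (sent to parabolic points) and transient ones (sent to the hyperbolic part of the boundary), use mono-directionality to collapse each coset to a single class, use bounded coset penetration for injectivity on the transient part, and lift geodesics from the cusped/coned-off model back to $\Cay(G)$ for surjectivity. The two places you diverge are worth noting. First, you work with the Groves--Manning cusped space $\hat{X}$, whereas the paper uses the coned-off Cayley graph $K$ with the Bowditch topology generated by the sets $m(v,W)$; the models are equivalent, but the paper's choice makes the neighborhood-matching concrete, since a finite set $W$ of vertices of $K$ can be compared directly with a ball in $\Cay(G)$, and transition points of geodesics in $\Cay(G)$ are uniformly close to vertices of geodesics in $K$ (the paper's lift lemma together with the relative thin-triangle property carry the whole continuity argument). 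Second, you invoke compactness of $\partial G$ (available from the ATM theorem) plus Hausdorffness of $\partial_B G$ to get continuity of the inverse for free from continuity of a bijection; the paper instead proves continuity of both $\xi$ and $\xi^{-1}$ by hand, and your shortcut genuinely eliminates the more delicate of the two continuity lemmas. You are also right that the remaining technical weight sits in calibrating cone neighborhoods $\calU(\bfb,r)$ against shadow neighborhoods; in the paper this is exactly the role of the inclusion criterion expressing membership in $\calU(\bfb,r)$ in terms of redirecting a single geodesic at a larger radius $r'$, which you would need an analogue of in the cusped-space picture.
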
 

Hence, when $X$ is an ATM space, we can think of $\partial X$ as the Bowditch boundary of $X$
which is not defined without a group action. Also, using the quasi-redirecting boundary, we see 
how quasi-isometries between relatively hyperbolic groups naturally induce homeomorphisms 
between the Bowditch boundaries. (This statement also follows from \cite[Theorem 4.8]{BDM09}). 

\begin{introcor}
Let $G$ and $G'$ be relatively hyperbolic groups equipped with the word metric associated to a finite generating set
such that the peripheral subgroups are mono-directional. Then any quasi-isometry $\Phi \from G \to G'$ 
induces a homeomorphism between the Bowditch boundaries of $G$ and $G'$. 
\end{introcor}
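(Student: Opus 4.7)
The plan is to chain the three preceding results in the excerpt. First I would observe that, because the peripheral subgroups are mono-directional, the Cayley graph $X = \Cay(G)$ (and similarly $X' = \Cay(G')$) is an ATM space in the sense defined just above: a relatively hyperbolic group with mono-directional peripherals is asymptotically tree-graded with respect to cosets of the peripheral subgroups, each of which is mono-directional by hypothesis. By the ATM Theorem, both $X$ and $X'$ therefore satisfy Assumptions 0, 1, and 2, so $\partial X$ and $\partial X'$ are well-defined, compact, metrizable topological spaces.

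Next I would apply the Bowditch-identification theorem (the previous introthm) to obtain canonical homeomorphisms
\[
h_G \from \partial G \longrightarrow \partial_B G, \qquad h_{G'} \from \partial G' \longrightarrow \partial_B G',
\]
where $\partial_B$ denotes the Bowditch boundary. Simultaneously, by the QI-invariance theorem for the quasi-redirecting boundary, the quasi-isometry $\Phi \from G \to G'$ induces a homeomorphism $\Phi_\ast \from \partial G \to \partial G'$. The proof then concludes by defining
\[
\widehat{\Phi} := h_{G'} \circ \Phi_\ast \circ h_G^{-1} \from \partial_B G \longrightarrow \partial_B G',
\]
which is a composition of homeomorphisms and therefore itself a homeomorphism.

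The main conceptual step, and the only place where there is anything to check beyond invoking the named results, is verifying that the ATM hypothesis genuinely applies to Cayley graphs of relatively hyperbolic groups with mono-directional peripherals. This is a standard fact: relative hyperbolicity in the Drutu--Sapir sense is exactly asymptotic tree-gradedness of $\Cay(G)$ with respect to the collection of left cosets of peripheral subgroups, and the mono-directionality assumption is imposed on those subgroups directly, so the pieces of the ATM decomposition are mono-directional as required. Once this is in place, no further analysis of the quasi-isometry $\Phi$ is needed, since all of the topological and functorial content is already packaged into the QI-invariance theorem for $\partial X$.

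The potential obstacle to be alert to is whether the identification $h_G$ depends on a particular choice of generating set, and whether $\Phi_\ast$ is compatible with this choice; however, since the quasi-redirecting boundary is QI-invariant (changing the generating set induces a QI), any two such identifications differ by a self-homeomorphism, and the final map $\widehat{\Phi}$ is a homeomorphism regardless. Thus there is nothing further to prove beyond assembling the diagram.
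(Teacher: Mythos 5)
Your proposal is correct and follows exactly the route the paper intends: Cayley graphs of relatively hyperbolic groups with mono-directional peripherals are ATM spaces (via Osin--Sapir), so Assumptions 0--2 hold, the identification $\partial G \cong \partial_B G$ of Theorem~\ref{bowditch-app} applies on both sides, and the corollary is the composition $h_{G'} \circ \Phi_\ast \circ h_G^{-1}$ with $\Phi_\ast$ supplied by the QI-invariance Theorem~\ref{Thm:QI}. Your closing remark on generating-set independence is also consistent with the paper's framework, since mono-directionality as defined already packages the needed assumptions on the peripherals.
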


It would be interesting to know if the metrizability of $\partial X$ is a characterizing property of ATM spaces. 

\begin{introques}
Let $X$ be a geodesic metric space where assumptions 0, 1 and 2 hold. 
Assume $\partial X$ is metrizable and $X$ has a cocompact action by a finitely generated group $G$. 
Does that imply that $G$ is a relative hyperbolic group with respect to mono-directional peripheral subgroups?
\end{introques}

\subsection*{The Croke-Kleiner group}
A good example of a non-positively curved group that is not relatively hyperbolic is  the Croke-Kleiner group,
\[
G = \big \langle a, b, c, d \st [a, b], [b, c], [c,d] \big \rangle,
\]
 which we study in detail in Section~\ref{Sec:CK}. 
The Croke-Kleiner group  is a well-known obstruction to attempts to generalize the Gromov boundary to visual boundaries 
in non-hyperbolic settings  \cite{CK02}. Therefore, it is useful to analyze the group in our current 
generalization of the Gromov boundary. 
It turns out that, certain (but not all) directions in the boundary of the Bass-Serre tree behave like Morse geodesics in a weak sense. 
As expected, this set contains the sublinearly Morse directions, but in this case, it is strictly a larger set. 
The quasi-redirecting boundary is a one point compactification of a set of \emph{Morse-like} directions. 

\begin{introthm}\label{TheoremE}
Let $X$ be the universal cover of the Salvetti complex of the Croke-Kleiner group $G$.  Then $X$ satisfies 
Assumptions 0, 1 and 2, thus $\partial G = \partial X$ is defined. The relation $\preceq$ is not symmetric; 
$P(G)$ has one maximal element and other elements (the minimal elements) are not comparable. 
The set of minimal elements is a strict enlargement of the sublinearly Morse boundary. 
\end{introthm}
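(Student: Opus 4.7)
The plan is to exploit the structure of $X$ as a $\CAT$ cube complex together with its Bass-Serre tree $T$ coming from the graph of groups decomposition of $G$, whose vertex groups are $\langle a,b\rangle$, $\langle b,c\rangle$ and $\langle c,d\rangle$, each isomorphic to $\ZZ^2$, and whose edge groups are $\langle b\rangle$ and $\langle c\rangle$. The equivariant projection $\pi\from X\to T$ coarsely records the sequence of flats visited by any quasi-geodesic ray, and everything will be organized according to whether $\pi\circ\alpha$ is bounded or unbounded. Assumption 0 is automatic for Cayley graphs, and I would verify Assumptions 1 and 2 by combining the convexity of the distance function in $\CAT$ spaces, cocompactness of the $G$--action, and the fact that each flat is mono-directional by Proposition~\ref{product}, so that uniform constants for families of quasi-geodesics can be produced by a case split over the tree projection.

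For the maximal element, I would pick a quasi-geodesic ray $\alpha$ with $\pi(\alpha)$ bounded, so that $\alpha$ is essentially contained in a single flat $F\cong\ZZ^2$. Mono-directionality of $F$ means all rays inside $F$ are $\preceq$--equivalent, and from any point far along $\alpha$ one can detour along a $\langle b\rangle$-- or $\langle c\rangle$--strip into a neighbouring flat and iterate. Chaining such detours would show that \emph{any} quasi-geodesic ray of $X$ can be quasi-redirected onto $\alpha$, so $[\alpha]$ is the unique maximum of $P(G)$.

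For the minimal elements, I would take $\beta$ with $\pi(\beta)$ an unbounded ray converging to some $\xi\in\partial T$. Since any quasi-redirection $\gamma\preceq\beta$ must eventually coincide with $\beta$, its projection also escapes to $\xi$, forcing $[\gamma]=[\beta]$; this makes $[\beta]$ minimal, and two rays escaping to distinct $\xi\neq\xi'$ are $\preceq$--incomparable. Asymmetry of $\preceq$ is then immediate, since $[\alpha]\succeq[\beta]$ but no redirection of $\beta$ can be trapped in a bounded region of $T$. Finally, to show strict enlargement of $\partial_\kappa X$, I would construct $\beta$ escaping to some $\xi\in\partial T$ but making prescribed superlinear excursions along one axis of successive flats before crossing each hyperplane. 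Any quasi-geodesic competitor tracking $\beta$ is forced by the tree projection to also escape to $\xi$, so $\beta$ remains Morse-like in the quasi-redirecting sense, but the superlinear excursions obstruct sublinear fellow-travelling and hence prevent $\beta$ from being $\kappa$--Morse for any sublinear $\kappa$.

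The main obstacle will be making rigorous the assertion that a quasi-redirection preserves the Bass-Serre tree projection, since a priori a detour through many flats could shortcut to a different tree direction. I expect to handle this by a hyperplane-crossing/linear-progress argument: a uniform quasi-geodesic whose projection moves forward in $T$ cannot reverse direction across a hyperplane without paying linear cost, so tree progress is preserved up to bounded error. The construction of a minimal but non-sublinearly-Morse direction is the second delicate point, requiring one to calibrate the length of the flat excursions against the sublinear scale to kill the $\kappa$--Morse property while keeping the tree projection proper.
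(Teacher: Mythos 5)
Your overall frame (organize $P(X)$ by the Bass--Serre tree projection, exhibit a bounded-projection maximal class, and build a non-sublinearly-Morse minimal class by prescribing excursions) matches the paper's strategy, but there is a genuine gap at the heart of the minimality argument. You claim that if $\pi(\beta)$ escapes to an end $\xi\in\partial T$ then $[\beta]$ is automatically minimal and that distinct ends give incomparable classes. This is false, and the paper's main technical result for this group (\thmref{Thm:Classification}) is precisely the correct replacement: a transient ray $\alpha_1$ with block excursions $|w_k|$ satisfies $[\alpha_1]\neq\bfz$ \emph{if and only if} $\lim_k \log|w_k|/k=0$, i.e.\ the excursion is sub-exponential in the tree distance. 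If the excursions grow exponentially in $k$, then the logarithmic-spiral construction of \propref{Prop:zeta-is-top} (follow $\zeta$ to radius $r$, then chase $\alpha_1$ through successive blocks, growing the time spent in block $A_k$ by a factor $(1+\rho)$ each step) produces \emph{uniform} quasi-geodesics redirecting $\zeta$ to $\alpha_1$ at every radius, so $[\alpha_1]=\bfz$ even though $\pi(\alpha_1)$ converges to an end of $T$. Your proposed fix --- a hyperplane-crossing/linear-progress argument showing tree progress is preserved up to bounded error --- does not close this: the spiral does pay an exponential cost to catch up in $T$, but when $\alpha_1$ itself makes exponentially large excursions the catch-up path remains a quasi-geodesic with constants independent of $r$. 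Consequently $P(X)$ is not $\partial T$ plus one extra point; it is the quotient of $\partial T$ collapsing all non-sub-exponential directions to the single maximal class, and your step ``its projection also escapes to $\xi$, forcing $[\gamma]=[\beta]$'' fails exactly for $\gamma=\zeta$. The same omission infects the strict-enlargement construction: it is not enough to make the excursions superlinear and keep $\pi(\beta)$ proper; you must calibrate them between two thresholds, not sublinear with respect to distance in $X$ (to defeat every sublinear Morse gauge) yet sub-exponential with respect to distance in $T$ (so the class is not swallowed by $\bfz$), which is what the paper's choice $|w_{k_n}|=(1+\rho_n)^{k_n}$ with $\rho_n\to 0$ achieves.

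A secondary, structural issue: you split $G$ over the edge groups $\langle b\rangle$ and $\langle c\rangle$ with $\ZZ^2$ vertex groups, whereas the paper amalgamates $G=G_1*_{\langle b,c\rangle}G_2$ with vertex groups $\langle a,b,c\rangle$ and $\langle b,c,d\rangle$. The paper's choice is not cosmetic: its vertex spaces (blocks) carry a global product structure $(\text{tree})\times\RR$, which is what \lemref{Lem:block-redirect} uses to pass from $\alpha|_r$ to $\beta|_{\geq R}$ whenever a block of linear thickness separates them, and which makes the spiral in \eqnref{Eq:log-spiral} a uniform quasi-geodesic. A single $\ZZ^2$ flat in your finer tree does not provide that buffer, so the ``detour along a $\langle b\rangle$- or $\langle c\rangle$-strip'' step and the uniformity of constants in Assumptions 1 and 2 would need to be rebuilt at the level of blocks anyway.
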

 
Our study of the Croke-Kleiner group offers a template to analyze quasi-redirecting boundaries 
 of irreducible right-angled Artin groups, CK-admissible groups, mapping class groups and hierarchically 
 hyperbolic groups where we expect the picture to be similar.

\subsection*{History}
Hyperbolic groups and their boundaries were first introduced by Gromov \cite{gromov}. This notion was 
generalized to many other settings where the group is not hyperbolic but it has some weaker hyperbolic-like 
properties, notably, $\mathrm{CAT}(0)$ groups \cite{gromov, Ger94}, relative hyperbolic groups 
\cite{Bowditch, Farb98}, the mapping class group \cite{MM00}, acylindrical hyperbolic groups \cite{Osin} 
and hierarchically hyperbolic groups \cite{BHS1}. The hyperbolicity in these groups is captured in various 
boundaries, namely the visual boundaries for $\CAT$ spaces \cite{gromov91}, the Bowditch boundary for 
relatively hyperbolic groups \cite{Bowditch}, Thurston boundary of \Teich spaces \cite{FLP},   Furstenberg 
boundary of the symmetric spaces \cite{Fur63}, the Floyd boundary  \cite{Floyd} and horofunction 
boundaries of geodesic metric spaces \cite{Gro81} to name a few.

There has also been many attempts to define a natural boundary that is invariant under quasi-isometries. 
In 2013, the \emph{contracting boundary} of \CAT spaces was constructed by Charney and Sultan 
\cite{contracting},  and is shown to be a first quasi-isometrically invariant geometric boundary in 
non-hyperbolic settings. The construction was generalized to proper geodesic spaces by Cordes in 
\cite{Morse} The Morse boundaries are equipped with a \emph{direct limit topology} and  are invariant under 
quasi-isometries. However, this space does not have good topological properties; for example, it is not first 
countable. Cashen-Mackay \cite{cashenmackay}, following the work of Arzhantseva-Cashen-Gruber-Hume 
\cite{Hume}, defined a different topology on the Morse boundary.
These turn out to be topological subspaces a larger space, namely, the sublinearly Morse boundaries \cite{QRT1, QRT2}. Aside from being QI-invariant and metrizable, sublinear Morse boundaries turn out to be generic in many senses. In the case of right-angled Artin groups, \cite{QRT1} also shows that $\kappa$-Morse boundaries realize Poisson boundaries for $\kappa(t)=\log t$. For mapping class groups, Kaimanovich-Masur showed that uniquely ergodic projective measured foliations with the corresponding harmonic measure can be identified with the Poisson boundary of random walks; Qing-Rafi-Tiozzo \cite{QRT2} showed that, when $\kappa=\log t$, the $\kappa$-boundary of the Cayley graph of the mapping class group can be identified with the Poisson boundary of the associated random walks. Meanwhile, genericity of a more geometric flavor is also exhibited for sublinearly Morse boundaries. In \cite{GQR22}, genericity of sublinearly Morse directions under Patterson Sullivan measure was shown to hold in a more general context of actions admitting a strongly contracting element. In fact, the results in \cite{GQR22} concerning stationary measures were recently obtained in this more general setting by Inhyeok Choi \cite{Choi}, who in place of our ergodic theoretic and boundary techniques uses a pivoting technique developed by Gou\"ezel\cite{Gou22}. Meanwhile, a Patterson-Sullivan theory on a certain quotient of the horofunction boundary for spaces admitting non-elementary group actions with contracting elements was recently obtained by Coulon \cite{Coulon} and Yang \cite{wenyuan}. Following  \cite{wenyuan}, genericity of sublinearly Morse directions on the horofunction boundary was recently shown for all proper statistically convex-cocompact actions on proper metric spaces \cite{QY24}. Furthermore, sublinearly Morse directions are invariant even under sublinear biLipschitz equivalence between metric spaces \cite{PQ}. A compact metrizable boundary was introduced by Dydac and Rashed in \cite{DR22} using $C^*$-algebra. However, this boundary 
seems to be small in many settings and does not contain the sublinearly Morse boundary as a topological subspace.

\subsection*{Acknowledgments}
The authors thank Giulio Tiozzo for helpful discussions during the conception of the project. We also thank Vivian He, Chris Hruska and 
Andrew Zimmer for useful conversations. We are grateful to Asha McMullin and Hoang Thanh Nguyen for feedbacks on the early drafts of the paper. The second named author was partially supported by NSERC Discovery grant RGPIN 05507. 

\section{Preliminaries}
In this section we recall some basic definitions and set up a few notations (see Notation~\ref{notation}). 
We also present a few old and new surgery constructions between quasi-geodesics as many arguments in 
this paper involve constructing quasi-geodesics with controlled constants. We also discuss Assumption 0 
which is the most basic assumption on all the metric space we will be considering.

\begin{definition}[Quasi Isometric embedding] \label{Def:Quasi-Isometry} 
Let $(X , d_X)$ and $(Y , d_Y)$ be metric spaces. For constants $q \geq 1$ and
$Q \geq 0$, we say a map $\Phi \from X \to Y$ is a 
$(q, Q)$--\emph{quasi-isometric embedding} if, for all $x_1, x_2 \in X$
$$
\frac{1}{q} d_X (x_1, x_2) - Q  \leq d_Y \big(\Phi (x_1), \Phi (x_2)\big) 
   \leq q \, d_X (x_1, x_2) + Q.
$$
If, in addition, every point in $Y$ lies in the $Q$--neighbourhood of the image of $\Phi$, then $\Phi$ is called 
a $(q, Q)$--quasi-isometry. This is equivalent to saying that $\Phi$ has a \emph{quasi-inverse}. That is, 
there exist constants $q', Q'>0$ and a $(q',Q')$--quasi-isometric embedding $\Psi \from Y \to X$ such that,
\[
\forall x \in X \quad d_X\big(x, \Psi\Phi(x)\big) \leq Q' \qquad\text{and}\qquad
\forall y \in Y \quad d_Y\big(y, \Phi\,\Psi(x)\big) \leq Q'.
\]
When such a map $\Phi$ exists, we say $(X, d_X)$ and $(Y, d_Y)$ are \textit{quasi-isometric}. 
\end{definition}

\begin{definition}[Quasi-Geodesics] \label{Def:Quadi-Geodesic} 
A \emph{quasi-geodesic} in a metric space $X$ is a quasi-isometric embedding $\alpha \from I \to X$ where $I \subset \RR$ is an 
(possibly infinite) interval. That is, $\alpha \from I \to X$ is a $(q,Q)$--quasi-geodesic if, for all $s, t \in I$, we have 
\[
\frac{|t-s|}{q} - Q  \leq d_X \big(\alpha(s), \alpha(t)\big)  \leq q \cdot |s-t|. 
\]
Furthermore, without loss of generality (see Lemma~\ref{Lem:Tame}), in this paper we always assume $\alpha$ is $(2q+2Q)$--Lipschitz, 
in particular, $\alpha$ is continuous. 
\end{definition}

The assumption that $\alpha$ is Lipschitz is needed so we can apply the Arzel\`a-Ascoli theorem to a sequence of quasi-geodesics 
to obtain a limiting quasi-geodesic. However, this assumption can always be achieved by increasing 
the constants of the quasi-geodesic:

\begin{lemma}[Taming of the quasi-geodesics] \label{Lem:Tame}
Let $X$ be a geodesic metric space and let $I \subset \RR$ be an interval of length bigger than 1. 
Given a $(q,Q)$--quasi-isometric embedding $\alpha \from  I \to X$, one can find a
$(q',Q')$--quasi-geodesic $\alpha' \from I \to X$, with $q'$ and $Q'$ depending only on $q$ and $Q$, 
that is $2(q+Q)$--Lipschitz and fellow travels $\alpha$. In fact, for $t \in I$, we have 
\[
d_X\big(\alpha(t), \alpha'(t)\big) \leq 2(q +Q). 
\]
\end{lemma}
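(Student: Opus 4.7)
The plan is the standard construction: discretize $\alpha$ at integer times and interpolate by geodesics, then verify the four required properties (Lipschitz, close to $\alpha$, upper and lower quasi-geodesic bounds) by elementary triangle inequality.

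First I would define the sampled map. Let $J = \{a\} \cup (I \cap \ZZ) \cup \{b\}$ where $a = \inf I$ and $b = \sup I$ (finite or not; if $I$ is unbounded simply drop the missing endpoint). Enumerate the points of $J$ in increasing order as $t_0 < t_1 < t_2 < \dots$, so $t_{i+1} - t_i \leq 1$ for every $i$. Set $\alpha'(t_i) = \alpha(t_i)$, and on each interval $[t_i, t_{i+1}]$ let $\alpha'$ be a constant-speed parametrization of a geodesic in $X$ from $\alpha(t_i)$ to $\alpha(t_{i+1})$; the existence of such a geodesic uses the assumption that $X$ is geodesic.

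Next I would verify the Lipschitz estimate. By the upper quasi-geodesic bound for $\alpha$, we have $d_X(\alpha(t_i), \alpha(t_{i+1})) \leq q(t_{i+1}-t_i) + Q \leq q + Q$, so the geodesic segment between them has length at most $q+Q$ and is traversed in time $t_{i+1}-t_i \leq 1$. Thus on each sub-interval the speed of $\alpha'$ is at most $(q+Q)/(t_{i+1}-t_i) \cdot (t_{i+1}-t_i) = $ well, the issue is the speed could be up to $(q+Q)/(t_{i+1}-t_i)$, which for the two end intervals (where $t_{i+1}-t_i$ might be less than $1$) could be larger per unit parameter. To fix this, I would sharpen the speed bound: on $[t_i, t_{i+1}]$, the speed is $d_X(\alpha(t_i),\alpha(t_{i+1}))/(t_{i+1}-t_i) \leq q + Q/(t_{i+1}-t_i)$. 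For the interior intervals $t_{i+1}-t_i = 1$, this is $\leq q+Q$. For the two boundary intervals (of length $< 1$), I would instead sample at $a+1$ and $b-1$ as well, ensuring every sub-interval has length in $[1, 2]$ and hence speed at most $q + Q \leq 2(q+Q)$. A small concatenation-of-speeds argument then shows $\alpha'$ is $2(q+Q)$--Lipschitz on all of $I$.

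Then I would verify the closeness estimate. For any $t \in [t_i, t_{i+1}]$, by the upper bound for $\alpha$ applied to $(t, t_i)$ and the triangle inequality along the geodesic segment from $\alpha(t_i)$ to $\alpha(t_{i+1})$,
\[
d_X\big(\alpha(t), \alpha'(t)\big) \leq d_X\big(\alpha(t), \alpha(t_i)\big) + d_X\big(\alpha(t_i), \alpha'(t)\big) \leq (q+Q) + (q+Q) = 2(q+Q).
\]
Finally, to deduce that $\alpha'$ is a quasi-geodesic, the upper bound follows from the Lipschitz estimate, and the lower bound follows from combining the closeness estimate with the lower bound for $\alpha$:
\[
d_X\big(\alpha'(s),\alpha'(t)\big) \geq d_X\big(\alpha(s),\alpha(t)\big) - 4(q+Q) \geq \tfrac{|s-t|}{q} - Q - 4(q+Q),
\]
so $\alpha'$ is a $(q', Q')$--quasi-geodesic with $q' = q$ and $Q' = 5Q + 4q$, both depending only on $q$ and $Q$.

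The only subtle point is the parametrization at the boundary pieces, since naive integer sampling could leave sub-intervals of length less than $1$ on which the geodesic is traversed too quickly; grouping the two endpoint pieces into sub-intervals of length between $1$ and $2$ resolves this and is the one place where the hypothesis that $I$ has length bigger than $1$ is used.
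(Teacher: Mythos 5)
Your proposal is correct and follows essentially the same route as the paper: sample $\alpha$ at times whose consecutive gaps are bounded above \emph{and} below (the paper uses gaps in $[\tfrac12,1]$, you use $[1,2]$), interpolate by constant-speed geodesics, and then deduce the Lipschitz, closeness, and two-sided quasi-geodesic bounds by the triangle inequality. The only nitpick is that in the closeness estimate you should compare $\alpha(t)$ to the \emph{nearer} of the two endpoints $t_i,t_{i+1}$ (which is within parameter distance $1$, since your gaps can be as large as $2$) rather than always to $t_i$; with that adjustment the claimed bound of the form $2(q+Q)$ goes through.
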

\begin{proof}
We assume $I$ is compact, the proof for other cases is similar. 
Let $k$ be an integer larger than the length of $I$ and choose times $t_0 \leq t_1 \leq \dots \leq t_k$ 
such that  $\tfrac 12 \leq |t_{i+1}- t_i| \leq 1$ and $I = [t_0, t_k]$. For $i =0, \dots, k$, define 
$\alpha'(t) = \alpha(t)$. Then define $\alpha'[t_i, t_{i+1}]$ to be a geodesic segments connecting 
$\alpha(t_i)$ to $\alpha(t_{i+1})$. The length of each of these geodesic segments is at most $(q+Q)$ 
and the length of $[t_i, t_{i+1}]$ is at least $\tfrac 12$. Hence $\alpha'$ is $2(q+Q)$--Lipschitz. For every 
$t \in I$, there is $t_i$ such that $|t-t_i| \leq \frac 12$. Hence
\[
d_X(\alpha(t), \alpha'(t)) \leq d_X(\alpha(t), \alpha(t_i)) + d_X(\alpha'(t_i), \alpha'(t)) 
\leq  \frac{q}2 + Q + \frac{q+Q}2 \leq  q + 2Q. 
\]
To see the lower bound for $\alpha'$, let $t, s \in I$ and let $t_i$ and $t_j$ be such that 
$|t- t_i| \leq \frac 12$ and $|s- t_j| \leq \frac 12$. Then 
\begin{align*}
d_X(\alpha'(t), \alpha'(s)) 
& \geq d_X(\alpha'(t_i), \alpha'(t_j))  - d_X(\alpha'(t), \alpha'(t_i))  - d_X(\alpha'(s), \alpha'(t_j)) \\
&\geq \frac{|t_i - t_j|}{q} - Q - \frac{(q+Q)}2 - \frac{(q+Q)}2      \\
& \geq \frac{|s-t| -1}{q} - q-2Q.
\end{align*}
That is, the Lemma holds for 
\begin{equation*} 
q' =q+Q \qquad\text{and}\qquad  Q'= q + \frac 1q + 2Q.  \qedhere
\end{equation*} 
\end{proof}

\begin{notation} \label{notation}
To simplify notation, we use $\qq=(q, Q) \in [1, \infty) \times [0, \infty)$ to indicate a pair of  constants.
That is, we say $\Phi \from X \to Y$ is a $\qq$--quasi-isometry. We also say $\alpha$ is 
$\qq$--quasi-geodesic, which can be a ray and/or a segment depending on the context. Furthermore, we fix a base point $\go$ in the metric space $X$. By a \emph{$\qq$--ray} 
we mean a $\qq$--quasi-geodesic ray $\alpha \from [0, \infty) \to X$ such that $\alpha(0) = \go$. 
For an interval $[s, t]\subset [0, \infty)$, we denote the restriction of $\alpha$ to the time interval 
$[s,t]$ by $\alpha [s,t]$ (simplified from $\alpha([s,t])$). However, if points $x, y \in X$ on the image of 
$\alpha$ are given, we denote the sub-segment of $\alpha$ connecting $x$ to $y$ by $[x,y]_\alpha$. 
That is, if $\alpha(s) = x$ and $\alpha(t) = y$ for $s \leq t$, then $[x,y]_\alpha = \alpha[s,t]$. 

We often need to concatenate quasi-geodesics. Let $\alpha \from [s_1, s_2] \to X$
and $\beta \from [t_1, t_2] \to X$ be two quasi-geodesics such that $\alpha(s_2) = \beta(t_1)$. 
We denote the concatenation of $\alpha$ and $\beta$ by $\alpha \cup \beta$ by which we mean 
the following quasi-geodesic:
\[
\alpha \cup \beta \from [s_1, t_2- t_1 + s_2] \to X,  
\qquad 
\alpha\cup\beta(t) = 
\begin{cases} 
\alpha(t) & \text{for $ t \in [s_1, s_2]$}\\
\beta(t+t_1-s_2) & \text{for $ t \in [s_2, t_2-t_1+s_2]$}
\end{cases} .
\]

For $r>0$, let $B_r^\circ \subset X$ be the open ball of radius $r$ centered at $\go$, let $B_r$ be the 
closed ball centered at $\go$ and let $B^{c}_{r}= X - B_r^\circ$. For a $\qq$--ray $\alpha$ and $r>0$, we let 
$t_r \geq 0 $ denote the first time $\alpha$ intersects 
$B^{c}_{r}$ and $T_r \geq t_r$ be the last time $\alpha$ intersects $B_r$. 
We denote $\alpha(t_r)$ by $\alpha_{r} \in X$.  Also, let 
\[
\alpha|_r =\alpha[0,t_r] \qquad\text{and}\qquad \alpha|_{\geq r} =\alpha[T_r, \infty)
\] 
be the restrictions $\alpha$ to the intervals $[0, t_r]$ and $[T_r, \infty)$ respectively. 
That is, $\alpha|_r $ is the sub-segment of $\alpha$ connecting $\go$ to $\alpha_r$
and $\alpha|_{\geq r}$ is the portion of $\alpha$ that starts at radius $r$ but never returns to 
$B_r$. Lastly, if $p$ is a point on a $\qq$--ray $\alpha$, we also use $\alpha_{[p, \infty)}$ 
to denote the tail of $\alpha$ starting from the point $p$.

We use some short hands for pairs of constants. For example, for a given pairs $\qq$ and $\qq'$
we write $\qq \leq \qq'$ if 
\[
\qq=(q, Q), \quad \qq' = (q', Q'), \quad q \leq q' \quad \text{and}\quad Q \leq Q'.
\] 
Also, $\qq = \max(\qq_1, \qq_2)$ means
\[
\qq_1 = (q_1, Q_1),\quad \qq_2 = (q_2, Q_2), \quad q = \max(q_1, q_2)\quad 
\text{and}\quad  Q = \max(Q_1, Q_2).
\]

We also use $d(\param, \param)$ instead of $d_X(\param, \param)$ when the metric space $X$ is fixed.
For $x \in X$, $\Norm{x}$ denotes $d(\go, x)$.  Let $A \subset X$ be a set and $D>0$, then 
\[
N_D(A) : = \{ x \in X | \, \exists \, a \in A \text{ where } d(x, a) \leq D\}
\] 
\end{notation} 

\subsection*{Preliminary assumption}
General metric spaces could be very wild and difficult to work with. Hence, we make some assumptions 
about the space $X$ which we show they holds for our main examples (see \secref{Sec:ATM} and 
\secref{Sec:CK}). Some of the statements in this paper can be stated in a more general setting, but these 
assumptions simplify the exposition and exclude certain exotic examples.

\subsection*{Assumption 0} (No dead ends) The metric space $X$ is always assumed to be a proper, 
geodesic metric space. Furthermore, there exist a pair of constants $\qq_0$ such that every point $x \in X$ 
lies on an infinite $\qq_0$--ray.  

Recall that every proper quasi-geodesic metric space is quasi-isometric to a proper geodesic metric space 
(see for example \cite[Proposition 5.3.9]{Clara}) via a process similar to \lemref{Lem:Tame}. So the first condition 
in the Assumption 0 is not a strong assumption.  The second condition in Assumption 0 holds for the spaces
we are most concerned about, namely, any space quasi-isometric to a finitely generated group. 
Note that, many groups such that lamplighter groups, have dead ends in the sense that not every point
lies on an infinite geodesic ray. In fact it is shown that there exists wreath products with unbounded dead-end length with respect to certain standard generating sets\cite{Cleary}.  However we prove now that all spaces quasi-isometric to a finitely generated group satisfies Assumption 0. This result is proven independently in \cite[Theorem 3.3]{thm33}.

\begin{lemma} \label{Lem:cocompact} 
Let $X$ be a proper geodesic metric space with a cocompact action by a finitely generated group $G$. Then $X$ satisfies Assumption 0.
\end{lemma}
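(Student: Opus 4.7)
The plan is to use cocompactness of the $G$-action together with the Arzel\`a-Ascoli theorem to build uniform quasi-geodesic rays from $\go$ through every point of $X$. If $G$ is finite then $X$ is bounded and Assumption~0 is vacuous, so I assume $G$ is infinite. Fix a compact fundamental domain $K$ containing $\go$, and set $D := \diam(K \cup \{\go\})$; then the orbit $G\cdot\go$ is $D$-dense in $X$, and $X$ is unbounded.

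The first step is to produce a single infinite geodesic ray from $\go$. Pick $y_n \in X$ with $\|y_n\| \to \infty$, and let $\gamma_n\colon [0,\|y_n\|] \to X$ be arc-length-parametrized geodesics from $\go$ to $y_n$. Each $\gamma_n$ is $1$-Lipschitz, so by properness of $X$ and Arzel\`a-Ascoli a subsequence converges uniformly on compact intervals to an infinite geodesic ray $\tau\colon [0,\infty)\to X$ based at $\go$. For a general $x \in X$, use $D$-density to choose $g \in G$ with $d(x, g\go) \leq D$. The translate $g\tau$ is an infinite geodesic ray from $g\go$, and I form the concatenation
\[
\sigma_x \,:=\, [\go, x] \,\cup\, [x, g\go] \,\cup\, g\tau,
\]
parametrized by arc length. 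The first two pieces have total length at most $\|x\| + D$, while along the tail the triangle inequality yields $d(\go, g\tau(t)) \geq t - \|g\go\|$. A case analysis over pairs of points on the three pieces of $\sigma_x$ should then show that $\sigma_x$ is a $(q_0, Q_0)$-quasi-geodesic with $(q_0, Q_0)$ depending only on $D$.

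The main obstacle is the initial portion of the tail $g\tau$, where $g\tau$ could a priori come back close to $\go$ and break the quasi-geodesic inequality (so that the na\"ive bound above becomes vacuous there). I plan to handle this by invoking cocompactness a second time: among the elements $g \in G$ with $d(g\go, x) \leq D$ one can be chosen so that the direction of $g\tau$ is aligned with $[\go, x]$, ensuring the tail advances from $\go$ at a uniform linear rate. This is the technical heart of the argument; a detailed version, independent of the construction sketched here, is carried out in \cite[Theorem~3.3]{thm33}.
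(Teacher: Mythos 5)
Your first step (extracting one geodesic ray $\tau$ from $\go$ by Arzel\`a--Ascoli) is fine, and you have correctly located the technical heart of the argument, but the fix you propose for it does not work. The claim that among the finitely many $g\in G$ with $d(g\go,x)\leq D$ one can be chosen so that $g\tau$ is ``aligned'' with $[\go,x]$ is false in general: take $X=\RR$ with $G=\ZZ$ acting by translation, let $\tau$ be the positive ray from $0$, and let $x=-n$ for large $n$. Then every admissible $g$ lies in $[-n-D,-n+D]$ and every translate $g\tau$ points in the positive direction, i.e.\ straight back through $\go$, so every concatenation $[\go,x]\cup[x,g\go]\cup g\tau$ backtracks by $2n$ and is not a quasi-geodesic with constants independent of $n$. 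Since $\tau$ is only a ray, there is no ``reversed'' direction available to choose, and no choice of $g$ near $x$ can repair this. Deferring to the cited reference does not close the gap, because the step you defer is precisely the one your construction cannot carry out.

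The paper's proof avoids this by upgrading the limit object to a \emph{bi-infinite} geodesic before translating: it chooses points $y_n$ in the interior of the geodesics $\gamma_n$ with both $\Norm{y_n}\to\infty$ and $d(y_n,x_n)\to\infty$, recenters by $y_n^{-1}$, and takes the Arzel\`a--Ascoli limit to get a bi-infinite geodesic $\gamma$ through $\go$. Given $x$, the translate $\gamma_x=x\cdot\gamma$ is a bi-infinite geodesic through $x$; letting $z$ be a closest point of $\gamma_x$ to $\go$ and $\gamma_x^+$ the half of $\gamma_x$ emanating from $z$ that contains $x$, the nearest-point projection surgery (Part~(I) of Lemma~\ref{surgery}) makes $[\go,z]\cup\gamma_x^+$ a $(3,0)$--quasi-geodesic ray through $x$ with no case analysis and no alignment choice at all --- the nearest-point condition is what replaces your ``alignment.'' (The paper also first reduces to the Cayley graph of $G$ via quasi-isometry invariance of Assumption~0, which is cosmetic; your in-$X$ setup with a compact fundamental domain would be equally fine once the bi-infinite geodesic and the projection trick are in place.)
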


\begin{proof}
If $\Phi \from X \to Y$ is a quasi-isometry between two proper geodesic metric spaces, then 
Assumption 0 holds for $X$ if an only if it holds for $Y$. This is because quasi-geodesics in 
$X$ are mapped to quasi-isometric embeddings of intervals to $Y$ which can be tamed using 
\lemref{Lem:Tame}. Hence, it is enough to prove the lemma for the case when $X$ is the Cayley graph 
of $G$ with respect to a finite generating set. 

We first argue that $X$ contains a bi-infinite geodesic ray $\gamma$. Pick a sequence of 
point $x_n$ such that $\Norm{x_n} \to \infty$, let $\gamma_n$ be a geodesic in $X$
connecting $\go$ to $x_n$ and let $y_n$ be a point on $\gamma_n$ such that both 
$\Norm{y_n} \to \infty$ and $d(y_n, x_n) \to \infty$. Note that we can think of $y_n$
as an element of $G$. Since $X$ is proper, the sequence of geodesic segments 
$y_n^{-1}(\gamma_n)$ converges, up to taking a subsequence, to a bi-infinite geodesic 
$\gamma$ passing through $\go$. 

Now let $x$ be a point in $X$ and let $\gamma_x = x \cdot \gamma$ be a bi-infinite geodesic passing
thorough $x$. Let $z$ be a point on $\gamma_x$ that is closest to $\go$. Then $z$ divides 
$\gamma_x$ into two half-infinite geodesics $\gamma_x^+$ and $\gamma_x^-$ starting at $z$.  
Let $\gamma_x^+$ be the half-infinite geodesic that contains $x$. By Part (II) of \lemref{surgery}, the concatenations 
$[\go, z] \cup \gamma_x^+$ is a $(3, 0)$--quasi-geodesic ray emanating from $\go$
passing through $x$. 
\end{proof}

\subsection{Surgeries between quasi-geodesics}
In this section we present several methods to produce a quasi-geodesic as a concatenation 
of other geodesics or quasi-geodesics. The statements are intuitively clear and the proofs are elementary. 
So, this subsection could be skipped on the first reading of the paper. 
First, we recall a few surgery lemmas from \cite{QRT1} and \cite{QRT2}. 

\begin{lemma}\label{surgery}
Let $X$ be a metric space satisfying Assumption 0. 
The following statements are \cite[Lemma 2.5, Lemma 4.3]{QRT1} and \cite[Lemma 3.7]{QRT2} 
respectively. 
 
\begin{enumerate}[(I)]
\item\label{concate}(Nearest-point projection)
Consider a point $x \in X$ and a $(q, Q)$--quasi-geodesic segment $\beta$ 
connecting a point $z \in X$ to a point $w \in X$. Let $y$ be a closest point in $\beta$
to $x$. Then 
\[
\gamma = [x, y] \cup [y, z]_\beta
\] 
is a $(3q, Q)$--quasi-geodesic.

\begin{figure}[h!]
\begin{tikzpicture}[scale=0.9]
 \tikzstyle{vertex} =[circle,draw,fill=black,thick, inner sep=0pt,minimum size=.5 mm]
[thick, 
    scale=1,
    vertex/.style={circle,draw,fill=black,thick,
                   inner sep=0pt,minimum size= .5 mm},
                  
      trans/.style={thick,->, shorten >=6pt,shorten <=6pt,>=stealth},
   ]

  \node[vertex] (z) at (0,0)[label=left:$z$] {}; 
  \node[vertex] (w) at (8,0) [label=right:$w$]  {}; 
  \node[vertex] (x) at (4,1.5) [label=right:$x$]  {};     
  \node[vertex] (y) at (4,0) [label=below:$y$]  {};     
  \node (a) at (.9,.8) [label=right:$\beta$]  {};    
  \draw[thick, dashed]  (x)--(4, 0){};
 
  \pgfsetlinewidth{1pt}
  \pgfsetplottension{.75}
  \pgfplothandlercurveto
  \pgfplotstreamstart
  \pgfplotstreampoint{\pgfpoint{0cm}{0cm}}  
  \pgfplotstreampoint{\pgfpoint{1.4cm}{-.6cm}}   
  \pgfplotstreampoint{\pgfpoint{1.3cm}{.2cm}}
  \pgfplotstreampoint{\pgfpoint{3cm}{-.4cm}}
  \pgfplotstreampoint{\pgfpoint{4cm}{0cm}}
  \pgfplotstreampoint{\pgfpoint{5cm}{-.2cm}}
  \pgfplotstreampoint{\pgfpoint{6cm}{.3cm}}
  \pgfplotstreampoint{\pgfpoint{7cm}{-.7cm}}
  \pgfplotstreampoint{\pgfpoint{8cm}{0cm}}
  \pgfplotstreamend
  \pgfusepath{stroke} 
  \end{tikzpicture}
  
\caption{The concatenation of the geodesic segment $[x,y]$ 
and the quasi-geodesic segment $[y,z]_\beta$ is a quasi-geodesic.}
\label{Fig:omega} 
\end{figure}
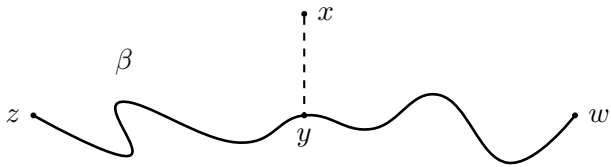

\item\label{redirect11} (Quasi-geodesic ray to geodesic ray) Let $\beta$ be a geodesic ray and 
$\gamma$ be a $(q, Q)$--ray. 
For $r>0$, assume that $d_X(\beta_\rr, \gamma)\leq r/2$. Then, there exists a
$(9q,Q)$--quasi-geodesic $\gamma'$ where $\gamma'(t) =\beta(t)$ for large values of $t$ and 
\[
\gamma|_{r/2} = \gamma'|_{r/2}. 
\]

\end{enumerate}
\end{lemma}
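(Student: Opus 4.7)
The plan is to verify each part directly from the triangle inequality, exploiting the closest-point hypothesis in (I) and a controlled detour in (II). In both parts the Lipschitz upper bound is immediate from the bounds on the individual pieces, so the real work lies in the multiplicative lower bound.

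For part (I), I would parameterize $\gamma$ so that $[x, y]$ is traversed by arc length over $[0, d(x, y)]$ and $[y, z]_{\beta}$ inherits the $\beta$-parameter translated to begin at $d(x, y)$. Points lying on the same piece are handled by the built-in geodesic and quasi-geodesic inequalities. For $u = \gamma(s)$ on $[x, y]$ and $v = \gamma(t)$ on $[y, z]_{\beta}$, set $s_1 = d(x, y) - s$ and $s_2 = t - d(x, y)$, so that $d(u, y) = s_1$ and $d(y, v) \geq s_2/q - Q$. The closest-point hypothesis gives $d(x, v) \geq d(x, y)$, and the triangle inequality then yields
\[
d(u, v) \;\geq\; d(x, v) - d(x, u) \;\geq\; s_1, \qquad d(u, v) \;\geq\; d(y, v) - d(u, y) \;\geq\; \frac{s_2}{q} - Q - s_1.
\]
A convex combination with weight $\lambda = (q+1)/(2q+1)$ balances the coefficients of $s_1$ and $s_2$ and gives
\[
d(u, v) \;\geq\; \frac{s_1 + s_2}{2q+1} - Q \;\geq\; \frac{s_1 + s_2}{3q} - Q,
\]
which is exactly the claimed $(3q, Q)$ bound, using $q \geq 1$.

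For part (II), I would pick $p = \gamma(t_*)$ realizing $d(p, \beta_r) \leq r/2$ and set
\[
\gamma' \;=\; \gamma|_{[0, t_*]} \,\cup\, [p, \beta_r] \,\cup\, \beta|_{[r, \infty)},
\]
parameterizing the last two pieces by arc length. The estimate $d(p, \beta_r) \leq r/2$ forces $\|p\| \geq r/2$, so $t_*$ is at least the first exit time of $\gamma$ from $B_{r/2}^\circ$; hence $\gamma'|_{r/2} = \gamma|_{r/2}$, and by construction $\gamma'$ eventually coincides with $\beta$. The upper bound follows from $q$-Lipschitz telescoping through the two junctions $p$ and $\beta_r$. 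For the lower bound I would do a case analysis on which of the three pieces contain the two parameters $s < t$, in each case combining the $\gamma$-quasi-geodesic lower bound on the initial piece with the radial estimate $d(\gamma'(s), \gamma'(t)) \geq \|\gamma'(t)\| - \|\gamma'(s)\|$ on the geodesic tail, and using that the detour has length at most $r/2$ while the outward radial progress on $\beta$ is linear.

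The main obstacle is the constant bookkeeping in (II) in the mixed case where $s$ lies on the long initial $\gamma$-piece while $t$ lies far out on $\beta$: one must simultaneously account for the $(q, Q)$-distortion of $\gamma$, the unit-speed growth of $\|\beta(\cdot)\|$, and the worst-case length-$r/2$ detour. This is where the factor $9q$ emerges as a safe overestimate. A more delicate convex-combination argument in the spirit of (I) would likely tighten this, but the stated constant is already good enough for the later applications.
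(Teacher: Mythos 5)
First, a remark on the comparison: the paper does not prove this lemma at all --- it imports both parts verbatim from \cite[Lemmas 2.5, 4.3]{QRT1} and \cite[Lemma 3.7]{QRT2} --- so your attempt is really being measured against those cited proofs. Your part (I) is correct and complete: the two bounds $d(u,v)\ge s_1$ and $d(u,v)\ge s_2/q-Q-s_1$ together with the convex combination at $\lambda=(q+1)/(2q+1)$ give $d(u,v)\ge (s_1+s_2)/(2q+1)-Q$, which is in fact slightly sharper than the stated $(3q,Q)$, and the upper bound is immediate. No complaints there.

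Part (II), however, contains a genuine gap, and it is not the ``constant bookkeeping'' you defer: the concatenation $\gamma'=\gamma|_{[0,t_*]}\cup[p,\beta_r]\cup\beta|_{[r,\infty)}$ is in general \emph{not} a quasi-geodesic with constants independent of $r$. Nothing in your setup prevents the initial piece $\gamma|_{[0,t_*]}$ from meeting the tail $\beta|_{[3r/2,\infty)}$ before time $t_*$ while still keeping distance exactly $r/2$ from $\beta_r$, nor prevents the connector $[p,\beta_r]$ from backtracking along $\beta$ when $p$ sits ``beyond'' $\beta_r$. Concretely, take $X=\RR^2$, $\beta$ the positive $x$-axis, and $\gamma$ the unit-speed path $(0,0)\to(0,0.6r)\to(2r,0.6r)\to(2r,-0.5r)\to(r,-0.5r)$ followed by the downward vertical ray: this is a $(5,1)$--ray uniformly in $r$, its unique point within $r/2$ of $\beta_r=(r,0)$ is $p=(r,-r/2)$ at time $t_*=4.7r$, yet it passes through $\beta(2r)=(2r,0)$ at time $3.2r$. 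Your $\gamma'$ then visits $(2r,0)$ twice with parameter gap $3r$, so the lower quasi-geodesic inequality fails for \emph{every} fixed pair $(q',Q')$ once $r$ is large; the defect is in the additive constant, so no ``safe overestimate'' of the multiplicative one can repair it. The radial estimate $d\ge\Norm{\gamma'(t)}-\Norm{\gamma'(s)}$ your sketch leans on is vacuous exactly in this regime, since both points sit at comparable radii. The repair --- and the reason part (I) is stated first and the constant is $9q=3\cdot 3q$ --- is to truncate $\gamma$ at its first exit from $B_{r/2}$ (which is all the conclusion $\gamma'|_{r/2}=\gamma|_{r/2}$ requires, and which keeps the initial piece at distance at least $r/2$ from $\beta|_{[r,\infty)}$) and to route the junctions through nearest-point projections so that part (I) can be applied twice, each application multiplying the multiplicative constant by $3$ while preserving the additive constant $Q$; this is exactly the pattern the paper uses again in the proof of Lemma \ref{kapparedirect}.
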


\begin{lemma}\label{onestep}(Fellow travelling)
Let $X$ be a metric space satisfying Assumption 0. Let $\qq$--rays $\alpha, \beta$, $t_0>0$ and $C>0$
be such that, for all $t \leq t_0$, we have 
\[ 
d(\alpha(t), \beta(t)) \leq C.
\]
Then there exists a $(q, Q+C)$--quasi-geodesic ray $\beta'$ such that 
\[
\beta'|_{t_0} =\beta|_{t_0} \qquad\text{and}\qquad \beta'|_{(t_0+1, \infty)} =\alpha|_{(t_0, \infty)} .
\]
\end{lemma}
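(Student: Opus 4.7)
The plan is to build $\beta'$ as an explicit three-piece concatenation: keep $\beta$ on $[0,t_0]$, cross the fellow-travel gap at time $t_0$ by a short geodesic bridge, and then follow $\alpha$ with a parameter shift of one. Concretely, since $d(\beta(t_0),\alpha(t_0))\le C$ by hypothesis, let $\gamma$ be a geodesic from $\beta(t_0)$ to $\alpha(t_0)$ of length $L\le C$, reparameterized over the unit interval, and set
\[
\beta'(t)=\begin{cases} \beta(t) & 0\le t\le t_0,\\ \gamma(t-t_0) & t_0\le t\le t_0+1,\\ \alpha(t-1) & t\ge t_0+1.\end{cases}
\]
The path is continuous at both gluing points, and the two identities demanded in the conclusion are built into the definition.

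Next I verify the lower bound $d(\beta'(s),\beta'(t))\ge (t-s)/q-(Q+C)$. The only nontrivial case is $s\le t_0$ and $t\ge t_0+1$, where $\beta'(s)=\beta(s)$ and $\beta'(t)=\alpha(t-1)$. Using the hypothesis $d(\alpha(s),\beta(s))\le C$ (applicable because $s\le t_0$) together with the quasi-geodesic lower bound on $\alpha$:
\[
d(\beta'(s),\beta'(t))\;\ge\; d(\alpha(s),\alpha(t-1))-C\;\ge\;\tfrac{(t-1)-s}{q}-Q-C,
\]
and the $1/q$ loss from the unit shift is absorbed into the additive slack $Q+C$. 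The remaining cases (both indices inside the same piece, or one on the bridge) reduce immediately to the quasi-geodesic bounds for $\beta$ or $\alpha$, or to the length bound $L\le C$ on $\gamma$.

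For the upper bound, within the $\beta$- and $\alpha$-pieces $\beta'$ inherits the upper quasi-geodesic estimate directly from $\beta$ and $\alpha$. Across pieces, the triangle inequality applied around the anchor points $\beta(t_0)$ and $\alpha(t_0)$, together with $d(\beta(t_0),\alpha(t_0))\le C$, gives
\[
d(\beta'(s),\beta'(t))\;\le\; q(t_0-s)+C+q(t-1-t_0)\;=\;q(t-s-1)+C,
\]
which fits inside the output tolerance $q(t-s)+(Q+C)$. The required Lipschitz taming, analogous to \lemref{Lem:Tame}, is automatic since each piece is Lipschitz with constant at most $\max(2q+2Q,L)$.

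The main obstacle is controlling the bridge segment in the upper bound: it is only $L$-Lipschitz with $L$ up to $C$, which can exceed the multiplicative constant $q$, so the clean Lipschitz bound on each piece cannot be invoked with constant $q$ uniformly. This is precisely where the additive slack $+C$ in the target constant $Q+C$ is spent, and the bookkeeping requires keeping the bridge length and the unit time shift aligned so that, after the shift, $\alpha$ contributes exactly the expected number of parameter units to $\beta'$.
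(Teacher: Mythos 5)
Your proof is correct and follows essentially the same route as the paper's: keep $\beta$ on $[0,t_0]$, insert a geodesic bridge across the gap of length at most $C$, then continue along a time-shifted copy of $\alpha$, and verify the two quasi-geodesic inequalities case by case via the triangle inequality (the only cosmetic difference is that you compress the bridge into a unit parameter interval, shifting $\alpha$ by $1$, whereas the paper parametrizes it over $[t_0,t_0+C]$ and shifts $\alpha$ by $C$; both give the same conclusion up to a harmless adjustment of the additive constant, and the paper's own bookkeeping has the same order-one slop in the lower bound that yours does).
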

\begin{proof}
Let $\gamma \from [0,C] \to X$ be a geodesic segment connecting $\beta(t_0)$ and 
$\alpha(t_0)$. Define $\beta'$ as:
\[
\beta'(t) = \begin{cases}
\beta(t) &\text{for } t \in[0, t_0]\\
\gamma(t-t_0) &\text{for } t \in[t_0, t_0+C]\\
\alpha(t-C) &\text{for } t \geq t_0+C
\end{cases}.
\]
We claim that $\beta'$ is a $(q, Q+ C)$--quasi-geodesic ray. Given two points $\beta'(t_1)$ and 
$\beta'(t_2)$. First we consider the case when $t_1 < t_0$ and $t_2 \geq t_0+C$.  By assumption 
$\beta'(t_1) = \beta(t_1)$ and
\[ 
d(\beta(t_1), \alpha(t_1)) \leq C.
\]
Thus 
\[
d(\beta'(t_1), \beta'(t_2)) \leq  
  d(\alpha(t_1), \alpha(t_2-C))+C \leq q (t_2-C -t_1) + Q +C \leq q (t_2 -t_1) + Q.
\]

On the other hand we have
\begin{align*}
d(\beta'(t_1), \beta'(t_2)) &\geq  d(\alpha(t_1), \alpha(t_2-C))-C \\
                                      &\geq \frac 1q  (t_2-C -t_1)  -Q -C.
\end{align*} 
Therefore, $\beta'$ is a $(q, Q+C)$--quasi-geodesic redirecting $\beta$ to $\alpha$. 

Another case to consider is when $t_1 \leq t_0$ and $t_0 \leq t_2 \leq t_0+C$. In this case \[d(\beta(t_0), \beta(t_2))<C.\] Thus we have
\begin{align*}
d(\beta(t_1), \beta(t_2)) &\leq d(\beta(t_1), \beta(t_0))+C \\
                                      &\leq q |t_0 -t_1| + Q +C \\
                                      & \leq q|t_2-C -t_1| + Q +C \leq q|t_2-t_1| + Q.
\end{align*}
On the other side we have 
\begin{align*}
                                      d(\beta(t_1), \beta(t_2)) &\geq  d(\beta(t_1), \beta(t_0))-C \\
                                      &\geq \frac 1q  (t_2-C -t_1)  -Q -C.
\end{align*}                                      
The case where $t_0 \leq t_1 \leq t_0+C$ and $t_2 \geq t_0+C$ is analogous. Other cases are trivial.
\end{proof}

\begin{lemma}[Pass through a nearby point]\label{twostep}
Let $X$ be a metric space satisfying Assumption 0. 
Let $\alpha$ be a $(q,Q)$--ray, $x \in X$ and let $t_0>0$ be such that 
\[ 
\ell : =d(x, \alpha(t_0)) \leq 1.
\]
Then there exists a $(q, Q+3)$--quasi-geodesic ray $\alpha'$ such that $x = \alpha'(t_0+1)$ and 
\[
\alpha'|_{t_0} =\alpha|_{t_0} \qquad\text{and}\qquad \alpha'|_{(t_0+2, \infty)} =\alpha|_{(t_0, \infty)} .
\]
\end{lemma}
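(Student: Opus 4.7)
The plan is to insert a short out-and-back detour through $x$ into $\alpha$ at parameter $t_0$, and then restart $\alpha$ with a parameter shift of $2$. Concretely, let $\gamma \from [0,\ell] \to X$ be a geodesic from $\alpha(t_0)$ to $x$, where $\ell = d(x,\alpha(t_0)) \leq 1$, and define
\[
\alpha'(t) =
\begin{cases}
\alpha(t) & t \in [0, t_0],\\
\gamma\bigl(\ell(t-t_0)\bigr) & t \in [t_0, t_0+1],\\
\gamma\bigl(\ell(t_0+2-t)\bigr) & t \in [t_0+1, t_0+2],\\
\alpha(t-2) & t \geq t_0+2.
\end{cases}
\]
By construction $\alpha'$ is continuous, $\alpha'(t_0+1) = \gamma(\ell) = x$, and the two restriction identities hold.

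It then remains to verify that $\alpha'$ is a $(q, Q+3)$--quasi-geodesic. I would perform a case analysis on a pair of times $t_1 \leq t_2$. When both lie in $[0,t_0]$ or both lie in $[t_0+2, \infty)$, the bounds follow immediately from $\alpha$ being a $(q,Q)$--quasi-geodesic. When $t_1 \leq t_0$ and $t_2 \geq t_0+2$, we have $d(\alpha'(t_1), \alpha'(t_2)) = d(\alpha(t_1), \alpha(t_2-2))$; the upper bound $q(t_2-t_1)+Q$ is immediate, and the lower bound becomes $\tfrac{|t_2-t_1|-2}{q} - Q \geq \tfrac{|t_2-t_1|}{q} - (Q+2)$. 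When both $t_1, t_2 \in [t_0, t_0+2]$, both points lie within distance $\ell \leq 1$ of $\alpha(t_0)$, so $d(\alpha'(t_1), \alpha'(t_2)) \leq 2\ell \leq 2$, while the required lower bound $\tfrac{|t_2-t_1|}{q} - (Q+3) \leq \tfrac{2}{q} - 3 \leq 0$ holds trivially. The remaining mixed cases (one time on the detour, the other on an $\alpha$--segment) are handled by the triangle inequality through $\alpha(t_0)$: the detour contributes at most $\ell \leq 1$ to any distance, and replacing the detour endpoint by $\alpha(t_0)$ reduces to one of the previous cases with a loss of at most $1$ in each direction.

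The additive slack of $3$ in $Q+3$ is precisely what absorbs these two corrections: at most $\tfrac{2}{q} \leq 2$ coming from the parameter shift in the tail and at most $\ell \leq 1$ coming from the out-and-back geodesic through $x$. There is no real obstacle in this argument; it is a routine bookkeeping exercise very similar to (and strictly simpler than) the verification in \lemref{onestep}, and the multiplicative constant $q$ is preserved because the detour is a geodesic reparameterized linearly in time.
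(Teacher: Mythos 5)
Your proposal is correct and follows essentially the same route as the paper: insert an out-and-back geodesic detour through $x$ at time $t_0$, shift the tail of $\alpha$ by $2$, and verify the $(q,Q+3)$ bounds by the same case analysis (both times on $\alpha$, both on the detour, or mixed, absorbing at most $\tfrac{2}{q}$ from the shift and $\ell\leq 1$ from the detour). If anything, your explicit formula for $\alpha'$ is cleaner than the paper's displayed one, whose third and fourth branches contain apparent typos ($\gamma(t-t_0-1)$ rather than $\gamma(t_0+2-t)$, and $\alpha(t-2\ell)$ rather than $\alpha(t-2)$).
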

\begin{proof}
Let $\gamma \from [0,1] \to X$ be a geodesic segment connecting $\alpha(t)$ and $x$
parametrized with constant speed.  Define $\alpha'$ as:
\[
\alpha'(t) = \begin{cases}
\alpha(t) &\text{for } t \in[0, t_0],\\
\gamma(t-t_0)&\text{for } t \in[t_0, t_0+1],\\
\gamma(t-t_0-1)&\text{for } t \in[t_0+1, t_0+2],\\
\alpha(t-2\ell) &\text{for } t \geq t_0+2.
\end{cases}
\]
We claim that $\alpha'$ is a $(q, Q+3)$-quasi-geodesic ray. Given two points $\alpha'(t_1)$ and $\alpha'(t_2)$. First we consider the case 
when $t_1 < t_0$ and $t_2 \geq t_0+\ell$.  By assumption 
$\alpha'(t_1) = \alpha(t_1)$ and $\alpha'(t_2) = \alpha(t_2-2)$. Thus 
\[
d(\alpha'(t_1), \alpha'(t_2)) = d(\alpha(t_1), \alpha(t_2-2)) \leq q (t_2-2 -t_1) + Q  \leq q (t_2-t_1) + Q.
\]
On the other hand we have
\begin{align*}
d(\alpha'(t_1), \alpha'(t_2)) &\geq  d(\alpha(t_1), \alpha(t_2-2)) \\
                                      &\geq \frac 1q  (t_2-2 -t_1)  -Q \\
                                      &=\frac 1q  (t_2 -t_1)  -Q -\frac{2}{q} 
                                      \geq \frac 1q  (t_2 -t_1)  -(Q+2).
\end{align*} 
Another case to consider is when $t_1 \leq t_0$ and $t_0 \leq t_2 \leq t_0+2$. In this case 
\begin{align*}
d(\alpha'(t_1), \alpha'(t_2)) &\leq d(\alpha(t_1), \alpha(t_0)) + \ell \\
                                      &\leq q |t_0 -t_1| + Q +  \ell \leq q |t_2 -t_1| + (Q +2).
                                      \end{align*}
On the other side we have 
\begin{align*}
 d(\alpha'(t_1), \alpha'(t_2)) &\geq  d(\alpha(t_1), \alpha(t_0))  -\ell\\
                                     &\geq \frac 1q |t_0-t_1| -Q-\ell\\
                                     &\geq \frac 1q(t_2-t_1 - 2)-Q-\ell  \geq \frac 1q(t_2-t_1)-(Q+3).
\end{align*}                                      
The case where $t_0 \leq t_1 \leq t_0+ 2\ell$ and $t_2 \geq t_0+ 2 \ell $ is analogous. Other cases are trivial.
\end{proof}

\section{Equivalence classes of rays up to quasi-redirection} \label{Sec:Partial_order}
As previously stated we assume throughout that Assumption 0 holds. In this section, we define a preorder 
$\preceq$ on the set of quasi-geodesic rays. The set of equivalence classes associated to $\preceq$ form 
a partially ordered set $P(X)$. Elements of $P(X)$ will later serve as points in our boundary $\partial X$ 
(see \secref{Sec:boundary}). We will also show in \secref{Sec:boundary} that equivalent classes
associated to sublinearly Morse quasi-geodesics rays are the minimal elements with respect to this partial 
order.

Roughly speaking, for quasi-geodesic rays $\alpha$ and $\beta$, $\alpha \preceq \beta$
if $\alpha$ can be \emph{quasi-redirected} to $\beta$, that is, if there is a family of quasi-geodesic 
rays with uniform constants that coincide with $\alpha$ in the beginning for an arbitrarily long time 
but eventually coincide with $\beta$. 

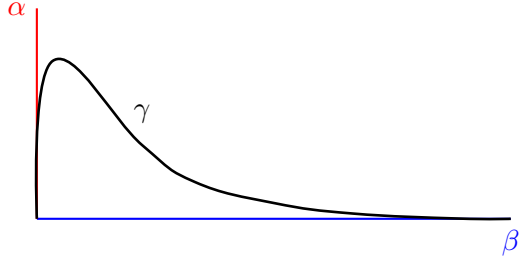
\begin{figure}[h!]
\centering
\begin{tikzpicture}[scale=0.7] 
\draw[blue, thick] (0,0) -- (9,0) node[below] {$\beta$};
\draw[red, thick] (0,0) -- (0,4) node[left] {$\alpha$};
\node at (2,2) {$\gamma$};

 \pgfsetlinewidth{1pt}
 \pgfsetplottension{.75}
 \pgfplothandlercurveto
 \pgfplotstreamstart
 \pgfplotstreampoint{\pgfpoint{0 cm}{0cm}}  
 \pgfplotstreampoint{\pgfpoint{.3 cm}{3 cm}}   
 \pgfplotstreampoint{\pgfpoint{2 cm}{1.4 cm}}
 \pgfplotstreampoint{\pgfpoint{3 cm}{.7 cm}}
 \pgfplotstreampoint{\pgfpoint{4.5 cm}{.3 cm}}
 \pgfplotstreampoint{\pgfpoint{6 cm}{.1 cm}} 
 \pgfplotstreampoint{\pgfpoint{8 cm}{.005 cm}} 
 \pgfplotstreampoint{\pgfpoint{9 cm}{0 cm}} 
 \pgfplotstreamend
 \pgfusepath{stroke} 
 \end{tikzpicture}
\caption{The ray $\alpha$ can be quasi-redirected to $\beta$ at radius $r$.}
\end{figure}

\begin{definition}\label{Def:Redirection}
Let $\alpha, \beta$ and $\gamma$ be quasi-geodesic rays. We say $\beta$ \emph{eventually coincide with 
$\gamma$} (and write $\gamma \eventual \beta$) if there are times $t_\gamma>0$
and $t_\beta$ (which maybe negative) such that, 
for $t\geq t_\gamma$, we have 
\[
\gamma(t) =\beta(t+t_\beta).
\]
For $r>0$, we say $\gamma$ \emph{quasi-redirects $\alpha$ to $\beta$ at radius $r$} if
\[
\gamma|_r = \alpha|_r \qquad\text{and} \qquad \beta \eventual \gamma. 
\]
If $\gamma$ is a $\qq$--ray, we say \emph{$\alpha$ can be $\qq$--redirected to $\beta$
at radius $r$}. We refer to $(t_\beta+t_\gamma)$ as the \emph{landing time}. 	
We say $\alpha \preceq \beta$, if there is $\qq \in [1, \infty) \times[0,\infty)$ such that, 
for every $r >0$, $\alpha$ can be $\qq$--redirected to $\beta$ at radius $r$. \end{definition}

\begin{lemma}[Quasi-redirection is transitive] \label{L:Transitive}
Let $\alpha, \beta, \gamma$ be  quasi-geodesic rays. If $\alpha$ can be $(q_1, Q_1)$--redirected to 
$\beta$ at every radius $r>0$ and $\beta$ can be $(q_2, Q_2)$--redirected to $\gamma$ at every
radius $r>0$, then $\alpha$ can be $(q_3, Q_3)$--redirects to $\gamma$ at every radius $r>0$ where 
\[
q_3 = \max \big\{ q_2+ 1, q_1 \big\}, \, \text{ and }  \, Q_3= \max \big\{ Q_1, Q_2 \big\}.
\]
Hence, the relation $\preceq$ is transitive, that is, if $\alpha \preceq \beta$ and $\beta \preceq \gamma$
then $\alpha \preceq \gamma$.
\end{lemma}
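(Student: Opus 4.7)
The plan is to fix an arbitrary $r > 0$ and construct a single $(q_3, Q_3)$-quasi-geodesic ray that redirects $\alpha$ to $\gamma$ at radius $r$, by splicing the two given redirections along their common overlap with $\beta$. Since the resulting constants depend only on $(q_1, Q_1)$ and $(q_2, Q_2)$ and not on $r$, transitivity of $\preceq$ follows immediately.

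First, apply the $\alpha$-to-$\beta$ hypothesis at radius $r$ to produce a $(q_1, Q_1)$-ray $\gamma_1$ with $\gamma_1|_r = \alpha|_r$ and constants $s_1, u_1$ satisfying $\gamma_1(t) = \beta(t + u_1)$ for all $t \geq s_1$. Since $\beta$ escapes every bounded set, I choose a radius $R > r$ large enough that $s_1 + u_1 \leq t_R^\beta$ (so the merge point lies in $B_R$) and additionally so that the overlap length $T - s_1 \geq \min(Q_1, Q_2)$, where $T := t_R^\beta - u_1$. Applying the $\beta$-to-$\gamma$ hypothesis at radius $R$ then yields a $(q_2, Q_2)$-ray $\gamma_2$ with $\gamma_2|_R = \beta|_R$ and $\gamma_2 \eventual \gamma$. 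On the overlap $[s_1, T]$, both $\gamma_1(t)$ and $\gamma_2(t + u_1)$ equal $\beta(t + u_1)$, so the definition
\[
\delta(t) = \begin{cases} \gamma_1(t), & 0 \leq t \leq T, \\ \gamma_2(t + u_1), & t \geq T, \end{cases}
\]
is continuous at $T$, satisfies $\delta|_r = \alpha|_r$, and obeys $\delta \eventual \gamma$ through the tail of $\gamma_2$.

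It remains to verify that $\delta$ is a $(q_3, Q_3)$-quasi-geodesic ray. For $t_1 \leq t_2$, if both are $\leq s_1$ the $(q_1, Q_1)$-bounds for $\gamma_1$ apply directly; if both are $\geq s_1$, the key observation is that $\delta(t) = \gamma_2(t + u_1)$ throughout this range (on the overlap $[s_1, T]$ because $\gamma_1(t) = \gamma_2(t + u_1)$ there, and after $T$ by definition), so the $(q_2, Q_2)$-bounds apply. Each is weaker than the claimed $(q_3, Q_3)$-bound. The mixed case $t_1 < s_1 \leq t_2$ is the delicate one: the triangle inequality through $\delta(s_1)$ gives
\[
d\big(\delta(t_1), \delta(t_2)\big) \leq q_1(s_1 - t_1) + q_2(t_2 - s_1) + Q_1 + Q_2,
\]
and I combine the slack $q_3 \geq q_2 + 1$ with the inequality $t_2 - s_1 \geq T - s_1 \geq \min(Q_1, Q_2)$ to absorb the surplus $\min(Q_1, Q_2)$ into the multiplicative term, yielding the desired $(q_3, Q_3)$-upper bound. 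The lower bound in the mixed case is handled by a parallel calculation, playing the $(q_i, Q_i)$-lower bounds on the two halves against each other.

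The main obstacle will be this bookkeeping in the mixed case. A naive triangle-inequality estimate produces an additive error of $Q_1 + Q_2$, and one must arrange for $R$ to be large enough that the merged segment with $\beta$ is long enough to ``pay for'' $\min(Q_1, Q_2)$ via the extra $+1$ in $q_3 = \max\{q_1, q_2 + 1\}$; this is exactly what forces the asymmetric form of $q_3$ in the statement and is what makes the cleaner $Q_3 = \max(Q_1, Q_2)$ attainable.
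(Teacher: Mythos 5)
Your construction is exactly the paper's: splice the redirection $\gamma_1$ of $\alpha$ to $\beta$ with the redirection $\gamma_2$ of $\beta$ to $\gamma$ along their common overlap with $\beta$, and use the slack $q_3\ge q_2+1$ to absorb the accumulated additive error in the mixed case. The case analysis for the upper bound is fine once you note that the inequality $t_2-s_1\ge T-s_1$ you invoke only holds when $t_2\ge T$; for $s_1\le t_2\le T$ both points still lie on $\gamma_1$, so the $(q_1,Q_1)$ bound applies directly and nothing is lost.

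The genuine gap is in the choice of $R$. Your only quantitative requirement, $T-s_1\ge\min(Q_1,Q_2)$, is calibrated for the upper bound and does not suffice for the lower bound in the mixed case. Take $t_1=0$ and $t_2$ slightly larger than $T$. The ``parallel calculation'' gives
\[
d\big(\delta(t_1),\delta(t_2)\big)\;\ge\;\frac{1}{q_2}(t_2-s_1)-Q_2-q_1(s_1-t_1)-Q_1,
\]
and comparing with $\tfrac{1}{q_3}(t_2-t_1)-Q_3$ one needs
\[
\Big(\frac{1}{q_2}-\frac{1}{q_3}\Big)(t_2-s_1)\;\ge\;\Big(q_1+\frac{1}{q_3}\Big)(s_1-t_1)+Q_1+Q_2-Q_3.
\]
The right-hand side is of order $q_1 s_1$, while with your choice the left-hand side is only $\big(\tfrac{1}{q_2}-\tfrac{1}{q_3}\big)\min(Q_1,Q_2)$. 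So the overlap must in fact exceed roughly $\tfrac{q_2q_3}{q_3-q_2}\big(q_1s_1+Q_1+Q_2\big)$, i.e.\ it must grow linearly in the merge time $s_1$ of the first redirection (and must also absorb a possibly negative shift $u_1$), not merely dominate the additive constants. This is precisely what the paper's two displayed conditions on the splice time $t_3$, Equations \eqref{Eq:first-t_3} and \eqref{Eq:second-t_3}, impose; with that stronger choice of $R$ your argument goes through verbatim.
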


\begin{proof}
Consider $r > 0$ and let $t_1$ be the first time such that the $\Norm{\alpha(t_1)} = r$. 
Let $\zeta_1$ be a  $(q_1, Q_1)$--quasi-geodesic ray quasi-redirecting $\alpha$
to $\beta$ at radius $r$. Let $t_2 >0$ and $s_1 \in \RR$ be such that for all $t \in [t_2, \infty)$, 
\[\zeta_1 (t) = \beta(t+s_1).\]
Let $t_3>0$ be large enough such that
\begin{equation} \label{Eq:first-t_3} 
\left(\frac{1}{q_2}- \frac{1}{q_3}\right) t_3 
  \geq q_1 t_2 - Q_3 +Q_1 + Q_2 + \frac{|s_1|}{q_2},
\end{equation} 
and 
\begin{equation} \label{Eq:second-t_3} 
(q_3 - q_2) \, t_3 \geq q_1 t_2 - Q_3 +Q_1 + Q_2 + q_2 |s_1|. 
\end{equation} 
Let $r' : = \Norm{\beta(t_3)}$ and let $\zeta_2$ be  a  $(q_2, Q_2)$--quasi-geodesic ray redirecting
 $\beta$ to $\gamma$ at radius $r'$.  Let $t_4>0$ and $s_2 \in \RR$ be such that for all 
 $t \in [t_4, \infty)$, 
\[\zeta_2 (t) = \gamma(t+s_2).\]
Now let $\zeta$ be a ray defined as follows: 
\[
\zeta \from \RR_+ \to X, \qquad 
\zeta(t) = \begin{cases}
\zeta_1(t) &\text{for } t \in[0, t_3],\\
\zeta_2(t+s_1) &\text{for } t \in [t_2, \infty).
\end{cases}
\]
Note that the two intervals $[0, t_3]$ and $ [t_2, \infty)$ overlap. However, for $t \in [t_2, t_3]$,
we have 
\[ \zeta(t) = \zeta_1(t) = \beta(t+s_1) = \zeta_2(t+s_1).\]
We claim that $\zeta$ is a $(q_3, Q_3)$--quasi-geodesic ray. 

\begin{figure}[h!]
\begin{tikzpicture}
\tikzstyle{vertex} =[circle,draw,fill=black,thick, inner sep=0pt,minimum size=2pt] 

   \draw[thick] (0,0) -- (10,0) node[right]{$\gamma$};
   \draw[thick] (0,0) -- (8,2) node[right]{$\beta$};
    draw[thick] (0,0) -- (3,3) node[right]{$\alpha$};

  \draw [red, thick, rounded corners] (0,0) -- (1.1 , 0.9)
           -- (2.15,0.4) -- (5.1,1.1) -- (7.6,-0.1) -- (10, -0.1) node[below] {$\zeta$} ; 
  \draw [blue,thick, rounded corners] (0 ,0) -- (1.1,1) -- (2,0.6) -- (7.95, 2.1) node[above] {$\zeta_1$};

  \colorlet{darkgreen}{green!60!black}      
  \draw [darkgreen ,thick, rounded corners] (0 ,0) -- (5.1,1.2) -- (7.5, 0.1) --(10, 0.1) 
      node[above] {$\zeta_2$};
\end{tikzpicture}
\caption{The ray $\zeta$, which is constructed from ray $\zeta_1$ and $\zeta_2$, quasi-redirects
$\alpha$ to $\gamma$.}
\end{figure}
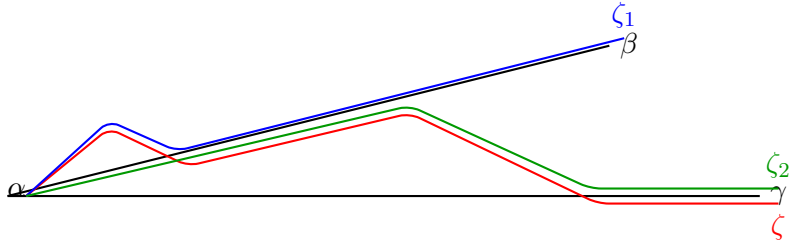

Let $x, y$ be points along $\zeta$ where $x = \zeta(t_x) $ and $y = \zeta(t_y)$. 
There are several cases. If $t_x, t_y \leq t_3$, then $\zeta(t_x) = \zeta_1(t_x)$ and 
$\zeta(t_y) = \zeta_1(t_y)$, and hence
\[ 
 \frac{1}{q_1} (t_y - t_x) -Q_1\leq d(x, y) 
    = d\big(\zeta_1(t_x), \zeta_1(t_y)\big) \leq q_1 (t_y - t_x) + Q_1.
\]
But $q_1 \leq q_3$ and $Q_1 \leq Q_3$, hence the claim holds for these times. 

Likewise if $t_x, t_y \geq t_2$, then  $\zeta(t_x) = \zeta_2(t_x+s_1)$, $\zeta(t_y) = \zeta_2(t_y+s_1)$
and hence
\[ 
 \frac{1}{q_2} (t_y - t_x) -Q_2\leq d(x, y) = d(\zeta_2(t_x+s_1), \zeta_2(t_y + s_1)) \leq q_2 (t_y - t_x) + Q_2.
\]
But $q_2 \leq q_3$ and $Q_2 \leq Q_3$, hence the claim also holds for these times.

It remains to consider the case where $t_x \in [0, t_2]$ and $t_y \in [t_3, \infty)$. We have 
\begin{equation} \label{Eq:Norms}
\frac 1{q_1} t_x -Q_1 \leq \Norm x \leq q_1 t_x +Q_1 
\quad \text{ and } \quad 
\frac {1}{q_2} (t_y +s_1)-Q_2 \leq \Norm y \leq q_2 (t_y +s_1)+Q_2.
\end{equation}
Therefore, 
\begin{align*}
d(x, y) & \geq \Norm y - \Norm x \quad \tag{triangle inequality}\\
           & \geq \frac {1}{q_2} (t_y +s_1)-Q_2- q_1 t_x -Q_1   
             \tag{Equation \eqref{Eq:Norms}} \\
            &\geq \frac{1}{q_3}  t_y  -Q_3, \tag{Equation~\eqref{Eq:first-t_3}}
\end{align*}
and 
\begin{align*}
d(x, y) & \leq \Norm y + \Norm x \quad \tag{triangle inequality}\\
           & \geq q_2 (t_y +s_1)+Q_2+ q_1 t_x +Q_1   
             \tag{Equation \eqref{Eq:Norms}} \\
            &\geq q_3  \, t_y  +Q_3. \tag{Equation~\eqref{Eq:second-t_3}}
\end{align*}
That is, $\zeta$ is a $(q_3, Q_3)$--quasi-geodesic ray. The argument holds for any $r>0$.
Hence $\alpha$ can be  $(q_3, Q_3)$--redirected to $\gamma$ at every radius $r > 0$.
\end{proof}

Since we also have $\alpha \preceq \alpha$ for every quasi-geodesic ray, $\preceq$ is a preorder
on the set of quasi-geodesic rays. 

\begin{definition} \label{Def:P(X)} 
Define $\alpha \simeq \beta$ if and only if $\alpha  \preceq \beta$ and $\beta  \preceq \alpha$. Then  
$\simeq$ is an equivalence relation on the space of all quasi-geodesic rays in $X$. Let $P(X)$
denote the set of all equivalence classes of quasi-geodesic rays under $\simeq$. 
For a quasi-geodesic ray $\alpha$, let $[\alpha] \in P(X)$ denote the equivalence class containing 
$\alpha$. We extend $\preceq$ to $P(X)$ by defining $[\alpha] \preceq [\beta]$ if 
$\alpha \preceq \beta$. Note that this does not depend on the representative chosen 
in the given class. The relation $\preceq$ is a partial order on elements of $P(X)$.
\end{definition}

We now check that the partially ordered set $P(X)$ is invariant under a quasi-isometry. 

\begin{proposition}\label{PXisQIinvariant}
Let $X, Y$ be proper geodesic metric spaces and let $\Phi \from X \to Y$ be a $(k, K)$-quasi-isometry
sending the base point $\go_X \in X$ to the base point $\go_Y \in Y$. 
Then there is a well-defined induced map 
\[
\Phi^* \from P(X) \to P(Y)
\qquad\text{where} \qquad 
\Phi^*([\alpha]) = [ \Phi \circ \alpha]. 
\]
Furthermore, $\Phi^*$ preserves the partial order on $P(X)$ and $P(Y)$. 
\end{proposition}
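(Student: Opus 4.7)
My plan is to construct $\Phi^*$ via the taming from Lemma~\ref{Lem:Tame}, then reduce both well-definedness and order-preservation to a single claim: quasi-redirection is preserved by $\Phi$ up to uniform constants. Bijectivity will follow by symmetry applied to a quasi-inverse of $\Phi$.

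For the construction, since $\Phi(\go_X) = \go_Y$, for any $\qq$--ray $\alpha$ in $X$ the composition $\Phi \circ \alpha$ is a quasi-isometric embedding from $[0,\infty)$ to $Y$ based at $\go_Y$, with constants depending only on $(k, K, q, Q)$. Applying Lemma~\ref{Lem:Tame} yields a Lipschitz quasi-geodesic ray $\tilde\alpha$ in $Y$ that fellow-travels $\Phi \circ \alpha$ within a uniform constant $C_0$. I would define $\Phi^*([\alpha]) := [\tilde\alpha]$.

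The heart of the argument is the order-preservation claim: if $\alpha \preceq \beta$ in $X$ with witnessing constants $\qq'$, then $\tilde\alpha \preceq \tilde\beta$ in $Y$ with constants depending only on $(k, K, \qq')$. Given $r'' > 0$ in $Y$, I would first choose $r > 0$ in $X$ large enough---using the quasi-isometry estimates together with the Lipschitz constant of $\tilde\alpha$---that $\tilde\alpha$ moves past $Y$-radius $r''$ before reaching $\Phi(\alpha_r)$, with slack to absorb the fellow-traveling constant $C_0$. Let $\gamma$ be a $\qq'$--ray realizing $\alpha \preceq \beta$ at radius $r$, so $\gamma|_r = \alpha|_r$ and $\gamma \eventual \beta$. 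Composing with $\Phi$ and taming (using a common subdivision of the parameter interval for the three rays $\alpha, \beta, \gamma$) produces quasi-geodesic rays $\tilde\alpha, \tilde\beta, \tilde\gamma$ in $Y$ with uniform constants such that $\tilde\gamma$ literally coincides with $\tilde\alpha$ on an initial segment covering at least $Y$-radius $r''$ and $\tilde\gamma \eventual \tilde\beta$. This exhibits the required uniform redirection of $\tilde\alpha$ to $\tilde\beta$ at radius $r''$.

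Well-definedness then follows by applying the order-preservation claim to $\alpha \simeq \alpha'$ in both directions, giving $\tilde\alpha \simeq \tilde{\alpha'}$; the same reasoning shows that two different tamings of $\Phi \circ \alpha$ land in the same equivalence class. For bijectivity, the analogous construction for a quasi-inverse $\Psi \from Y \to X$ yields $\Psi^*$, and since $\Psi \circ \Phi$ is uniformly close to $\mathrm{Id}_X$, the ray $\Psi \circ \tilde\alpha$ fellow-travels $\alpha$ within a uniform constant, so Lemma~\ref{onestep} forces $\Psi^* \circ \Phi^*([\alpha]) = [\alpha]$; symmetrically $\Phi^* \circ \Psi^* = \mathrm{Id}_{P(Y)}$. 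The principal technical obstacle is the tension between \emph{fellow-traveling}---which is what composition with $\Phi$ and taming produce naturally---and the \emph{literal coincidence} of initial and terminal segments demanded by Definition~\ref{Def:Redirection}. I would address this either by choosing a common taming grid, so that coincidence of rays in $X$ propagates to coincidence in $Y$, or by invoking Lemma~\ref{onestep} to convert bounded fellow-traveling into exact coincidence at the cost of additive loss in the constants, loss that the initial choice of $r$ is large enough to absorb.
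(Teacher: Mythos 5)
Your proposal is correct and follows essentially the same route as the paper: push the redirecting family forward under $\Phi$, tame with Lemma~\ref{Lem:Tame}, and observe that coinciding initial segments and eventual coincidence survive a common taming (the paper makes exactly this parenthetical remark). Your explicit treatment of bijectivity via a quasi-inverse is a natural supplement that the paper leaves implicit.
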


\begin{proof}
It suffices to argue that the relation $\preceq$ is preserved by $\Phi$. That is, for quasi-geodesic rays 
$\alpha$ and $\beta$ in $X$ where $\alpha$ can be quasi-redirected to $\beta$, we need to show that 
$\Phi \circ \alpha$ can be quasi-redirected to $\Phi \circ \beta$. Consider a pair of constants $\qq$ and 
family of $\qq$--rays $\gamma_r$ ($r>0$) that respectively $\qq$--redirect $\alpha$ to $\beta$ at radius 
$r$. Since $\alpha|_r = \gamma_r|_r$ we have 
\[
(\Phi \circ \alpha)|_{r'} = (\Phi \circ \gamma_r)|_{r'}\qquad\text{for}\qquad r' \geq \frac rk - K,
\]
and since $\alpha \eventual \gamma_r$ we have \[\Phi \circ \beta \eventual \Phi \circ \gamma_r.\] 
Also, $r' \to \infty$ and $r \to \infty$ and, since $\gamma_r$ are uniform quasi-geodesics, 
$\Phi \circ \gamma_r$ are uniform quasi-geodesics as well (note that we need to use \lemref{Lem:Tame} 
to tame $\Phi \circ \alpha$, $\Phi \circ \beta$ and $\Phi \circ \gamma_r$, but if portions of these 
quasi-geodesics coincide before taming, they will also coincide after taming). This finishes the proof. 
\end{proof}

It is desirable to have a geodesic representative in each quasi-redirecting class, however, this is not 
the case in general (see Example~\ref{Ex:No-Geodesic}). But a weaker version holds which will be useful later. 

\begin{lemma}\label{limit-geo}
Let $X$ be a proper, geodesic, metric space and let $\alpha$ be a $\qq$--ray. Then there exists a geodesic ray $\alpha_0 \preceq \alpha$.
\end{lemma}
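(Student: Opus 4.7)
The plan is to construct $\alpha_0$ as an Arzel\`a--Ascoli limit of geodesic segments from $\go$ to successive points along $\alpha$, and then to verify $\alpha_0\preceq\alpha$ by producing, for each radius $r$, a uniform quasi-geodesic redirection built by gluing $\alpha_0|_r$ onto a tail of $\alpha$ via Lemma~\ref{onestep}.

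For the construction of $\alpha_0$, choose for each $n\in\NN$ a geodesic segment $\beta_n$ from $\go$ to $\alpha(n)$. Since $\alpha$ is a $\qq$--ray, $\|\alpha(n)\|\to\infty$, so properness of $X$ and the Arzel\`a--Ascoli theorem yield a subsequence (still denoted $\beta_n$) converging uniformly on compact sets to a geodesic ray $\alpha_0\colon[0,\infty)\to X$ starting at $\go$.

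For the redirection, given $r>0$, form the ray $\tilde\beta_n=\beta_n\cup\alpha|_{[n,\infty)}$, which eventually coincides with $\alpha$. The key technical claim is that $\tilde\beta_n$ is a $\qq'$--quasi-geodesic ray with $\qq'$ depending only on $\qq$. The upper bound on $d(\beta_n(s),\alpha(t))$ in terms of the parameter difference $(L_n-s)+(t-n)$, where $L_n=\|\alpha(n)\|$, is immediate from the triangle inequality; the lower bound requires a case analysis combining the triangle-inequality bounds through $\alpha(n)$ with the norm-based bound $d(\beta_n(s),\alpha(t))\geq \|\alpha(t)\|-s$, exploiting $\|\beta_n(s)\|=s$ (since $\beta_n$ is a geodesic from $\go$) and $\|\alpha(t)\|\geq t/q-Q$.

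Granted this uniform bound, take $n$ large enough that $\beta_n$ agrees with $\alpha_0$ within distance $1$ on $[0,r+1]$ (by the subsequence convergence), and apply Lemma~\ref{onestep} to the $\qq'$--rays $\alpha_0$ and $\tilde\beta_n$ with $t_0=r$ and $C=1$. This produces a $(q',Q'+1)$--quasi-geodesic ray $\gamma_r$ coinciding with $\alpha_0$ on $[0,r]$ and eventually coinciding with $\tilde\beta_n$, hence with $\alpha$. Since the resulting constants depend only on $\qq$, this yields $\alpha_0\preceq\alpha$. The main obstacle is the uniform quasi-geodesic verification for $\tilde\beta_n$, particularly in the regime where $(L_n-s)$ and $(t-n)$ are comparable, where the individual triangle-inequality bounds through $\alpha(n)$ are weak and must be supplemented by the norm-based bound to rule out large parameter gaps with small actual distance.
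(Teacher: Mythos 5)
There is a genuine gap: your key technical claim --- that $\tilde\beta_n=\beta_n\cup\alpha|_{[n,\infty)}$ is a $\qq'$--quasi-geodesic with $\qq'$ depending only on $\qq$ --- is false in general, and the two estimates you propose (the triangle inequality through $\alpha(n)$ and the bound $d(\beta_n(s),\alpha(t))\geq\Norm{\alpha(t)}-s$) cannot establish it. The problem is that the geodesic $\beta_n$ from $\go$ to $\alpha(n)$ need not stay anywhere near $\alpha|_{[0,n]}$, so the tail $\alpha|_{[n,\infty)}$ is free to return very close to an interior point of $\beta_n$ without violating the quasi-geodesic property of $\alpha$ itself. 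Concretely, in the Euclidean plane let $\alpha$ travel at unit speed from the origin to $(0,n)$, then to $(n,n)$, then to $(n/2,n/2)$, then off to infinity in the direction $(1,-1)$; a routine check shows this is a $(q_*,0)$--quasi-geodesic for a universal constant $q_*$ independent of $n$. Cutting at time $2n$ (so $\alpha(2n)=(n,n)$ and $\beta_{2n}$ is the diagonal segment), the concatenation $\tilde\beta_{2n}$ passes through the single point $(n/2,n/2)$ at two parameters, roughly $0.71\,n$ and $2.12\,n$, so it is not a $(q',Q')$--quasi-geodesic for any fixed constants once $n$ is large. At this pair of parameters both of your proposed lower bounds degenerate to $d\geq 0$: one has $\Norm{\alpha(t)}=s$, and the two legs through $\alpha(2n)$ have equal length, so the triangle-inequality bound also vanishes.

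The paper sidesteps this by choosing the attachment point to be $x_i$, the starting point of $\alpha|_{\geq r_i}$ (the last exit point of $\alpha$ from $B_{r_i}$), which is automatically a nearest point of that tail to $\go$. Part (I) of \lemref{surgery} (nearest-point projection surgery) then gives directly that $[\go,x_i]\cup\alpha|_{\geq r_i}$ is a $(3q,Q)$--quasi-geodesic, with no case analysis. The rest of your architecture --- Arzel\`a--Ascoli applied to the geodesic segments $[\go,x_i]$ to produce $\alpha_0$, followed by \lemref{onestep} with $C=1$ to splice $\alpha_0|_r$ onto $[\go,x_i]\cup\alpha|_{\geq r_i}$ --- is exactly the paper's argument and goes through verbatim once the attachment point is repaired. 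So the single necessary fix is to replace the arbitrary point $\alpha(n)$ by the nearest point of the tail to the basepoint.
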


\begin{proof}
Choose a sequence $r_i \to \infty$ and let $x_i$ be starting point of the 
quasi-geodesic $\alpha|_{\geq r_i}$. Then $x_i$ is also the closest point in $\alpha|_{\geq r_i}$
to $\go$. Let
\[
\alpha_i = [\go, x_i] \cup \alpha|_{\geq r_i}. 
\]
By Part (I) of \lemref{surgery}, $\alpha_i$ is a $(3q, Q)$--quasi-geodesic ray. Up to taking a subsequence, 
the geodesic segments $[\go, x_i]$ converge to a geodesic ray $\alpha_0$. 
That is, for every $r>0$, assume $i$ is large enough we have $[\go, x_i]$ stays within distance
$1$ of $\alpha_0$ up to radius $r$. By Lemma~\ref{onestep} (setting $C=1$), 
$\alpha_0$ can be  $(3q, Q+1)$--redirected to $\alpha_i$ at radius $r$. 
But the tail of $\alpha_i$ is the same as the tail of $\alpha$. Thus $\alpha_0$ can be 
$(3q, Q+1)$--redirected to $\alpha$ at radius $r$ for every $r>0$. 
Thus, $\alpha_0 \preceq \alpha$ with quasi-redirecting constants $(3q, Q+1)$.
\end{proof}

\subsection*{Fundamental assumptions on redirecting} \label{Sec:Assumptions} 
To continue, we need to make more assumptions about the metric space $X$. General metric spaces
can be very wild with large holes in the middle. Later in the paper, we will show that for a 
large classes of groups, the Cayley graphs satisfy these assumptions. It would be interesting to know
if this holds for all finitely generated group and whether these assumptions follow from a simpler,
more geometric assumption on the metric space. 

\subsection*{Assumption 1} (Quasi-geodesic representative)\label{Q1}
There is $\qq_0$ (by making it larger, we can assume it is the same at $\qq_0$ in 
Assumption~0) such that every equivalence class $\bfa \in P(X)$ contains a 
$\qq_0$--ray. We fix such a $\qq_0$--ray, denote it by $\alpha_0 \in \bfa$ and refer to it 
as the central element of the class $\bfa$.

\subsection*{Assumption 2} (Uniform redirecting function)\label{Q2}
For every $\bfa \in P(X)$, there is a function 
\[
f_\bfa \from [1, \infty) \times [0, \infty) \to [1, \infty) \times [0, \infty),
\] 
called the redirecting function of
the class $\bfa$, such that if $\bfb \prec \bfa$ then any $\qq$--ray $\beta \in \bfb$ can be
$f_\bfa(\qq)$--redirected to $\alpha_0$. 

Note that the function $f_\bfa$ may depend on the choice of the central element. But such 
functions exist for every quasi-geodesic ray. That is:

\begin{lemma} \label{Lem:f-alpha}
Let $X$ be a space where Assumptions 0, 1 and 2 hold. 
For every quasi-geodesic ray $\alpha$, there is a function
\[
f_\alpha \from [1, \infty) \times [0, \infty) \to [1, \infty) \times [0, \infty),
\] 
such that if $\beta \prec \alpha$ for a $\qq$--ray $\beta$, the $\beta$ can be 
$f_\alpha(\qq)$--redirected to $\alpha$. 
\end{lemma}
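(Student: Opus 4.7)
The plan is to leverage Assumption 2, which controls redirections into the \emph{central element} $\alpha_0$ of the class $\bfa := [\alpha]$, and then to post-compose with a redirection from $\alpha_0$ to $\alpha$ via the transitivity lemma, Lemma \ref{L:Transitive}. Since all constants involved depend only on $\alpha$ (and on the input $\qq$), this produces the desired function $f_\alpha$.

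To carry this out, first apply Assumption 1 to fix the central $\qq_0$-ray $\alpha_0 \in \bfa$, and apply Assumption 2 to obtain the redirecting function $f_\bfa$ of the class. By definition, any $\qq$-ray $\beta$ with $[\beta] \preceq \bfa$ can be $f_\bfa(\qq)$-redirected to $\alpha_0$ at every radius $r>0$. Next, since $\alpha_0$ and $\alpha$ lie in the same class, $\alpha_0 \simeq \alpha$, and in particular $\alpha_0 \preceq \alpha$. Unpacking Definition \ref{Def:Redirection}, there exist constants $\qq_\alpha$ (depending only on the pair $\alpha, \alpha_0$, not on $\beta$) such that for every $r>0$, $\alpha_0$ can be $\qq_\alpha$-redirected to $\alpha$ at radius $r$.

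Now given any $\qq$-ray $\beta$ with $\beta \prec \alpha$, feed the two families of redirections above into Lemma \ref{L:Transitive}. The explicit formula in that lemma provides a pair $(q_3, Q_3)$ determined only by $f_\bfa(\qq)$ and $\qq_\alpha$ such that $\beta$ is $(q_3, Q_3)$-redirected to $\alpha$ at every radius. Defining
\[
f_\alpha(\qq) := \bigl( \max\{q_\alpha + 1,\; q_1\},\; \max\{Q_1,\; Q_\alpha\} \bigr),
\]
where $(q_1, Q_1) = f_\bfa(\qq)$ and $\qq_\alpha = (q_\alpha, Q_\alpha)$, gives the required function.

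The main (very mild) obstacle is purely bookkeeping: one must check that the radius-by-radius redirections supplied by Assumption 2 and by $\alpha_0 \preceq \alpha$ can be concatenated into a single redirection at each radius. This is exactly what Lemma \ref{L:Transitive} does, so no genuine difficulty arises. It is worth emphasizing that the uniformity of $\qq_\alpha$ in $\beta$ is what makes $f_\alpha$ depend only on $\qq$: the dependence on $\alpha$ is absorbed into the choice of $\alpha_0$ and the fixed constants $\qq_\alpha$ witnessing $\alpha_0 \simeq \alpha$.
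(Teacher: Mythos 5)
Your proposal is correct and follows essentially the same route as the paper's proof: take the central element $\alpha_0$ of $[\alpha]$, use Assumption 2 to get the $f_\bfa(\qq)$--redirection of $\beta$ to $\alpha_0$, fix uniform constants witnessing $\alpha_0 \preceq \alpha$, and combine via Lemma~\ref{L:Transitive}; your explicit formula for $f_\alpha(\qq)$ matches the paper's $\max\bigl(f_\bfa(\qq), (q_1+1, Q_1)\bigr)$ up to notation. Nothing further is needed.
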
 

\begin{proof} 
Let $\alpha_0$ be the central element in the class $[\alpha]$. Assume $\alpha_0$ can be $(q_1, Q_1)$--redirected 
to $\alpha$. By Assumption~2, $\beta$ can be $f_\bfa(\qq)$--redirected to $\alpha_0$. Now, \lemref{L:Transitive} 
implies that for 
\[
f_\alpha(\qq) = \max \big( f_\bfa(\qq), (q_1+1, Q_1) \big) 
\]
the $\qq$--ray $\beta$ can be $f_\alpha(\qq)$--redirected to $\alpha$. 
\end{proof} 

We use the following example to show how Assumptions 1 may fail. To see how assumptions 2 
may fail, see Example \ref{example2}. 

\begin{example} \label{Ex:No-Geodesic}
The easiest way to generate examples is via connected, locally finite metric graphs
since these are always proper geodesic metric spaces. For a simple construction of such an example, fix an integer $k>0$. 
Attach two copies of $\RR_+$ at a point $\go$ and denote them by $\alpha_0$ and $\beta$. 
Then attach the point in $\alpha_0$ that is distance $n$ from $\go$ to 
the point in $\beta$ that is distance $n^2$ along $\beta$ with a segment of length 
$n^2/k$. Denote the resulting metric space by $X_k$. 

Then $\alpha_0$ is a geodesic in $X_k$ but $\beta$ is only a $(k,0)$--quasi-geodesic
since the paths that go along $\alpha_0$ a distance $n$ and then switch to $\beta$ give
shortcuts for points in $\beta$. That is, every point in $\beta$ lies on a quasi-geodesic ray
but not on a geodesic rays. 

Furthermore, $\alpha_0 \preceq \beta$ but $\beta \not \preceq \alpha_0$.  Hence
$P(X_k) = \{ [\alpha_0] , [\beta]\}$ and the relation is not symmetric. We also notice
that $[\beta_0]$ does not contain a geodesic ray even though $X_k$ is a geodesic metric space. 

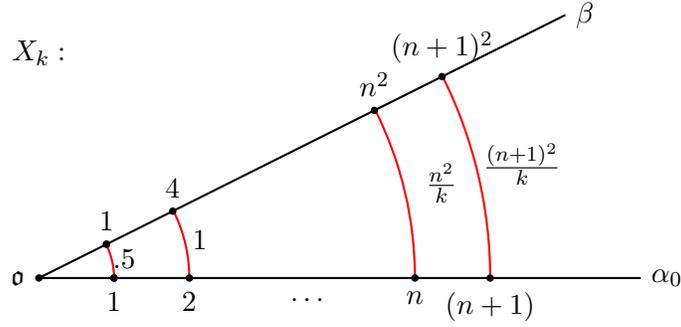
\begin{figure}[ht]
\begin{tikzpicture}
[point/.style={circle,fill,inner sep=1.3pt} ] \node (N) at (1.5,0){}; \node (N') at (1.3,0.8){}; \node (O) at (0,0){}; 

 \tikzstyle{vertex} =[circle,draw,fill=black,thick, inner sep=0pt,minimum size=2pt] 

 \node[vertex]  at (0, 0) [label=-180:$\go$] {}; 

\node at (0, 3){$X_k:$};

\node[vertex] (A) at (1,0) {}; 
\node[vertex] (A') at (.9,.45) {};

\pic [draw=red, thick, angle radius=10mm, "$\ .5$", angle eccentricity=1.1]{angle=A--O--A'}; 
 \pic [draw=red, thick, angle radius=20mm, "$1$", angle eccentricity=1.1]{angle=A--O--A'}; 
 \pic [draw=red, thick, angle radius=50mm, "$\frac{n^2}{k}$", angle eccentricity=1.1]{angle=A--O--A'}; 
 \pic [draw=red, thick, angle radius=60mm, "$\frac {(n+1)^2}{k}$", angle eccentricity=1.1]{angle=A--O--A'}; 

\draw[thick] (0,0) -- (8,0) node[right]{$\alpha_0$}; 
\draw[thick] (0,0) -- (7,3.5) node[right]{$\beta$};  
 
\node[vertex] at (1,0) [label=-90:$1$] {}; 
\node[vertex]  at (.9,.45) [label=90:$1$] {}; 
\node[vertex]  at (2,0) [label=-90:$2$] {}; 
\node[vertex]  at (1.78,.89) [label=90:$4$] {}; 
\node[vertex] at (5,0) [label=-90:$n$] {}; 
\node[vertex]  at (4.46,2.23) [label=90:$n^2$] {}; 
\node[vertex] at (6,0) [label=-90:$(n+1)$] {}; 
\node[vertex]  at (5.36,2.68) [label=90:$(n+1)^2$] {}; 
 
\node at (3.6, -0.3){$\cdots$};

\end{tikzpicture}
\caption{The space $X_k$ is a proper geodesic metric space. However, $[\beta]$ does not 
have a geodesic representative.}
\end{figure}

We can also use $X_k$ to see how Assumption 1 can fail. Namely, consider $\RR^2$ equipped with 
the Euclidean metric. For every $k>0$ attach a copy of $X_k$ to $\RR^2$ along $\alpha_0$
in a way that the resulting space is still proper. For example, attach $\alpha_0$ in $X_k$ to 
the line starting at $(1,k)$ with slope $k$. Then the geodesic $\beta_k$ connecting $(0,0)$
to $(1,k)$ and then traveling along $\beta$ in $X_k$ is only a $(k,0)$--quasi-geodesic.
In fact $[\beta_k]$ does not have a $(q, Q)$--representative for $q < k$. Hence, 
the Assumption 1 does not hold for any $\qq_0$. 
\end{example}

In \propref{Prop:Landing}, we establish some consequences of Assumption 1 and Assumption 2.  
First we need a lemma that follows only from Assumption 0. 

\begin{lemma}[Down sets in $P(X)$ are closed under point-wise convergence] \label{Lem:limit}
Let $X$ be a metric space where Assumption 0 holds, let $\bfa \in P(X)$ and let $\alpha_0$
be the central element of $\bfa$. 
Let $\alpha_n \in \bfa$ be a sequence of $\qq$--rays such that  $\alpha_n \to \beta$ point wise. 
That is, for all $t>0$, $\alpha_n(t) \to \beta(t)$. 
Then $\beta$ is a $\qq$--ray and $\beta$ can be $(q, Q+1)$--redirected to $\alpha_0$. 
\end{lemma}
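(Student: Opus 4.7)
The plan is to establish the two conclusions separately: that $\beta$ is itself a $\qq$--ray, and that $\beta$ can be $(q, Q+1)$--redirected to $\alpha_0$ at every radius.

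First, I would verify that $\beta$ is a $\qq$--ray by a direct pointwise limit argument. Each $\alpha_n$ is $(2q+2Q)$--Lipschitz by the taming convention of \lemref{Lem:Tame}, so the pointwise limit $\beta$ is also $(2q+2Q)$--Lipschitz and in particular continuous. For fixed $s,t \geq 0$, passing to the limit as $n \to \infty$ in the two-sided inequality
\[
\tfrac{|t-s|}{q} - Q \;\leq\; d\bigl(\alpha_n(s),\alpha_n(t)\bigr) \;\leq\; q|t-s|+Q
\]
yields the same bounds for $\beta(s),\beta(t)$, so $\beta$ is a $\qq$--ray.

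For the redirection statement, I would fix $r>0$, let $t_r$ denote the first time $\beta$ reaches the sphere of radius $r$, and produce a single $(q,Q+1)$--ray $\beta'$ agreeing with $\beta|_r$ and eventually coinciding with $\alpha_0$. The key tool here is \lemref{onestep}: if two $\qq$--rays agree within distance $1$ on some initial interval $[0,t_0]$, then they can be spliced into a $(q,Q+1)$--ray that follows one initially and the other eventually. To apply it, I would use that $\{\alpha_n\}$ is equi-Lipschitz, so the pointwise convergence $\alpha_n \to \beta$ upgrades to uniform convergence on the compact interval $[0,t_r]$. Hence for $n$ sufficiently large, $d\bigl(\alpha_n(t),\beta(t)\bigr) \leq 1$ for every $t \in [0,t_r]$. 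Applying \lemref{onestep} with $t_0 = t_r$ and $C=1$ produces a $(q,Q+1)$--ray $\beta'$ with $\beta'|_{t_r} = \beta|_{t_r}$ whose tail coincides with the tail of $\alpha_n$.

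Since the $\alpha_n$ are $\qq$--rays in $\bfa$ whose tails coincide with $\alpha_0$, the tail of $\beta'$ also coincides with $\alpha_0$, providing the required $(q,Q+1)$--quasi-redirection of $\beta$ to $\alpha_0$ at radius $r$. Since $r>0$ was arbitrary, the conclusion follows. The most delicate point is ensuring that the splice actually ends at $\alpha_0$ rather than only at a ray equivalent to $\alpha_0$: this is essential for the tight constants $(q,Q+1)$, and relies on the fact that the sequences of representatives in a class considered here are genuinely eventually equal to the central element. The remaining piece --- equicontinuity upgrading pointwise to uniform convergence on compacta --- is standard Arzel\`a--Ascoli reasoning.
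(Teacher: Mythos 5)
Your argument follows the paper's own route almost exactly (uniform convergence on $[0,t_r]$ from the equi-Lipschitz family, then a splice via \lemref{onestep} with $C=1$), and your preliminary verification that $\beta$ is a $\qq$--ray is a correct addition that the paper leaves implicit. However, the step you yourself flag as ``the most delicate point'' is where the argument breaks: you assert that the $\alpha_n$ have tails coinciding with $\alpha_0$, so that the spliced ray $\beta'$ eventually equals $\alpha_0$. Nothing in the hypotheses gives this. The hypothesis $\alpha_n \in \bfa$ only says $\alpha_n \simeq \alpha_0$, i.e.\ each $\alpha_n$ and $\alpha_0$ can be quasi-redirected to one another; it does not say any $\alpha_n$ eventually coincides with $\alpha_0$. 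By \defref{Def:Redirection}, redirecting $\beta$ to $\alpha_0$ requires the redirecting ray to eventually coincide with $\alpha_0$ itself, so your $\beta'$ only witnesses a $(q,Q+1)$--redirection of $\beta$ to $\alpha_n$, not to $\alpha_0$.

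The paper closes this gap by observing that $\beta'$ redirects $\beta$ to $\alpha_n \sim \alpha_0$ and then invoking transitivity (\lemref{L:Transitive}) to obtain a redirection of $\beta$ to $\alpha_0$ --- at the cost of replacing $(q,Q+1)$ by some $\qq'$ that also depends on the constants redirecting $\alpha_n$ to $\alpha_0$. (Indeed the constant $(q,Q+1)$ in the lemma's statement is only actually delivered for the redirection to $\alpha_n$; what is used downstream, e.g.\ in \propref{Prop:Landing}, is merely $\beta \preceq \alpha_0$ together with Assumption~2.) So you should either add the transitivity step and accept the weaker constant, or restrict to the case where the $\alpha_n$ genuinely share a tail with $\alpha_0$ --- but the latter is not the setting in which the lemma is applied.
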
 
\begin{proof}
For $r>0$, let $t_r$ be the first time where $\Norm{\alpha(t_r)}= r$. 
Pick $n>0$ large enough such that, for all $t \leq t_r$, we have 
\[ 
d(\alpha_n(t), \beta(t)) \leq 1 
\]
Let $\gamma \from [0,1] \to X$ be a geodesic segment connecting $\beta(t_r)$ and 
$\alpha_n(t_r)$. Define $\beta_r$ as:
\[
\beta_r(t) = \begin{cases}
\beta(t) &\text{for } t \in[0, t_r],\\
\gamma(t-t_r) &\text{for } t \in[t_r, t_r+1],\\
\alpha_n(t-1) &\text{for } t \geq t_r+1
\end{cases}
\]
By Lemma~\ref{onestep}, $\beta_r$ is a $(q, Q+ 1)$-quasi-geodesic ray that redirects $\beta$ to $\alpha_n \sim \alpha_0$. Thus by transitivity there exists $\qq'$ that redirects $\beta$
to $\alpha_0$ at each radius $r$. Thus $\beta \preceq \alpha_0$.
\end{proof}

\begin{proposition} \label{Prop:Landing}
Let $X$ be a metric space where Assumption 0, 1 and 2 hold. 
For every $\bfa \in P(X)$, and $\qq, \qq' \in [0, \infty) \to [1, \infty) \times [0, \infty)$ and $r>0$, there are 
constants $\ell_\bfa(\qq, r) > 0$ and $R_\bfa( f_\bfa(\qq) + (0,1), \qq', r)>0$ such that 
the followings hold: 
\begin{enumerate}[(I)] 
\item (Uniform landing function for each class)  
If $\bfb \prec \bfa$, then every $\qq$--ray $\beta \in \bfb$ can be  $ (f_\bfa(\qq) + (0,1))$--redirected 
to $\alpha_0$ with the landing time at most $\ell_\bfa(\qq, r)$ (see \defref{Def:Redirection}). 

\item (Redirecting at large distance implies uniform redirecting at small distant)
If $\beta$ is a $\qq$--ray that $\qq'$--redirects to $\alpha_0$ at radius $R \geq R_\bfa( f_\bfa(\qq) + (0,1), \qq', r)$, then 
$\beta$ can be $(f_\bfa(\qq) + (0,1))$--redirected to $\alpha_0$ at radius $r$.

\item (Taming of the tail) 
If  $\beta \in \bfb \prec \bfa$ is a $\qq'$--ray where $\beta|_R$ is a $\qq$--quasi-geodesic 
segment for $R \geq R_\bfa( f_\bfa(\qq) + (0,1), \qq', r)$ then there is a $( f_\bfa(\qq) + (0,1))$--ray $\alpha \in \bfa$ 
such that  $\beta|_r=\alpha|_r$. 
\end{enumerate} 
\end{proposition}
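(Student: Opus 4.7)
The three parts are interrelated and all rely on Arzelà-Ascoli compactness (applied after \lemref{Lem:Tame} to make rays uniformly Lipschitz) combined with the surgery \lemref{onestep}. The $(0,1)$ correction in the quasi-geodesic constants absorbs the additive cost of the length-$\leq 1$ geodesic jump used in each splicing.

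For (I), I would argue by contradiction: suppose there exist $\qq$--rays $\beta_n$ with $[\beta_n]\preceq \bfa$ whose $f_\bfa(\qq)$--redirectings $\zeta_n$ (guaranteed by Assumption~2) have landing times $L_n\to\infty$. Extract subsequential pointwise limits $\beta_n\to\beta$ and $\zeta_n\to\zeta$. The key subtlety is to show $[\beta]\preceq\bfa$; this mirrors \lemref{Lem:limit}, with additional care required because the landing times diverge (so one must pass to subsequences of the shift parameters in $\zeta_n\eventual\alpha_0$ to retain a redirecting relation in the limit). Applying Assumption~2 to $\beta$ then produces some $f_\bfa(\qq)$--redirecting $\xi$ of $\beta$ at radius $r$ with a specific finite landing time $L$. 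For $n$ large the point $\beta_n(t_r^{(n)})$ is within distance $1$ of $\beta(t_r)=\xi(t_r)$; \lemref{onestep} then splices $\beta_n|_{t_r^{(n)}}$ with the tail of $\xi$ to produce a $(f_\bfa(\qq)+(0,1))$--redirecting of $\beta_n$ with landing time $\leq L+1$, contradicting $L_n\to\infty$. Define $\ell_\bfa(\qq,r)$ to be the resulting supremum.

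For (II), proceed again by contradiction: if no threshold $R_\bfa$ works, there exist $\qq$--rays $\beta_n$ with $\qq'$--redirectings $\zeta_n$ at radii $R_n\to\infty$ such that no $\beta_n$ admits a $(f_\bfa(\qq)+(0,1))$--redirecting at radius $r$. Since $\zeta_n|_{R_n}=\beta_n|_{R_n}$ with $R_n\to\infty$, the pointwise limits satisfy $\zeta_\infty=\beta_\infty$, a ray that is simultaneously a $\qq$-- and a $\qq'$--quasi-geodesic. A limit argument (using that the tails of $\zeta_n$ lie on $\alpha_0$) shows $[\beta_\infty]\preceq\bfa$. Apply (I) to $\beta_\infty$ to obtain a $(f_\bfa(\qq)+(0,1))$--redirecting with landing time $\leq\ell_\bfa(\qq,r)$, and splice via \lemref{onestep} to transfer it to $\beta_n$ for $n$ large, contradicting the hypothesis. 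The constant $R_\bfa(f_\bfa(\qq)+(0,1),\qq',r)$ is defined as the smallest $R_n$ at which this construction succeeds.

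For (III), I would derive it from (II). By Assumption~2 applied to the $\qq'$--ray $\beta$, there is a $f_\bfa(\qq')$--redirecting $\zeta$ of $\beta$ at radius $R$. Since $\zeta|_R=\beta|_R$ is a $\qq$--quasi-geodesic segment by hypothesis, $\zeta$ serves as a $\qq'$--redirecting whose initial portion has the improved $\qq$ constants needed to invoke (II); this yields a $(f_\bfa(\qq)+(0,1))$--redirecting of $\beta$ at radius $r$ whose $\eventual\alpha_0$ tail gives the desired $\alpha\in\bfa$ with $\alpha|_r=\beta|_r$. The main obstacle throughout is the limit step: showing that the pointwise limit of quasi-geodesics whose ``connection'' to $\alpha_0$ escapes to infinity (in landing time or redirecting radius) still lies in a class $\preceq\bfa$. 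This is a strengthening of \lemref{Lem:limit} requiring careful bookkeeping of the shift parameters in the ``eventually coincide'' relation.
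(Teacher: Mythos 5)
Your overall method for parts (I) and (II) --- argue by contradiction, extract a pointwise limit via Arzel\`a--Ascoli, apply Assumption~2 (via the down-set closure of \lemref{Lem:limit}) to the limit ray, and splice the resulting redirection back onto $\beta_n$ with \lemref{onestep} --- is exactly the paper's argument. Two small remarks there. First, the subtlety you flag in (I) about ``passing to subsequences of the shift parameters in $\zeta_n \eventual \alpha_0$'' is not actually needed: the relation $\beta \preceq \alpha_0$ for the limit ray is obtained by splicing an initial segment of $\beta$ onto the tail of some $\beta_n$ and invoking transitivity, so the diverging landing times of the $\zeta_n$ never enter. Second, in (II) you apply part (I) to $\beta_\infty$ to get a $(f_\bfa(\qq)+(0,1))$--redirection and then splice with \lemref{onestep}, which costs another $+(0,1)$ and lands you at $(f_\bfa(\qq)+(0,2))$; that does not contradict the hypothesis that $\beta_n$ admits no $(f_\bfa(\qq)+(0,1))$--redirection. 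The paper avoids this by applying Assumption~2 directly to the limit ray to get an $f_\bfa(\qq)$--redirection, so that the single splice lands exactly on $f_\bfa(\qq)+(0,1)$. Also, your opening negation in (I) should quantify over \emph{all} $(f_\bfa(\qq)+(0,1))$--redirections of $\beta_n$ at radius $r$ (none lands before time $L_n$), not just the particular ones supplied by Assumption~2; your final contradiction implicitly uses the stronger form.

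The genuine gap is in your derivation of (III) from (II). Part (II) takes as input a ray that is globally a $\qq$--ray, whereas in (III) the ray $\beta$ is only a $\qq'$--ray whose \emph{initial segment} $\beta|_R$ happens to be a $\qq$--quasi-geodesic; the whole content of (III) is that the output constants depend only on $\qq$ and not on $\qq'$. Applying (II) to $\beta$ as a $\qq'$--ray would only yield a $(f_\bfa(\qq')+(0,1))$--redirection, and there is no ray in sight satisfying (II)'s hypotheses with the pair $(\qq,\qq')$: the redirection $\zeta$ you build from Assumption~2 agrees with $\beta$ on $[0,R]$ but is not itself a $\qq$--ray. The paper does not reduce (III) to (II); it runs the same compactness argument a third time, using that the pointwise limit of the $\beta_n$ is a genuine $\qq$--ray because every bounded initial piece of it is a limit of $\qq$--quasi-geodesic segments $\beta_n|_{R_n}$ with $R_n \to \infty$. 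Your proof of (III) would need to be rewritten along those lines; as proposed, the step ``invoke (II)'' fails.
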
 

\begin{proof} 
We start with the proof of the first assertion. 
Assume, for contradiction, that there exists $r>0$ and a sequence of $\qq$--rays $\alpha_n \prec \alpha_0$ 
and times $t_n \to \infty$ such that there does not exists a $ f_\bfa(\qq) + (0,1)$--quasi-redirection 
of $\alpha_n$ to $\alpha_0$ at radius $r$ that lands on $\alpha_0$ before $\alpha_0(t_n)$. 
Since $X$ is proper, up to taking a subsequence, we can assume that the sequence of rays 
$\alpha_n$ converges to some ray $\beta$. By Lemma~\ref{Lem:limit}, $\beta$ is a $\qq$--ray with 
$\beta \prec \alpha_0$. By Assumption 2, there exists a $f_\bfa(\qq)$--ray $\alpha$ that redirects 
$\beta$ to $\alpha_0$ at radius $r$; let $t_\alpha$ be the landing time. 
Take $n$ large enough such that $\alpha_n$ is within distance $1$ of $\beta|_r = \alpha|_r$
and that $t_n > t_\alpha$. Let $t_r$ be the time when $\alpha_r=\alpha(t_r)$ 
and apply Lemma~\ref{onestep} to $\alpha_n$ and $\alpha$ at the time $t_r$ to construct 
a $(f_\bfa(\qq) + (0,1))$--rays $\alpha'_n$ that quasi-redirects $\alpha_n$ to 
$\alpha_0$. Then $\alpha_n'$ lands on $\alpha_0$ before $t_n$. 
This contradicts our assumptions and hence proves the first claim.

To see the second assertion we assume for contradiction that there exists $r>0$, 
$\qq' \in [1, \infty) \times [0, \infty)$, a sequence of radii $R_n \to \infty$ and a sequence of $\qq$--rays 
$\beta_n$ such that 
\begin{itemize}
\item[(S1)] $\beta_n$ can be $\qq'$--redirected to $\alpha_0$ at a radius $R_n$, but 
\item[(S2)] $\beta_n$ cannot be $(f_\bfa(\qq) + (0,1))$--redirected to $\alpha_0$ at radius $r$. 
\end{itemize} 
After taking a subsequence, we can assume that there exists a $\gamma$ where $\beta_n \to \gamma$ 
point-wise. By Lemma~\ref{Lem:limit}, $\gamma$ is a $\qq$--ray and it can be 
$(q', Q'+ 1)$--redirected to $\alpha_0$. Therefore, $\gamma$ can be 
in fact be $f_\bfa(\qq)$ redirected to $\alpha_0$ by Assumption 2. Let $n_r$ be large enough so that 
$\beta_{n_r}$ is within distance $1$ of $\gamma$ up to radius $r$. Applying Lemma~\ref{onestep}
to $\beta_{n_r}$ and $\gamma$ at radius $r$, we produce $(f_\bfa(\qq) + (0,1))$--rays $\gamma_r$ 
redirecting $\beta_{n_r}$ to $\alpha_0$ at radius $r$. This contradicts (S2) and thus proves the second 
assertion.

The proof of the third assertion is nearly identical to above. We assume, for contradiction, that there exists 
$r>0$, $\qq'\in [1, \infty) \times [0, \infty)$, a sequence of radii $R_n \to \infty$ and a sequence of 
$\qq'$--rays $\beta_n$ such that 
\begin{itemize}
\item[(S3)] $\beta_n|_{R_n}$ is a $\qq$--quasi-geodesic segment, but
\item[(S4)] $\beta_n$ cannot be $ f_\bfa(\qq) + (0,1)$--redirected to $\alpha_0$ after radius $r$. 
\end{itemize} 
After taking a subsequence, we can assume that there exists a $\gamma$ where $\beta_n \to \gamma$ point-wise. 
As before, $\gamma$ is a $\qq$--ray that can be $(q', Q'+ 1)$--redirected to $\alpha_0$. Therefore, $\gamma$ can
in fact be $f_\bfa(\qq)$ redirected to $\alpha_0$ by Assumption 2. We can argue identical to above to 
get a contradiction to (S4). This proves the third assertion. 
\end{proof} 

\section{Mono-directional spaces} \label{Sec:Single}
The boundary we are defining is meant to generalize the Gromov boundary of a hyperbolic space 
and it captures the hyperbolicity in a metric space. Hence, when the space $X$ has no hyperbolic directions, 
the space of direction $P(X)$ has only one point. In this section we concentrate on spaces without hyperbolic 
directions in this sense.

\begin{definition} 
Let $X$ be a metric space satisfying the Assumption 0, 1 and 2. We say $X$ is \emph{mono-directional} if $P(X)$ 
has only one point. That is, for every pair of quasi-geodesic rays $\alpha$ and $\beta$, we have 
$\alpha \prec \beta$. 
\end{definition} 

The first classes of mono-directional spaces we consider are products of unbounded metric spaces.

\begin{proposition}\label{product}
Let $ X= A \times B$, where $A$ and $B$ are proper geodesic metric spaces satisfying the 
Assumption 0, equipped with the $L^\infty$--metric. Then $P(X)$ is a point. 
\end{proposition}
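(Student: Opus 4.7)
The plan is to show that any two $\qq$--quasi-geodesic rays $\alpha, \beta$ in $X$ satisfy $\alpha \simeq \beta$; since $\preceq$ is preserved when the roles of $\alpha$ and $\beta$ are swapped, it suffices to construct uniform quasi-redirections of $\alpha$ to $\beta$ at every radius $r$. The essential feature of the $L^\infty$ metric on $A \times B$ that I would exploit is that, given any two points $(a, b), (a', b') \in X$, there is an honest geodesic segment between them of length exactly $\max(d_A(a, a'), d_B(b, b'))$: one takes a geodesic in $A$ between $a$ and $a'$, a geodesic in $B$ between $b$ and $b'$, reparametrizes both with constant speed on the common interval $[0, \max(d_A, d_B)]$, and pairs them. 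This abundance of short geodesics is what makes redirections cheap in a product.

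Writing $\alpha = (\alpha_A, \alpha_B)$ and $\beta = (\beta_A, \beta_B)$, and given $r > 0$, I would let $t_r$ be the first time $\alpha$ meets $\partial B_r$, and $S_r$ be the first time $\beta$ meets $\partial B_{2r}$ (both finite because Assumption 0 on $A$ and $B$ forces $\|\beta\|$ to be unbounded). Set $p = \alpha(t_r)$ and $p' = \beta(S_r)$, and using the construction above fix a geodesic $\mu$ in $X$ from $p$ to $p'$ of length $d_X(p, p')$. Define the candidate redirection
\[
\gamma_r \;=\; \alpha|_{[0, t_r]} \,\cup\, \mu \,\cup\, \beta|_{[S_r, \infty)}.
\]
By construction $\gamma_r|_{t_r} = \alpha|_{t_r}$ and $\gamma_r \eventual \beta$, so this is a legitimate candidate for a quasi-redirection of $\alpha$ to $\beta$ at radius $r$.

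The main step is to verify that $\gamma_r$ is a $(q', Q')$--quasi-geodesic with constants independent of $r$. The upper Lipschitz bound is automatic from the triangle inequality and the $\qq$--quasi-geodesic bounds of each of the three pieces. For the lower bound, the key numerical fact is that the choice $\|p'\| \geq 2r = 2\|p\|$ forces
\[
\|p'\| - \|p\| \;\leq\; d_X(p, p') \;\leq\; \|p'\| + \|p\| \;\leq\; \tfrac{3}{2}\|p'\|,
\]
so the middle segment $\mu$ makes genuine outward progress and cannot backtrack significantly; similarly the arc length from $\go$ out to the end of $\mu$ is bounded above by $t_r + d_X(p,p') \leq qr + Q + \tfrac32\|p'\|$, which is uniformly comparable to $\|p'\|$. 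A short case analysis, splitting according to which of the three pieces contain the two comparison points, then yields the lower bound $\frac{1}{q'}|s - t| - Q' \leq d_X(\gamma_r(s), \gamma_r(t))$ with $\qq'$ depending only on $\qq$. The symmetric construction yields $\beta \preceq \alpha$, so $P(X)$ has a single point.

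The main obstacle I anticipate is the bookkeeping in this case analysis: the redirection threads together three pieces of potentially quite different lengths (the short $\alpha$--initial part, the detour $\mu$, and the long $\beta$--tail), and one must verify that no pair of points lying in different pieces exhibits a large shortcut in $X$. The factor $2$ in the definition of $S_r$ provides the slack needed for this; in principle any factor strictly greater than $1$ would work, at the cost of worsening $\qq'$.
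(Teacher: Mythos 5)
Your construction has a genuine gap, and it is not merely a matter of bookkeeping: the concatenation $\gamma_r=\alpha|_{[0,t_r]}\cup\mu\cup\beta|_{[S_r,\infty)}$ need not be a quasi-geodesic with constants independent of $r$. The inequality $d_X(p,p')\le \tfrac32\Norm{p'}$ only bounds the \emph{length} of $\mu$; it does not force $\mu$ to make outward progress. By the triangle inequality a point $z\in\mu$ satisfies only $\Norm{z}\ge \bigl|\,\Norm{p}-d(p,z)\,\bigr|$, so when $d_X(p,p')$ is close to $\Norm{p}+\Norm{p'}=3r$ the geodesic $\mu$ can pass arbitrarily close to the basepoint. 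Concretely, take $A=B=\RR$, so $X$ is $\RR^2$ with the $L^\infty$ metric, and let $\alpha(t)=(t,0)$, $\beta(t)=(-t,0)$. Then $p=(r,0)$, $p'=(-2r,0)$, and $\mu$ is the straight segment through $\go=(0,0)$; the point of $\mu$ at the origin is at distance $0$ from $\gamma_r(0)=\go$ but at parameter distance $2r$ along $\gamma_r$, so $\gamma_r$ is at best a $(1,2r)$--quasi-geodesic and the lower bound fails uniformly in $r$. A useful sanity check: the only place your argument uses the hypothesis $X=A\times B$ is to produce \emph{some} geodesic from $p$ to $p'$, which exists in every geodesic space; since $P(X)$ is certainly not a single point for, say, a tree, no argument of this shape can succeed. (A secondary, fixable, issue: you take $S_r$ to be the \emph{first} time $\beta$ meets $\partial B_{2r}$, so the tail $\beta|_{[S_r,\infty)}$ may re-enter $B_r$ and come close to, or even meet, $\alpha|_{[0,t_r]}$; this is why the paper's notation $\beta|_{\ge R}$ deliberately starts at the \emph{last} visit to $B_R$.)

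The paper's proof repairs exactly this defect by genuinely using the coordinates. Instead of one direct geodesic from $p$ to $p'$, it routes the connection through a chain of intermediate points whose norms increase through $r$, $2r$, $3r$, $4r$, moving only one of the two factors at each stage; this is where the $L^\infty$ product structure actually enters, since freezing one coordinate while moving the other along a quasi-geodesic ray of $A$ or $B$ yields a quasi-geodesic of $X$. Each consecutive pair of these segments forms a uniform quasi-geodesic by the nearest-point-projection surgery lemma, each segment has length comparable to $r$, and any two non-adjacent segments stay at distance at least $r$ from one another; these three facts together give the uniform lower bound that your single detour $\mu$ cannot provide. To salvage your outline you would need to replace $\mu$ by such a staircase that never drops below norm $r$.
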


Note that since $P(X)$ is invariant under quasi-isometries, the proposition also holds if 
we equip $X$ with the $L^p$--metric, $p \geq 1$. 

\begin{proof}
Consider a pair of $\qq$--rays $\zeta$ and $\xi$. For every $r>0$ let $R = 4r$. Let 
$(a_1, b_1)=\zeta(t_r)$ be coordinates of the first time $\zeta$ hits the sphere of radius $r$ in 
$X$ and let $(a_2, b_2)=\xi(t_R)$ be the last time $\xi$ hit the sphere of radius $R$ in $X$. 
Either $\Norm{a_1} = r$ or $\Norm{b_1} = r$. We assume without loss of generality that 
$\Norm{a_1} = r$. Similarly, either $\Norm{a_2} = R$ or $\Norm{b_2} = R$. We assume 
$\Norm{a_2} = R$ which is the more complicated case. 

Consider a $\qq_0$--geodesic ray $\alpha_1$ in $A$ passing through $a_1$ and let $a_1'$ be a 
points along $\alpha_2$ with $\Norm{a_1'} = 2r$. Similarly, consider a $\qq_0$--geodesic ray $\alpha_2$ 
in $A$ passing through $a_2$ and let $a_2'$ be a points along $\alpha_2$ with $\Norm{a_2'} = 3r$. 
Note that $a_2'$ lies between $\go$ and $\alpha_2$ could just be a quasi-geodesic segment. Finally, 
let $\alpha$ be a $\qq_0$-segment connecting $a_1'$ to $a_2'$. 
Let $b$ be any point in $B$ with $\Norm{b_2}=2r$. Let $\beta_1$ be a $\qq_0$--geodesic segment 
connecting $b_1$ to $b$ and $\beta_2$ be a $\qq_0$--geodesic segment
connecting $b$ to $b_2$. Let
\begin{align*}
\alpha_1 &\from [t_{a_1}, t_{a_1} + s_{a_1}] \to A
&\text{be the parametrization of the segment }\qquad &[a_1, a_1']_{\alpha_1}, \\
\alpha &\from [t_{a}, t_{a}+ s_{a}] \to A
&\text{be the parametrization of the segment }\qquad &[a_1', a_2']_{\alpha},  \\
\alpha_2 &\from [t_{a_2}, t_{a_2}+ s_{a_2}] \to A
&\text{be the parametrization of the segment }\qquad &[a_2', a_2]_{\alpha_2},  \\
\beta_1 &\from [t_{b_1}, t_{b_1}+ s_{b_1}] \to B
&\text{be the parametrization of the segment }\qquad &[b_1, b]_{\beta_1},\\
\intertext{and} 
\beta_2 &\from [t_{b_2}, t_{b+2}+ s_{b_2}] \to B
&\text{be the parametrization of the segment }\qquad &[b, b_2]_{\beta_2}.
\end{align*} 
Let 
\begin{align*}
t_1&= t_r & t_2&= t_1 + s_{a_1} & t_3&= t_2 + s_{b_1}\\
t_4&= t_3 +  s_{a} &t_5& = t_4 + s_{b_2} & t_6&= t_5 +s_{a_2}
\end{align*}
and define, 
\[
\gamma(t) = \begin{cases}
\zeta(t)                                                         &\text{for } t \in[0, t_1],\\
\big(\alpha_1(t-t_1 + t_{a_1}), b_1\big)       &\text{for } t \in [t_1, t_2],\\
\big(\alpha_1', \beta_1(t-t_2+t_{b_1}) \big) &\text{for } t \in [t_2, t_3],\\
\big(\alpha(t-t_3+t_a), b\big)                        &\text{for } t \in [t_3, t_4],\\
\big(a_2', \beta_2(t-t_4+t_{b_2}) \big)        &\text{for } t \in [t_4, t_5],\\
\big(\alpha_2(t-t_5+t_{a_2}), b_2 \big)        &\text{for } t \in [t_5, t_6],\\
\xi(t) &\text{for } t \geq t_6.
\end{cases}
\]
Then $\gamma$ is a quasi-geodesic where the constant depend only on $\qq$ and $\qq_0$. 
In fact, setting $t_0=0$ and $t_7=\infty$, we have the restriction of $\gamma$ to each interval 
$[t_{i-1}, t_{i+1}]$ ($i=1, \dots, 6$) is a uniform
quasi-geodesic by Part (II) of \lemref{surgery} since  $\gamma(t_i)$ is a closest point from any point in 
$\gamma|_{[ t_i, t_{i+1}]}$ to the segment $\gamma|_{[ t_{i-1}, t_i]}$.
And, for $|i-j| \geq 2$, any point in the segment 
$\gamma|_{[t_i, t_{i+1}]}$ and any point in the segment $\gamma|_{[t_j, t_{j+1}]}$
are at least $r$ apart. Since the length of all these intervals are comparable to $r$ and 
each one is a quasi-geodesic, $\gamma$ is a also quasi-geodesic where the constants do not depend 
on $r$. Therefore, $\zeta \prec \xi$. The proof in the case $\Norm{b_2} = R$ is similar. 
\end{proof}

\begin{figure}

\begin{tikzpicture}[scale=0.8]
 \tikzstyle{node_style}=[inner sep=1.5pt, circle, fill=black]
    \draw[thick] (0,0) -- (8,0) node[left] {};
   \draw[thick] (0,0) -- (0,8) node[left] {};
   \draw[thick] (0,8) -- (8,8) node[left] {};
    \draw[thick] (8,0) -- (8,8) node[right] {};
     \draw[thick] (2,3) -- (6,3);
     \draw[thick] (2,3) -- (2,6);
      \draw[thick] (6,3) -- (6,6);
       \draw[thick] (2,6) -- (3,6) node[above] {${\scriptstyle(a_1,b_1)}$}-- (6,6);
    \draw[thick] (4,6) -- (4,7);
      \draw[thick] (4,7) node[above right] {${\scriptstyle(a_1,b'_1)}$}-- (7,7) node[above] {${\scriptstyle(a'_1,b'_1)}$};
      \draw[thick] (7,7) -- (7,1) node[below] {${\scriptstyle (a'_1,b'_2)}$};
      \draw[thick] (7,1) -- (4,1) node[above] {${\scriptstyle (a'_2,b'_2)}$};
      \draw[thick] (4,1) -- (4,0) node[below left] {${\scriptstyle (a_2,a_2)}$};
      \node[node_style, label=below:{}] at (4,4.5) (y) {};
      \draw [thick, rounded corners] (y.center) .. controls (4.5,5)   .. (5,4.9) .. controls (5.8,5.2) .. (5.5,5.7) .. controls (5.3,5.7) .. (4.1,5.7) .. controls (4.15,5.7) .. (4,6) ;

  \draw [thick, rounded corners] (4,0) .. controls (3.95,-0.5) .. (4.4,-0.7) .. controls (6.5,-0.9) .. (6.3,-2.5) .. controls (6.7,-2.8) .. (6.9,-3) node[above] {};
     \draw[decorate,decoration={brace,amplitude=8pt,mirror,aspect=0.25},xshift=-2pt] (0.1,4.4)--(1.1,4.4) node [below=8pt] {$4r$}--(4,4.4) ;  
        \draw[decorate,decoration={brace,amplitude=8pt,mirror},xshift=-2pt] (4.1,4.4) -- (6,4.4) node [midway,below=8pt] {$r$};  
\end{tikzpicture}

\end{figure}
\begin{example}The Baumslag-Solitar group.
For a complete calculation of the redirecting boundary of the Baumslag–Solitar group, see \cite{Asha}. 
\end{example}

Morse and sublinearly Morse quasi-geodesic rays 
in a metric space $X$ resemble geodesics in a Gromov hyperbolic space. 
In fact, we will later show in \propref{SubMorsemin} that every 
equivalence class of $\kappa$--Morse quasi-geodesic rays, and hence every equivalence class of 
Morse geodesic rays (where the equivalence relations are those specified in the construction of sublinearly 
Morse boundary and Morse boundaries) are also equivalence classes in $P(X)$. 

\begin{question}
Assume $X$ does not have any Morse geodesics. Does that imply that $P(X)$ is a single point?
\end{question}

\section{The Topology and the boundary} \label{Sec:boundary}

In this section, we build a topology on the set $X \cup P(X)$. We denote $P(X)$ equipped
with the restriction of this topology to $P(X)$ by $\partial X$. 
This boundary is strongly analogous to the $\kappa$--Morse boundary of $X$ (see \cite{QRT2})
and it should be considered as an enlargement of the $\kappa$-Morse boundary.
In fact, we will show in the \secref{Sec:kappa} that $\partial X$ contains every $\kappa$--Morse boundary 
as a topological subspace.

We define the topology on $X \cup P(X)$ by defining a system of neighbourhoods. Recall that points
in $P(X)$ are equivalence classes of quasi-geodesic rays. To unify the treatment of point in 
$X$ and $P(X)$, for every $x \in X$, we consider the set of quasi-geodesic rays that pass 
through $x$. Abusing the notation, we denote this set by $ \mathbb{x}$, that is 
\[
 \mathbb{x} = \Big\{ \text{quasi-geodesics rays passing through $x$} \Big\}. 
\]
We use the gothic letters $\Ga, \Gb, \Gc$ to denote elements of $P(X) \cup X$, that is, either a set of quasi-geodesic rays 
passing through a point $x \in X$ or an equivalence class of quasi-geodesic rays in $P(X)$. For $\bfa \in P(X)$, define
$F_\bfa \from [1,\infty) \times [0, \infty) \to [1,\infty) \times [0, \infty)$ by 
\begin{equation}\label{actualtopology}
F_\bfa(\qq) = \max \{ f_\bfa(\qq) + (1,1), (1, q+Q)\} \qquad\text{for}\qquad \qq \in   [1,\infty) \times [0, \infty).
\end{equation}
\begin{definition} \label{Def:The-In-Topology}
For $\bfa \in P(X)$ (with $\alpha_0 \in \bfa$ as a central element) and $r>0$, define   
\begin{align*}
 \calU(\bfa, r) := \Big  \{  \Gb \in P(X) \cup X \, \ST 
 &\text{ every $\qq$--ray in $\Gb$ can be $F_\bfa(\qq)$--redirected to $\alpha_0$ at radius $r$} \Big \}.   
\end{align*}
\end{definition}

\begin{remark} 
The sets $ \calU(\bfa, r)$ will define open neighborhoods around $\bfa$ in our topology. This is meant to be 
a direct analogue of the the cone topology for the Gromov boundary of hyperbolic spaces.
Roughly speaking, we think a quasi-geodesic ray $\beta$ is in a small neighborhood of 
a quasi-geodesic ray $\alpha$ if $\beta$ can be redirected to $\alpha$ at a large radius. However, one might think 
it is more natural to require that $\alpha$ should be redirected to $\beta$ at a large radius. This would suggest 
that there may exists an Out-topology which is different from our topology but equally valid. 
In \secref{Sec:Out-Topology}, we will argue that this is not true and the Out-topology is the wrong 
definition. The main problem is that \lemref{Lem:F} below will not hold for the Out-topology. 
\end{remark} 

In most arguments about a class of quasi-geodesic rays $\bfa$, it is enough to consider $\qq$--rays 
where $\qq$ is not too big. We now make this precise.

\begin{lemma} \label{Lem:Max}
For every $r>0$ there is a pair of constants $\qq_{\rm max}(r) \in [1,\infty) \times [0,\infty)$
such that if $\qq \not \leq \qq_{\rm max}$ then, for every $\bfa \in P(X)$, any $\qq$--ray $\beta$ can be 
$F_\bfa(\qq)$--redirected to $\alpha_0$ at radius $r$. 
\end{lemma}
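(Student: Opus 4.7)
\bigskip

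\noindent\textit{Proof proposal.} The plan is to exhibit an explicit redirecting ray $\gamma$, manufactured from three pieces, whose quasi-geodesic constants we can compute directly in terms of $\qq,\qq_0$ and $r$, and then to choose $\qq_{\rm max}(r)$ large enough that these constants fall under $F_\bfa(\qq)$ as soon as $\qq\not\le\qq_{\rm max}$. Given $\bfa\in P(X)$ with central $\qq_0$--ray $\alpha_0$ and a $\qq$--ray $\beta$, I would let $\sigma\from[0,r]\to X$ be a geodesic from $\beta(t_r)$ back to $\go$ and set
\[
\gamma(t)=\begin{cases}\beta(t) & t\in[0,t_r],\\ \sigma(t-t_r) & t\in[t_r,t_r+r],\\ \alpha_0(t-t_r-r) & t\ge t_r+r.\end{cases}
\]
By construction $\gamma$ first exits $B_r$ at time $t_r$ (so $\gamma|_r=\beta|_r$) and eventually coincides with $\alpha_0$, so it is a candidate quasi-redirection; what remains is the quasi-geodesic bookkeeping.

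The upper bound is routine: by \lemref{Lem:Tame}, each of the three pieces of $\gamma$ is $C$--Lipschitz for a constant depending only on $\qq$ and $\qq_0$, and a Lipschitz function automatically supplies the upper half of the quasi-geodesic inequality. The interesting constraint is the lower bound, and its extremal instance is $s_1=0$, $s_2=t_r+r$: at this pair $\gamma$ has returned to $\go$, so $d(\gamma(s_1),\gamma(s_2))=0$ while $|s_2-s_1|=t_r+r$. For $\gamma$ to be an $F_\bfa(\qq)$--quasi-geodesic at this pair I need
\[
F_\bfa(\qq)_1\cdot F_\bfa(\qq)_2\ \ge\ t_r+r.
\]
Using $t_r\le q(r+Q)$ together with $F_\bfa(\qq)_1\ge f_\bfa(\qq)_1+1\ge q+1$ (we may assume WLOG $f_\bfa(\qq)\ge\qq$, since the redirecting ray must carry the $\qq$--quasi-geodesic initial segment) and $F_\bfa(\qq)_2\ge q+Q$, this reduces to $(q+1)(q+Q)\ge q(r+Q)+r$, i.e.\ to the polynomial inequality
\[
q^2+q(1-r)+(Q-r)\ \ge\ 0.
\]
Viewed as a quadratic in $q$ with positive leading coefficient, its discriminant is $(r+1)^2-4Q$, which is $\le 0$ whenever $Q\ge(r+1)^2/4$; and direct substitution shows the expression is non-negative whenever $q\ge r$. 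Thus setting $\qq_{\rm max}(r)$ to have first coordinate $\ge r$ and second coordinate $\ge(r+1)^2/4$ takes care of this extremal pair.

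The other sub-cases of the lower bound (both arguments in $[0,t_r]$; both in the geodesic piece; both in the $\alpha_0$ piece; one in $[t_r,t_r+r]$ and the other in the tail) each yield comparable estimates and each one feeds back into the same type of inequality $q+Q\ge $ (linear expression in $r,\qq_0$). Concretely, in the mixed case $s_1\in[t_r,t_r+r]$, $s_2>t_r+r$ I expect the constraint $F_\bfa(\qq)_2\ge r(1+1/F_\bfa(\qq)_1)+Q_0$, which is again satisfied once $\qq$ is chosen outside some explicit $\qq_{\rm max}(r)$ depending on $\qq_0$ and $r$. Assembling the finitely many such constraints yields a single $\qq_{\rm max}(r)$ that works for all $\bfa\in P(X)$ simultaneously, since the bounds involve $\qq_0$ (the uniform constant from Assumption 1) and not the particular class $\bfa$. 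The main obstacle is precisely the backtracking step $\gamma(t_r+r)=\go$, which forces the product $F_\bfa(\qq)_1\cdot F_\bfa(\qq)_2$ to absorb the parameter length $t_r\le q(r+Q)$; everything else is routine case-checking once this dominant inequality is secured.
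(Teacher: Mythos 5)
Your construction --- follow $\beta$ out to radius $r$, return to $\go$, then run along $\alpha_0$, and observe that the backtracking is absorbed because $F_\bfa(\qq)_2 \ge q+Q$ grows with $\qq$ --- is exactly the paper's argument; the only differences are cosmetic, in that the paper retraces $\beta|_r$ in reverse rather than taking a geodesic back to $\go$, and it simply notes that the resulting concatenation is a $(2,2r)$--ray, hence dominated by $F_\bfa(\qq)$ as soon as $q>2r$ or $Q\ge 2r$, so neither your quadratic discriminant analysis nor the ``WLOG $f_\bfa(\qq)\ge\qq$'' enlargement is needed. Your proposal is correct and takes essentially the same approach.
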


\begin{proof} 
For $\qq=(q, Q)$ let $(q_1, Q_1)= F_\bfa(q,Q) $. From the definition of $F_\bfa$ we know that
$Q_1 \geq \max(q, Q)$ and $q_1 \geq 2$. That is, if $q>2r$ or $Q \geq 2r$, then $Q_1 \geq 2r$. Now, for a $\qq$--ray 
$\beta$, and $\bfa \in P(X)$, let $\gamma$ be the concatenation $\beta|_r$ followed by $\beta|_r$ traverses in reverse 
and then $\alpha_0$. Then $\gamma$ is a $(2, 2r)$--ray, $q_1 \geq 2$ and $Q_1 \geq 2r$. This finishes the proof. 
\end{proof} 

We verify some basic properties of $\calU(\bfa, r)$. Below $B(x,1)$ is the ball of radius $1$ around $x$. 

\begin{lemma} \label{Lemma:Ubeta}
Assume $X$ satisfies Assumptions 0, 1 and 2. Then: 
\begin{enumerate}[(I)]
\item For $\bfa, \bfb \in P(X)$, if $\bfb \preceq \bfa$ then $\bfb \in \calU(\bfa, r)$ for all $r>0$. In particular,
$\bfa \in \calU(\bfa, r)$ for all $r > 0$. 
\item For every $r_{2} \geq r_{1}>0$, we have $\calU(\bfa, r_{2}) \subseteq \calU(\bfa, r_{1})$.
\item For every $r > 0$, there is  $r_\bfa>0$ depending on $\bfa$ and $r$ such that 
\begin{enumerate}
\item For every $\bfb \in \calU(\bfa, r_\bfa)$,
there is $r_\bfb>0$ such that 
\[
 \calU(\bfb, r_\bfb \,) \subset \calU(\bfa, r).
\]
\item For every $ \mathbb{x} \in \calU(\bfa, r_\bfa) \cap X$, 
\[
 B( \mathbb{x}, 1) \subset \calU(\bfa, r).
\]
\end{enumerate}
\end{enumerate}
\end{lemma}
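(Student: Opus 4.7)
Part (I) is immediate from Assumption 2: if $\bfb \preceq \bfa$ and $\beta \in \bfb$ is a $\qq$--ray, then $\beta$ admits an $f_\bfa(\qq)$--redirection to $\alpha_0$ at every radius $r > 0$, and since $F_\bfa(\qq) \geq f_\bfa(\qq) + (1,1)$ by \eqref{actualtopology}, that same ray serves as an $F_\bfa(\qq)$--redirection. For Part (II), let $\eta$ witness $\bfc \in \calU(\bfa, r_2)$ via some $\qq$--ray $\beta \in \bfc$; then $\eta$ and $\beta$ agree on $[0, t_{r_2}(\beta)]$, and since $t_{r_1}(\beta) \leq t_{r_2}(\beta)$ whenever $r_1 \leq r_2$, they also agree on $[0, t_{r_1}(\beta)]$ with matching first-hit times of the sphere of radius $r_1$. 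Hence $\eta|_{r_1} = \beta|_{r_1}$, so the same $\eta$ witnesses $\bfc \in \calU(\bfa, r_1)$.

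The real content is in Part (III). The central tool is Proposition~\ref{Prop:Landing}(II): a $\qq'$--redirection of a $\qq$--ray $\beta$ to $\alpha_0$ at a radius $R \geq R_\bfa(f_\bfa(\qq)+(0,1),\qq',r)$ upgrades to an $(f_\bfa(\qq)+(0,1))$--redirection at radius $r$, and $F_\bfa(\qq) \geq f_\bfa(\qq)+(0,1)$ by construction. Combined with \lemref{Lem:Max}, which lets me restrict attention to $\qq \leq \qq_{\rm max}(r)$, this tolerates a large blowup of constants in intermediate constructions and allows uniform choices of the various auxiliary radii.

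For (a), given $\bfb \in \calU(\bfa,r_\bfa)$, a class $\bfc \in \calU(\bfb, r_\bfb)$ and a $\qq$--ray $\gamma \in \bfc$, the plan is to splice two redirections. First I would produce an $F_\bfb(\qq)$--ray $\eta_1$ redirecting $\gamma$ to $\beta_0$ at radius $r_\bfb$ and an $F_\bfa(\qq_0)$--ray $\eta_2$ redirecting $\beta_0$ itself to $\alpha_0$ at radius $r_\bfa$. Mimicking the construction in the proof of \lemref{L:Transitive}, I splice $\eta_1$ and $\eta_2$ at a time $T^*$ at which $\eta_1$ has already merged with $\beta_0$ but $\eta_2$ has not yet branched off; such a $T^*$ exists provided $r_\bfa$ exceeds the landing time of $\eta_1$ on $\beta_0$, which is controlled uniformly by Proposition~\ref{Prop:Landing}(I). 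The spliced ray is a $\qq_3$--ray redirecting $\gamma$ to $\alpha_0$ at radius $r_\bfb$, with $\qq_3$ depending only on $F_\bfb(\qq)$ and $F_\bfa(\qq_0)$. Choosing $r_\bfb \geq R_\bfa(f_\bfa(\qq)+(0,1),\qq_3, r)$ uniformly over $\qq \leq \qq_{\rm max}(r)$, Proposition~\ref{Prop:Landing}(II) then delivers the desired $F_\bfa(\qq)$--redirection of $\gamma$ at radius $r$.

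For (b), given $\mathbb{x} \in \calU(\bfa, r_\bfa)$, a point $y \in B(x,1)$ and a $\qq$--ray $\beta$ through $y$ with $\beta(t_y) = y$, I would apply \lemref{twostep} at time $t_y$ to detour $\beta$ through $x$, producing a $(q, Q+3)$--ray $\beta'$ with $\beta'(t_y+1)=x$ that reunites with $\beta$ after a bounded delay. The hypothesis $\mathbb{x} \in \calU(\bfa,r_\bfa)$ supplies an $F_\bfa((q,Q+3))$--ray $\eta$ redirecting $\beta'$ to $\alpha_0$ at radius $r_\bfa$; for $r_\bfa$ large enough that $\eta$ has already merged back onto $\beta$'s image past the detour, a short surgery in the spirit of \lemref{onestep} removes the detour and produces a $\qq''$--ray redirecting $\beta$ itself at a radius close to $r_\bfa$, with $\qq''$ controlled by $\qq$. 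Proposition~\ref{Prop:Landing}(II) again upgrades this to an $F_\bfa(\qq)$--redirection at radius $r$. Taking $r_\bfa$ to be the maximum of the thresholds demanded by (a) and (b) over $\qq \leq \qq_{\rm max}(r)$ yields a single $r_\bfa$ that works for both parts. The main obstacle will be the bookkeeping: tracking carefully how $\qq_3$ and $\qq''$ depend on $\qq$ and on the central elements, so that the landing proposition applies uniformly across all relevant $\qq$.
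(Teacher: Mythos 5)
Parts (I) and (II) are correct and are the same one-line arguments the paper gives, and your treatment of (III)(b) --- detour through $x$ via \lemref{twostep}, redirect the detoured ray using $\mathbb{x}\in\calU(\bfa,r_\bfa)$, then upgrade with \propref{Prop:Landing} after taking $r_\bfa$ to be a maximum over $\qq\leq\qq_{\rm max}(r)$ --- is essentially the paper's proof of that part.

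The gap is in (III)(a), where your direct splicing construction does not close up. The splice must occur at a point of $\beta_0$ where $\eta_1$ has already landed but $\eta_2$ has not yet branched off. Since $\eta_2$ coincides with $\beta_0$ only up to radius $r_\bfa$, the landing point of $\eta_1$ must have norm at most roughly $r_\bfa$; but $\eta_1$ coincides with $\gamma$ out to radius $r_\bfb$ before it can land, so (writing $(q',Q')=F_\bfb(\qq)$) its landing point has norm at least about $r_\bfb/(qq')-Q'$. The splice therefore forces $r_\bfb$ to be at most a multiple of $r_\bfa$, with the multiple depending on $F_\bfb(\qq)$. Meanwhile the upgrade via \propref{Prop:Landing}(II) forces $r_\bfb\geq R_\bfa\big(f_\bfa(\qq)+(0,1),\qq_3,r\big)$, where $\qq_3$ also depends on $F_\bfb(\qq)$. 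Assumption 2 gives each class its own redirecting function with no uniformity as $\bfb$ ranges over $\calU(\bfa,r_\bfa)$, and $R_\bfa(\cdot,\qq_3,r)$ is itself produced by a compactness argument, so it may grow much faster in $\qq_3$ than the linear upper bound on $r_\bfb$ allows. Hence no choice of $r_\bfa$ made in advance guarantees that a compatible $r_\bfb$ exists for every $\bfb$. Your appeal to \propref{Prop:Landing}(I) to control the landing time of $\eta_1$ does not repair this: that statement presupposes $[\gamma]\prec\bfb$, i.e.\ redirectability to $\beta_0$ at \emph{every} radius, whereas $\bfc\in\calU(\bfb,r_\bfb)$ only supplies a redirection at the single radius $r_\bfb$; and even granting it, the bound $\ell_\bfb(\qq,r_\bfb)$ depends on $\bfb$ and is at least of order $r_\bfb$, so it cannot be dominated by the pre-chosen $r_\bfa$.

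The paper avoids this circularity entirely with a compactness argument: assume (a) fails along a sequence $r_\bfa^n\to\infty$ with witnesses $\bfb^n$, $\bfc^n$ and $\qq$--rays $\gamma^n\in\bfc^n$ that cannot be $F_\bfa(\qq)$--redirected to $\alpha_0$; use properness to extract pointwise limits $\beta_0$ of the central elements $\beta_0^n$ and $\gamma$ of the $\gamma^n$; apply \lemref{Lem:limit} to conclude that the limits satisfy $\gamma\prec\beta_0\prec\alpha_0$ honestly, at all radii; apply Assumption~2 to the limit $\gamma$ to get an $f_\bfa(\qq)$--redirection; and finally transfer this back to $\gamma^n$ for large $n$ by \lemref{onestep}, contradicting the choice of $\gamma^n$. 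You should replace the direct construction in (a) with an argument of this type.
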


\begin{proof}
Parts (I) and (II) follow immediately from the definition of the neighborhood, Assumption 2
and  the fact that $F_\bfa(\param) \geq f_\bfa(\param)$.  We check (a) of (III). 

Fix $r>0$ and let $\qq_{\rm max}(r)$ be as in \lemref{Lem:Max}. Assume for contradiction that there 
exist $\qq \leq \qq_{\rm max}(r)$, a sequence $r^n_\bfa \to \infty$ and  $\bfb^n \in \calU(\bfa, r_\bfa^n)$ such that for 
$r^n_\bfb = 2r_\bfa^n$, there exist $\bfc^n \in \calU(\bfb^n, r_\bfb^n)$  and $\qq$--rays 
$\gamma^n \in \bfc^n$ such that $\gamma^n$  cannot be $F_\bfa(\qq)$--redirected to $\alpha_0$. 
After taking a subsequence, we can assume the central elements $\beta_0^n \in \bfb^n$ and $\gamma^n \in \bfc^n$ 
point-wise converge. That is, for $t>0$, 
\[ 
 \lim_{n \to \infty} \beta_0^n(t)  = \beta_0(t)  \qquad\text{and}\qquad  \lim_{n \to \infty}  \gamma^n(t) = \gamma(t), \]
 where $\beta_0$ is a $\qq_0$--ray and $\gamma$ is a $\qq$--ray. 

By applying Lemma~\ref{onestep} we see that $\beta_0$ can be redirected to $\alpha_0$ for all $R>0$ hence 
$[\beta_0]  \preceq \bfa$. Also $\gamma$ can be redirected to $\beta_0$ for all $R>0$ hence $\gamma \prec \beta_0$. 
Therefore, $[\gamma] \prec \bfa$ and, by Assumption 2, $\gamma$ can be $f_\bfa(\qq)$ redirected to $\alpha_0$ 
for all $R>0$. Applying Lemma~\ref{onestep} again, we see that $\gamma^n$ can be $f_\bfa(\qq) + (0,1)$
redirected to $\alpha_0$ at some radius that goes to infinity. But $f_\bfa(\qq) + (0,1) \leq F_\bfa(\qq)$ which 
contradicts the choice of $\gamma^n$. This proves part (a).

To see part (b), define
\[
r_\bfa = \max_{\qq \leq \qq_{\rm max}(r)} R_\bfa(\qq, F_\bfa(((q, Q+3)), r)+1.
\]
Let $\mathbb{z}$ be a point in $B( \mathbb{x}, 1)$ and let $\zeta$ be any $\qq$--ray in $ \mathbb{z}$ ( i.e. $\zeta$ passes through the point $z \in X$). By 
Lemma~\ref{twostep}, there is a $(q, Q+3)$--ray $\zeta'$ that passes through $x$ and 
$\zeta'|_{r_\bfa-1} = \zeta|_{r_\bfa-1}$. Since $ \mathbb{x} \in \calU(\bfa, r_\bfa)$, $\zeta'$ can be redirected to 
$\alpha_0$ by a $F_\bfa(((q, Q+3))$--ray. By the choice of $r_\bfa$ and Proposition~\ref{Prop:Landing}, 
$\zeta$ can be $F_\bfa(\qq)$--redirected to $\alpha_0$ and thus $z \in \calU(\bfa, r)$. 
\end{proof}

\subsection*{A system of neighbourhoods}
For each $\bfa \in P(X)$, define 
\[
\calB(\bfa) = \Big\{ \calV \subset X \cup P(X) \ST 
\calU(\bfa, r) \subset \calV \quad \text{ for some $r>0$ } \Big\}
\]
and for every $x \in X$, define 
\[
\calB(\mathbb{x}) = \Big\{ \calV \subset X \cup P(X) \ST 
B(\mathbb{x}, r) \subset \calV \quad \text{ for some $r>0$ } \Big\}.
\]

We claim that these sets form a fundamental system of
neighbourhoods that can be used to define a topology on the set $X \cup P(X)$. 
We need to check they satisfy some basic properties. 

\begin{proposition} \label{Prop:PreTopology}
For every $\Ga \in X \cup P(X)$, the set $\calB(\Ga)$ satisfies the following properties 
defining a pretopology on $X \cup \partial P(X)$:
\begin{enumerate}[(I)]
\item Every subset of $X \cup P(X)$ which contains a set belonging to $\calB(\Ga)$ 
itself belongs to $\calB(\Ga)$.
\item Every finite intersection of sets of $\calB(\Ga)$ belongs to $\calB(\Ga)$.
\item The element $\Ga$ is in every set of $\calB(\Ga)$.
\item If $\calV \in \calB(\Ga)$ then there is $\calW \in \calB(\Ga)$ such that, 
for every $\bfb \in \calW$, we have $\calV \in \calB(\Gb)$.
\end{enumerate}
\end{proposition}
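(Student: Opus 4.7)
The plan is to verify the four pretopology axioms one at a time, with (I)--(III) being essentially formal consequences of the definition of $\calB(\Ga)$ together with the basic properties of $\calU(\bfa, r)$ already recorded in \lemref{Lemma:Ubeta}, and (IV) being the substantive property. For (I), membership in $\calB(\Ga)$ is defined by upward closure under containment, so there is nothing to prove. For (II), a finite intersection of sets in $\calB(\bfa)$ contains an intersection of basic neighborhoods $\calU(\bfa, r_i)$, and by part (II) of \lemref{Lemma:Ubeta} the latter contains $\calU(\bfa, \max_i r_i)$; for $\Ga = \mathbb{x}$ one uses the same argument with metric balls. Property (III) is part (I) of \lemref{Lemma:Ubeta}, which places $\bfa$ inside every $\calU(\bfa, r)$, and is trivial for $\Ga = \mathbb{x}$ since $x \in B(\mathbb{x}, r)$ for every $r$.

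The content is property (IV). Consider first $\Ga = \bfa \in P(X)$ and fix $r>0$ with $\calU(\bfa, r) \subseteq \calV$. The plan is to take $\calW = \calU(\bfa, r_\bfa)$, where $r_\bfa$ is the radius supplied by part (III) of \lemref{Lemma:Ubeta}. For any $\Gb \in \calW$, there are two cases. If $\Gb = \bfb \in P(X)$, then part (III)(a) of \lemref{Lemma:Ubeta} yields some radius $r_\bfb$ with $\calU(\bfb, r_\bfb) \subseteq \calU(\bfa, r) \subseteq \calV$, certifying $\calV \in \calB(\bfb)$. If $\Gb = \mathbb{x}$ for some $x \in X$, then part (III)(b) gives $B(\mathbb{x}, 1) \subseteq \calU(\bfa, r) \subseteq \calV$, so $\calV \in \calB(\mathbb{x})$.

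For $\Ga = \mathbb{x}$ with $\calV \supseteq B(\mathbb{x}, r)$, take $\calW = B(\mathbb{x}, r/2)$. Since $\calW \subseteq X$, any $\Gb \in \calW$ is a point $y \in X$ with $d(x,y) \leq r/2$, and the triangle inequality gives $B(\mathbb{y}, r/2) \subseteq B(\mathbb{x}, r) \subseteq \calV$, so $\calV \in \calB(\mathbb{y})$.

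The hard part will be the mixed situation inside (IV) when $\Ga = \bfa \in P(X)$: one needs a single radius $r_\bfa$ for which every element of $\calU(\bfa, r_\bfa)$ --- whether an equivalence class in $P(X)$ or a point of $X$ --- is an interior point of $\calU(\bfa, r)$. This uniformity is precisely the dual content of part (III)(a) and (III)(b) of \lemref{Lemma:Ubeta}, whose proofs in turn rely on Assumption 2 via the landing-time estimates of \propref{Prop:Landing}. Once this bundling is in place, (IV) follows with no further combinatorial work.
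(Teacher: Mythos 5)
Your proposal is correct and follows essentially the same route as the paper: (I)--(III) are formal consequences of the definitions and parts (I)--(II) of Lemma~\ref{Lemma:Ubeta}, and (IV) is handled by taking $\calW = \calU(\bfa, r_\bfa)$ with $r_\bfa$ from part (III) of that lemma (resp.\ $\calW = B(\mathbb{x}, r/2)$ for points of $X$). If anything, you are slightly more careful than the paper in (IV), since you explicitly treat the mixed case where an element of $\calU(\bfa, r_\bfa)$ is a point $\mathbb{x} \in X$ via part (III)(b), which the paper's proof leaves implicit.
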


\begin{proof}
Property (I) is the definition of $\calB(\Gb)$. 
To see (II), consider sets $\calV_1, \dots, \calV_k \in \calB(\Gb)$. First assume $\Ga = \bfa \in P(X)$ and 
let $r_i$ be such that $\calU(\bfa, r_i) \subset \calV_i$ and let $r = \max r_i$. 
Note that $\calU(\bfa, r)  \subseteq \calU(\bfb, r_i)$ by part (II) of Lemma~\ref{Lemma:Ubeta}. 
Therefore, 
\[
\calU(\bfb, r) \subset \bigcap_i \calV_i
\]
and hence the intersection is in $\calB(\Gb)$.
If  $\Ga = x \in X$, let $r_i$ be such that  $B(x, r_i) \subset \calV_i$ and let $r = \min r_i$. 
Then  
\[
B(x, r) \subset \bigcap_i \calV_i
\]
and again the intersection is in $\calB(\Gb)$.

Property (III) is trivial when $\Ga \in X$ and it is part (I) of Lemma~\ref{Lemma:Ubeta}
when $\Ga \in P(X)$. 

We now check that Property (IV). If $\Ga =  \mathbb{x}$, we 
have $B( \mathbb{x},r) \subset \calV$ and we can take $\calW = B( \mathbb{x}, r/2)$. Then 
for every $ \mathbb{y} \in \calW$, we have $B( \mathbb{y}, r/2) \subset \calV$ and $V \in \calB(y)$. 

Now assume $\Ga = \bfa \in P(X)$ and take $\calV \in \calB(\Ga)$. Let $r>0$ be 
such that $\calU(\bfa, r) \subset \calV$, let $r_\bfa$ be as in part (III) of \lemref{Lemma:Ubeta}
and let $\calW = \calU(\bfa, r_\bfa)$. For every $\bfb \in \calW$, part (III) of 
Lemma~\ref{Lemma:Ubeta} implies that 
\[
\calU(\bfb, r_\bfb) \subset \calU(\bfa, r ) \subset \calV.
\]
Thus $\calV \subset \calB(\bfb)$ and we are done. 
\end{proof}

Our system of neighborhoods in fact defines a topology on $X \cup \partial X$: 

\begin{proposition}[\cite{Bourbaki} Proposition 2]
If to each elements $\Gb \in \partial X$ there corresponds a set $\calB(\Gb)$ of
subsets of $\partial X$ such that properties (I) to (IV) above are satisfied, then there is a
unique topological structure on $\partial X$ such that for each $\Gb \in \partial X$, 
$\calB(\Gb)$ is the set of neighborhoods of $\Gb$ in this topology. 
\end{proposition}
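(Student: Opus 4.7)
Since this is a classical fact from general topology (essentially Bourbaki's characterization of a topology via its neighborhood system), the plan is to declare the candidate open sets and verify directly that they form a topology whose neighborhood filters are exactly the given $\calB(\Gb)$. Specifically, I would define
\[
\tau = \Big\{ U \subseteq X \cup \partial X \ST \forall \,\Ga \in U,\ U \in \calB(\Ga) \Big\},
\]
and then carry out three steps: (1) show $\tau$ is a topology, (2) show that $\calB(\Ga)$ is the filter of $\tau$-neighborhoods of $\Ga$, and (3) show uniqueness.

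Step (1) is formal. The empty set is in $\tau$ vacuously and the whole space is in $\tau$ by (I) and (III). Closure under finite intersection follows from (II): if $\Ga \in U_1 \cap U_2$ with each $U_i \in \calB(\Ga)$, then $U_1 \cap U_2 \in \calB(\Ga)$. Closure under arbitrary unions follows from (I): if $\Ga \in \bigcup_i U_i$, then some $U_j \in \calB(\Ga)$, so the union, containing $U_j$, is also in $\calB(\Ga)$.

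Step (2) is where the real content lies. One direction is easy: any $\tau$-neighborhood of $\Ga$ contains an open set $U \ni \Ga$ with $U \in \calB(\Ga)$, and by (I) the neighborhood itself is in $\calB(\Ga)$. The harder direction requires, given $\calV \in \calB(\Ga)$, the construction of a $\tau$-open set inside $\calV$ that still contains $\Ga$. The natural candidate is
\[
U_\calV := \Big\{ \Gb \in \calV \ST \calV \in \calB(\Gb) \Big\},
\]
and the main thing to verify is that $U_\calV \in \tau$. Fix $\Gb \in U_\calV$; by (IV) applied to $\calV \in \calB(\Gb)$, there exists $\calW \in \calB(\Gb)$ such that $\calV \in \calB(\Gc)$ for every $\Gc \in \calW$. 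By (III) each such $\Gc$ lies in $\calV$, so $\calW \subseteq U_\calV$, and then (I) gives $U_\calV \in \calB(\Gb)$. This is the step I expect to be the main obstacle, in the sense that one must carefully combine (III) and (IV) to ensure the constructed set is not just a neighborhood of $\Ga$ but a neighborhood of each of its points.

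Step (3) is immediate: in any topology a set is open iff it is a neighborhood of each of its points, so if $\tau'$ is any topology whose neighborhood filters at each $\Ga$ coincide with $\calB(\Ga)$, then its open sets coincide with $\tau$. In the present application, this gives the desired unique topology on $X \cup \partial X$ whose neighborhood base at each point in $X$ is given by metric balls and whose neighborhood base at each $\bfa \in P(X)$ is given by the sets $\calU(\bfa, r)$.
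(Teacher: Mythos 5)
Your proof is correct, and it is the standard Bourbaki argument that the paper simply cites rather than reproves: define the open sets as those that belong to $\calB(\Ga)$ for each of their points $\Ga$, and use (IV) combined with (III) to show that every $\calV \in \calB(\Ga)$ contains the open set $\{\Gb \in \calV \st \calV \in \calB(\Gb)\}$. All steps check out, including the key verification that this set is open, so there is nothing to add.
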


We can argue, similar to above, to see that $\calU(\bfa, r) \cap \partial X$ defines a system of 
neighborhoods for $\partial X$ and the resulting topology is the subspace topology 
induces by the topology defined on $X \cup \partial X$. That is, $X$, $\partial X$ and
$X \cup \partial X$ are all topological spaces. 

\subsection{Invariance under quasi-isometries}
\defref{Def:The-In-Topology} is written in such a way that neighborhoods $\calU(\bfb, r)$ are mapped to 
neighbourhoods under quasi-isometries, except, the functions $F_\bfa(\param)$ may have to 
replaced with larger functions. We now check that this does not impact the definition of the topology. 

\begin{lemma} \label{Lem:F}
Let $F'_\bfa \from [1,\infty) \times [0,\infty)  \to \RR^2$ be a family of functions such that, 
for every $\bfa \in P(X)$,  $F_\bfa'(\qq) \geq F_\bfa(\qq)$ for all $\qq$. 
Define
\begin{align*}
 \calU(\bfa, r, F_\bfa') := \Big  \{  \Gb \in P(X) \cup X \, \ST 
 &\text{ every $\qq$--ray in $\Gb$ can be $F_\bfa'(\qq)$--redirected to $\alpha_0$ at radius $r$} \Big \}   
\end{align*}
Then, for every $r>0$, there exists $R >0$ such that 
\[  \calU(\bfa, R, F_\bfa') \subset \calU(\bfa, r). \]
\end{lemma}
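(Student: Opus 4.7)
The plan is to reduce the claim to \propref{Prop:Landing}(II), which is tailor-made for this situation: for a fixed $\qq$--ray $\beta$, a $\qq'$--redirection to $\alpha_0$ at a sufficiently large radius (depending on $\qq$, $\qq'$, $r$) is automatically upgraded to an $(f_\bfa(\qq)+(0,1))$--redirection at radius $r$. Since the definition \eqnref{actualtopology} gives $F_\bfa(\qq) \geq f_\bfa(\qq) + (1,1) \geq f_\bfa(\qq) + (0,1)$, such a redirection is in particular an $F_\bfa(\qq)$--redirection, exactly what membership in $\calU(\bfa, r)$ requires.

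First I would use \lemref{Lem:Max} to restrict attention to rays with bounded quasi-geodesic constants: for any $\qq \not\leq \qq_{\max}(r)$, every $\qq$--ray is automatically $F_\bfa(\qq)$--redirectable to $\alpha_0$ at radius $r$, so such rays place no constraint on membership in $\calU(\bfa, r)$. The remaining rays have parameters lying in the bounded box $\{\qq : \qq \leq \qq_{\max}(r)\}$. For each such $\qq$, applying \propref{Prop:Landing}(II) with $\qq' := F'_\bfa(\qq)$ produces a threshold radius
\[
R_\qq := R_\bfa\bigl(f_\bfa(\qq)+(0,1),\ F'_\bfa(\qq),\ r\bigr)
\]
beyond which the promotion occurs. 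The second step is to choose $R$ dominating $R_\qq$ uniformly over $\qq \leq \qq_{\max}(r)$.

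The main obstacle is the uniformity of $R_\qq$ in $\qq$: the hypothesis $F'_\bfa \geq F_\bfa$ is only pointwise, so no a priori bound on $F'_\bfa$ over the bounded box is available. I would handle this by a contradiction/compactness argument modeled on the proof of \propref{Prop:Landing}(II) itself. Supposing no finite $R$ suffices, one produces a sequence $\beta_n \in \Gb_n$ of $\qq_n$--rays with $\qq_n \leq \qq_{\max}(r)$ and $\Gb_n \in \calU(\bfa, n, F'_\bfa)$, each of which fails to be $F_\bfa(\qq_n)$--redirectable at radius $r$. After passing to a subsequence, $\qq_n \to \qq_*$ in the compact box and $\beta_n \to \beta$ pointwise via Arzel\`a--Ascoli (having first tamed by \lemref{Lem:Tame}). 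A small adaptation of \lemref{Lem:limit} applied to the $F'_\bfa(\qq_n)$--redirecting rays (whose existence at radius $n \to \infty$ suffices for every fixed redirection radius $R > 0$) shows $\beta \preceq \alpha_0$. By Assumption~2 together with \lemref{Lem:f-alpha}, $\beta$ admits an $f_\bfa(\qq_*)$--redirection at radius $r$. Finally, \lemref{onestep} transfers this to an $(f_\bfa(\qq_*)+(0,1))$--redirection of $\beta_n$ for large $n$, contradicting the defining property of the sequence. This yields the desired finite $R$ and completes the proof.
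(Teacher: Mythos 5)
Your proposal follows essentially the same route as the paper's proof: restrict attention to $\qq \leq \qq_{\rm max}(r)$ via \lemref{Lem:Max}, choose $R$ dominating the thresholds $R_\bfa\big(f_\bfa(\qq)+(0,1), F'_\bfa(\qq), r\big)$ over that box, and invoke \propref{Prop:Landing} to upgrade an $F'_\bfa(\qq)$--redirection at radius $R$ to an $F_\bfa(\qq)$--redirection at radius $r$. Using part (II) of \propref{Prop:Landing} where the paper uses part (III) is immaterial; both apply here and their proofs are identical.

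The one place you go beyond the paper is the compactness argument for the finiteness of $\sup_{\qq \leq \qq_{\rm max}(r)} R_\qq$; the paper simply writes $R = \max_{\qq \leq \qq_{\rm max}} R_\bfa(\qq, F'_\bfa(\qq), r)$ without comment, so the concern is legitimate. But note that your sketched fix runs into the same difficulty one level down: along the contradiction sequence the redirecting rays for $\beta_n$ have constants $F'_\bfa(\qq_n)$, and since $F'_\bfa$ is only assumed to dominate $F_\bfa$ pointwise (no continuity or monotonicity), these constants need not be bounded even though $\qq_n$ stays in a compact box. Consequently the step ``an adaptation of \lemref{Lem:limit} shows $\beta \preceq \alpha_0$'' is not automatic: $\preceq$ demands a single pair of constants valid at every radius, whereas the surgery of \lemref{onestep} only produces redirections of $\beta$ with constants $F'_\bfa(\qq_{n(\rho)})+(0,1)$ depending on the radius $\rho$ through the choice of $n(\rho)$. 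To close this one needs some additional normalization of $F'_\bfa$ (e.g.\ local boundedness or monotonicity, which does hold in the application to \thmref{Thm:QI}); with that in place both your argument and the paper's one-line maximum go through.
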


\begin{proof}
For a given $r$ let $q_{\rm max}(r)$ be as in \lemref{Lem:Max}. Let 
\[
R = \max_{\qq \leq \qq_{\rm max}} R_\bfa(\qq, F_\bfa'(\qq), r).
\]  
Let $\Gb$ be a point in $\calU(\bfa, R, F_\bfa')$ and let $\beta \in \Ga$ be a $\qq$--ray with 
$\qq \leq \qq_{\rm max}$. By definition of $\calU(\bfa, R, F_\bfa')$, there is a $F_\bfa'(\qq)$--ray
$\alpha \in \bfa$ such that $\beta|_R = \alpha|_R$. Now part (III) of \propref{Prop:Landing}
implies that there is $F_\bfa(\qq)$--ray $\alpha' \in \bfa$ such that $\alpha'|_r = \alpha|_r =\beta|_r$. 
Therefore, $\Gb \in \calU(\bfa, r)$. 
\end{proof}

\begin{remark}\label{Rem:F}
Note that, since $F_\bfa' \geq F_\bfa$, we have $\calU(\bfa, r) \subseteq \calU(\bfa, r, F_\bfa')$.
Hence the above Lemma implies that the topology defined by the neighborhoods $\calU(\bfb, r, F')$ 
is the same as the topology defined by  $\calU(\bfb, r)$. That is, the definition of 
topology on $\partial X$ is robust and does not depend on the family of function $F_\bfa$. 
\end{remark} 
 
\begin{theorem} \label{Thm:QI} 
Let $X$ be a metric space satisfying Assumptions 0, 1 and 2 and let 
\[
\Psi \from X \to Y
\] 
be a quasi-isometry. Then $Y$ also satisfies Assumptiona 0, 1 and 2 and hence the quasi-redirecting boundary 
exists for both $X$ and $Y$. Furthermore, the induced map from 
\[
\Psi^*:\partial X \to \partial Y
\]
is a homeomorphism. 
\end{theorem}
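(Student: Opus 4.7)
The plan proceeds in three steps: (i) transfer Assumptions 0, 1, and 2 from $X$ to $Y$ using $\Psi$ and a quasi-inverse $\Psi^{-1}$; (ii) identify $\Psi^*$ as a bijection; and (iii) verify bicontinuity.

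Step (ii) is immediate from \propref{PXisQIinvariant}: applied to both $\Psi$ and $\Psi^{-1}$ it yields induced order-preserving maps $\Psi^*$ and $(\Psi^{-1})^*$, and the compositions act as the identity on $P(X)$ and $P(Y)$ because $\Psi \circ \Psi^{-1}$ and $\Psi^{-1} \circ \Psi$ are each at bounded distance from the identity, so the image of any quasi-geodesic ray under either composition lies in the same quasi-redirecting class as the original. For step (i), Assumption 0 on $Y$ is verified by choosing, for each $y \in Y$, a point $x \in X$ with $\Psi(x)$ near $y$ and an infinite $\qq_0$-ray $\alpha$ through $x$; taming $\Psi \circ \alpha$ via \lemref{Lem:Tame}, prepending a short geodesic from $\go_Y$ to its start, and adjusting via \lemref{onestep} and \lemref{twostep} yields an infinite quasi-geodesic ray through $y$ whose constants depend only on $\qq_0$ and $\Psi$. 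For Assumption 1 on $Y$, fix $\bfa \in P(Y)$, let $\alpha_0'$ be the central $\qq_0$-ray of $\bfa' = (\Psi^*)^{-1}(\bfa) \in P(X)$, and declare the tamed, rerouted image of $\Psi \circ \alpha_0'$ to be the central element of $\bfa$; its constants depend only on $\qq_0$ and $\Psi$. For Assumption 2 on $Y$: given a $\qq$-ray $\beta$ in some $\bfb \prec \bfa$, pull it back to a $\qq''$-ray $\beta'$ in $\bfb' = (\Psi^*)^{-1}(\bfb) \prec \bfa'$, invoke $f_{\bfa'}^X(\qq'')$ in $X$ to redirect $\beta'$ to $\alpha_0'$, push the redirecting family forward by $\Psi$, and absorb the bounded error via the surgery lemmas; the resulting constants depend only on $\qq$, $\Psi$, and $f_{\bfa'}^X$, yielding the required $f_\bfa^Y$.

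For step (iii), the symmetry between $\Psi$ and $\Psi^{-1}$ reduces matters to showing continuity of $\Psi^*$ at each $\bfa \in P(X)$. Let $V = \calU(\Psi^*(\bfa), r)$ be a basic neighborhood in $\partial Y$; I seek $R$ so that $\Psi^*(\calU(\bfa, R)) \subset V$. For $\bfb \in \calU(\bfa, R)$ and any $\qq'$-ray $\beta' \in \Psi^*(\bfb)$, tame the pull-back of $\beta'$ under $\Psi^{-1}$ to a $\qq$-ray $\beta \in \bfb$ whose constants depend only on $\qq'$ and $\Psi$. By hypothesis, $\beta$ admits an $F_\bfa^X(\qq)$-redirection to the central element $\alpha_0 \in \bfa$ at radius $R$; pushing this redirection forward through $\Psi$ and correcting with surgery produces a $G^Y(\qq')$-redirection of $\beta'$ to the central element of $\Psi^*(\bfa)$ at radius roughly $R/k - K$, which exceeds $r$ once $R$ is large enough. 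Since $G^Y$ depends only on $\qq'$ and $\Psi$, one may replace it by $\max(G^Y, F^Y_{\Psi^*(\bfa)})$; then \lemref{Lem:F} together with \remref{Rem:F} shows that the neighborhoods defined by $G^Y$ and by $F^Y_{\Psi^*(\bfa)}$ generate the same topology, and hence $\Psi^*(\bfb) \in V$.

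The principal obstacle is constant bookkeeping: across pull-backs, push-forwards, taming, and the surgery lemmas, one must check that every auxiliary quasi-geodesic constant and every redirecting function can be bundled into a single function depending only on $\qq$ and $\Psi$, never on the particular ray or class. The conceptual lever that makes the bookkeeping succeed is the robustness of \defref{Def:The-In-Topology} established by \lemref{Lem:F} and \remref{Rem:F}: because the topology is insensitive to the specific family $F_\bfa$ used to define the neighborhood basis, the non-canonical push-forward $G^Y$ produced by the argument defines the same topology on $\partial Y$ as the intrinsic family $F^Y_{\Psi^*(\bfa)}$ constructed directly from Assumption 2 on $Y$.
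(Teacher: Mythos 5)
Your proposal is correct and follows essentially the same route as the paper: the bijection comes from Proposition~\ref{PXisQIinvariant}, the transfer of Assumptions 0--2 is by pushing rays forward and taming (which the paper asserts in one sentence and you spell out in more detail), and bicontinuity hinges on exactly the robustness statement of Lemma~\ref{Lem:F} and Remark~\ref{Rem:F}. The only cosmetic difference is that you phrase continuity via the pushforward neighborhood in $Y$ with an auxiliary function $G^Y$ and apply Lemma~\ref{Lem:F} on the $Y$ side, whereas the paper works with the pullback neighborhood in $X$ and a function $F''=\max(F_\bfa,F')$ on the $X$ side; these are mirror images of the same argument.
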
 

\begin{proof}
Assumptions  0, 1 and 2 are written in a way that, the fact that they hold for $X$ 
immediately implies they also hold for $Y$. We have shown in \propref{PXisQIinvariant} that the 
map $\Psi^*$ is a bijection. We only need to show that it is continuous. Let $\bfa$ be a point in 
$\partial X$ and let $F_{\Psi(\bfa)}$ be the redirecting function defining neighborhoods for  
the class $\Psi(\bfa)$ in $Y$. For $r>0$, consider a neighborhood $\calU(\Psi(\bfa), r)\cap \partial Y$ 
around $\Psi(\bfa)$ in $\partial Y$. It is immediate from the definition of neighborhoods that, 
for some $r'>0$ and some function $F' \from [1,\infty) \times [0, \infty) \to \RR$, we have 
\[
\calU(\bfa, r', F') \subset \Phi^{-1}(\calU(\Psi(\bfa), r)\cap \partial Y).
\]
Now define $F'' = \max (F_\bfa, F')$. 
Now, by \lemref{Lem:F}, there is $R>0$ such that 
\[
\calU(\bfa, R, F'') \subset \calU(\Psi(\bfa), r', F'). 
\]
Also, since $F_\bfa \leq F''$, $\calU(\bfa, R)  \subset \calU(\bfa, R, F'')$. Hence
\[
\calU(\bfa, R) \cap \partial X \subset \calU(\Psi(\bfa), r', F') \cap \partial X 
\subset \Phi^{-1}(\calU(\Psi(\bfa), r)\cap \partial Y) \cap \partial X. 
\]
This proves continuity of $\Psi^*$ at $\bfa$. The proof of continuity of $(\Psi^*)^{-1}$ is similar. 
\end{proof}

\section{The sublinearly-Morse boundary and the quasi-redirecting boundary} \label{Sec:kappa}
In this section we prove the sublinearly Morse boundary, defined in \cite{QRT1} and \cite{QRT2}, is naturally a topological 
subspace of $\partial X$. Points in the sublinearly Morse boundary are defined via sublinear fellow traveling
of quasi-geodesic rays. We first show that each sublinearly Morse equivalence class is also a redirecting 
equivalence class. We then show that the topology on the two spaces are compatible. In addition, we show that the sublinearly Morse classes are minimal elements in the partially set $P(X)$.  First we recall the construction of 
the sublinearly Morse boundary. 

\subsection{Background on $\kappa$-Morse boundaries}
Let $(X, d_X)$ be a metric space satisfying the Assumption 0.  We follow the notation of \cite{QRT2}
and refer the reader to \cite{QRT2} for more details.  By a \emph{sublinear function} we mean 
a concave continuous function $\kappa \from [0, \infty) \to [1, \infty)$ such that 
\[
\lim_{t \to \infty} \frac{\kappa(t)}{t}  = 0.
\]
For $x  \in X$, define
\[
\Norm x : = d_X(\go, x).
\] 
We often need to refer to $\kappa(\Norm x)$ and, 
for simplicity, we will write $\kappa(x)$ instead of $\kappa(\Norm x)$. 
Given a quasi-geodesic ray $\alpha$ (here we think of $\alpha$ as a subset of  $X$, that is, 
we abuse the notation and let $\alpha$ represents both the map 
$\alpha \from [0, \infty) \to X$ and the image of this map) and a constant $m > 1$, we define:
\[ \mathcal{N}_{\kappa}(\alpha, m) := \Big\{ x \in X \ : \ d_X(x, \alpha) \leq m \cdot \kappa(x) \Big\}. \]

We recall two equivalent definitions of $\kappa$--Morse sets. Their equivalence is established in 
\cite[Proposition 3.10]{QRT2}, hence we use whichever is needed as the definition.

\begin{definition}($\kappa$-Morse I)\label{Def:Weakly-Morse}
We say a quasi-geodesic ray $\alpha$ is \emph{$\kappa$-Morse I} if there exists a proper 
function $m_\alpha : \mathbb{R}^2 \to \mathbb{R}$ such that for any $(\qq, \sQ)$-quasi-geodesic 
$\gamma \from [s, t] \to X$ with endpoints on $\alpha$, we have 
\[
\gamma([s, t]) \subset \calN_\kappa \big(\alpha, m_\alpha(\qq, \sQ)\big).
\]
The function $m_\alpha$ will be called a $\emph{$\kappa$--weakly Morse gauge}$ of $\alpha$. 
\end{definition}

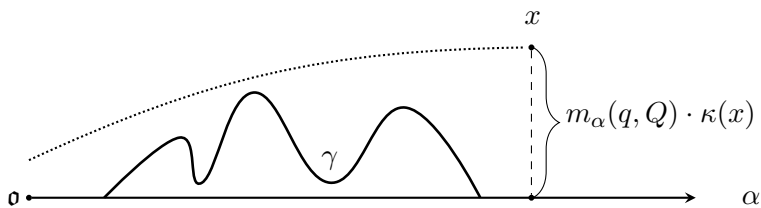
\begin{figure}[h!]
\begin{tikzpicture}
 \tikzstyle{vertex} =[circle,draw,fill=black,thick, inner sep=0pt,minimum size=.5 mm] 
[thick, 
    scale=1,
    vertex/.style={circle,draw,fill=black,thick,
                   inner sep=0pt,minimum size= .5 mm},
                  
      trans/.style={thick,->, shorten >=6pt,shorten <=6pt,>=stealth},
   ]

 \node[vertex] (a) at (0,0) {};
 \node at (-0.2,0) {$\go$};
 \node (b) at (9, 0) {};
 \node at (9.6, 0) {$\alpha$};
 \node (c) at (6.7, 2) {};
 \node[vertex] (d) at (6.68,2) {};
 \node at (6.7, 2.4){$x$};
 \node[vertex] (e) at (6.68,0) {};
 \node at (4, 0.5){$\gamma$};
 \draw [-,dashed](d)--(e);
 \draw [decorate,decoration={brace,amplitude=10pt},xshift=0pt,yshift=0pt]
  (6.7,2) -- (6.7,0)  node [black,midway,xshift=0pt,yshift=0pt] {};

 \node at (8.4, 1.1){$m_\alpha(q, Q) \cdot \kappa(x)$};
 \draw [thick, ->](a)--(b);
 \path[thick, densely dotted](0,0.5) edge [bend left=12] (c);

  \pgfsetlinewidth{1pt}
  \pgfsetplottension{.75}
  \pgfplothandlercurveto
  \pgfplotstreamstart
  \pgfplotstreampoint{\pgfpoint{1cm}{0cm}}  
  \pgfplotstreampoint{\pgfpoint{2cm}{0.8cm}}   
  \pgfplotstreampoint{\pgfpoint{2.3cm}{0.2cm}}
  \pgfplotstreampoint{\pgfpoint{3cm}{1.4cm}}
  \pgfplotstreampoint{\pgfpoint{4cm}{0.2cm}}
  \pgfplotstreampoint{\pgfpoint{5cm}{1.2cm}}
  \pgfplotstreampoint{\pgfpoint{6cm}{0cm}}
  \pgfplotstreamend
  \pgfusepath{stroke} 
  
\end{tikzpicture}
\caption{Weakly Morse: The $(\kappa, n)$--neighbourhood of the geodesic ray $b$.}
\end{figure}

\begin{definition}\label{Def:Strongly-Morse}($\kappa$-Morse II)
We say a quasi-geodesic ray $\alpha$ is \emph{$\kappa$--Morse II} if there exists a proper function 
$m_\alpha : \mathbb{R}^2 \to \mathbb{R}$ such that for any sublinear function $\kappa'$ 
and for any $r > 0$, there exists $R$ such that for any $\qq$--ray $\beta$
with $m_Z(\qq) \leq \frac{r}{2\kappa(r)}$, if 
\[
d_X(\beta_R, \alpha) \leq \kappa'(R)
\qquad\text{then}\qquad
\beta|_r \subset \calN_\kappa \big(\alpha, m_\alpha(\qq)\big).
\]
The function $m_\alpha$ will be called a $\emph{$\kappa$--strongly Morse gauge}$ of $\alpha$. 
\end{definition}

Sublinear fellow traveling defines an equivalence relation between quasi-geodesic rays. We write, 
\[
\alpha \sim_s \beta 
\qquad \Longleftrightarrow \qquad
\lim_{r \to \infty} \frac{d(\alpha_r, \beta_r)}{r} = 0. 
\]
The sublinear fellow traveling equivalence class of $\alpha$ is denoted $[\alpha]_s$. 
We also recall the following basic facts about these classes from \cite[Lemma 3.4]{QRT2}:

\begin{proposition}
Let $\alpha$ be a $\kappa$--Morse $\qq$--ray. Then, for any other $\qq$--ray $\beta$, 
if $\alpha \sim_s \beta$ then:
\begin{itemize}
\item $\beta$ is $\kappa$-Morse.
\item there exists $m(\qq)$ and $m'(\qq)$ that depends only on $[\alpha]_s$ and $\qq$ such that 
\[
\alpha \in \calN_\kappa \big(\beta, m(\qq)\big)
\] 
and 
\[
\beta \in \calN_\kappa \big(\alpha, m'(\qq)\big).
\] 
\end{itemize}
\end{proposition}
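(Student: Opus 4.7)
The plan is to exploit the strongly-Morse formulation (Definition~\ref{Def:Strongly-Morse}) of $\alpha$ to upgrade the sublinear fellow-traveling hypothesis into a uniform $\kappa$-fellow-traveling bound, and then to transfer the Morse property to $\beta$ by a concatenation argument. Since $\alpha \sim_s \beta$, the function $r \mapsto d(\alpha_r, \beta_r)$ is sublinear, so there is a concave sublinear majorant $\kappa'$ with $d(\alpha_R, \beta_R) \leq \kappa'(R)$ for all $R$. Fix any $r > 0$ large enough that $m_\alpha(\qq) \leq r/(2\kappa(r))$, and apply Definition~\ref{Def:Strongly-Morse} to $\alpha$ with this $\kappa'$: it supplies an $R$ such that $d(\beta_R, \alpha) \leq \kappa'(R)$ forces $\beta|_r \subset \calN_\kappa(\alpha, m_\alpha(\qq))$. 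The hypothesis is automatic because $d(\beta_R, \alpha) \leq d(\beta_R, \alpha_R) \leq \kappa'(R)$, so $\beta|_r \subset \calN_\kappa(\alpha, m_\alpha(\qq))$, and since $r$ was arbitrary I obtain $\beta \subset \calN_\kappa(\alpha, m'(\qq))$ with $m'(\qq) = m_\alpha(\qq)$, depending only on $[\alpha]_s$ and $\qq$.

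For the reverse containment $\alpha \subset \calN_\kappa(\beta, m(\qq))$, fix $\alpha(s)$ and set $r = \Norm{\alpha(s)}$. Sublinear fellow traveling gives $d(\alpha_r, \beta_r) \leq \kappa'(r)$, so it suffices to bound $d(\alpha(s), \alpha_r)$ by $O(\kappa(r))$. Since $\alpha_r$ and $\alpha(s)$ both lie on the sphere of radius $r$ and along the same $\kappa$-Morse $\qq$-ray, an anti-wandering estimate is required: a $\kappa$-Morse quasi-geodesic ray $\kappa$-fellow-travels a geodesic ray emanating from $\go$ (a standard consequence of $\kappa$-Morseness), so both $\alpha_r$ and $\alpha(s)$ lie within $O(\kappa(r))$ of the same point of this base geodesic at level $r$, giving the desired bound. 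Adding the two estimates yields $d(\alpha(s), \beta_r) \leq m(\qq)\kappa(r)$, and hence $\alpha \subset \calN_\kappa(\beta, m(\qq))$ with a gauge depending only on $[\alpha]_s$ and $\qq$.

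To finish, I verify that $\beta$ is $\kappa$-Morse in the sense of Definition~\ref{Def:Weakly-Morse}. Given a $(\qq',\sQ')$-quasi-geodesic $\gamma$ with endpoints on $\beta$, I connect each endpoint to a nearby point on $\alpha$ by a segment of length at most $m'(\qq')\kappa$; the surgery of \lemref{surgery} combines these with $\gamma$ into a uniform quasi-geodesic $\gamma'$ with endpoints on $\alpha$, whose constants depend only on $\qq',\sQ'$ and $m'(\qq')$. The $\kappa$-Morse property of $\alpha$ then confines $\gamma'$ to $\calN_\kappa(\alpha, m_\alpha)$, and pushing back through $\alpha \subset \calN_\kappa(\beta, m(\qq))$ transports the bound into $\gamma \subset \calN_\kappa(\beta, m_\beta(\qq', \sQ'))$ with a gauge $m_\beta$ depending only on $[\alpha]_s$ and $(\qq', \sQ')$. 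The main obstacle throughout is the anti-wandering step in the reverse containment: extracting a $\kappa$-scale bound on how much a $\kappa$-Morse ray can backtrack along a sphere from the (sublinear but a priori unquantified) fellow traveling with a geodesic ray, keeping the gauge dependence only on $[\alpha]_s$ and $\qq$.
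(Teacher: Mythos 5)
The paper does not actually prove this statement; it is recalled verbatim from \cite[Lemma 3.4]{QRT2}, so there is no internal proof to compare against, and your overall plan (use the $\kappa$--Morse II definition to upgrade sublinear fellow traveling to $\kappa$--neighborhood containment, then surgery to transfer the Morse property) is the right general strategy. Your first step, giving $\beta \subset \calN_\kappa\big(\alpha, m_\alpha(\qq)\big)$, is correct. The genuine gap is in the reverse containment. You bound $d\big(\alpha(s), \beta_r\big) \leq d\big(\alpha(s), \alpha_r\big) + d(\alpha_r, \beta_r)$ and estimate the second term by $\kappa'(r)$, where $\kappa'$ is only a concave sublinear majorant of $r \mapsto d(\alpha_r,\beta_r)$. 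The hypothesis $\alpha \sim_s \beta$ gives no control of $\kappa'$ in terms of $\kappa$: for instance $\kappa(t)=\log t$ is compatible with $d(\alpha_r,\beta_r)\sim \sqrt{r}$, and then your sum is $O(\kappa(r)) + \sqrt{r}$, which is not $O(\kappa(r))$. As written you only get $\alpha \subset \calN_{\kappa'}(\beta,\cdot)$, not containment in the $\kappa$--neighborhood, which is the whole content of the second bullet. The repair is to discard the raw $\kappa'$ bound at this stage and instead use the output of your first step: once $\beta \subset \calN_\kappa\big(\alpha, m_\alpha(\qq)\big)$ is known, run a coarse connectedness argument on the map sending $\beta(u)$ to a nearest point of $\alpha$ --- it starts near $\go$, goes to infinity, and has jumps of size $O(\kappa)$ controlled by the quasi-geodesic constants of $\alpha$, so it is coarsely onto $\alpha$ and every point of $\alpha$ lies within $O(\kappa)$ of $\beta$.

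Relatedly, the ``anti-wandering'' claim that a $\kappa$--Morse ray lies in a uniform $\kappa$--neighborhood of a geodesic ray from $\go$ is asserted but not proved, and it is not a formal consequence of \defref{Def:Weakly-Morse} or \defref{Def:Strongly-Morse}: those definitions confine \emph{other} quasi-geodesics with endpoints on $\alpha$ to $\calN_\kappa(\alpha,\cdot)$, i.e.\ they yield $\alpha_0 \subset \calN_\kappa(\alpha,\cdot)$ for the limiting geodesic $\alpha_0$, which is the opposite containment from the one you invoke. Establishing $\alpha \subset \calN_\kappa(\alpha_0,\cdot)$ requires exactly the connectedness-of-projection argument above, so flagging it as ``standard'' leaves the crux of the reverse containment (and hence also the transfer of the gauge to $\beta$ in your final step) unproved. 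Both gaps are fixable, but the proof is incomplete as it stands.
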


The boundary $\partial_\kappa X$, as a set, is the set of sublinear fellow traveling
classes of $\kappa$--Morse geodesics. We now show that $\partial_\kappa X \subset P(X)$. 

\begin{lemma}\label{kapparedirect}
Let $\alpha$ be a geodesic ray that is $\kappa$--Morse and let $\beta \in [\alpha]_s$ be a $\qq$--ray, 
then $\beta \preceq \alpha$ and $\alpha \preceq \beta$. That is, $[\alpha]_s \subset [\alpha]$, where $[\alpha]$ is the equivalence class of $\alpha$ under quasi-redirecting. 
\end{lemma}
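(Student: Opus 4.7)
I would prove the two inequalities $\beta \preceq \alpha$ and $\alpha \preceq \beta$ separately, since they require different surgery arguments.

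For $\beta \preceq \alpha$, I plan to apply Part (II) of Lemma~\ref{surgery} directly with $\alpha$ in the role of the geodesic ray and $\beta$ in the role of the $\qq$-ray. The only hypothesis to verify is that $d(\alpha_r, \beta) \leq r/2$, and for this I invoke the cited proposition: since $\alpha \subset \calN_\kappa\big(\beta, m(\qq)\big)$, we have $d(\alpha_r, \beta) \leq m(\qq)\kappa(r)$, which is sublinear and therefore below $r/2$ once $r$ exceeds some threshold $r_0$ depending only on $[\alpha]_s$ and $\qq$. The lemma then outputs a $(9q,Q)$--ray $\beta'_r$ with $\beta'_r|_{r/2}=\beta|_{r/2}$ and $\beta'_r \eventual \alpha$, exhibiting a uniform $(9q,Q)$--redirection at every radius $r/2$ with $r \geq r_0$. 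For radii below this threshold, the same $\beta'_{r_0}$ still serves, since it agrees with $\beta$ on every ball of smaller radius.

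For $\alpha \preceq \beta$, no surgery lemma directly produces a redirection from a geodesic to a quasi-geodesic, so I would build $\gamma_r$ by hand. Let $p_r \in \beta$ be a point closest to $\alpha_r$, so that $\ell_r := d(\alpha_r, p_r) \leq m(\qq)\kappa(r)$, and define
\[
\gamma_r \;:=\; \alpha|_r \,\cup\, [\alpha_r, p_r] \,\cup\, \beta|_{[p_r,\infty)}.
\]
Part (I) of Lemma~\ref{surgery} applied at $x = \alpha_r$ shows that the tail $[\alpha_r,p_r]\cup \beta|_{[p_r,\infty)}$ is a $(3q,Q)$--quasi-geodesic ray from $\alpha_r$. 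The upper quasi-geodesic bound for the full $\gamma_r$ is routine via the triangle inequality. To obtain the lower bound with constants independent of $r$, I would invoke the $\kappa$-Morse property of $\alpha$: the whole ray $\gamma_r$ lies in $\calN_\kappa(\alpha, m')$ for a fixed $m'$ (since $\alpha|_r$ lies on $\alpha$, the bridge has length at most $m(\qq)\kappa(r)$, and $\beta \subset \calN_\kappa(\alpha, m)$ by the proposition), so closest-point projection to the geodesic $\alpha$ compresses distances only by a sublinear correction and tracks the path parameter of $\gamma_r$ coarsely monotonically.

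\textbf{Main obstacle.} The delicate step is extracting \emph{uniform} quasi-geodesic constants for $\gamma_r$ in the second direction. A direct estimate based on $\Norm{\gamma_r(t_1)}$ and $\Norm{\gamma_r(t_2)}$ produces an additive error that grows with $\ell_r = o(r)$, which is not bounded as $r \to \infty$. Overcoming this requires the Morse-projection argument: one pushes pairs of points on $\gamma_r$ down to $\alpha$ and uses the fact that on $\alpha$ (a geodesic) the projection of the path is coarsely Lipschitz in reverse, so the sublinear slack $\ell_r$ is absorbed into a single additive constant controlled by $\qq$ and the $\kappa$--Morse gauge of $\alpha$, rather than by $r$. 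Once this is in place, $\gamma_r$ is a uniform $\qq'$--ray, proving $\alpha \preceq \beta$ and hence $[\alpha]_s \subset [\alpha]$.
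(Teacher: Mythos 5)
Your first direction is sound and is essentially the paper's own argument: the strong Morse property together with sublinear fellow traveling gives $d(\alpha_r,\beta)\leq m(\qq)\kappa(r)<r/2$ for all large $r$, and Part (II) of \lemref{surgery} then produces the uniform $(9q,Q)$--redirection of $\beta$ to $\alpha$; the remark that redirection at a large radius subsumes smaller radii is also fine.

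The second direction has a genuine gap. You correctly isolate the hard step --- uniformity in $r$ of the constants of $\gamma_r=\alpha|_r\cup[\alpha_r,p_r]\cup\beta|_{[p_r,\infty)}$ --- but the proposed resolution does not close it. The assertion that the sublinear slack $\ell_r\leq m(\qq)\kappa(r)$ is ``absorbed into a single additive constant controlled by $\qq$ and the $\kappa$--Morse gauge, rather than by $r$'' is false as stated: $\kappa(r)\to\infty$, and projecting points of $\gamma_r$ to $\alpha$ costs an error of order $\kappa(\Norm{x})$, which for points at radius comparable to $r$ grows with $r$. Such an error can at best be traded for a worse multiplicative constant, and only for pairs of points whose parameter separation is itself comparable to $r$; for nearby pairs straddling the bridge the estimate degenerates and a separate local argument is required. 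None of this case analysis is carried out, so the claim that $\gamma_r$ is a uniform $\qq'$--ray is unproved. The paper sidesteps the difficulty by choosing the bridge point differently: let $y$ be the last point of $\beta$ in $B_{3r}$ and let $P=[q,z]$ realize the distance between $\alpha|_{2r}$ and $\beta_{[y,\infty)}$, with $q\in\alpha|_{2r}$ and $z\in\beta_{[y,\infty)}$. Then $P\cup\beta_{[z,\infty)}$ is a $(3q,Q)$--quasi-geodesic by Part (I) of \lemref{surgery}, and since (by minimality of $P$) the nearest point of $P\cup\beta_{[z,\infty)}$ to any point of $[\go,q]$ is $q$, a second application of Part (I) makes $[\go,q]\cup P\cup\beta_{[z,\infty)}$ a $(9q,Q)$--ray with no further estimates; the Morse property is used only at the end, to show $\Norm{q}\geq r$ so that this ray agrees with $\alpha$ out to radius $r$. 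To repair your version, either carry out the uniformity proof with the case analysis above, or replace your bridge at $\alpha_r$ by the nearest-point bridge from $\alpha|_{2r}$ to a far tail of $\beta$ as the paper does.
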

\begin{proof}
Since $\beta \in [\alpha]_s$, there is a sublinear function $\kappa'$ such that, for every $R>0$, 
\[d(\beta_R, \alpha) \leq \kappa'(R).\] \defref{Def:Strongly-Morse} implies that 
\[ \beta \in \calN_\kappa(\alpha, m_\alpha(\qq)).\]
Let $r$ be large enough such that 
\[
d( \beta_r, \alpha)  \leq m_\alpha(\qq) \kappa(r) < \frac{r}2 .
\]
Surgery Lemma \ref{redirect11} implies that $\beta$ can be redirected to $\alpha$ at radius $r$ by a 
$(9q, Q)$--quasi-geodesic ray where $\qq=(q,Q)$.  Since this holds for all large values of $r$, we have that 
$\beta \preceq \alpha$.

We now show $\alpha \preceq \beta$. Let $y \in X$ be the last point on $\beta$ that is in the ball of radius $3r$.
We denote the portion of $\beta$ from $y$ onward by $\beta_{[y, \infty)}$.

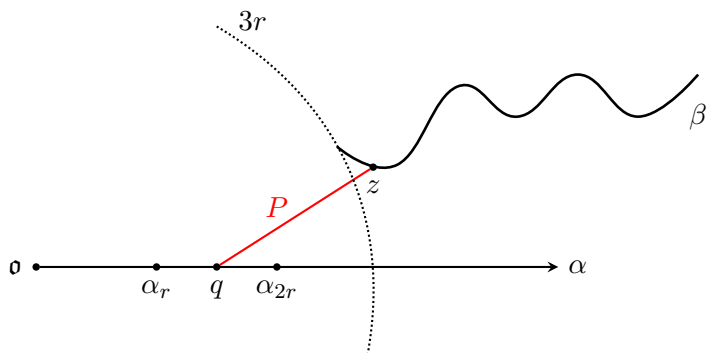
\begin{figure}[ht]
\begin{tikzpicture}[scale=0.8]
 \tikzstyle{vertex} =[circle,draw,fill=black,thick, inner sep=0pt,minimum size=2pt] 

 \node[vertex] (a) at (0,0)  [label=180:$\go$] {};
 \node  (b) at (9, 0) {$\alpha$};
 \draw [thick, ->](a)--(b);
 
 \node[vertex]  at (2, 0) [label=-90:$\alpha_r$] {}; 
 \node[vertex]  at (4, 0) [label=-90:$\alpha_{2r}$] {}; 
  
 \node[vertex] (q) at (3, 0)  [label=-90:$q$] {};
 \node[vertex] (z) at (5.6, 1.66)  [label=-90:$z$] {};
 \draw [red, thick] (q)--(z);

 \node at (4, 1){$\textcolor{red}{P}$};
%
 \path[thick, densely dotted](3,4) edge [bend left=35] (5.5,-1.5);
 \node at (3.6, 4.1){$3r$};

 \node at (11, 2.5) {$\beta$};
 
 \pgfsetlinewidth{1pt}
 \pgfsetplottension{.75}
 \pgfplothandlercurveto
 \pgfplotstreamstart
 \pgfplotstreampoint{\pgfpoint{5 cm}{2cm}}  
 \pgfplotstreampoint{\pgfpoint{6cm}{1.7cm}}   
 \pgfplotstreampoint{\pgfpoint{7 cm}{3cm}}
 \pgfplotstreampoint{\pgfpoint{8 cm}{2.5cm}}
 \pgfplotstreampoint{\pgfpoint{9 cm}{3.2cm}}
 \pgfplotstreampoint{\pgfpoint{10 cm}{2.5cm}}
 \pgfplotstreampoint{\pgfpoint{11cm}{3.2cm}}
 \pgfplotstreamend
 \pgfusepath{stroke} 
  
\end{tikzpicture}
\caption{The path $[\go, q] \cup P \cup \beta_{[z, \infty)}$ is a quasi-geodesic ray that redirects $\alpha$ to $\beta$.}
\end{figure}
Let $P$ be the geodesic segment that realizes the set distance between $\alpha|_{2r}$ and 
$\beta_{[y, \infty)}$. Such a segment exists  since $X$ is proper.  
Suppose $P$ has an end point $q$ on $\alpha|_{2r}$ and an end point $z$ on $\beta_{[y, \infty)}$.
We denote the portion of $\beta$ from $z$ onward by $\beta_{[z, \infty)}$. 
By Surgery Lemma~\ref{surgery} I, $P \cup \beta_{[z, \infty)}$ is a $(3q, Q)$--quasi-geodesic.  
Also, the closets point projection of any point in $[\go, q]$ to $P \cup \beta_{[z, \infty)}$ is $q$,
otherwise, $P$ would not be the shortest path from $\beta_{[y, \infty)}$ to $\alpha|_{2r}$. 
Thus, again by Surgery Lemma~\ref{surgery} I, the concatenation 
\[
\ell : = [\go, q] \cup P \cup \beta_{[z, \infty)}
\]
 is a $(9q, Q)$--ray. 
 
It remains to prove that, for $r$ large enough, we have $\Norm{q}\geq r$. First notice that 
\[
\Norm{z} \leq d(z, q) + d(q, \go) \leq d(y, q) + 2r \leq d(y, \go) + d(\go, q) + 2r \leq 7r. 
\]
 Choose $r$ large enough such that 
 \[
d( z , \alpha)  \leq m_\alpha(\qq) \kappa(7r) < \frac{r}2.
\]
Let $z_{\alpha}$ be a closest point in $\alpha$ from $z$. Then $\Norm{z_{\alpha}} \geq 3r - r/2 > 2r$,
that is, $\alpha_{2r}$ lies in between $q$ and $z_\alpha$. We have 
\[
d(q, z_{\alpha}) - d(z_\alpha, z)  \leq d(q, z) \leq 
d(\alpha_{2r},z) \leq d(\alpha_{2r}, z_{\alpha}) + d(z_\alpha, z).
\]
Therefore, 
\[
d(q, z_{\alpha})  - d(\alpha_{2r}, z_{\alpha}) \leq  2 d(z_\alpha, z) \leq r. 
\]
This implies that $d(q, \alpha_{2r}) \leq r$ and hence $\Norm{q} \geq r$. 
We have shown that the geodesic $\alpha$ can be $(9q, Q)$--redirected to $\beta$ at radius $r$ 
for all $r$ large enough.
\end{proof}

\begin{proposition}\label{SubMorsemin}
Let $X$ be a proper geodesic metric space satisfying Assumption 0, let $\bfa \in P(X)$ be
a quasi-redirecting class and let $\alpha \in \bfa$ be $\kappa$--Morse quasi-geodesic ray. 
Then $\bfa = [\alpha]_s$ and, for an appropriate choice of a central element in $\bfa$,
we can take $f_{\bfa}(q, Q) = (9q, Q)$.  Moreover, $\bfa=[\alpha]_s$ is minimal in the partial order of $P(X)$. That is to say, if $\bfa$ is $\kappa$ Morse, then
\[
\bfb \preceq \bfa \Longrightarrow \bfb \sim \bfa.
\]
\end{proposition}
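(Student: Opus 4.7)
The plan is to reduce all three assertions to one key claim: \emph{if $\beta$ is any quasi-geodesic ray with $\beta \preceq \alpha$, then $\beta \sim_s \alpha$}. Granted this, the inclusion $[\alpha]_s \subseteq \bfa$ is exactly \lemref{kapparedirect}, with explicit redirecting constants $(9q,Q)$. Any $\beta \in \bfa$ satisfies $\beta \preceq \alpha$, so the key claim gives $\beta \sim_s \alpha$, proving $\bfa \subseteq [\alpha]_s$. Minimality is the same observation: if $\bfb \preceq \bfa$ and $\beta \in \bfb$, then $\beta \preceq \alpha$, hence $\beta \sim_s \alpha$ and $\beta \in [\alpha]_s = \bfa$, so $\bfb = \bfa$. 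Finally, choosing a geodesic $\alpha_0 \in \bfa$ as the central element (produced in the next step), \lemref{kapparedirect} says that every $\qq$--ray $\beta \in \bfa = [\alpha_0]_s$ admits a $(9q,Q)$--redirection to $\alpha_0$, yielding $f_\bfa(q,Q) = (9q,Q)$.

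To prove the key claim I would first produce a $\kappa$--Morse geodesic representative of $\bfa$. By \lemref{limit-geo} there is a geodesic ray $\alpha_0$ arising as a pointwise limit of geodesic segments $[\go, x_i]$ whose endpoints lie on $\alpha$ and satisfy $\Norm{x_i} \to \infty$. Applying \defref{Def:Weakly-Morse} to each $[\go, x_i]$ gives $[\go, x_i] \subset \calN_\kappa(\alpha, m_\alpha(1, 0))$, and passing to the limit yields $\alpha_0 \subset \calN_\kappa(\alpha, m_\alpha(1, 0))$. A short routine argument (extending any quasi-geodesic with endpoints on $\alpha_0$ to one with endpoints on $\alpha$ by adjoining the bounded nearest-point projections, invoking the $\kappa$--Morse property of $\alpha$, and projecting back) shows that $\alpha_0$ is itself $\kappa$--Morse and that $\alpha_0 \sim_s \alpha$, so $\alpha_0 \in \bfa$.

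It now suffices to verify the key claim with the geodesic $\alpha_0$ in place of $\alpha$. Fix a $\qq$--ray $\beta$ with $\beta \preceq \alpha_0$. For each $r > 0$, there is a $\qq'$--redirecting ray $\gamma_r$ with $\gamma_r|_r = \beta|_r$ and $\gamma_r \eventual \alpha_0$. For every $T$ large enough that $\gamma_r(T)$ lies on $\alpha_0$, the segment $\gamma_r|_{[0,T]}$ is a $\qq'$--quasi-geodesic whose two endpoints $\go$ and $\gamma_r(T)$ both lie on $\alpha_0$; hence by \defref{Def:Weakly-Morse}, $\gamma_r|_{[0, T]} \subset \calN_\kappa(\alpha_0, m_{\alpha_0}(\qq'))$. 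Letting $T \to \infty$ and then $r \to \infty$, one concludes that all of $\beta$ lies in $\calN_\kappa(\alpha_0, m_{\alpha_0}(\qq'))$.

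Finally, convert this neighborhood containment into sublinear fellow traveling using the fact that $\alpha_0$ is a geodesic. For every $r$, pick a closest point $p_r = \alpha_0(t^*_r)$ on $\alpha_0$ to $\beta_r$, so $d(\beta_r, p_r) \leq m_{\alpha_0}(\qq') \kappa(r)$. Since $\alpha_0$ is geodesic, $\Norm{p_r} = t^*_r$, so $|t^*_r - r| \leq m_{\alpha_0}(\qq') \kappa(r)$ and therefore $d(\alpha_0(r), p_r) \leq m_{\alpha_0}(\qq') \kappa(r)$. The triangle inequality then gives $d(\alpha_{0, r}, \beta_r) \leq 2 m_{\alpha_0}(\qq') \kappa(r) = o(r)$, so $\beta \sim_s \alpha_0 \sim_s \alpha$, as required. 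The main obstacle is the preliminary reduction: transferring the $\kappa$--Morse property from the arbitrary quasi-geodesic $\alpha$ to its geodesic limit $\alpha_0$, and checking that \defref{Def:Weakly-Morse} behaves well under the limiting construction of \lemref{limit-geo}.
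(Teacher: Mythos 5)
Your proof is correct and follows essentially the same route as the paper's: both pass to a $\kappa$--Morse geodesic representative $\alpha_0$ of the class, use \lemref{kapparedirect} to get $[\alpha_0]_s \subseteq [\alpha_0]$ with redirecting constants $(9q,Q)$, and then apply the weak Morse property of $\alpha_0$ to the redirecting rays $\gamma_r$ (whose endpoints eventually lie on $\alpha_0$) to place any $\beta \preceq \alpha_0$ in a sublinear neighborhood of $\alpha_0$, which yields $[\alpha_0] \subseteq [\alpha_0]_s$ and minimality simultaneously. The only difference is that the paper obtains the $\kappa$--Morse geodesic representative by citing \cite[Lemma 4.2]{QRT2} rather than re-deriving it from \lemref{limit-geo} as you sketch, and it leaves implicit the final conversion from neighborhood containment to sublinear fellow traveling, which you spell out.
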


\begin{proof}
By \cite[Lemma 4.2]{QRT2}, the class $[\alpha]_s$ always contains a geodesic representative $\alpha_0$
which is also $\kappa$--Morse. Then $[\alpha]_s = [\alpha_0]_s$ and by  \lemref{kapparedirect}, 
$[\alpha_0]_s \subseteq [\alpha_0]$. In particular, $\alpha \in [\alpha_0]$ and $[\alpha] = [\alpha_0]$. It remains to show that 
$[\alpha_0] \subseteq [\alpha_0]_s$. 

Pick a $\qq$--ray $\beta \in [\alpha_0]$. Then there is $\qq'$ and, for $r>0$, there is a $\qq'$--ray, denoted $\gamma_r$, 
quasi-redirecting $\beta$ to $\alpha_0$ at radius $r$. Since $\alpha_0$ is $\kappa$--weakly Morse (\defref{Def:Weakly-Morse}), 
we have 
\[
\gamma_r \subset \calN_\kappa\big(\alpha, m_\alpha(f_{\alpha_0}(\qq))\big).
\] 
But this holds for every $r$ and $\beta \subset \cup_r \gamma_r$. Hence $\beta$
sublinearly fellow travels $\alpha_0$ and $\beta \in [\alpha_0]_s$. 

Note that we just showed $\beta \prec \alpha_0$ implies $\beta \in [\alpha_0]_s = \bfa =[\alpha]$. 
That is, any quasi-geodesic ray smaller than $\alpha$ is in $\bfa$. This means 
$\bfa$ is minimal in $P(X)$. The assertion that 
\[f_\bfa(\qq) = (9q, Q)\] follows from the proof
of \lemref{kapparedirect}. 
\end{proof}

\subsection{Topology of $\pka X$} \label{Sec:k-Topology}
Similar to $X \cup \partial X$, the topology in $X \cup \partial_\kappa X$ is defined using 
a neighborhood basis (see \cite{QRT2} for more details). Namely, for $\bfa \in \pka X$, let 
$m_{\alpha_0}$ be the Morse gauge for $\alpha_0$ (the central element in $\bfa$). 
For $r>0$, we define the set $\calU_\kappa(\bfa, r) \subseteq X \cup \partial_\kappa X$ as follows:
\begin{itemize}
\item  An equivalence class $\bfb \in \partial_\kappa X$ belongs to $\calU(\bfa, r)$ if,
for any $\qq$--ray $\beta \in \bfb$, where $m_{\alpha_0}(\qq) \leq \frac{r}{2 \kappa(r)}$,
we have
\[ 
\beta\vert_r \subseteq \calN_{\kappa}\big(\alpha_0, m_{\alpha_0}(\qq)\big).
\]
\item  A point $p \in X$ belongs to $\calU_\kappa(\bfa, r)$ if $d_X(\go, p) \geq r$ and, 
for every $\qq$--quasi-geodesic segment $\beta$ connecting $\go$ to $p$, 
where $m_{\alpha_0}(\qq) \leq \frac{r}{2 \kappa(r)}$,  we have
\[
\beta|_{r} \subseteq \calN_{\kappa}\big(\alpha_0, m_{\alpha_0}(\qq)\big).
\]
\end{itemize}

\begin{theorem} \label{Thm:two-bordifications}
The bordification $X \cup \, \pka X$ is a topological subspace of $X \cup \, \partial X$. 
\end{theorem}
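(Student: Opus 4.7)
The plan is to leverage Proposition~\ref{SubMorsemin}, which shows that every sublinearly Morse class $\bfa$ is in fact a quasi-redirecting class with $f_\bfa(q,Q)=(9q,Q)$; this already identifies $X\cup\partial_\kappa X$ set-theoretically with a subset of $X\cup\partial X$. What remains is to check that the intrinsic topology on $X\cup\partial_\kappa X$ from Section~\ref{Sec:k-Topology} agrees with the subspace topology inherited from $X\cup\partial X$. At points of $X$, both topologies are generated by metric balls, so I would focus on a fixed $\bfa\in\partial_\kappa X$ and verify that the neighborhood systems $\{\calU_\kappa(\bfa,r)\}$ and $\{\calU(\bfa,r)\cap(X\cup\partial_\kappa X)\}$ are mutually cofinal. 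Throughout, by Proposition~\ref{SubMorsemin} I would take $\alpha_0$ to be a geodesic representative of $\bfa$ with Morse gauge $m_{\alpha_0}$.

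For the inclusion $\calU_\kappa(\bfa,r')\subseteq\calU(\bfa,r)$, I would first use Lemma~\ref{Lem:Max} to reduce to quasi-geodesic rays with constants $\qq\leq\qq_{\rm max}(r)$; properness of $m_{\alpha_0}$ then yields a uniform bound $M_r$ on $m_{\alpha_0}(\qq)$ for such rays. I would pick $r'\geq r$ large enough that both $M_r\leq r'/(2\kappa(r'))$ and $M_r\cdot\kappa(r')\leq r'/2$, which is possible since $\kappa$ is sublinear. Given $\bfb\in\calU_\kappa(\bfa,r')$ and a $\qq$--ray $\beta\in\bfb$ with $\qq\leq\qq_{\rm max}(r)$, the definition of $\calU_\kappa$ gives $\beta|_{r'}\subseteq\calN_\kappa(\alpha_0,m_{\alpha_0}(\qq))$, so $d(\beta_{r'},\alpha_0)\leq M_r\cdot\kappa(r')\leq r'/2$. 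Surgery Lemma~\ref{redirect11} then produces a $(9q,Q)$--redirection of $\beta$ to $\alpha_0$ at radius $r'\geq r$, and $(9q,Q)=f_\bfa(\qq)\leq F_\bfa(\qq)$. The same estimate applied to the segment $[\go,x]_\beta$ handles the $X$--point case.

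For the reverse inclusion $\calU(\bfa,r')\cap(X\cup\partial_\kappa X)\subseteq\calU_\kappa(\bfa,r)$, I would use the $\kappa$--Morse II characterization of $\alpha_0$ from \defref{Def:Strongly-Morse}. From $m_{\alpha_0}(\qq)\leq r/(2\kappa(r))$ and properness of $m_{\alpha_0}$, extract an upper bound $\qq_r$ on the admissible $\qq$; set $M'_r := m_{\alpha_0}(F_\bfa(\qq_r))$ and define $\kappa'(t):=M'_r\cdot\kappa(t)$, which is sublinear. Applying \defref{Def:Strongly-Morse} to $\alpha_0$ with this $\kappa'$ and target radius $r$ produces $R_0=R_0(r,\kappa')$; set $r':=\max(r,R_0)$. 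For $\bfb\in\calU(\bfa,r')\cap\partial_\kappa X$ and any $\qq$--ray $\beta\in\bfb$ satisfying the gauge bound, let $\gamma$ be the $F_\bfa(\qq)$--redirection to $\alpha_0$ at radius $r'$. Since $\gamma$ eventually coincides with $\alpha_0$, the weakly Morse condition applied to the subsegment of $\gamma$ between $\go$ and the first point where $\gamma$ lands on $\alpha_0$ yields $\gamma|_{r'}\subset\calN_\kappa(\alpha_0,m_{\alpha_0}(F_\bfa(\qq)))$. Since $\beta_{r'}=\gamma_{r'}$, this gives $d(\beta_{r'},\alpha_0)\leq M'_r\cdot\kappa(r')=\kappa'(r')$, and then $\kappa$--Morse II delivers $\beta|_r\subset\calN_\kappa(\alpha_0,m_{\alpha_0}(\qq))$, proving $\bfb\in\calU_\kappa(\bfa,r)$. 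The argument for $X$--points proceeds analogously after taming the quasi-geodesic segment from $\go$ to $x$ using Lemma~\ref{Lem:Tame}.

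The main technical obstacle is uniformity in $\qq$: the $\kappa$--Morse II condition demands a single sublinear function $\kappa'$ that works for all admissible $\qq$, whereas the raw redirection bound on $\gamma$ depends on $F_\bfa(\qq)$. The resolution is that the sublinear gauge bound $m_{\alpha_0}(\qq)\leq r/(2\kappa(r))$, combined with properness of $m_{\alpha_0}$, implicitly confines $\qq$ to a bounded set; this in turn bounds $F_\bfa(\qq)$ and hence $m_{\alpha_0}(F_\bfa(\qq))$, so that a single $\kappa'$ and single $R_0$ suffice. A parallel bounded-$\qq$ reduction (via Lemma~\ref{Lem:Max}) enables the surgery step in the other direction. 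Finally, by Remark~\ref{Rem:F}, the precise choice of $F_\bfa$ in defining $\calU(\bfa,r)$ does not affect the resulting topology, which gives some flexibility in handling the overlap of $\qq$--dependence between the two definitions.
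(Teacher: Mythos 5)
Your proposal is correct and follows essentially the same route as the paper: after identifying $\partial_\kappa X$ inside $\partial X$ via Proposition~\ref{SubMorsemin}, you prove mutual cofinality of the two neighborhood systems, using the surgery Lemma~\ref{surgery}(II) together with sublinearity of $\kappa$ for the inclusion $\calU_\kappa(\bfa,r')\subseteq\calU(\bfa,r)$ (as in Lemma~\ref{kapparedirect}), and the $\kappa$--Morse I/II characterizations with a rescaled sublinear function $\kappa'$ for the reverse inclusion — exactly the paper's two claims. Your explicit treatment of uniformity in $\qq$ (via Lemma~\ref{Lem:Max} and the bound $M'_r=m_{\alpha_0}(F_\bfa(\qq_r))$) matches the paper's supremum defining $\kappa'$, so there is no substantive difference.
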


\begin{proof}
The restriction of both topologies to $X$ is the topology defined by the metric on $X$. 
We will prove the theorem with the following two claims:
\begin{claim}
For every $\bfa \in \pka X$ and for all $r>0$, there exists $R$ such that 
\[
\big (\calU(\bfa, R) \cap \partial_\kappa X  \big ) \subseteq \calU_\kappa (\bfa, r).
\]
\end{claim}
\begin{proof}[Proof of Claim] \renewcommand{\qedsymbol}{$\blacksquare$} 
For $r>0$, let 
\[
\kappa' (t) =  \sup_{m_{\alpha_0}(\qq) \leq \frac{r}{2 \kappa(r)}} m_{\alpha_0}(F_\bfa(\qq)) \cdot  \kappa(t)
\]
and let $R$ be as in \defref{Def:Strongly-Morse} where $\alpha$ is $\alpha_0$. 
Let $\bfb \in \calU (\bfa, R) \cap \pka X$. Then every $\qq$--ray $\beta \in \bfb$ can be 
$F_\bfa(\qq)$--redirected to $\alpha_0$ at radius $R$.  That is, $\beta|_R$ is a subsegment 
of a $F_\bfa(\qq)$--quasi-geodesic segment with end points on $\alpha_0$. But $\alpha_0$ is $\kappa$--Morse
and, by Definition~\ref{Def:Weakly-Morse}, 
\[
\beta|_R \subset \calN_\kappa(m_\bfa(F_\bfa(\qq)), \alpha_0 ). 
\]
From definition of $\calU_\kappa(\bfa, r)$, we need to check that if 
$m_{\alpha_0}(\qq) \leq \frac{r}{2 \kappa(r)}$ then 
\[
\beta|_r \subset \calN_\kappa(m_\bfa(\qq), \alpha_0 ).
\]
But that is exactly what \defref{Def:Strongly-Morse} implies for $R$ chosen as above. 
The proof for a point $x \in \calU (\bfa, R) \cap X$ is similar.
\end{proof}

\begin{claim}
For every $\bfa \in \pka X$ and $r> 0$ there exists $R > 0$ such that
\[ \calU_\kappa(\bfa, R) \subseteq \calU(\bfa, r).\]
\end{claim}

\begin{proof}[Proof of Claim] \renewcommand{\qedsymbol}{$\blacksquare$} 
We can use the same argument as in the proof of \lemref{kapparedirect}. 
In that proof, we needed to know that $z$ with $\Norm{z} \leq 7r$ is 
still in the sublinear neighborhood of the central geodesic. Hence, 
for $r$ large enough and $R \geq 7r$. The claim holds. 

Again, the proof for a point $\mathbb{x} \in \calU (\bfa, R) \cap X$ is similar is hence it is omitted.
\end{proof}
Since the sets $\calU (\bfa, R)$ and $\calU_\kappa (\bfa, R)$ for neighborhood basis for
$X \cup \, \pka X$ and  $X \cup \, \partial X$ respectively, the above claims prove the
Proposition. 
\end{proof}

\begin{corollary}
Let $X$ be a proper Gromov hyperbolic space. Then $X \cup \partial X$ is homeomorphic
to the Gromov compactification of $X$. 
\end{corollary}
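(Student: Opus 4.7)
The plan is to deduce this corollary as a direct consequence of \thmref{Thm:two-bordifications}, using the fact that Gromov hyperbolicity forces every quasi-geodesic ray to be Morse (in particular $\kappa$--Morse for the constant sublinear function $\kappa \equiv 1$). First I would verify that a proper geodesic Gromov hyperbolic space $X$ satisfies Assumptions 0, 1 and 2. Assumption 0 is the nearest-point-projection/limiting argument of \lemref{Lem:cocompact} together with the Morse lemma: any point $x \in X$ lies within bounded distance of a geodesic from $\go$ to any boundary point (which exists because $X$ is proper and the Gromov boundary is understood to be nontrivial), and concatenation via Part (I) of \lemref{surgery} produces a $\qq_0$--ray through $x$. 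Assumption 1 follows from the Morse stability of quasi-geodesics: every quasi-geodesic ray in a $\delta$--hyperbolic space lies within bounded Hausdorff distance of a geodesic ray, so every class in $P(X)$ contains a geodesic ray, which we take as the central element (with a uniform $\qq_0 = (1,0)$). Assumption 2 follows because in $\delta$--hyperbolic space, if $\beta$ is a $\qq$--ray and $\alpha_0$ a geodesic, any two rays with the same ideal endpoint lie in a uniform neighborhood of one another; explicit redirecting constants $f_\bfa(\qq)$ depending only on $\qq$ and $\delta$ are produced by the nearest-point projection surgery of \lemref{surgery}(I).

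Next I would identify $P(X)$ with the Gromov boundary $\partial_G X$ as a set. Since every quasi-geodesic ray is Morse with a universal Morse gauge depending only on its QI constants, every $\bfa \in P(X)$ is $\kappa$--Morse for $\kappa \equiv 1$. \propref{SubMorsemin} then gives $\bfa = [\alpha_0]_s$ for the geodesic representative $\alpha_0 \in \bfa$. In a hyperbolic space, sublinear fellow-traveling of quasi-geodesic rays with the same ideal endpoint coincides with bounded fellow-traveling (the Morse gauge is constant, so the sublinear neighborhood is actually a bounded neighborhood), and this is exactly the Gromov equivalence. Conversely, any quasi-geodesic rays converging to the same point of $\partial_G X$ lie at bounded Hausdorff distance and hence sublinearly fellow travel, so they are in the same class. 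The map $[\alpha] \mapsto \alpha(\infty)$ is therefore a bijection $\partial X \to \partial_G X$.

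Finally, for the topology, I would apply \thmref{Thm:two-bordifications} with $\kappa \equiv 1$. This gives that $X \cup \partial_1 X$ is a topological subspace of $X \cup \partial X$. Since every quasi-geodesic class is $1$--Morse, the two sets coincide, so the inclusion is the identity and the two topologies on $X \cup \partial X$ agree. It then remains to identify $X \cup \partial_1 X$ with the Gromov compactification, which is standard: the neighborhoods $\calU_\kappa(\bfa, r)$ in \secref{Sec:k-Topology} are precisely the cone neighborhoods $\{x : $ every QI ray through $x$ fellow-travels $\alpha_0$ up to radius $r\}$ defining the Gromov topology on $X \cup \partial_G X$.

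The main obstacle is the careful verification of Assumption 2, since it requires producing uniform redirecting constants $f_\bfa(\qq)$ that depend only on $\qq$ (and $\delta$), not on the class $\bfa$; but this is exactly the content of the Morse lemma combined with Part (I) of \lemref{surgery}. Everything else is bookkeeping: once Assumptions 0--2 are established and every ray is $1$--Morse, the corollary falls out of \propref{SubMorsemin} and \thmref{Thm:two-bordifications}.
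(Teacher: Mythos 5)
Your proposal is correct and follows essentially the same route as the paper: all quasi-geodesics in a hyperbolic space are Morse, so the Gromov boundary coincides with the $\kappa$--boundary for $\kappa\equiv 1$, the inclusion $\partial_\kappa X \subset \partial X$ is a bijection, and Theorem~\ref{Thm:two-bordifications} upgrades this to a homeomorphism. The only difference is that you explicitly verify Assumptions 0--2 for hyperbolic spaces, which the paper's two-sentence proof leaves implicit; that is a reasonable (and harmless) addition rather than a different argument.
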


\begin{proof} 
Since all geodesics in $X$ are Morse, the Gromov boundary is homeomorphic to the 
$\kappa$--boundary for $\kappa =1$. Also, again because all geodesics are $\kappa$-Morse (where $\kappa$=1), 
the inclusion $\partial_\kappa \subset \partial X$ is in fact a bijection. Therefore, 
in view of \thmref{Thm:two-bordifications}, the Gromov boundary is also homeomorphic 
to $\partial X$. 
\end{proof} 

\section{Symmetry of redirecting}
The partial order we defined on quasi-geodesics  is not in general symmetric (see \secref{counterexample} for an example).  
However,  symmetry is also a natural assumption to consider and it holds for our main class of examples  
that is discussed in \secref{Sec:ATM}. Here, we briefly investigate the consequences of the symmetry assumption 
in general terms (not specific examples). Notably, we will show that under this assumption, the boundary 
$\partial X$, when defined, is Hausdorff. 

\subsection*{Assumption 3}(Symmetry of redirecting) 
For every pair of quasi-geodesic rays $\alpha$ and $\beta$,
\[ 
\alpha \preceq \beta \quad \Longrightarrow \quad \beta \preceq \alpha.
\]

First we note that  Assumption 3 can be used instead of  Assumption 1. 

\begin{lemma}\label{geo-rep}
Assumption 0 and Assumption 3 imply  Assumption 1.
\end{lemma}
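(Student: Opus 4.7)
The plan is to observe that Assumption 3 is strong enough to upgrade the one-sided conclusion of Lemma~\ref{limit-geo} to a two-sided one, providing a genuine geodesic representative inside every equivalence class.

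First, I would pick an arbitrary class $\bfa \in P(X)$ and any representative $\alpha \in \bfa$. Since Assumption 0 gives us that $X$ is a proper geodesic metric space, Lemma~\ref{limit-geo} applies and produces a geodesic ray $\alpha_0$ with $\alpha_0 \preceq \alpha$. This is the only place where properness/geodesicity of $X$ is used: it is what lets the sequence of geodesic segments $[\go, x_i]$ subconverge (by Arzel\`a--Ascoli) to a genuine geodesic ray from $\go$.

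Next, I would invoke Assumption 3 directly: since $\alpha_0 \preceq \alpha$, symmetry of $\preceq$ forces $\alpha \preceq \alpha_0$ as well. Hence $\alpha \simeq \alpha_0$, i.e.\ $\alpha_0 \in \bfa$. Since $\alpha_0$ is a geodesic ray it is in particular a $(1,0)$--quasi-geodesic ray, so every class of $P(X)$ contains a $(1,0)$--ray. Replacing $(1,0)$ by $\max\bigl((1,0), \qq_0\bigr)$ if necessary, where $\qq_0$ is the constant from Assumption 0, we obtain a single pair of constants that serves every class simultaneously, and we may declare one such $\alpha_0$ to be the central element of $\bfa$. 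This verifies Assumption 1.

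There is no real obstacle here; the argument is essentially two lines once Lemma~\ref{limit-geo} and Assumption 3 are in hand. The only conceptual point worth noting is that Lemma~\ref{limit-geo} already does all the work of producing a geodesic limit, and the role of Assumption 3 is purely to close the inequality $\alpha_0 \preceq \alpha$ into an equivalence, so that the geodesic we have manufactured actually lives in the class $\bfa$ rather than in some class strictly below it.
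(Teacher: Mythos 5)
Your proof is correct and follows essentially the same route as the paper: produce a geodesic ray $\alpha_0 \preceq \alpha$ by a limiting argument (you cite Lemma~\ref{limit-geo}, the paper reruns the same Arzel\`a--Ascoli argument inline), then apply Assumption~3 to conclude $\alpha_0 \in \bfa$. The only cosmetic difference is that the paper takes $\qq_0 = (1,0)$ outright where you take a maximum with the constant from Assumption~0; both are fine.
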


\begin{proof}
Consider a class $\bfa = [\alpha]\in \partial X$ where $\alpha$ is a $\qq$--ray. By Assumption 0, there is 
geodesic segment $\alpha_i$ connecting $\go$ to $\alpha(i)$. Taking a subsequence, we can assume 
that segments $\alpha_i$ point-wise converge to a geodesic ray $\alpha_0$ and we have as usual
$\alpha_0 \preceq \alpha$. Hence by Assumption 3 $\alpha \preceq \alpha_0$ and thus $\alpha_0 \in \bfa$. That is, the class $\bfa$ has a representative 
that is a geodesic ray. Therefore,  Assumption 1 holds for $\qq_0=(1,0)$. 
\end{proof}

However, Assumption 2 is still necessary. We demonstrate  that in the following example:
\begin{example}\label{example2}
Here we construct a geodesic metric space that satisfies Assumption 0, 1 and 3 but not
Assumption 2. The space $X$ consists of the following:
\begin{itemize}
\item All points in the xy-plane where $y \geq 0, x\geq 0$ and $y \leq \frac 1x$ (shaded area).
\item For every positive integer $n$, points in the xy-plane on the ray $y = n x$, $y \geq 0$
(we denote this ray by $a_n$). 

\begin{figure}[ht]
\begin{tikzpicture}[domain=0.25:4]

\draw[thick] (0,0) -- (4,0) node[right] {$a_0$};
\draw[thick] (0,0) -- (0,4) node[above] {$b$};
\draw[thick] (0,0) -- (1,4) node[above]{$a_4$};
\draw[thick] (0,0) -- (1.5,4) node[above]{$a_3$};
\draw[thick] (0,0) -- (3,4) node[above]{$a_2$};
\draw[thick] (0,0) -- (4,4) node[above]{$a_1$};
\draw[color=blue] plot (\x,{1/(\x)});
\node at (3,1){$\dfrac{1}{x}$};
\draw[dashed] (3.8,0.05) -- (3.8,0.21);
\draw[dashed] (3.8,0.05) -- (3.8,0.21);
\draw[dashed] (3.8,0.05) -- (3.8,0.21);
\draw[dashed] (3.6,0.05) -- (3.6,0.22);
\draw[dashed] (3.4,0.05) -- (3.4,0.25);
\draw[dashed] (3.2,0.05) -- (3.2,0.27);
\draw[dashed] (3,0.05) -- (3,0.29);
\draw[dashed] (2.8,0.05) -- (2.8,0.31);
\draw[dashed] (2.6,0.05) -- (2.6,0.33);
\draw[dashed] (2.4,0.05) -- (2.4,0.35);
\draw[dashed] (2.2,0.05) -- (2.2,0.39);
\draw[dashed] (2,0.05) -- (2,0.43);
\draw[dashed] (1.8,0.05) -- (1.8,0.49);
\draw[dashed] (1.6,0.05) -- (1.7,0.53);
\draw[dashed] (1.4,0.05) -- (1.55,0.57);
\draw[dashed] (1.2,0.05) -- (1.4,0.62);
\draw[dashed] (1,0.05) -- (1.25,0.7);
\draw[dashed] (0.8,0.05) -- (1.1,0.78);
\draw[dashed] (0.6,0.05) -- (0.95,0.89);
\draw[dashed] (0.4,0.05) -- (0.9,1);
\draw[dashed] (0.2,0.05) -- (0.75,1.2);
\draw[dashed] (0,0.05) -- (0.6,1.4);
\draw[dashed] (0,0.5) -- (0.4,2.2);
\draw[dashed] (0,1) -- (0.35,2.6);
\draw[dashed] (0,1.5) -- (0.3,3);
\draw[dashed] (0,2) -- (0.25,3.4);
\draw[dashed] (0,2.5) -- (0.2,3.8);
\draw[dashed] (0,3) -- (0.1,3.9);
\end{tikzpicture}
\qquad
\begin{tikzpicture}[scale=0.9]
[point/.style={circle,fill,inner sep=1.3pt} ] \node (N) at (1.5,0){}; \node (N') at (1.3,0.8){}; \node (O) at (0,0){}; \node (A) at (2,0){}; \node (A') at (1.8,1.1){}; \node (B) at (2.5,0){}; \node (B') at (2.3,1.4){}; \node (C) at (3,0){}; \node (C') at (2.8,1.7){}; 
 \draw[thick] (0,0) -- (7,0) node[right]{$a_0$}; 
 \draw[thick] (0,0) -- (6,3.6) node[right]{$a_n$}; 
 \pic [thick, draw=black, angle radius=14.9mm,"$n^2$", angle eccentricity=1.2]{angle=N--O--N'}; 
 \path[thick] (1.6,0.4)edge [bend right=10](1.6,3){};
  \path[thick] (2.9, 0.8)edge [bend right=10](2.9,4){};
    \path[thick] (4.3, 1.2)edge [bend right=10](4.3,5){};
       \path[thick] (5.7, 1.5)edge [bend right=10](5.7,6){};
 \pic [thick, draw=black, angle radius=27mm,"$2n^2$",angle eccentricity=1.15]{angle=A--O--A'}; 
 \pic [thick, draw=black, angle radius=40mm,"$3n^2$",angle eccentricity=1.1]{angle=B--O--B'}; 
 \pic [thick, draw=black, angle radius=53mm,"$4n^2$",angle eccentricity=1.1]{angle=C--O--C'}; 
 \node at (0.75,-0.2){$n$}; \node at (2.5,-0.2){$n$}; \node at (3.8,-0.2){$n$}; 
 \node at (5.4,-0.2){$n$}; \node at (0.75,0.7){$n$}; 
 \node at (2,1.45){$n$}; \node at (3.3,2.2){$n$};
  \node at (4.3,2.8){$n$};   
  \end{tikzpicture}
\caption{A copy of the graph on the right is attached to the pairs $(a_0, a_n)$ for $n=1, 2, 3, etc$.}
\end{figure}
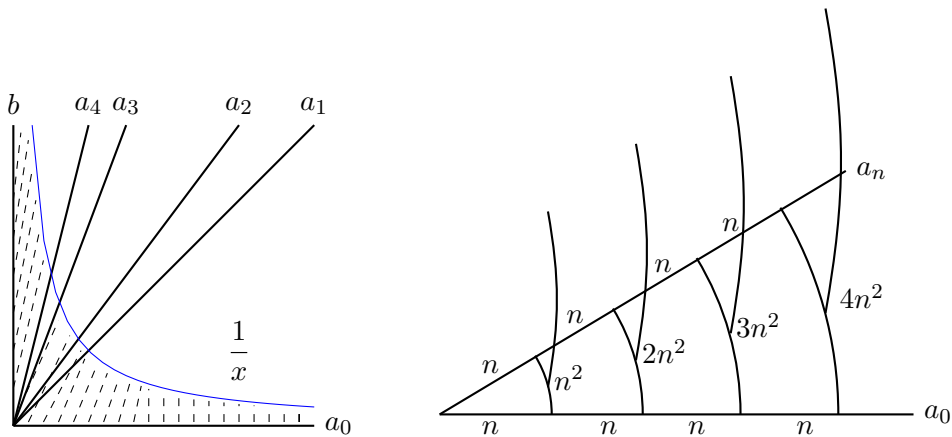

\item For every pair of positive integers $n$ and $k$, a segment $\omega_{k,n}$ of length $kn^2$ 
connecting $a_n(kn) = (kn, kn^2)$ to $a_0(kn)= (kn, 0)$. This segment is not embedded in the $xy-plane$. 
\item At the mid-point of each segment $\omega_{k,n}$ from $a_0(kn)$ to  $a_n(kn)$, attach a \emph{hair}, that is, an infinite geodesic ray.
\end{itemize}

The rays $a_n$ is isometrically embedded in $X$. Also, the rays
\[
a_n[0,kn] \cup \omega_{k,n} \cup a_0[kn, \infty) 
\quad\text{and}\quad 
a_0[0,kn] \cup \omega_{k,n} \cup a_n[kn, \infty) 
\]
are $((n+1), 0)$--quasi-geodesic rays. That is, $a_0$ and $a_n$ can be quasi-redirected to each other 
for $n=0, 1, 2, \dots$ are hence all $a_n$ are in the same class.  Choosing 
$a_0$ as the central element, the redirecting constants from $a_n$ to $a_0$ is 
$((n+1), 0)$. Thus Assumption 2 is not satisfied.

To see that $X$ satisfies Assumption 0, we note that by construction $X$ is a geodesic space. The region under the graph $y = 1/x$ ensures that the
space $X$ is proper. The attached hairs ensures every point outside of the shaded region lies on a geodesic ray. In fact, each hair
represents its own equivalence class and each such class contain exactly two geodesic representatives. 
Furthermore, for $\qq_0$ large enough, we can ensure that every point in the shaded region lies on a $\qq_0$-quasi-geodesic ray. Thus Assumption 0 is satisfied.

Note that $a_0$ can be quasi-redirected 
to every $a_n$ and vice versa. Hence, all geodesic rays $a_n$ belong to the same class $\bfa$. 
The geodesic ray $b$ and rays associated to each hair represent incomparable classes, which is also 
not comparable to $\bfa$. Therefore, every class has a geodesic representative and hence, Assumption 1
holds. Also, since all the classes are incomparable, Assumption 3 holds. But as we saw in the preceding paragraph, 
Assumption 2 does not hold. 
\end{example}

Lastly, we prove the main theorem of this section. 

\begin{theorem}\label{sym-Haus}
Let $X$ be a proper geodesic metric space where Assumption 0, 2 and 3 hold.
Then $\partial X$ is a Hausdorff. 
\end{theorem}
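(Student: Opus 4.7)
The plan is to show that any two distinct classes $\bfa \neq \bfb$ in $\partial X$ admit disjoint neighborhoods of the form $\calU(\param, r) \cap \partial X$. First observe that by Assumption 3, $\bfa \not\preceq \bfb$ and $\bfb \not\preceq \bfa$; otherwise the symmetric direction would also hold, forcing $\bfa = \bfb$. I then argue by contradiction: suppose that for all $r, s > 0$ the set $\calU(\bfa, r) \cap \calU(\bfb, s) \cap \partial X$ is nonempty. Choose $r_n, s_n \to \infty$ and classes $\bfc_n$ lying in $\calU(\bfa, r_n) \cap \calU(\bfb, s_n)$, and let $\gamma_n \in \bfc_n$ be the central $\qq_0$--ray of $\bfc_n$.

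Next, since $X$ is proper and the $\gamma_n$ are uniformly Lipschitz, Arzel\`a--Ascoli (as in \lemref{Lem:limit}) gives a subsequence converging locally uniformly to a $\qq_0$--ray $\gamma$. By the definition of $\calU(\bfa, r_n)$, there is an $F_\bfa(\qq_0)$--ray $\eta_n^\bfa$ with $\eta_n^\bfa|_{r_n} = \gamma_n|_{r_n}$ and $\alpha_0 \eventual \eta_n^\bfa$, where $\alpha_0$ is the central element of $\bfa$; similarly $\eta_n^\bfb$ for $\bfb$. For a fixed radius $R > 0$, choose $n$ large enough that $r_n > R$ and $\gamma_n$ is within distance $1$ of $\gamma$ on the initial interval corresponding to radius $\leq R$; on the same initial portion $\eta_n^\bfa$ coincides with $\gamma_n$, and hence it is within distance $1$ of $\gamma$ there. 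Applying \lemref{onestep} to $\gamma$ and $\eta_n^\bfa$, both regarded as $(q_1, Q_1)$--rays with $(q_1, Q_1) = \max(\qq_0, F_\bfa(\qq_0))$, with $C = 1$, produces a $(q_1, Q_1+1)$--ray that agrees with $\gamma$ up to radius $R$ and eventually coincides with $\alpha_0$. As these constants are independent of $R$, this shows $\gamma \preceq \alpha_0$; the identical argument with $\eta_n^\bfb$ gives $\gamma \preceq \beta_0$.

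Finally, Assumption 3 promotes these to $\alpha_0 \preceq \gamma$ and $\beta_0 \preceq \gamma$, and transitivity (\lemref{L:Transitive}) yields $\alpha_0 \preceq \beta_0$, i.e.\ $\bfa \preceq \bfb$. A last application of Assumption 3 forces $\bfa = \bfb$, contradicting our standing hypothesis. Hence there exist $r, s > 0$ with $\calU(\bfa, r) \cap \calU(\bfb, s) \cap \partial X = \emptyset$, so $\partial X$ is Hausdorff. The step that needs the most care is the uniformity of the redirecting constants throughout the diagonal construction: they must depend only on $\qq_0$, $F_\bfa(\qq_0)$ and $F_\bfb(\qq_0)$, never on $n$ or $R$. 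This uniformity is exactly what Assumption 2 supplies through the functions $F_\bfa, F_\bfb$, and the extraction--plus--surgery argument follows the same template as in the proof of \propref{Prop:Landing}.
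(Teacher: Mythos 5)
Your proof is correct and follows essentially the same route as the paper: extract a limiting $\qq_0$--ray $\gamma$ from representatives of the $\bfc_n$ via properness/Arzel\`a--Ascoli, use the surgery of \lemref{onestep} to conclude $\gamma \preceq \alpha_0$ and $\gamma \preceq \beta_0$ with uniform constants, and then invoke Assumption 3 plus transitivity to force $\bfa = \bfb$. The only difference is cosmetic — the paper cites \lemref{Lem:limit} for the limiting step, whereas you carry out the fellow-travelling surgery on the redirecting rays $\eta_n^{\bfa}$ inline, which is if anything a slightly more careful handling since the $\gamma_n$ lie in different classes.
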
 

\begin{proof}
Consider distinct points $\bfa, \bfb \in \partial X$. We will show that $\calU(\bfa, r) \cap \calU(\bfb, r) = \emptyset$
for some $r>0$. Otherwise, there is a sequence 
$r_i \to \infty$ and a sequence of points 
\[
\bfc_i \in \calU(\bfa, r_i) \cap \calU(\bfb, r_i).
\]
Let $\gamma_i$ be a $\qq_0$--ray in $\bfc_i$. Up to taking a subsequence, we can assume that 
$\gamma_i$ point-wise converges to a $\qq_0$--ray $\gamma$ representing a class $\bfc$. 
\lemref{Lem:limit} implies that $\gamma \prec \alpha_0$ and  $\gamma \prec \beta_0$. By symmetry $\alpha_0$ and $\beta_0$ 
can also be quasi-redirected to $\gamma$ and, by transitivity (\lemref{L:Transitive}), $\alpha_0$ and $\beta_0$ 
can be quasi-redirected to each other. Therefore, $\bfa = \bfb$. That is, $\bfa$ and $\bfb$ could not have been distinct. 
\end{proof} 

\section{Asymptotically tree-graded spaces relative to mono-directional subsets}\label{Sec:ATM}
In this section we study a class of spaces called \emph{asymptotically tree-graded spaces}. These spaces are first defined in \cite{DS05} 
and  are studied in \cite{DS08, sis12, sistoAT, OS}, among others. In this section we adopt the language and notation of \cite{DS05}.
These are metric space analogues of relatively hyperbolic groups. 
We focus on a sub-class where the peripheral subsets are mono-directional (see Section~\ref{Sec:Single}). 
We refer to these spaces as \emph{asymptotically tree-graded spaces relative to mono-directional subsets}, or ATM spaces, for short. 
We show that these spaces satisfy Assumptions 0, 1 and 2  and thus have well-defined quasi-redirecting boundaries. 
We also show that if $X$ is an ATM space then $\partial X$ is metrizable and compact. Finally, we see that, 
when $X$ is the Cayley graph of a group, $\partial X$ is an alternate description of the 
Bowditch boundary. 

\subsection{Background on asymptotically tree-graded spaces}
We start by recalling the necessary definitions and properties as laid out in \cite{DS05}. We refer the readers to 
\cite{DS05} for a more complete treatment. Asymptotically tree-graded spaces are metric spaces whose asymptotic cones are tree-graded. However, one of the main results of \cite[Theorem 4.1]{DS05} states that asymptotically tree-graded spaces 
can be characterized without referring to asymptotic cones. We use this characterization.

\begin{definition}\label{char-of-AT} 
We say that a subset of $X$ is a geodesic $k$--\emph{gon} if it is a union of $k$ geodesic segments $p_1,...,p_k$  with 
pairs of endpoints $((p_i)_-,  (p_{i})_+)$ such that, $(p_i)_+= (p_{i+1})_-$ for every $i=1,...,(k-1)$ and $(p_k)_+= (p_{1})_-$. Also for each of $i = 1, . . . , k$, 
we denote the polygonal curve $P \backslash (p_{i-1} \cup p_i )$ by $\calO_{x_i}$, where
 $x_i = (p_{i-1})_{+} = (p_i)_{-}$. 
 
Let $\vartheta>0$ and $v \geq 8$ be constants. We say a $k$--gon $P$ is \emph{$(\vartheta, 2, v)$--fat} if the following 
properties hold:
\begin{enumerate}[(F1)]
\item (Large comparison angles and large inscribed radii in interior points) For every edge $p$ with endpoints $\{x, y\}$ we have
\[
d(p\backslash\calN_{2\vartheta}({x,y}),P\backslash p) \geq \vartheta;\]
\item (Large edges and large inscribed radii in vertices) For every vertex $x$ we have
\[
d(x,\calO_x)\geq v\, \vartheta.
\]
\end{enumerate}
Let $(X,d)$ be a geodesic metric space and let 
$\calA$ be a collection of proper geodesic subsets of $X$. The metric space $X$ is asymptotically tree-graded 
with respect to $\calA$ if and only if the following properties are satisfied:
\begin{itemize}
\item[(AT1)](Isolated subsets) For every $\delta > 0$ there $D>0$ such that, for distinct sets $A, A' \in \calA$, the diameter of the intersection
$\calN_\delta (A) \cap \calN_\delta (A')$ is  bounded by $D$.
\item[(AT2)](Hyperbolicity outside of the special subsets) For every $\qq=(q, Q)$ and every $\theta \in [0, 0.5)$ there exists a number 
$M_{\qq}> 0$ such that for every $\qq$-quasi-geodesic  segment $\beta$ defined on $[0,\ell]$ and every $A \in \calA$ 
with $\beta(0), \beta(\ell) \in \calN_{\theta\ell/q} (A)$ we have $\beta([0, \ell]) \cap \calN_{M_{\qq}}(A) \neq \emptyset$.

 When $\qq = (1, 0)$ we also write $M_0$ for when $\qq = (1,0)$. We note that even without assumption of cocompactness, the choice of $M_{\qq}$ is uniform and 
 does not depend on specific choices of $A$.
 
\item[(AT3)](Fat polygons are entirely in $A$.) For every $k \geq 2$ there exist $\vartheta >0, v \geq 8$ and $\chi > 0$ such that every $k$-gon $P$ with geodesic edges which is  $(\vartheta, 2, v)$-fat satisfies $P \subset \calN_\chi(A)$ for some $A \in \calA$.  
\end{itemize}
\end{definition}

\begin{definition}[ATM] \label{Def:ATM} 
We say a space $X$ is \emph{ATM} if it satisfies Assumption 0 and it is 
asymptotically tree-graded with respect to a collection of mono-directional subspaces $\calA$ that each satisfies Assumption 2.  
We denote the redirecting function with respect to the central element of $A$ by $f_A$.
Note that the function $f_A$ depends on the choice of base point, however, we do not 
include this in the notation.  
\end{definition}

\subsection*{Transition points and transient rays}
We now recall some basic properties of ATM spaces.  The following definitions 
and results are developed in \cite{sis12, Hru10}.

 \begin{definition}\cite[Definition 3.9]{sis12}\label{deep}
 Let $\alpha$ be a path in $X$. For $M, c>0$, define the $\deep_{M,c} (\alpha)$ to be the set of points
 $x \in \alpha$ such that there exists a subpath of $\alpha$ containing $x$ 
 with endpoints $x_1,x_2$ and $A \in \calA$ where
 \[
x_1,x_2 \in N_M(A)  
\qquad\text{and}\qquad 
d(x,x_i) \geq c \quad \text{for $i = 1, 2$}. 
\]
Thinking of $\alpha$ as a subset of $X$, define 
\[
\trans_{M,c}(\alpha) = \alpha - \deep_{M,c} (\alpha)
\]
to be the set of $(M,c)$--transition points of $\alpha$. 
\end{definition}

\begin{proposition}\cite{sis12, DS05} \label{sis12}
Let $X$ be a proper, geodesic, asymptotically tree-graded metric space.
For every $\qq$ there exist constant $M = M(\qq)$, $c=c(\qq), D=D(\qq)$ and $\rho(\qq)$ such that the followings hold. 
Let $\alpha \from [a,b] \to X$ be a $\qq$--quasi-geodesic segment.
\begin{enumerate}[(I)]
\item  The set $\deep_{M, c}(\alpha)$ is a disjoint union 
of subpaths each contained in $N_{\rho M}(A)$ for distinct sets $A \in \calA$. 
\item For any pair of $\qq$--quasi-geodesic 
segments $\alpha, \beta$ with the same endpoints, we have
\[ 
d_{\rm Haus} \big( \trans_{M,c}(\alpha), \trans_{M,c}(\beta)\big) \leq D.
\]

\item Moreover, for every $A \in \calA$ there are times $t, s \in [a,b]$ such that during the interval $[a, s]$
$\alpha$ approaches $A$ at a linear speed, during the interval $[t, b]$ $\alpha$ moves away from $A$ 
at a linear speed and $\alpha[s,t] \subset N_{\rho M} (A)$. 
\end{enumerate}
The same also holds for quasi-geodesic rays. 
\end{proposition}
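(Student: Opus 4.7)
The three assertions are established in \cite{sis12} and \cite{DS05}; the plan below recalls the structure of the argument. Throughout, fix $\qq=(q,Q)$ and let $\alpha\from[a,b]\to X$ be a $\qq$--quasi-geodesic.

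For part (I), the strategy is to bootstrap the definition of a deep point into a maximal ``deep subarc.'' Given $x\in\deep_{M,c}(\alpha)$, one witness $(x_1,x_2,A)$ is recorded. Extending $[x_1,x_2]_\alpha$ maximally inside $N_M(A)$ yields a subpath $\sigma_A\subset\alpha$ with $\sigma_A\subset N_M(A)$ by construction. I would then upgrade the containment to $\sigma_A\subset N_{\rho M}(A)$ by applying property (AT2) of \defref{char-of-AT} to each sub-segment of $\sigma_A$ joining two nearby projections onto $A$; the constant $\rho$ depends only on $\qq$. For disjointness and distinctness of the sets $A$, I would argue by contradiction: if two maximal subarcs $\sigma_A$ and $\sigma_{A'}$ overlap, or are too close with $A\neq A'$, one produces points in $N_\delta(A)\cap N_\delta(A')$ of arbitrarily large diameter, contradicting (AT1) once $c,M$ are chosen sufficiently large.

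Part (III) is a direct corollary of (I). The maximal deep subarc $\sigma_A$ associated with $A$ determines the times $s,t\in[a,b]$. Outside $\sigma_A$, the quasi-geodesic has just exited (or is about to enter) $N_M(A)$; the quasi-geodesic inequality forces $d(\alpha(u),A)\gtrsim (s-u)/q$ for $u<s$ and similarly $d(\alpha(u),A)\gtrsim (u-t)/q$ for $u>t$, up to additive error controlled by $M$ and $Q$. This is precisely the linear approach/departure assertion, with the containment $\alpha[s,t]\subset N_{\rho M}(A)$ inherited from (I).

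Part (II) is the step I expect to be the main obstacle, and it is where the tree-graded combinatorics are used. The plan is to show that the ordered sequence of deep subarcs of $\alpha$ and $\beta$ correspond to the \emph{same} sequence of special subsets $A_1,\dots,A_n\in\calA$ in the same order. To do this, I would form the bigon $\alpha\cup\bar\beta$ and, assuming some $A_i$ is visited by $\alpha$ but not by $\beta$, construct a fat geodesic polygon (using the entry/exit transition points of $\alpha$ along $A_i$ together with nearby points on $\beta$) to which (AT3) applies. The output set $A\in\calA$ produced by (AT3) would have to be $A_i$, forcing $\beta$ also to pass through a neighbourhood of $A_i$, contradicting the assumption. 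Once the two sequences match, (AT1) bounds the displacement between the entry/exit transition points on $\alpha$ and $\beta$ in a uniform fashion, and the transition portions between consecutive $A_i$'s lie in the $M_\qq$--neighbourhoods of each other by (AT2) applied to each ``hyperbolic'' subsegment of the bigon. Assembling these estimates yields the Hausdorff bound $D=D(\qq)$.

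Finally, the same arguments apply verbatim to quasi-geodesic rays $\alpha\from[a,\infty)\to X$, since all the constants produced depend only on $\qq$ and the structure constants of the asymptotically tree-graded decomposition, not on the length of the domain.
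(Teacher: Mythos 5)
The paper does not actually prove this proposition: immediately after the statement it records that parts (I) and (II) are contained in \cite[Proposition 5.7]{sis12} and that part (III) follows from \cite[Lemma 4.17]{DS05}. So there is no in-paper argument to compare against line by line; your proposal is a reconstruction of the external proofs, and most of it is a plausible plan in the spirit of those sources (maximal deep subarcs via (AT1)/(AT2) for part (I), matching the ordered sequences of peripherals via fat polygons and (AT3) for part (II)).

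There is, however, one concrete error in your justification of part (III). You write that for $u<s$ ``the quasi-geodesic inequality forces $d(\alpha(u),A)\gtrsim (s-u)/q$.'' The quasi-geodesic lower bound controls $d(\alpha(u),\alpha(s))$, not $d(\alpha(u),A)$: a quasi-geodesic can move linearly far from the point $\alpha(s)$ while remaining at bounded distance from the \emph{set} $A$ (e.g.\ by travelling parallel to $A$), so the triangle inequality gives you nothing here. The linear approach/departure genuinely needs property (AT2) of \defref{char-of-AT} together with maximality of the deep subarc: if $d(\alpha(u),A)\leq \theta(s-u)/q$ for the threshold $\theta$ of (AT2), then the subsegment $\alpha[u,s]$ has both endpoints in $\calN_{\theta \ell/q}(A)$ and hence meets $N_{M_\qq}(A)$, and (combined with part (I)) the deep component of $A$ would extend past $s$ toward $u$, contradicting its maximality. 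Separately, your part (II) plan leaves the hardest step implicit: one must actually verify the fatness conditions (F1) and (F2) for the polygon built from entry/exit transition points before (AT3) can be invoked, and this is where the constants $\vartheta, v$ enter; as written this is a statement of intent rather than an argument. Since the proposition is quoted rather than proved in the paper, the cleanest fix is to cite \cite[Proposition 5.7]{sis12} and \cite[Lemma 4.17]{DS05} directly, or else repair the (AT2) step above.
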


The statements of (1) and (2) is contained \cite[Proposition 5.7]{sis12}. The statement (2) follows from 
\cite[Lemma 4.17]{DS05}.
 
 \begin{definition} \label{Def:transition} 
 Let $\alpha$ be a $\qq$--quasi-geodesic segment or $\qq$--ray in $X$. We say a point $\alpha(t)$ 
 is a \emph{$\qq$--transition point} of $\alpha$ if
\[
\alpha(t)  \in \trans_{M(\qq), c(\qq)}(\alpha),
\] 
where $M(\qq), c(\qq)$ are as Proposition~\ref{sis12}. 
\end{definition}
 
\begin{definition} \label{Def:Transient} 
Let $\alpha$ be a $\qq$--ray. We say $\alpha$ is a \emph{$\qq$--transient} ray if,
there is a sequence of times $t_i \to \infty$ such that $\alpha(t_i)$ is a 
$\qq$--transition point of $\alpha$. 
\end{definition}

Note that if $\qq' \geq \qq$ and $\alpha$ is a $\qq$--ray, then $\alpha$ is also a 
$\qq'$--ray. But, the set of $\qq$--transition points in not necessary a subset or a superset
of the set of $\qq'$--transition points because, to ensure 
\[
 \deep_{M_1,c_1} (\alpha)  \subset \deep_{M_2,c_2} (\alpha)
\] 
we need $c_1 \geq c_2$ and  $M_1\leq M_2$. However, as we shall see, the quality of being
a transient ray is independent of choice of $\qq$. 

\begin{lemma}
Let $\alpha$ be a $\qq$--ray and let $M, c$ and $\rho$ be as in \propref{sis12}.
Then either $\alpha$ is a $\qq$--transient ray or $\alpha$ is eventually contained in $N_{\rho M}(A)$ 
for some $A \in \calA$. 
Furthermore, if $\alpha$ is a $\qq$--transient ray and $\qq' \geq \qq$, then $\alpha$ is also a 
$\qq'$--transient ray. 
\end{lemma}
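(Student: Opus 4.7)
The first assertion will follow almost directly from \propref{sis12}(I). If $\alpha$ is not $\qq$--transient, then by \defref{Def:Transient} the set of $\qq$--transition points along $\alpha$ is bounded in the parameter, so there exists $T > 0$ with $\alpha[T, \infty) \subseteq \deep_{M(\qq), c(\qq)}(\alpha)$. Since $\deep_{M(\qq), c(\qq)}(\alpha)$ decomposes as a disjoint union of subpaths, each contained in $N_{\rho(\qq) M(\qq)}(A)$ for distinct $A \in \calA$, and the tail $\alpha[T, \infty)$ is a connected subinterval of $\alpha$, it must lie in a single component. Hence $\alpha$ is eventually contained in $N_{\rho(\qq) M(\qq)}(A)$ for some single $A$, which is the desired alternative.

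For the second assertion, the plan is to argue the contrapositive: assume $\alpha$ is a $\qq$--ray, $\qq' \geq \qq$, and $\alpha$ is not $\qq'$--transient; I will deduce that $\alpha$ is not $\qq$--transient either. Since $\qq \leq \qq'$, $\alpha$ is automatically a $\qq'$--ray, so Part~1 applied with the constants attached to $\qq'$ produces some $A \in \calA$ for which $\alpha$ is eventually contained in $N_{\rho(\qq') M(\qq')}(A)$. In particular, $\alpha$ remains at bounded distance from $A$ from some time onwards.

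Next I apply \propref{sis12}(III) to $\alpha$, now regarded as a $\qq$--ray, with respect to this same $A$: this gives times $s \leq t$ (with $t$ possibly infinite) such that $\alpha[s, t] \subseteq N_{\rho(\qq) M(\qq)}(A)$, and on $[t, \infty)$ the ray $\alpha$ would move away from $A$ at linear speed. The linear-escape possibility is incompatible with $\alpha$ lying at bounded distance from $A$, so $t = \infty$ and $\alpha$ is eventually contained in $N_{\rho(\qq) M(\qq)}(A)$. To close the argument I invoke the converse direction of Part~1, using the entry point supplied by \propref{sis12}(III) as an anchor in $N_{M(\qq)}(A)$ on one side and producing a symmetric anchor on the other side (from the fact that arbitrarily late sub-segments of $\alpha$ must enter $N_{M(\qq)}(A)$ because $\alpha$ cannot both stay in $N_{\rho(\qq) M(\qq)}(A)$ and avoid $N_{M(\qq)}(A)$ without violating \propref{sis12}(III)). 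This forces all but boundedly many points of $\alpha$ to lie in $\deep_{M(\qq), c(\qq)}(\alpha)$, contradicting the hypothesis that $\alpha$ is $\qq$--transient.

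The main subtlety lies in the mismatch between the neighborhood $N_{M(\qq)}(A)$ appearing in the definition of $\deep$ and the larger neighborhood $N_{\rho(\qq) M(\qq)}(A)$ in which the deep set is actually contained by \propref{sis12}(I). Because the constants $M, c, \rho$ are not assumed to be monotone in $\qq$, the $\qq$-- and $\qq'$--dichotomies of Part~1 cannot be compared quantitatively. My strategy is to reduce everything to the qualitative condition ``$\alpha$ eventually stays at bounded distance from a single peripheral'' and use the linear-escape clause of \propref{sis12}(III) as the pivot that converts this qualitative statement into the quantitative containment adapted to the constants attached to $\qq$.
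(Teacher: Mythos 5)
Your first paragraph reproduces the paper's argument for the dichotomy exactly: a non-transient ray has a tail inside $\deep_{M,c}(\alpha)$, which by \propref{sis12}(I) is a disjoint union of subpaths each lying in $N_{\rho M}(A)$ for distinct $A$, and connectedness of the tail places it in a single one.

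For the second assertion you argue the contrapositive of what the paper does directly, and this changes which direction of the dichotomy you need. The paper's route is: if $\alpha$ is $\qq$--transient then, by part (III) of \propref{sis12}, it eventually leaves every neighborhood of every $A$, in particular every $N_{\rho(\qq')M(\qq')}(A)$, so the already-proven direction of the dichotomy applied at scale $\qq'$ forces $\alpha$ to be $\qq'$--transient. You instead start from ``not $\qq'$--transient'', land $\alpha$ eventually in $N_{\rho(\qq)M(\qq)}(A)$, and then must establish the \emph{converse} of the first assertion at scale $\qq$: that eventual containment in $N_{\rho(\qq)M(\qq)}(A)$ makes all late points deep. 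That converse is true, but it is the one soft spot in your sketch: to make a late point $\alpha(u)$ deep you need anchor points of $\alpha$ in the \emph{smaller} neighborhood $N_{M(\qq)}(A)$ on both sides of $u$ at distance at least $c$, and the existence of arbitrarily late such anchors does not follow from part (III), which only controls containment in $N_{\rho M}(A)$ and linear escape. It follows instead from (AT2) applied to long subsegments of the tail, whose endpoints lie within $\rho(\qq)M(\qq)\leq \theta\ell/q$ of $A$ once the length $\ell$ is large, so each such subsegment must meet $N_{M(\qq)}(A)$. With that substitution your argument closes; the paper's ordering of quantifiers simply avoids having to prove this converse at all, which is why its second paragraph is two sentences long.
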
 

\begin{proof} 
By definition, if $\alpha$ is not $\qq$--transient then $\alpha$ is eventually contained in 
$\deep_{M, c}(\alpha)$. 
This means that there is $A \in \calA$ and $t_A$ such that $\alpha(t_A) \in N_{M}(A)$ and, for $t \geq t_A$,
there is $s \geq t$ such that $\alpha(s)  \in N_{M}(A)$. Hence, $\alpha[t_A, \infty) \subset N_{\rho M}(A)$. 

If $\alpha$ is $\qq$--transient, then for every $A \in \calA$, $\alpha$ eventually leaves every neighborhood of $A$
(see part (III) of \propref{sis12}). In particular, $\alpha$ leaves the $(\rho(\qq') \cdot M(\qq'))$--neighborhood of 
every $A\in\calA$. The above argument shows that $\alpha$ is also $\qq'$--transient. 
\end{proof} 

Part (II) of \propref{sis12} can be restated as follows: 
 
\begin{corollary}[Relative fellow traveling property]\label{RFT}  
Suppose $\alpha, \beta$ are $\qq$--quasi-geodesic segments 
that start and end at the same point and let $x \in \alpha$ be a $\qq$--transition point of $\alpha$. 
Then there exists a point $y \in \beta$ that is a $\qq$--transition point of $\beta$ and 
\[
d(p, q) \leq D(\qq).
\]
\end{corollary}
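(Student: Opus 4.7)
The plan is to observe that this corollary is essentially a translation of Part (II) of Proposition~\ref{sis12} through the language of Definition~\ref{Def:transition}, so there is not much to do beyond unpacking definitions. The only substantive ingredient we need from earlier in the paper is already contained in Proposition~\ref{sis12}, and the point of the corollary is to record it in a form that will be convenient to cite later.

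First, I would unfold the hypothesis: saying that $x \in \alpha$ is a $\qq$--transition point of $\alpha$ means, by Definition~\ref{Def:transition}, that $x \in \trans_{M(\qq), c(\qq)}(\alpha)$, where $M(\qq)$ and $c(\qq)$ are the constants produced by Proposition~\ref{sis12}. Since $\alpha$ and $\beta$ are both $\qq$--quasi-geodesic segments with the same pair of endpoints, Part (II) of that proposition applies with the same constants $M(\qq)$ and $c(\qq)$, yielding
\[
d_{\rm Haus}\big(\trans_{M(\qq),c(\qq)}(\alpha),\, \trans_{M(\qq),c(\qq)}(\beta)\big) \leq D(\qq).
\]

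Next, I would invoke the definition of Hausdorff distance: since $x$ lies in the first set and the Hausdorff distance between the two sets is at most $D(\qq)$, there must exist a point $y$ in $\trans_{M(\qq),c(\qq)}(\beta)$ with $d(x,y) \leq D(\qq)$. By Definition~\ref{Def:transition} applied to $\beta$, this $y$ is exactly a $\qq$--transition point of $\beta$, which is what the corollary asserts (reading the inequality $d(p,q) \leq D(\qq)$ as a typo for $d(x,y) \leq D(\qq)$).

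There is no real obstacle here; the only minor care needed is to note that the constants $M(\qq), c(\qq), D(\qq)$ supplied by Proposition~\ref{sis12} depend only on $\qq$, so they can be used simultaneously for $\alpha$ and $\beta$, and hence the $y$ produced is genuinely a transition point for the \emph{same} quasi-geodesic constant $\qq$ as $x$. The utility of phrasing Part (II) this way is that the Hausdorff comparison is upgraded to a pointwise ``fellow traveling'' statement between transition sets, which is the form needed in the forthcoming arguments about ATM spaces.
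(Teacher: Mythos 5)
Your proposal is correct and matches the paper's intent exactly: the paper offers no separate proof, simply remarking that the corollary is a restatement of Part (II) of Proposition~\ref{sis12}, and your unpacking of the Hausdorff-distance bound via Definition~\ref{Def:transition} is precisely that restatement. You are also right that $d(p,q)\leq D(\qq)$ in the statement is a typo for $d(x,y)\leq D(\qq)$.
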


Finally we recall the notion of \emph{saturation} of a quasi-geodesic (see \cite[Definition 4.20]{DS05}).  

\begin{definition} \label{Def:saturation}
Let $\alpha$ be a $\qq$--ray or $\qq$--segment. The \emph{saturation} of $\alpha$, denoted by $\Sat(\alpha)$, is 
the union of $\alpha$ and all $A \in \calA$ with $N_{M(\qq)}(A) \cap \alpha \neq \emptyset$.
\end{definition} 

The saturation is quasi-convex (see \cite[Lemma 4.25]{DS05}). 

\begin{lemma}[Uniform quasi-convexity of saturations]\label{Lem:sat-convex}
 For every $\qq'$, there exists $\tau(\qq')>0$ such that for every $L\geq1$ and every $\qq$--ray or $\qq$--segment 
 $\alpha$, $\Sat(\alpha)$ has the property that, for every $\qq'$--segment $\gamma$ with endpoints $N_L(\Sat(\alpha))$, 
 we have  
 \[
  \gamma \subset N_{\tau(\qq') \cdot L}(\Sat(\alpha)).
 \] 
\end{lemma}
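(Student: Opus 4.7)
The plan is to exploit the transition-point theory of Proposition~\ref{sis12} together with the relative fellow-traveling property of Corollary~\ref{RFT} to pin every point of $\gamma$ to a controlled neighbourhood of $\Sat(\alpha)$. Let $x,y$ denote the endpoints of $\gamma$ and pick $x',y'\in\Sat(\alpha)$ with $d(x,x'),d(y,y')\leq L$; concatenating short geodesics $[x',x]$ and $[y,y']$ with $\gamma$ produces a quasi-geodesic segment $\gamma'$ from $x'$ to $y'$ whose constants degrade only by an additive $O(L)$, so the task reduces to bounding the distance from $\gamma'$ to $\Sat(\alpha)$.

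Next I would construct a comparison path $\eta$ from $x'$ to $y'$ lying entirely inside $\Sat(\alpha)$. If $x'\in A_1\in\calA$ with $A_1\cap N_{M(\qq)}(\alpha)\neq\emptyset$, connect $x'$ to a nearest entry point $p_1\in A_1\cap N_{M(\qq)}(\alpha)$ via a geodesic of $A_1$ (which is a geodesic of $X$ since $A_1$ is a geodesic subset); pick an analogous exit point $q_1$ on the peripheral piece containing $y'$; fill in with the subsegment of $\alpha$ between the nearest-point projections of $p_1$ and $q_1$ onto $\alpha$. Using the isolation axiom (AT1) together with hyperbolicity outside peripherals (AT2), one checks that $\eta$ is a uniform $\qq^\ast$--quasi-geodesic with $\qq^\ast$ depending only on $\qq$ and $\qq'$. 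Apply Corollary~\ref{RFT} to $\gamma'$ and $\eta$: the transition points of $\gamma'$ are within Hausdorff distance $D$ of those of $\eta$, and every transition point of $\eta$ lies in $\Sat(\alpha)$ by construction. The deep subsegments of $\gamma'$ are handled next: each such subsegment is contained in $N_{\rho M'}(B)$ for some $B\in\calA$, with its two boundary transition points within $D$ of $\Sat(\alpha)$. Either $B$ is already one of the peripheral sets contributing to $\Sat(\alpha)$, in which case the subsegment sits within $\rho M'$ of $\Sat(\alpha)$; or AT1 bounds $\diam\big(N_\delta(B)\cap N_\delta(A)\big)$ uniformly over all peripheral sets $A$ of $\Sat(\alpha)$ and over $\alpha$ itself, forcing the subsegment to be short and still close to $\Sat(\alpha)$. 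Tracking how the additive $O(L)$ correction in the first paragraph propagates through Corollary~\ref{RFT} then yields the linear bound $N_{\tau(\qq')\cdot L}(\Sat(\alpha))$.

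The main obstacle is verifying the quasi-geodesic property of $\eta$: this amounts to ruling out hypothetical shortcuts through peripheral sets unrelated to $\Sat(\alpha)$, which is precisely where the full force of asymptotic tree-gradedness --- in particular AT1 combined with the fat-polygon axiom AT3 --- must be deployed, since any such shortcut would give rise to a fat geodesic polygon not contained in a single peripheral piece. Once $\eta$ is known to be a uniform quasi-geodesic, the remaining arguments reduce to essentially routine applications of Proposition~\ref{sis12} and isolation of peripherals.
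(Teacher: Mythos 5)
The first thing to note is that the paper does not prove this lemma at all: it is quoted directly from Dru\c{t}u--Sapir as \cite[Lemma 4.25]{DS05}, where it is established by passing to asymptotic cones and using the tree-graded structure of the limit. So your argument is necessarily a different route, and while its outline (reduce to endpoints on $\Sat(\alpha)$, build a comparison path $\eta$ inside $\Sat(\alpha)$, compare transition points via Corollary~\ref{RFT}, and control the deep pieces) is a plausible strategy, it has gaps that are not cosmetic. The step you yourself flag as ``the main obstacle'' --- that $\eta$, a concatenation of a geodesic of $A_1$, a subsegment of $\alpha$, and a geodesic of a second piece, is a quasi-geodesic with constants depending only on $\qq$ and $\qq'$ --- is essentially the whole content of the lemma in disguise; asserting that (AT1) and (AT3) ``must be deployed'' does not discharge it. Moreover the construction of $\eta$ is not well specified: $x'$ and $y'$ may lie on $\alpha$ itself rather than on peripheral pieces, the two pieces may coincide, and the ``nearest entry point'' $p_1\in A_1\cap N_{M(\qq)}(\alpha)$ need not be close to a nearest point of $A_1$ to $x'$, so the concatenation can a priori backtrack inside $A_1$.

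Two further steps would fail as written. The conclusion must be \emph{linear} in $L$, but after you attach the two tails of length at most $L$, the segment $\gamma'$ is a quasi-geodesic only with additive constant of order $L$, and the constants $D(\cdot)$ of Proposition~\ref{sis12} and Corollary~\ref{RFT} are merely asserted to exist for each pair of quasi-geodesic constants; nothing available in the paper says that $D$ grows at most linearly in the additive constant, so ``tracking the $O(L)$ correction'' does not yield the bound $\tau(\qq')\cdot L$. Second, in the deep-subsegment case your appeal to (AT1) only handles the situation where both endpoints of a deep piece lie close to the \emph{same} peripheral set of $\Sat(\alpha)$; if one endpoint is close to $\alpha$ itself and the other to some $A\subset\Sat(\alpha)$, or the two endpoints are close to distinct pieces of $\Sat(\alpha)$, (AT1) says nothing and you need a separate bounded-intersection statement (for $\alpha$ versus a piece $B\notin\calA(\Sat(\alpha))$, this is the content of (AT2)/Proposition~\ref{sis12}(III), not of (AT1)). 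These points are all repairable with the machinery of \cite{DS05}, but as it stands the proposal records a plan rather than a proof.
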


 \subsection*{Redirecting in ATM spaces}
In this section we show that $\preceq$ is a symmetric relation. Elements of $P(X)$ are divided to 
transient class with infinitely many transition points and non-transient class that are eventually contained 
in a bounded neighborhood of some $A \in \calA$.
 
\begin{lemma}\label{transient-class}
If $\alpha$ is a transient quasi-geodesic ray and $\beta \preceq \alpha$, then $\beta$ is also a transient ray.
\end{lemma}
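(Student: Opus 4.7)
The plan is to argue by contradiction, exploiting the fact that a quasi-redirecting ray cannot be a genuine quasi-geodesic when the starting ray is trapped in a peripheral set but the target is transient. Suppose $\beta$ is not transient. Then $\beta$ has only finitely many transition points, so the dichotomy of the lemma immediately preceding places $\beta$ eventually inside $N_K(A)$ for some $A \in \calA$ with $K = \rho(\qq) M(\qq)$; fix $r_0$ with $\beta[r_0, \infty) \subset N_K(A)$. The hypothesis $\beta \preceq \alpha$ supplies constants $\qq_0$, which I enlarge so as to dominate $\qq$ and the constants of $\alpha$, and, for each $r$, a $\qq_0$--ray $\gamma_r$ with $\gamma_r|_r = \beta|_r$ and $\gamma_r(t) = \alpha(t - s_r)$ for $t \geq t^{\mathrm{start}}_r$. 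Write $K'' = \rho(\qq_0) M(\qq_0) \geq K$.

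Since $\alpha$ is transient, \propref{sis12} Part (III) applied to $\alpha$ and $A$ yields a last time $T_* < \infty$ with $\alpha(T_*) \in N_{K''}(A)$, and $d(\go, \alpha(T_*))$ is bounded by a constant $C_*$ independent of $r$. Because the tail of $\gamma_r$ coincides with that of $\alpha$, the ray $\gamma_r$ is itself transient, so \propref{sis12} Parts (I) and (III) apply to it as well: its deep visit to $A$ is a single contiguous interval $[s^{\gamma_r}, t^{\gamma_r}]$, which must contain $[r_0, r]$ because $\gamma_r[r_0, r] \subset N_K(A) \subset N_{K''}(A)$; in particular $t^{\gamma_r} \geq r$.

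In the main case $t^{\gamma_r} \geq t^{\mathrm{start}}_r$, the identity $\gamma_r(t^{\gamma_r}) = \alpha(t^{\gamma_r} - s_r)$ combined with $\gamma_r(t^{\gamma_r}) \in N_{K''}(A)$ forces $t^{\gamma_r} - s_r \leq T_*$, so $s_r \geq t^{\gamma_r} - T_* \geq r - T_*$. Setting $t := s_r + T_*$, we have $t \geq t^{\gamma_r} \geq t^{\mathrm{start}}_r$, and hence $\gamma_r(t) = \alpha(T_*)$, giving $d(\go, \gamma_r(t)) \leq C_*$. On the other hand the quasi-geodesic lower bound for $\gamma_r$ yields $d(\go, \gamma_r(t)) \geq (s_r + T_*)/q_0 - Q_0$, which diverges with $r$; this is the desired contradiction.

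The remaining case $t^{\gamma_r} < t^{\mathrm{start}}_r$ is handled by a transition-point argument. The segment $\gamma_r|_{[r, t^{\mathrm{start}}_r]}$ is then a $\qq_0$--quasi-geodesic from $\beta(r)$, deep in $A$, to $\alpha(t^{\mathrm{start}}_r - s_r)$, which has exited $N_{K''}(A)$. Comparing with a geodesic between the same endpoints via \corref{RFT}, together with \propref{sis12} Part (III) applied to the geodesic (which must pass through the bounded attaching region of $A$ near $\go$ en route to $\alpha$'s transient end), one produces a time $\tau \in [r, t^{\mathrm{start}}_r]$ at which $d(\go, \gamma_r(\tau))$ is bounded by a constant depending only on $\qq_0$; the quasi-geodesic lower bound for $\gamma_r$ then forces $\tau$ to be bounded, contradicting $\tau \geq r$ for $r$ large. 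The chief technical obstacle is this transition-point claim, which requires a careful use of the ATM axioms to locate the attaching region of $A$ within a bounded neighborhood of $\go$; once this is established, the two cases together force $\beta$ to be transient.
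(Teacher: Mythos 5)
Your setup and your first case are sound. Reducing to a contradiction, placing the non-transient $\beta$ eventually in $\deep_{M,c}(\beta)$ inside a neighborhood of a single $A \in \calA$, and observing that each redirecting ray $\gamma_r$ inherits a deep $A$--component reaching out to radius $r$ while its tail coincides with the transient ray $\alpha$, is exactly the right frame. In the case where that deep component persists past the merging time, your bookkeeping with the landing shift $s_r$ and the last exit time $T_*$ of $\alpha$ from $N_{K''}(A)$ does give a clean contradiction ($\gamma_r$, at a time tending to infinity with $r$, sits at the bounded-norm point $\alpha(T_*)$), and this is a genuinely different mechanism from the one used in the paper.

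The genuine gap is your second case, and it is not a routine verification. When the deep component of $\gamma_r$ in $A$ ends before $\gamma_r$ merges with $\alpha$, you assert that a comparison geodesic ``must pass through the bounded attaching region of $A$ near $\go$'' and hence that $\gamma_r$ returns to bounded norm at some $\tau \geq r$; but nothing you have invoked from (AT1)--(AT3) or \propref{sis12} produces such a $\tau$: a quasi-geodesic from a point deep in $A$ at norm $\geq r$ to a far point of $\alpha$ does not obviously approach $\go$, and the fact you actually need --- that it must pass close to where $\alpha$ last leaves $A$ --- is exactly the transition-point transfer of \corref{RFT}, which you never apply. The paper's proof shows that once you do use \corref{RFT}, the case split evaporates entirely: pick a $\qq$--transition point $x$ of $\alpha$ with $\Norm{x} \geq r_0 + D$ and set $r = \Norm{x} + D$; since $\gamma_r$ and $\alpha$ emanate from $\go$ and eventually coincide, \corref{RFT} transfers $x$ to a $\qq$--transition point $y$ of $\gamma_r$ with $d(x,y) \leq D$, hence $r_0 \leq \Norm{y} \leq r$; but $\gamma_r$ agrees with $\beta$ out to radius $R \geq \Norm{z_2}$ and so contains the deep subsegment $[z_1,z_2]_\beta$ with $\Norm{z_1} \leq r_0 - c$ and $\Norm{z_2} \geq r + c$, forcing $y \in \deep_{M,c}(\gamma_r)$ --- and a point cannot be both a transition point and deep. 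I would either complete your second case by proving this transfer (at which point your first case becomes redundant) or adopt the transfer argument outright.
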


\begin{proof}
We choose $\qq$ large enough so that both $\alpha$ and $\beta$ are $\qq$--rays and $\beta$ can be 
$\qq$--redirected to $\alpha$ and let $M$, $c$ and $D$ be as in \propref{sis12}. Assume for contradiction that 
$\beta$ is not transient. Then there a radius $r_0$ such that 
\[
\beta|_{\geq r_0} \subset \deep_{M,c}(\beta).
\] 
In particular, for every $r>0$, there are points $z_1, z_2 \in \beta$ such that $\Norm{z_1} \leq r_0-c$, $\Norm{z_2} \geq r+c$ and
$d(z_i, A) \leq M$ for $i=1,2$ and some $A \in \calA$.

Since $\alpha$ is transient, there exists $x \in \alpha$ with $\Norm{x} \geq r_0 + D$ such that $x$ is a $\qq$--transition point.
Let $r = \Norm{x} + D$ and let $z_1$ and $z_2$ be as above. Let $R \geq \Norm{z_2}$ and let $\gamma$ be a 
$\qq$--ray redirecting $\beta$ to $\alpha$ at radius $R$. 

By \corref{RFT}, there is a point $y \in \gamma$ with $d(x, y) \leq D$ such that $y$ is a $\qq$--transition point 
of $\gamma$. Note that $r_0 \leq \Norm{y} \leq r$ which implies, $d(y, z_i) \geq c$ for $i=1,2$.  Also, 
\[
B_R(\go) \cap \gamma = B_R(\go) \cap \beta,
\]
thus, the segment $[z_1, z_2]_\beta$ is also a subsegment of $\gamma$.
Hence $y \in \deep_{M,c}(\gamma)$. This is a contradiction. 
\end{proof}

 As a consequence, being transient is a property of an equivalence class. We say $\bfa \in P(X)$ is transient if some 
 quasi-geodesic ray in $\bfa$ is transient which implies all rays in $\bfa$ are transient.

 \begin{proposition}\label{Prop:Assump}
Let $\bfa \in P(X)$ be a transient class.  Then $\bfa$ contains a geodesic ray $\alpha_0$. 
If we choose $\alpha_0$ as the central element of $\bfa$, we have 
$f_\bfa(q, Q) = (9q, Q)$. Furthermore, if $\beta \preceq \alpha_0$ then $\beta \in \bfa$. 
\end{proposition}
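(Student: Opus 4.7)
The plan is to prove the three assertions together via a closeness estimate: any $\qq$--ray $\beta \preceq \alpha_0$ must have its transition points uniformly close to $\alpha_0$, and vice versa. The main technical work is a case analysis using the relative fellow traveling property (\corref{RFT}) in ATM spaces.

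First I would produce a candidate geodesic. Given any $\qq_0$--ray $\alpha \in \bfa$, \lemref{limit-geo} yields a geodesic ray $\alpha_0$ with $\alpha_0 \preceq \alpha$, and \lemref{transient-class} ensures $\alpha_0$ is transient. Membership $\alpha_0 \in \bfa$, i.e.\ the opposite direction $\alpha \preceq \alpha_0$, will be established at the end.

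The heart of the proof is the following closeness estimate: there is a constant $D = D(\qq)$ such that for any $\qq$--ray $\beta$ with $\beta \preceq \alpha_0$, every $\qq$--transition point of $\alpha_0$ lies within distance $D$ of $\beta$, and symmetrically every $\qq$--transition point of $\beta$ lies within distance $D$ of $\alpha_0$. To prove this, fix a $\qq$--transition point $\alpha_0(r)$; for each large $R$ pick a $\qq$--ray $\gamma_R$ quasi-redirecting $\beta$ to $\alpha_0$ at radius $R$, and for $S \gg R$ consider the sub-segment $\eta_R$ of $\gamma_R$ from $\go$ to $\alpha_0(S)$. This $\eta_R$ is a $\qq$--quasi-geodesic with the same endpoints as the geodesic $\alpha_0|_{[0, S]}$, so \corref{RFT} produces a $\qq$--transition point $y' \in \eta_R$ with $d(\alpha_0(r), y') \leq D$. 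The ray $\gamma_R$ breaks into three parts: the initial portion coinciding with $\beta$ up to radius $R$, an intermediate portion, and a terminal portion coinciding with $\alpha_0$. Using the quasi-geodesic lower bound $\Norm{\gamma_R(t)} \geq t/q - Q$ together with $R$ chosen large compared to $r + D$, the intermediate and terminal portions of $\gamma_R$ consist entirely of points of norm greater than $r + D$ and hence cannot contain $y'$, forcing $y' \in \beta$. The symmetric statement follows from the same argument applied to $\qq$--transition points of $\beta$ lying in the initial portion of $\gamma_R$: these are $\qq$--transition points of $\gamma_R$ and hence close, by RFT, to transition points of the comparison geodesic $\alpha_0$.

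With the closeness estimate in hand, the remaining assertions follow by surgery. For the redirection constant, pick any large $r$ with $\alpha_0(r)$ a $\qq$--transition point and $D \leq r/2$, and invoke Part (II) of \lemref{surgery} applied to the geodesic $\alpha_0$ and the $\qq$--ray $\beta$: the hypothesis $d(\alpha_0(r), \beta) \leq r/2$ produces a $(9q, Q)$--quasi-geodesic redirecting $\beta$ to $\alpha_0$ and agreeing with $\beta$ up to radius $r/2$. Since such $r$ are unbounded, this yields $(9q, Q)$--redirections at every radius, giving $f_\bfa(q, Q) = (9q, Q)$. For the symmetry, for each radius $r$ choose a $\qq$--transition point $\beta(s)$ with $\Norm{\beta(s)} \geq r + D$ and a point $\alpha_0(t)$ with $d(\beta(s), \alpha_0(t)) \leq D$; then the concatenation $\alpha_0|_{[0,t]} \cup [\alpha_0(t), \beta(s)] \cup \beta|_{[s,\infty)}$ is a $\qq'$--quasi-geodesic, with $\qq'$ depending only on $\qq$ and $D$, proving $\alpha_0 \preceq \beta$. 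Finally, running the closeness argument with $\alpha_0 \preceq \alpha$ in place of $\beta \preceq \alpha_0$ yields $\alpha \preceq \alpha_0$ and hence $\alpha_0 \in \bfa$. The main technical obstacle is the case analysis forcing $y'$ onto $\beta$: this requires quantitative control on how far $\gamma_R$ can backtrack once it leaves $\beta$, and on how late the landing point of $\gamma_R$ on $\alpha_0$ falls as $R \to \infty$.
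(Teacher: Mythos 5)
Your proposal is correct and follows essentially the same route as the paper: extract a geodesic $\alpha_0 \preceq \alpha$ via \lemref{limit-geo}, use \lemref{transient-class} together with the relative fellow-traveling property (\corref{RFT}) applied to the redirecting rays to place transition points of $\alpha_0$ uniformly close to $\beta$, then invoke Part (II) of \lemref{surgery} for the $(9q,Q)$--redirection and a concatenation surgery (as in \lemref{kapparedirect}) for the reverse direction. Your case analysis forcing the RFT-produced transition point onto the initial, $\beta$-coinciding portion of $\gamma_R$ is actually more explicit than the paper's, which simply asserts the closeness; the one place you are looser than you should be is the ``symmetric'' closeness claim (a transition point of $\beta$ inside the common portion need not a priori be a transition point of $\gamma_R$, since deepness in $\gamma_R$ could be witnessed by a subpath leaving that portion), but this claim is not needed because the first estimate already suffices for the reverse concatenation.
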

\begin{proof}
Let $\bfa \in P(X)$ be a transient class and $\alpha \in \bfa$ be a transient $\qq$--ray. 
By Lemma~\ref{limit-geo}, there exists a geodesic ray $\alpha_0 \preceq \alpha$. 
Since $\alpha$ is transient, \lemref{transient-class} implies that $\alpha_0$ is also transient. 

In fact, assume $\alpha_0$ can be $\qq'$--redirected to $\alpha$. Then there is a 
sequence of times $t_i \to \infty$ such that $\alpha(t_i)$ is a $\qq'$--transition point. 
\lemref{RFT} implies that, for every $t_i$, there is $s_i$ such that 
$\alpha_0(s_i)$ is a $\qq'$--transition point and  $d(\alpha_0(s_i), \alpha(t_i)) \leq D(\qq')$. 
For $i$ large enough, we have 
\[
d\big( \alpha_0(s_i), \alpha(t_i) \big)\leq \frac 12 \Norm{\alpha_0(s_i)}.  
\]
Thus by Part (II) of Surgery Lemma~\ref{surgery}, $\alpha$ can be $(9q, Q)$--redirected to $\alpha_0$. 
In particular, $\alpha \preceq \alpha_0$. Every other $\qq'$--ray in $\bfa$ can similarly 
be $f_\bfa(\qq')$--redirected to $\alpha$ using  Part (II) of Lemma~\ref{surgery} for 
$f_\bfa$ as in the statement of the proposition. 

The proof of the last assertion is the same as above. If $\beta \preceq \alpha_0$ then, 
by \lemref{transient-class}, $\beta$ is also transient and $\beta$ and $\alpha_0$ are
near each other at all the transition points. Part (II) of Surgery Lemma~\ref{surgery}
implies that $\alpha_0$ can be quasi-redirected to $\beta$. 
\end{proof}

\begin{lemma}\label{non-transient-class}
If $\alpha$ is a non-transient quasi-geodesic ray and $\beta \preceq \alpha$, then $\beta$ is also a non-transient ray.
In fact, there is $A \in \calA$ such that both $\alpha$ and $\beta$ are eventually contained in a bounded neighborhood of $A$.
\end{lemma}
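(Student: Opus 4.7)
The plan is to show that $\beta$ is trapped in a bounded neighborhood of $\Sat(\alpha)$, which is a finite union of peripheral subsets together with $\alpha$ itself, and then to argue via the tree-graded structure that $\beta$ must commit asymptotically to the single peripheral $A$ that $\alpha$ does.

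First, by the non-transient characterization established in the discussion preceding the lemma, there exist $A \in \calA$ and $T_0 > 0$ such that $\alpha[T_0, \infty) \subseteq N_{\rho(\qq) M(\qq)}(A)$, where $\qq$ is a common parameter for $\alpha$ and $\beta$. The bounded initial subsegment $\alpha[0, T_0]$ meets $N_{M(\qq)}(A')$ for only finitely many $A' \in \calA$ by Part (I) of \propref{sis12}, so $\Sat(\alpha) = \alpha \cup A''_1 \cup \cdots \cup A''_k \cup A$ is a finite union. Next, let $\qq_*$ be a common pair such that $\beta$ can be $\qq_*$-redirected to $\alpha$ at every radius $r$, with redirecting ray $\gamma_r$. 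For each $r, S > 0$, the subsegment of $\gamma_r$ from $\go$ to $\alpha(S)$ has both endpoints in $\Sat(\alpha)$, so \lemref{Lem:sat-convex} gives its containment in $N_{\tau(\qq_*)}(\Sat(\alpha))$. Taking $r, S \to \infty$ yields $\beta \subseteq N_L(\Sat(\alpha))$ with $L = \tau(\qq_*)$.

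Since $N_L(\alpha[T_0, \infty)) \subseteq N_{L + \rho M(\qq)}(A)$, the far tail of $\beta$ lies in the finite union $N_{L'}(A) \cup \bigcup_i N_{L'}(A''_i)$ for some $L' \geq L$. By property (AT1), pairwise intersections of these neighborhoods have bounded diameter, so transitions between them are confined to a fixed bounded region near $\go$. Because $\beta$ is a $\qq$-quasi-geodesic with $\|\beta(t)\| \to \infty$, it cannot repeatedly re-enter any such fixed bounded region without violating the linear growth of $\|\beta(t)\|$ in $t$. Hence $\beta$ is eventually contained in a single $N_{L'}(A^*)$ with $A^* \in \{A, A''_1, \ldots, A''_k\}$, and in particular $\beta$ is non-transient.

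To complete the proof I would rule out the possibility $A^* = A''_j$ for some $j$. If that held, then for $r$ large the point $p_r = \gamma_r(t_r) = \beta(t_r)$ with $\|p_r\| = r$ would lie in $N_{L'}(A''_j)$, while a fixed $q = \alpha(S) \in N_{\rho M(\qq)}(A)$ would lie on $\gamma_r$ at parameter $s_r$ with $\|\gamma_r(s_r)\| = \|q\|$ independent of $r$. Because $\alpha$ visits $A''_j$ only within the bounded segment $\alpha[0,T_0]$, the tree-graded geometry forces a lower bound of the form $d(p_r, q) \geq r + \|q\| - C$ for a constant $C$ independent of $r$: any $\qq_*$-segment from $p_r$ to $q$ must pass through a bounded bridge region near the exit of $A''_j$ and the entrance of $A$, as follows by applying part (III) of \propref{sis12} to the peripherals $A''_j$ and $A$ and invoking (AT1). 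On the other hand, the $\qq_*$-quasi-geodesic property gives $d(p_r, q) \leq q_*(s_r - t_r) + Q_*$, while $s_r \leq q_*(\|q\| + Q_*)$, contradicting the lower bound once $r$ is sufficiently large. The main obstacle is establishing this lower bound $d(p_r, q) \gtrsim r + \|q\|$ rigorously: doing so cleanly requires unpacking the tree-graded structure to argue that no shortcut exists between two different peripherals other than through the bounded bridge determined by $\alpha$'s visits, and this is where the specific geometry of ATM spaces is indispensable.
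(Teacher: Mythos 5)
Your strategy -- trap $\beta$ in $N_L(\Sat(\alpha))$ via \lemref{Lem:sat-convex} and then sort out which piece of the saturation $\beta$ commits to -- is a legitimate alternative starting point, and the first reduction (that $\beta\subseteq N_{\tau(\qq_*)}(\Sat(\alpha))$, by applying quasi-convexity to subsegments of the redirecting rays $\gamma_r$) is fine. But there are two genuine gaps. First, $\Sat(\alpha)$ need not be a finite union. By Definition~\ref{Def:saturation} it contains \emph{every} $A'\in\calA$ with $N_{M(\qq)}(A')\cap\alpha\neq\emptyset$, and the tail $\alpha[T_0,\infty)$, while contained in $N_{\rho M}(A)$, can pass within $M(\qq)$ of infinitely many other peripherals along the infinite set $A$; Part (I) of \propref{sis12} only controls the peripherals carrying \emph{deep} components, not all peripherals whose $M$-neighborhood is met. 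With infinitely many pieces, the pairwise intersections are individually bounded by (AT1) but their union need not be bounded (and certainly need not be ``near $\go$''), so the claim that $\beta$ eventually stops transitioning and commits to a single $N_{L'}(A^*)$ does not follow. Second, the exclusion $A^*\neq A''_j$ -- which is the entire content of the ``in fact'' clause -- is left open by your own admission, and the contradiction you sketch does not work as set up: a redirecting ray $\gamma_r$ only \emph{eventually} coincides with $\alpha$, with a landing time that may grow with $r$, so there is no fixed point $q=\alpha(S)$ of bounded norm guaranteed to lie on $\gamma_r$. What you would actually need is that any $\qq_*$-segment from a point deep in $A''_j$ to the far tail of $\alpha$ must pass through a fixed bounded gate region; that is precisely the step you flag as the ``main obstacle,'' so the proof is incomplete where it matters most.

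The paper avoids both issues by working entirely with transition points. Since $\gamma_i|_{r_i}=\beta|_{r_i}$ and $\gamma_i$ eventually equals $\alpha$, one can compare the segments of $\gamma_i$ and of $\alpha$ from $\go$ to a common point far along their shared tail and invoke Corollary~\ref{RFT}: transition points of $\gamma_i$ are uniformly close to transition points of $\alpha$. If $\beta$ were transient, the $\gamma_i$ would have transition points at arbitrarily large radii, hence so would $\alpha$ -- contradiction. If $\beta$ settled near some $B\neq A$, then $\gamma_i$ would have a deep component in $B$ followed by one in $A$, forcing a transition point of $\gamma_i$ (hence of $\alpha$) at arbitrarily large radius -- again a contradiction. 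I would recommend replacing your covering-and-gate argument with this transition-point argument, or at minimum supplying the missing ``no shortcut between distinct peripherals'' lemma with a precise statement and proof.
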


\begin{proof}
Let $\qq$ be large enough such that $\alpha$ and $\beta$ are $\qq$--rays and there is a family of $\qq$--rays 
$\gamma_i$ redirecting $\beta$ to $\alpha$ at radius $r_i$, with $r_i \to \infty$. 
Let $M, c, D$ and $\rho$ be as in \propref{sis12}. 

The set $\deep_{M,c}(\alpha)$ is a disjoint union of subpaths
and there is a $\qq$--transition point between any two adjacent ones.  Since $\alpha$ is non-transient, the tail of $\alpha$ after the last transition 
point is all in a deep segment which, by part (I) of \propref{sis12}, stays in a bounded neighborhood of some $A \in \calA$. 

If $\beta$ contains $\qq$--transition points at arbitrarily large radii, then $\gamma_i$ contain transition points at arbitrary radii 
for large enough $i$.  But $\gamma_i$ are eventually equal to $\alpha$, hence (by \propref{RFT}) $\alpha$ contains $\qq$--transition point at arbitrarily large radius. 
This is a contradiction since $\alpha$ is not transient.  

Therefore, $\beta$ is non-transient and hence, for some $B \in \calA$, $\beta$ is eventually contained in a bounded neighborhood of $B$. 
If $B \not = A$, then $\gamma_i$ stay near $B$ for a long time and then near $A$. This means $\gamma_i$ has a $\qq$--transition 
at arbitrarily larger radii for large values of $i$. Hence $\alpha$ has infinitely many $\qq$--transition points which is again a contradiction. 
The contradiction implies $B=A$. This finishes the proof. 
\end{proof} 

As a consequence, being non-transient is also a property of an equivalence class. We say $\bfa \in P(X)$ is a non-transient if some 
quasi-geodesic ray in $\bfa$ is non-transient which implies all rays in $\bfa$ are non-transient and in fact, they all eventually 
stay in a bounded neighborhood of some $A \in \calA$. 

\begin{proposition} \label{Prop:Assump-A}
Let $\bfa$ be a non-transient class, where all the rays in $\bfa$ eventually stay in a bounded neighborhood of some $A \in \calA$. 
Then every other quasi-geodesic ray that eventually stays in a bounded neighborhood of $A$ is in $\bfa$. 
Furthermore, $\bfa$ contains a geodesic ray and we can choose a function $f_\bfa$ that depends only on $f_A$. 
\end{proposition}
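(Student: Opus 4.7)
The plan is to build a canonical representative $\widetilde\alpha \in \bfa$ out of the central ray of $A$, and to show that every quasi-geodesic ray eventually in a bounded neighborhood of $A$ quasi-redirects to $\widetilde\alpha$ with constants depending only on $\qq$, $f_A$, and the fixed ATM data of $X$. This simultaneously gives all three conclusions. To begin, I would fix a nearest point $\go_A \in A$ to $\go$, take $\alpha_A$ to be a central quasi-geodesic ray of $A$ based at $\go_A$ (from Assumption 1 applied to $A$), and set
\[ \widetilde\alpha := [\go, \go_A] \cup \alpha_A. \]
By Part (I) of \lemref{surgery}, $\widetilde\alpha$ is a quasi-geodesic ray in $X$ with constants depending only on the Assumption 0 and Assumption 1 data of $A$.

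For the main step, given any $\qq$--ray $\beta$ with $\beta[t_\beta, \infty) \subset N_M(A)$ and any radius $r \geq t_\beta$, I would construct a redirecting ray as follows. Let $x = \beta_r$ and choose $a \in A$ with $d(x,a) \leq M$. By Assumption 0 applied inside $A$, there is a quasi-geodesic ray $\gamma_a$ in $A$ from $\go_A$ through $a$; by mono-directionality (Assumption 2 for $A$) there is an $f_A$--controlled ray $\delta_a$ in $A$ agreeing with $\gamma_a$ up through $a$ and eventually coinciding with $\alpha_A$. Put
\[ \gamma_r := \beta|_r \ \cup\ [x,a] \ \cup\ \delta_a[a, \infty), \]
which agrees with $\beta$ on $\beta|_r$ and satisfies $\gamma_r \eventual \widetilde\alpha$ through the $\alpha_A$--tail. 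Granting that $\gamma_r$ is a uniform quasi-geodesic (this is the obstacle below), one gets $\beta \preceq \widetilde\alpha$ with constants independent of $r$. Applying this to an arbitrary $\alpha \in \bfa$ and then swapping the roles of $\alpha$ and $\widetilde\alpha$ (legitimate because $\widetilde\alpha$ itself is eventually inside $A$) yields $\widetilde\alpha \simeq \alpha$, so $\widetilde\alpha \in \bfa$. The same construction applied to any quasi-geodesic ray eventually in a bounded neighborhood of $A$ proves the first assertion. For the geodesic representative, \lemref{limit-geo} produces a geodesic $\alpha_0^* \preceq \widetilde\alpha$; \lemref{non-transient-class} guarantees $\alpha_0^*$ still stays near $A$, and the first assertion then gives $\alpha_0^* \in \bfa$. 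The redirecting function $f_\bfa$ read off from the construction is a composition of $f_A$ with universal ATM constants ($M(\qq), c(\qq), \rho(\qq), D(\qq)$ from \propref{sis12} and $\tau$ from \lemref{Lem:sat-convex}), and thus depends only on $f_A$.

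The main obstacle is verifying that $\gamma_r$ is a quasi-geodesic with constants uniform in $r$. The Lipschitz upper bound is immediate since each of the three pieces is Lipschitz and the middle piece $[x,a]$ has length at most $M$. The lower bound — ruling out an ambient shortcut in $X$ between a point on $\beta|_r$ and a point far along $\delta_a[a,\infty)$ — is where the ATM structure is essential. Since both $\beta|_{[t_\beta, r]}$ and $\delta_a[a,\infty)$ live in bounded neighborhoods of $\Sat(\alpha_A)$, uniform quasi-convexity of saturations (\lemref{Lem:sat-convex}) combined with property (AT2) of \defref{char-of-AT} forces any $X$--quasi-geodesic joining two such points to stay close to $A$ on the overlap, reducing the lower bound to a computation inside $A$, where $\delta_a$ is an $f_A$--quasi-geodesic by construction. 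The relative fellow traveling property (\corref{RFT}) supplies the matching of transition points needed to piece the estimate together, yielding constants uniform in $r$ and depending only on $\qq$, $M$, $f_A$, and the universal ATM data of $X$.
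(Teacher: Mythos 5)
Your overall strategy is the same as the paper's: fix a gate point $\go_A\in A$, reduce everything to the mono-directionality of $A$, and chain back out with the surgery lemmas; the paper likewise manufactures its representative from a ray of $A$ attached at $\go_A$ and uses quasi-convexity of saturations (\lemref{Lem:sat-convex}) to see that every ray eventually near $A$ enters through a bounded gate. The structural difference is that the paper introduces, for \emph{each} ray $\beta$ near $A$, its closest-point projection $\beta^A$ --- a ray \emph{in} $A$ from $\go_A$ that fellow-travels the tail of $\beta$ --- and then runs the chain $\beta\leftrightarrow\beta^A\leftrightarrow\alpha_0^A\leftrightarrow\alpha_0$, where the outer links are handled by \lemref{onestep}/part (II) of \lemref{surgery} and the middle link by mono-directionality of $A$. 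Every link in that chain is reversible, which is what makes the equivalence (rather than just one inequality) come out. You instead hard-code the target as the central ray $\alpha_A$ and, at each radius $r$, concatenate $\beta|_r\cup[x,a]\cup\delta_a[a,\infty)$. Two consequences. First, the uniform lower quasi-geodesic bound for that concatenation is genuinely the crux and is not a formal consequence of \lemref{Lem:sat-convex} plus (AT2): $\delta_a[a,\infty)$ is the tail of a ray based at $\go_A$, not at $\go$, and you must rule out that it swings back toward early points of $\beta|_r$ faster than the elapsed parameter; the paper's route through the fellow-traveling projection avoids this concatenation altogether.

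Second, and this is the concrete gap: the step ``swapping the roles of $\alpha$ and $\widetilde\alpha$'' does not work as stated. Your construction only ever redirects rays \emph{to} $\widetilde\alpha$, because the terminal segment is always $\alpha_A$. To place $\widetilde\alpha$ (and hence the geodesic $\alpha_0^*$) in $\bfa$, and to prove the first assertion, you also need $\widetilde\alpha\preceq\alpha$ for rays $\alpha$ that lie only in a bounded \emph{neighborhood} of $A$, not in $A$ itself; your recipe produces no ray that eventually coincides with such an $\alpha$. The fix is exactly the paper's missing ingredient: use mono-directionality of $A$ (via \lemref{Lem:f-alpha}) to redirect $\alpha_A$ to the projection $\alpha^A\subset A$, and then hop from $\alpha^A$ onto $\alpha$ with \lemref{onestep}, using that they fellow-travel. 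With that insertion your argument closes up and gives the same dependence of $f_\bfa$ on $f_A$ as in the paper.
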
 

\begin{proof}
Let $\alpha$ be a quasi-geodesic ray in $\bfa$. By Lemma~\ref{limit-geo}, there is a geodesic ray $\alpha_0 \preceq \alpha$. 
\lemref{non-transient-class} implies that $\alpha_0$ is non-transient and eventually stays in a
bounded neighborhood of $A$. Let $\beta$ be any $\qq$--ray that eventually stays in 
a bounded neighborhood of $A$. We will show that $\beta$ can be quasi-redirected to $\alpha_0$ where
the redirecting constant depends uniformly on $f_A(\qq)$. Note that this in particular proves (setting $\beta = \alpha$)
that $\alpha_0 \in \bfa$ and hence finishes the proof. 

Let $\rho_0 = \rho(1,0)$, $M_0 = M(1,0)$, $\rho =\rho(\qq)$ and $M= M(\qq)$ be as in 
\propref{sis12}. Let $t_0$ be the first time $\alpha_0$ enters the $M_0$--neighborhood of $A$ and let 
$\go_A \in A$ be a closest point in $A$ to $\alpha_0(t_0)$. Note that $\alpha_0$ stays in the 
$\rho_0 M_0$--neighborhood of $A$ after $t_0$. Let $\alpha_0^A$ be a quasi-geodesic ray in 
$A$ starting at $\go_A$ that fellow travels $\alpha_0$, namely, we can compose $\alpha_0|_{[t_0, \infty)}$ with 
the closest point projection to $A$ and use \lemref{Lem:Tame} to tame the resulting quasi-geodesic. 

Similarly, let $t_\beta$ be the first time $\beta$ enters the $M$--neighborhood of $A$. 
Consider $\Sat(\alpha_0)$  (See Definition~\ref{Def:saturation}) and recall the uniform quasi-convexity of saturations \cite[Lemma 4.25]{DS05}. Since $\beta$ 
is contained in a bounded neighborhood of the saturation of $\alpha_0$, $\beta$ has to enter the $M$--neighborhood 
of $A$ in a bounded distance from $\go_A$ where the bounds depend only on $\qq$. 
Let $\beta^A$ be a quasi-geodesic ray in $A$ starting at $\go_A$ that fellow travels $\beta$ constructed as above. 

Now $\alpha_0^A$ and $\beta^A$ are $\qq'$--rays in $A$ where $\qq'$ depends uniformly on $\qq$. 
Since $A$ is mono-directional, $\beta^A$ can be $f_A(\qq')$ redirected to $\alpha_0^A$. 
Part (II) of \lemref{surgery} implies that $\beta$ can be quasi-redirected to $\beta^A$, and $\alpha_0^A$ can be 
quasi-redirected to $\alpha_0$. Now, arguing as in \lemref{L:Transitive}, we see that $\beta$ 
can be quasi-redirected to $\alpha_0$ with quasi-redirection constant uniformly depending $f_A(\qq')$. 
This finishes the proof. 
\end{proof}

\begin{corollary} \label{Cor:Symmetric} 
The relation $\preceq$ is symmetric. In particular, different classes in $P(X)$ are not comparable.
\end{corollary}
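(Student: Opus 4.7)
The plan is to handle the two cases (transient vs.\ non-transient target) separately, in both cases showing that the quasi-geodesic rays $\alpha$ and $\beta$ with $\alpha \preceq \beta$ must in fact belong to the same equivalence class $\bfa \in P(X)$. Once this is established, symmetry follows immediately: $\alpha \simeq \beta$ implies in particular $\beta \preceq \alpha$. The second assertion, that distinct classes in $P(X)$ are incomparable, is then a formal consequence: if $\bfa \preceq \bfb$, then applying symmetry to any representatives gives $\bfb \preceq \bfa$, whence $\bfa = \bfb$ by \defref{Def:P(X)}.

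Assume $\alpha \preceq \beta$. I would split into two cases according to whether $\beta$ is transient or not. In the transient case, \lemref{transient-class} immediately tells us that $\alpha$ is transient as well. By \propref{Prop:Assump}, the class $[\beta]$ contains a central geodesic ray $\beta_0$ with the stronger property that every quasi-geodesic ray $\gamma$ with $\gamma \preceq \beta_0$ already lies in $[\beta]$. Since $\beta \simeq \beta_0$ and $\alpha \preceq \beta$, transitivity of $\preceq$ (\lemref{L:Transitive}) gives $\alpha \preceq \beta_0$, hence $\alpha \in [\beta]$, i.e.\ $\alpha \simeq \beta$, which contains $\beta \preceq \alpha$ as desired.

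In the non-transient case, \lemref{non-transient-class} ensures that both $\alpha$ and $\beta$ are non-transient and eventually lie in a bounded neighborhood of a common peripheral subset $A \in \calA$. Then \propref{Prop:Assump-A} asserts that \emph{every} quasi-geodesic ray that eventually stays in a bounded neighborhood of $A$ lies in the single class $[\beta]$. Thus $\alpha \in [\beta]$, so $\alpha \simeq \beta$, and once again $\beta \preceq \alpha$.

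There is essentially no obstacle here: the work was all done in the earlier structural results on transient and non-transient classes, and this corollary is just a bookkeeping consequence. The only thing to be careful about is to invoke transitivity (\lemref{L:Transitive}) when passing between a ray and the central representative of its class, so as to stay within the given quasi-redirecting framework rather than accidentally using a property like $\simeq$ too strongly before it has been established.
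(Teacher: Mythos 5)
Your proof is correct and follows essentially the same route as the paper, which simply cites \propref{Prop:Assump} and \propref{Prop:Assump-A}; your case split via \lemref{transient-class} and \lemref{non-transient-class} just makes the implicit details explicit.
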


\begin{proof}
This follows immediately from \propref{Prop:Assump} and \propref{Prop:Assump-A}. 
\end{proof}

\subsection*{Topological properties of $\partial X$}
In this subsection, we show that $\partial X$ is compact and metrizable. We note that 
$X$ satisfies Assumption 0, 1 and 2; Assumption 0 holds by the definition of an ATM space and  
Assumptions 1 and 2 follow from  \propref{Prop:Assump} and \propref{Prop:Assump-A}. 

\begin{proposition}\label{secondcountable}
Let $X$ be an ATM space. Then $\partial X$ is second countable.
\end{proposition}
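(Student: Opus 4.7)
The plan is to exhibit a countable sub-basis for the topology on $\partial X$, which yields a countable basis by taking finite intersections. Since $X$ is proper, it is separable; fix a countable dense subset $D = \{x_i\}_{i \in \mathbb{N}} \subset X$. The argument splits according to the transient / non-transient dichotomy of classes in $P(X)$ established earlier in this section.

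For the non-transient classes, \propref{Prop:Assump-A} gives a bijection with the subcollection $\calA^{*} \subseteq \calA$ of peripheral subsets containing a ray. I will argue that $\calA^{*}$ is countable by combining properness of $X$ with the isolation property (AT1): every $A \in \calA^{*}$ is unbounded, so for each $A \in \calA^{*}$ we may pick a point $p_A \in A \cap B_{n_A}(\go)$ for some $n_A \in \mathbb{N}$; meanwhile (AT1) with $\delta = 1$ prevents too many distinct $A \in \calA$ from meeting any fixed ball in a set of diameter greater than $D(1)$. A packing argument in each $B_n(\go)$ (which is compact) then bounds the number of $p_A$'s there, yielding countability of $\calA^{*}$. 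This produces the countable family $\{\calU(\bfa_A, k) \cap \partial X : A \in \calA^{*}, \, k \in \mathbb{N}\}$ of neighborhoods for the non-transient classes.

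For the transient classes, each of which has a geodesic central representative by \propref{Prop:Assump}, I define for each pair $(i, n) \in \mathbb{N}^2$
\[
W_{i,n} = \big\{ \bfb \in \partial X : \beta_0[n, n+1] \cap B(x_i, 1) \neq \emptyset, \text{ where } \beta_0 \in \bfb \text{ is the central geodesic}\big\}.
\]
The collection $\{W_{i,n}\}$ is countable, and every transient $\bfa$ lies in infinitely many $W_{i,n}$ by density of $D$ and unboundedness of $\alpha_0$.

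The key step — and the main obstacle — is verifying that $\calF = \{W_{i,n}\} \cup \{\calU(\bfa_A, k) \cap \partial X\}$ is in fact a sub-basis. Concretely, given a transient $\bfa$ and $r > 0$, I choose $n$ exceeding the threshold $R_\bfa$ from part (II) of \propref{Prop:Landing} (applied to $\qq_0$), and pick $x_i$ within distance $1$ of $\alpha_0(n)$, so that $\bfa \in W_{i,n}$. For any $\bfb \in W_{i,n}$, the central geodesic $\beta_0$ of $\bfb$ passes within distance $2$ of $\alpha_0(n)$ near radius $n$; splicing $\beta_0|_n$ with a short geodesic to $\alpha_0(n)$ and the tail of $\alpha_0$ via \lemref{surgery} produces a $\qq'$-redirection (with $\qq'$ depending only on $\qq_0$) of $\beta_0$ to $\alpha_0$ at radius approximately $n$. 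Part (II) of \propref{Prop:Landing} then upgrades this to an $F_\bfa(\qq_0)$-redirection at radius $r$. To lift this from $\beta_0$ to every $\qq$-representative of $\bfb$ (as required by \defref{Def:The-In-Topology}), I will use the uniform redirecting function for $\bfb$ from Assumption 2 together with the relative fellow-traveling property \corref{RFT}, which forces every $\qq$-representative of $\bfb$ to pass uniformly close to $\beta_0(n)$ near radius $n$. The analogous verification in the non-transient case uses the quasi-convexity of saturations (\lemref{Lem:sat-convex}) and the mono-directionality of the relevant $A \in \calA^{*}$. This yields $W_{i,n} \subseteq \calU(\bfa, r)$ (respectively the analogous inclusion) and completes the proof.
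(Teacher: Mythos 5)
Your proposal has a structural gap that prevents it from establishing second countability. A countable basis (or sub-basis) must consist of \emph{open} sets, but your sets $W_{i,n}$ are defined by a rigid condition on the fixed central geodesic $\beta_0$ of each class and are almost certainly not open in $\partial X$: a class $\bfc \in \calU(\bfb, R)$ for huge $R$ only has its central geodesic redirectable to $\beta_0$ at radius $R$, which forces proximity to $\beta_0$ only near transition points and only up to the constant $D(1,0)$ of \propref{sis12} --- it does not force $\gamma_0[n,n+1]$ to meet the specific ball $B(x_i,1)$. Even if you succeed in proving $\bfa \in W_{i,n} \subseteq \calU(\bfa,r)$, a countable family of non-open sets interpolating between points and their neighborhoods is a countable \emph{network}, which is strictly weaker than a countable basis and does not feed into the Urysohn metrization argument the paper needs. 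The fix is essentially what the paper does: instead of recording ``which ball the central geodesic passes through,'' choose for each net point an actual $\qq_0$--ray through it (Assumption 0 provides one; properness gives finitely many per sphere $\calS_i$), and use the genuine basic open sets $\calU([\alpha],R)$ for those countably many rays $\alpha$ and $R\in\NN$. One must then prove both inclusions $\bfb \in \calU([\alpha],R)$ and $\calU([\alpha],R)\subseteq \calU(\beta_0,r)$, the second of which you do not address at all.

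There is a second, quantitative gap in your ``key step.'' You choose $n$ only to exceed the threshold from \propref{Prop:Landing} and then invoke \corref{RFT} to force every $\qq$--representative of $\bfb$ to pass near $\beta_0(n)$. But \corref{RFT} controls only \emph{transition points}: if $\alpha_0(n)$ (or the nearby point of $\beta_0$) lies deep inside some peripheral set $A\in\calA$, two representatives of $\bfb$ can diverge by an amount comparable to their depth in $A$ there, and the splicing via \lemref{surgery} no longer yields a redirection with constants $(9q,Q)\le F_\bfa(\qq)$ uniformly over all representatives, as \defref{Def:The-In-Topology} demands. This is why the paper's proof repeatedly arranges for the comparison radii $r'$ and $R'$ to be chosen so that $(\beta_0)_{r'}$ and $(\beta_0)_{R'}$ are $(1,0)$--transition points before applying \corref{RFT} and Part (II) of \lemref{surgery}. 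Your treatment of the non-transient classes (countability of $\calA^{*}$ via (AT1) plus a packing argument in compact balls) is fine and in fact supplies a justification the paper leaves implicit, but the transient case as written does not go through.
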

\begin{proof}
Take a sequence $r_i \to \infty$ and let 
 \[
 \calS_{i} : = \big\{ x \in X \st \Norm{x} = r_i \big\}, 
\] 
be the sphere for radius $r_i$ in $X$. Since $X$ is proper, there are finitely many points that are $1$--dense 
on any given sphere. By Assumption 0, every point $x \in X$ lies on a $\qq_0$--ray. 
Thus there exists a finite set $\calE_i$ of $\qq_0$--rays such that the set 
\[
\big\{ \alpha_{r_i} \st \alpha \in \calE_i \big\} \qquad \text{is $1$--dense in $\calS_i$}. 
\]
In addition, let $\calZ$ be the set of non-transient classes in $P(X)$. Note that $\calZ$ is countable. Now, define 
 \[
 \calY: =  \Big\{ \calU(\bfa, R) \ST  \bfa \in \calZ  \quad \text{and}\quad R \in \NN \Big\}
\ \bigcup \
 \Big\{ \calU([\alpha], R) \ST \alpha \in \bigcup_i \calE_i  \quad \text{and}\quad R \in \NN \Big\}.
 \]
This is a countable set of neighborhoods. To prove the proposition, it is sufficient to show that,
for every $\bfb \in \partial X$ and every $r>0$, there exists $ \calU(\bfa, R) \in \calY$ such that 
\begin{equation} \label{Eq:need-to-show}
 \bfb \in \calU(\bfa, R) \subset  \calU(\beta_0, r) . 
\end{equation} 
If $\bfb$ is a non-transient class, then $\bfb \in \calZ$ and we can take $\bfa=\bfb$ and 
choose any $R \geq r$. Thus, we assume $\bfb$ is a transient class.

Let the geodesic ray $\beta_0$ be the central element of $\bfb$. For any $r>0$, let 
$(q_r, Q_r) =\qq_{\max}(r)$ be as in \lemref{Lem:Max}, let $\qq_r = (9q_r, Q_r)$
and let $D_r = D(\qq_r)$ be as in \propref{sis12}. Let 
\[
r' \geq  2r + 6D_r
\]
be such that $(\beta_0)_{r'}$ is a $(1,0)$--transition point for $\beta_0$. 
Choose $R \gg r'$, let $(q_R, Q_R)= \qq_{\max}(R)$ be as in \lemref{Lem:Max},
$\qq_R=(9 q_R, Q_R)$ and let $D_R = D(\qq_R)$ be as in \propref{sis12}. Again, let 
\[
R' \geq  2 R + 6 D_R
\]
be such that $(\beta_0)_{R'}$ is a $(1,0)$--transition point for $\beta_0$. Finally,  choose $r_i \gg 2R'$ and let 
$\alpha$ be a $\qq_0$--ray in $\calE_i$ such that $d((\beta_0)_{r_i}, \alpha_{r_i}) \leq 1$. Let $\bfa = [\alpha]$. 
We check that \eqref{Eq:need-to-show} holds.

We start by showing that $\bfb \in \calU(\bfa, R)$. Part (II) of \lemref{surgery} implies that 
$\alpha$ can be $(9q_0, Q_0)$--redirected to $\beta_0$ at radius $r_i/2 \gg R'$. 
Since $(\beta_0)_{R'}$ is a transition point for $\beta_0$, arguing as in the proof of \lemref{transient-class},
we see that there is a transition point $x \in \alpha$ with 
\[
d(x, (\beta_0)_{R'}) \leq D_R
\] 
that is a $(9q_0, Q_0)$--transition point for $\alpha$. 

Let the geodesic ray $\alpha_0 \in \bfa$ be the central element of $\bfa$. 
Then there is a point $x_0 \in \alpha_0$ near $x$ that is a transition point for $\alpha_0$, that is 
\[
d(x_0, (\beta_0)_{R'}) \leq d(x_0, x) + d(x, (\beta_0)_{R'})  \leq 2 D_R.
\]
Let $\beta \in \bfb$ be a $\qq=(q, Q)$--ray with $\qq \leq \qq_R$. Since $(\beta_0)_{R'}$ is a transition point for $\beta_0$, 
there is a transition point $y \in \beta$ with $d(y, (\beta_0)_{R'}) \leq D_R$. This means 
\[
d(x_0, y) \leq d(x_0, (\beta_0)_{R'}) + d((\beta_0)_{R'}, y)  \leq 3D_R \leq R'/2. 
\]
Part (II) of \lemref{surgery} implies that $\beta$ can be $(9q, Q)$--quasi-redirected to $\alpha_0$ at radius $R'/2 \geq R$. 
Therefore $\bfb \in \calU(\bfa, R)$. 

We now show $\calU(\bfa, R) \subset  \calU(\beta_0, r)$. The proof is similar to above and uses the transition points. 
For the sake brevity, we omit some of the intermediate constants in our proof. 
Let $\bfc$ be a point in $\calU(\bfa, R)$ and let $\gamma \in \bfc$ be a $\qq$--ray with $\qq \leq \qq_{\rm max}(r)$. 
We need to show that $\gamma$ can be $(9q,Q)$--redirected to $\beta_0$ at radius $r$. Note that, $\gamma$ be the 
redirected to $\alpha_0$ at radius $R$. Since $(\beta_0)_{r'}$ is a $(1,0)$-transition points, $\alpha$ has to have 
a nearby transition point, and thus $\alpha_0$ has a nearby transition point which implies $\gamma$ has a nearby 
transition point. That is, there is a point $x$ in $\gamma$ such that 
\[
d(x, (\beta_0)_{r'}) \leq 3 D_r \leq r'/2. 
\]
Part (II) of \lemref{surgery} implies that $\gamma$ can be $(9q, Q)$--redirected to $\beta_0$ at radius $r'/2 \geq r$. 

Since $\calY$ is a countable basis for the topology we have that $\partial X$ is a second countable topological space.
\end{proof}

We use the notation 
\[
A \gg B
\]
for quantities $A$ and $B$ to mean that there exists a sufficiently large constant $C>0$ not depending 
on $\qq$ such that when $A-B \geq C$ the argument holds.
This simplifies the exposition when there are many additive errors accumulating but the errors
do not get larger when the quasi-geodesic constant get larger. 

We first establish an a criterion for an element $\bfc$ to be contained in a neighborhood 
$\calU(\bfa, r)$.

\begin{lemma}\label{Lem:Inclusion-Criterion}
Let $\bfa \in \partial X$ and $r>0$ be given and let the geodesic ray $\alpha_0$ be the central element of 
$\bfa$. We can choose the redirecting function $f_\bfa$ large enough such that the following holds. 
Then there exists $r'>r$ such that, if a geodesic $\gamma$ can be $f_\bfa(1,0)$--redirected to 
$\alpha_0$ at radius $r'$ and $\gamma \in \bfb$ (not necessarily the central element of $\bfb$) 
then $\bfb \in \calU(\bfa, r)$. 
\end{lemma}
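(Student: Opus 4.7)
The plan is to show that every $\qq$-ray $\beta \in \bfb$ admits an $F_\bfa(\qq)$-redirection to $\alpha_0$ at radius $r$, which is exactly the content of $\bfb \in \calU(\bfa, r)$. By \lemref{Lem:Max}, the case $\qq \not\leq \qq_{\rm max}(r)$ is automatic, so I restrict to $\qq \leq \qq_{\rm max}(r)$, over which the transition-point constants $M(\qq), c(\qq), D(\qq)$ of \propref{sis12} admit a uniform bound $D^\ast$ that also dominates $D(f_\bfa(1,0))$. The main idea is that the redirection of the geodesic $\gamma$ to $\alpha_0$ at the large radius $r'$ forces $\gamma$ to share a transition point with $\alpha_0$ at a chosen intermediate radius, and this transition point can be transferred to any $\beta \in \bfb$ via an intra-class redirection and then spliced into a direct redirection of $\beta$ to $\alpha_0$ using Part (II) of \lemref{surgery}.

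Consider first the transient case, where $\alpha_0$ has $(1,0)$-transition points at arbitrarily large radii. Choose $r_0 \geq \max(2r, 8D^\ast)$ so that $\alpha_0(r_0)$ is a transition point of $\alpha_0$, and set $r' = 2 r_0$. Let $\gamma \in \bfb$ be a geodesic and $\zeta$ the $f_\bfa(1,0)$-ray witnessing the redirection, so $\zeta|_{r'} = \gamma|_{r'}$ and $\zeta$ eventually coincides with $\alpha_0$. Comparing $\zeta|_{[\go, p]}$ and $[\go, p]_{\alpha_0}$ for a point $p$ far out on $\alpha_0$, \corref{RFT} produces a transition point $x \in \zeta$ with $d(x, \alpha_0(r_0)) \leq D^\ast$; since $\Norm{x} \leq r_0 + D^\ast \leq r'$ and $\zeta|_{r'} = \gamma|_{r'}$, we have $x \in \gamma$.

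Now fix $\beta \in \bfb$ a $\qq$-ray. By \lemref{Lem:f-alpha}, $\beta$ can be $f_\gamma(\qq)$-redirected to $\gamma$ at any radius; apply this at radius $r_0 + 2D^\ast$ to obtain an $f_\gamma(\qq)$-ray $\eta$ with $\eta|_{r_0+2D^\ast} = \beta|_{r_0+2D^\ast}$ and $\eta$ eventually equal to $\gamma$. A second application of \corref{RFT} produces a transition point $y \in \eta$ with $d(y, x) \leq D^\ast$, and $\Norm{y} \leq r_0 + 2D^\ast$ forces $y$ to lie on the $\beta$-portion of $\eta$, so $y \in \beta$ satisfies
$$d\bigl(y, \alpha_0(r_0)\bigr) \leq 2 D^\ast \leq r_0/4.$$
Part (II) of \lemref{surgery}, with the geodesic $\alpha_0$ and the $\qq$-ray $\beta$ at radius $r_0$, then yields a $(9q, Q)$-quasi-geodesic redirecting $\beta$ to $\alpha_0$ at radius $r_0/2 \geq r$. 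Enlarging $f_\bfa$ so that $F_\bfa(\qq)$ dominates $(9q, Q)$ uniformly over $\qq \leq \qq_{\rm max}(r)$ closes the transient case. For the non-transient case, $\alpha_0$ eventually lies in a bounded neighborhood of some $A \in \calA$: if $\alpha_0$ still has a transition point between $r$ and $r'$, the argument above applies verbatim; otherwise I enlarge $r'$ so that $\zeta|_{r'}$ (hence $\gamma|_{r'}$) penetrates deeply past the entry point of $\alpha_0$ into a neighborhood of $A$, and quasi-convexity of $\Sat(\gamma)$ (\lemref{Lem:sat-convex}) together with the argument of \propref{Prop:Assump-A} forces any $\beta \in \bfb$ to enter the same neighborhood of $A$ near the common entry point $\go_A$; mono-directionality of $A$ with redirecting function $f_A$, combined with Part (II) of \lemref{surgery} to bridge in and out of $A$, then delivers the required $F_\bfa(\qq)$-redirection at radius $r$.

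The principal obstacle is the additive bookkeeping through the two successive applications of \corref{RFT}: ensuring that the final transition point $y$ truly lies on the $\beta$-portion of $\eta$ rather than the $\gamma$-tail, and that the accumulated error $2D^\ast$ fits within the $r_0/2$ budget demanded by Part (II) of \lemref{surgery}. This is exactly what forces the relation $r' \gg r$ (specifically $r'$ must exceed the first transition point of $\alpha_0$ past $\max(2r, 8D^\ast)$) and the permitted enlargement of $f_\bfa$, which is used to absorb the output quasi-isometry constants $(9q, Q)$ into $F_\bfa(\qq)$ uniformly over the bounded family $\qq \leq \qq_{\rm max}(r)$.
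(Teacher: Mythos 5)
Your transient case follows the same route as the paper's Case I (transfer a far transition point of $\alpha_0$ first to $\gamma$ and then to $\beta$, and close with Part (II) of \lemref{surgery}), but there is a uniformity gap in the second transfer. The redirection $\eta$ of $\beta$ to $\gamma$ supplied by \lemref{Lem:f-alpha} has constants $f_\gamma(\qq)$, and $f_\gamma$ is built from the redirecting function of the class $\bfb$ and its central element --- it depends on $\bfb$, not on $\bfa$. Consequently the error in the second application of \corref{RFT} is $D(f_\gamma(\qq))$, not your $D^{\ast}$ (which you only arranged to dominate $D(\qq)$ for $\qq\leq \qq_{\rm max}(r)$ and $D(f_\bfa(1,0))$). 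Since $r_0$ and $r'$ must be fixed before $\bfb$ is known, the inequality $d(y,\alpha_0(r_0))\leq r_0/4$ is circular as written. Closing it requires the ATM-specific uniformity of \propref{Prop:Assump} ($f_\bfb(\qq)=(9q,Q)$ for transient $\bfb$, with a bounded correction for the non-central geodesic $\gamma$), and a separate argument when $\bfb$ is non-transient, where $f_\gamma$ involves the redirecting function $f_{A'}$ of a peripheral set $A'$ that $\bfa$ does not control. A smaller but real issue: ``$\Norm{x}\leq r_0+D^{\ast}\leq r'$ and $\zeta|_{r'}=\gamma|_{r'}$, hence $x\in\gamma$'' conflates the radius of a point with its position along the ray; a point of $\zeta$ of norm below $r'$ need not lie on $\zeta|_{r'}$, and with $\zeta$ only an $f_\bfa(1,0)$--quasi-geodesic you need $r'$ to exceed $r_0$ by a multiplicative factor depending on $f_\bfa(1,0)$, so $r'=2r_0$ is not enough in general (the same applies to placing $y$ on the $\beta$--portion of $\eta$).

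The more serious gap is the non-transient case, which is where most of the paper's proof lives and which your sketch does not actually carry out. The fallback ``if $\alpha_0$ still has a transition point between $r$ and $r'$, the argument applies verbatim'' does not cover the situation: for a non-transient $\bfa$ the last transition point of $\alpha_0$ may occur below any prescribed radius. The missing ingredient is a quantitative, segment-level version of mono-directionality: for every $r_A$ there is $r'_A$ such that every $\qq$--segment in $A$ issuing from $\go_A$ of length at least $r'_A$ can be $f_A(\qq)$--redirected to $\alpha_0^A$ at radius $r_A$ (the paper proves this by an Arzel\`a--Ascoli limiting argument). Mono-directionality as defined only redirects \emph{rays} of $A$, whereas $\beta$ meets $A$ only in a segment; it is precisely this segment statement that determines $r'$ in the non-transient case and yields a redirection at the small radius $r$ with constants depending only on $f_A$. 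Finally, the splicing into and out of $A$ should go through \lemref{onestep} (fellow traveling), not Part (II) of \lemref{surgery}: the latter requires the target to be a geodesic ray, while the pieces being joined here, such as $[\go,\go_A]\cup\beta_A'$ against $\alpha_0$, are only fellow-traveling quasi-geodesics.
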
 

\begin{proof} 
Suppose that $\bfa$ is a transient class. Let $x_0$ be a $(1,0)$--transition point along $\alpha_0$ with 
\[
\Norm{x_0} \gg r \qquad\text{and let}\qquad  r' \gg \Norm{x_0}.
\] 
Since $\gamma$ can be redirected to $\alpha_0$ 
at radius $r'$, \lemref{RFT} implies that $\gamma$ has a $(1,0)$--transition 
point near $x_0$. Similarly,  any $\qq$--ray $\beta \in \bfb$ ($\qq \leq \qq_{\rm max}(r))$
has a $\qq$--transition point $y$ near $x_0$. Choosing the $\Norm{x_0}$ large enough, we can ensure 
that 
\[
d(x_0, y) \leq \frac{\Norm{x_0}}2.
\] Then Part (II) of \lemref{surgery} implies that $\beta$ can be $(9q, Q)$--redirected to $\alpha_0$ at radius $\Norm{x_0}/2 > r$. Since 
\[
(9q, Q) = f_\bfa(q,Q) < F_\bfa(q, Q),
\]
and this holds for every quasi-geodesic ray 
$\beta \in \bfb$, we have $\bfb \in \calU(\bfa, r)$.  Suppose otherwise that $\bfa$ is a non-transient class. Let 
\begin{align*}
&M_0=M(1,0), c_0 = c(1,0), \rho_0 = \rho(1,0)\\
&M = M(\qq_{\rm max}(r)), c = c(\qq_{\rm max}(r)), \rho = \rho(\qq_{\rm max}(r))
\end{align*}
be as in \propref{sis12}. 
For the rest of the proof, by a bounded constant, we mean a constant that depends uniformly 
on these constants. Since $\bfa$ is non-transient, there is $A \in \calA$ and $t_0$ such that 
$\alpha_0|_{[t_0, \infty)}$ is in a $(\rho_0 M_0)$--neighborhood of $A$. Let $\go_A$ be a closest point 
in $A$ to $\alpha_0(t_0)$. Then any $\qq$--ray that enter an $M(\qq)$--neighborhood of $A$, does 
so a bounded distance away from $\go_A$ (see \lemref{Lem:sat-convex}). 
Let $\alpha_0^A$ be a quasi-geodesic ray in $A$ that fellow travels $\alpha_0$ (meaning they stay 
a bounded distance from each other in a parametrized manner). 

We would like to argue that, every quasi-geodesic that stays near $A$ for a long time can be redirected to 
$\alpha_0$ at a radius $r$. We formulate this in terms of geodesic segments in $A$.

\begin{claim}
For every $r_A>0$ there is $r'_A\geq r_A$ such that every $\qq$--segment $\gamma_A$ in $A$ connecting $\go_A$ to $x$ with 
$d(\go_A, x) \geq r'_A$ can be $f_A(\qq)$--redirected to $\alpha_0^A$ at a radius $r_A$. That is, there is a $f_A(\qq)$--ray
$\gamma_A'$ in $A$ that $\gamma_A|_{r_A} = \gamma_A'|_{r_A}$ and $\gamma_A'$ is eventually equal to $\alpha_0^A$. 
\end{claim} 

\begin{proof}[Proof of Claim]\renewcommand{\qedsymbol}{$\blacksquare$} 
Let $r_A$ be given and assume for contradiction there is a sequence of radii $r_i^A \to \infty$ and
$\qq$--segments $\gamma_i^A$ ($\qq \leq \qq_{\rm max}(r_A)$), that cannot be $f_A(\qq)$--redirected to 
$\alpha_0^A$. Taking a subsequence, we find a $\qq$--ray $\gamma_A$ that cannon be 
$f_A(\qq)$--redirected to $\alpha_0^A$. This contradicts the fact that $A$ is mono-directional. 
\end{proof} 

Let $r_A = r -d(\go, \go_A) + 2 \rho \cdot M$ and choose 
\[
r' \gg d(\go, \go_A) + r'_A + 2\rho \cdot M
\]
where $r_A'$ is as in the claim. 

Let $\gamma$ be a geodesic ray that redirects to $\alpha_0$ at radius $r'$. 
Let $\Ge_\gamma$ be the points where $\gamma$ exits the $M$--neighborhood of $A$. 
Let $\gamma'$ be the $f_\bfa(1,0)$--ray where $\gamma'|_{r'} = \gamma|_{r'}$ and 
$\gamma$ is eventually in a bounded neighborhood of $A$. Then $\gamma'$ is entirely 
contained in a $\rho M$--neighborhood of $A$. Hence $|\Norm{\Ge_\gamma} - r'|$ is 
bounded. 

Let $\beta$ be a $\qq$--ray in $\bfb$ with $\qq \leq \qq_{\rm max}(r)$. Then, 
by quasi-convexity of the saturation (Lemma~\ref{Lem:sat-convex}), $\beta$ has to also enter the $M$--neighborhood of $A$ 
at a point $x$ at most a bounded distance away $\go_A$ and exit the neighborhood near at a point 
$y$ that is at most a bounded distance away from $\Ge_\gamma$.
\lemref{Lem:sat-convex} implies that, $[x, y]_\beta$ stays in a bounded neighborhood of $A$. 
Let $\beta_A$ be the $\qq'$--quasi-geodesic segment in $A$ starting from $\go_A$ 
that fellow travels $[x,y]_\beta$ where $\qq'$ depends only on $\qq$. By the claim, $\beta_A$ can 
be quasi-redirected to $\alpha_0^A$ at 
radius $r_A$ via a $f_A(\qq')$--ray $\beta'$ in $A$. Now $\beta|_r$ fellow travels 
$[\go, \go_A] \cup \beta_A'|_{r_A}$. By \lemref{onestep}, $\beta$ can be quasi-redirected to 
$[\go, \go_A] \cup \beta_A'|_{r_A}$. Also, $[\go, \go_A] \cup \beta_A'$ is eventually equal to 
$\alpha_0^A$ and hence fellow travels $\alpha_0$. Again, by \lemref{onestep}, it can be 
quasi-redirected to $\alpha_0$ at any radius. By transitivity, we see that 
$\beta$ can be quasi-redirected to $\alpha_0$ at radius $r$ with uniform constant that depends 
only on $f_A(\qq)$. Hence, $\bfb \in \calU(\bfa, r)$ for an appropriate function $F_\bfa$.  
\end{proof}

Recall that a topological space is regular if points can be separated from closed sets. 
\begin{proposition}\label{regular}
Let $X$ be an ATM space. Then $\partial X$ is regular.
\end{proposition}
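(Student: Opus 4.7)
The plan is to show that for every $\bfa \in \partial X$ and every $r>0$, there exists $r'>r$ such that
\[
\overline{\calU(\bfa, r')\cap \partial X} \ \subseteq \ \calU(\bfa, r)\cap \partial X.
\]
Since the sets $\calU(\bfa, s) \cap \partial X$ form a neighborhood basis at $\bfa$, this will give regularity. The main tool is \lemref{Lem:Inclusion-Criterion}: if the central geodesic $\gamma_0$ of a class $\bfc$ can be $f_\bfa(1,0)$--redirected to $\alpha_0$ at some large radius $r^*$ (depending on $\bfa$ and $r$), then $\bfc \in \calU(\bfa, r)$. So it suffices to produce, for every $\bfc \in \overline{\calU(\bfa, r')}$, such a redirection of $\gamma_0$.

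\textbf{Transient case.} Suppose $\bfa$ is transient with central geodesic $\alpha_0$. First fix $r^*\geq r$ large enough to invoke \lemref{Lem:Inclusion-Criterion}. Choose a $(1,0)$--transition point $x_0\in\alpha_0$ with $\Norm{x_0}\gg r^*$, and then set $r'\gg \Norm{x_0}+D(1,0)$. Now pick any $\bfc\in\overline{\calU(\bfa,r')}$ with geodesic central $\gamma_0$. Then there exist $\bfd_n \in \calU(\bfc,n)\cap\calU(\bfa,r')$ whose geodesic centrals $\delta_n^0$ satisfy: $\delta_n^0$ is $f_\bfc(1,0)$--redirectable to $\gamma_0$ at radius $n$, and $F_\bfa(1,0)$--redirectable to $\alpha_0$ at radius $r'$. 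Extract a pointwise limit $\delta^0$ (a geodesic); by \lemref{Lem:limit} $\delta^0\preceq\gamma_0$ and (after bumping constants by $1$) $\delta^0$ is still $F_\bfa(1,0)$--redirectable to $\alpha_0$ at radius $r'$. By symmetry (\corref{Cor:Symmetric}) $[\delta^0]=[\gamma_0]=\bfc$. Since the $(1,0)$--transition point $x_0$ sits at radius much less than $r'$, \corref{RFT} produces a $(1,0)$--transition point of $\delta^0$ within $D(1,0)$ of $x_0$, and hence a $(1,0)$--transition point of $\gamma_0$ nearby as well (applying \corref{RFT} again to the two geodesic representatives $\gamma_0,\delta^0$ of $\bfc$). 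Part (II) of \lemref{surgery} now glues $\gamma_0$ up to that transition point with the tail of $\alpha_0$ past $x_0$ to produce a $(9,0)=f_\bfa(1,0)$--redirection of $\gamma_0$ to $\alpha_0$ at radius $\approx \Norm{x_0}/2 \geq r^*$ (by choice of $\Norm{x_0}$). By \lemref{Lem:Inclusion-Criterion}, $\bfc\in\calU(\bfa,r)$.

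\textbf{Non-transient case.} Suppose $\bfa$ is non-transient, so $\alpha_0$ eventually lies in a $\rho_0M_0$--neighborhood of some $A\in\calA$, entering the $M_0$--neighborhood of $A$ near a distinguished point $\go_A\in A$. Take any $\bfc\in\overline{\calU(\bfa,r')}$ with geodesic central $\gamma_0$ and, as above, extract a pointwise limit $\delta^0$ of geodesic centrals $\delta_n^0$ coming from $\bfd_n\in\calU(\bfc,n)\cap\calU(\bfa,r')$. The uniform quasi-convexity of saturations (\lemref{Lem:sat-convex}) applied to $\Sat(\alpha_0)$ forces each $\delta_n^0$ to enter the $M_0$--neighborhood of $A$ within a bounded distance from $\go_A$, and this persists in the limit for $\delta^0$. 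If $\delta^0$ eventually stays near $A$, then by \propref{Prop:Assump-A}, $[\delta^0]=\bfa$; combined with $[\delta^0]=\bfc$ from symmetry we get $\bfc=\bfa\in\calU(\bfa,r)$. Otherwise $\delta^0$ is transient, but then so is $\gamma_0$; however, the fact that $\gamma_0$ enters a bounded neighborhood of $A$ near $\go_A$ allows us to concatenate the initial portion $[\go,\go_A]_{\gamma_0}$ with the tail of $\alpha_0$ (using part (II) of \lemref{surgery} combined with the $A$--redirection machinery in the proof of \lemref{Lem:Inclusion-Criterion}) to obtain an $f_\bfa(1,0)$--redirection of $\gamma_0$ to $\alpha_0$ at radius $r^*$. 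Another appeal to \lemref{Lem:Inclusion-Criterion} places $\bfc$ in $\calU(\bfa,r)$.

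\textbf{Expected main obstacle.} The bookkeeping of constants is the main hurdle, particularly the interplay between the three hierarchies of scales: (i) the Inclusion Criterion threshold $r^*$, (ii) the radius $\Norm{x_0}$ of the chosen transition point (or, in the non-transient case, the position of $\go_A$ and the depth at which $\alpha_0$ enters $A$), and (iii) the outer radius $r'$ used to form the neighborhood. These must be cascaded $r\ll r^* \ll \Norm{x_0}\ll r'$ so that the transition-point argument and the surgery in \lemref{surgery} both fit inside a neighborhood that the Inclusion Criterion can handle. In the non-transient case the extra subtlety is to verify that all the redirection constants produced by the limiting argument depend only on $f_A$, so that they can be absorbed into $f_\bfa$.
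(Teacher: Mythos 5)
Your proposal is correct and follows essentially the same route as the paper: the paper also reduces regularity to showing that any class whose every neighborhood $\calU(\bfb,R)$ meets $\calU(\bfa,r')$ must lie in $\calU(\bfa,r)$ — which is exactly your closure statement $\overline{\calU(\bfa,r')}\subseteq\calU(\bfa,r)$ — and it does so by the same device of taking pointwise limits of central geodesics of classes in the intersection, using symmetry (\corref{Cor:Symmetric}) to identify the limit's class, and applying \lemref{Lem:Inclusion-Criterion}. The only real difference is that you re-derive the transition-point and saturation surgery by hand and split into transient/non-transient cases, whereas the paper leaves all of that encapsulated inside \lemref{Lem:Inclusion-Criterion}.
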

\begin{proof}

Let $\bfa \in \partial X$ be a point and $B\subset \partial X$ be a closed set. Since, 
$B$ is closed, there is $r>0$ such that 
\begin{equation} 
B \cap \calU(\bfa, r) = \emptyset. 
\end{equation} 
For a given $r>0$, let $r'>0$  be as in \lemref{Lem:Inclusion-Criterion}. We will show that, 
for every $\bfb \in B$, there is $r_\bfb$ such that 
\[
\calU(\bfa, r') \cap \calU(\bfb, r_\bfb) = \emptyset.
\] 
Then, $\bigcup_{\bfb \in B} \calU(\bfb, r_\bfb)$ contains an open neighborhood of $B$ disjoint 
from $\calU(\bfa, r')$.
 
Pick $\bfb \in B$ and let $\beta_0$ be the central element of $\bfb$. Assume for contradiction that  
\[
\calU(\bfa, r') \cap \calU(\bfb, R) \not = \emptyset
\] 
for every $R$. Then, there is a sequence 
\[
\bfc_i \in  \calU(\bfa, r') \cap \calU(\bfb, R_i) \qquad \text{ where }R_i \to \infty.
\] 
Let the geodesic ray $\gamma_i$ be the central element of $\bfc_i$. Up to taking a subsequence, $\gamma_i$
point-wise converge to a geodesic ray $\gamma$.  Since $\gamma_i$ can be redirected to $\beta_0$ at radius $R_i$, 
$\gamma$ can be quasi-redirected to $\beta_0$ and hence $\gamma \in \bfb$. 
This is a contradiction which shows $\calU(\bfa, r') \cap \calU(\bfb, r_\bfb) \not = \emptyset$ for some 
$r_\bfb$. 
\end{proof}

\begin{theorem} \label{Thm:Nice}
When $X$ is an ATM space, $\partial X$ is a compact, second countable and metrizable topological space.
\end{theorem}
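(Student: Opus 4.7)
The plan is to assemble compactness and metrizability from pieces already present in the paper, and to prove compactness via sequential compactness once metrizability is in hand.

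For metrizability, by Corollary \ref{Cor:Symmetric} the relation $\preceq$ is symmetric, so Assumption 3 holds for ATM spaces. Combined with Assumption 0 (built into the definition of an ATM space) and the already established Assumption 2, Theorem \ref{sym-Haus} gives that $\partial X$ is Hausdorff. Proposition \ref{regular} says $\partial X$ is regular, and Proposition \ref{secondcountable} says $\partial X$ is second countable. The Urysohn metrization theorem then yields metrizability.

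For compactness, since $\partial X$ is metrizable, it suffices to show it is sequentially compact. Given a sequence $\bfa_n \in \partial X$, apply Propositions \ref{Prop:Assump} and \ref{Prop:Assump-A} to pick a geodesic central element $\alpha_n \in \bfa_n$. Since $X$ is proper, the geodesic rays $\alpha_n$ are uniformly Lipschitz and uniformly bounded on compact intervals, so by Arzelà-Ascoli we may extract a subsequence converging pointwise to a geodesic ray $\alpha$. Set $\bfa = [\alpha]$. I claim $\bfa_n \to \bfa$ in $\partial X$.

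Fix $r>0$. By Lemma \ref{Lem:Inclusion-Criterion} (after enlarging $f_\bfa$ if necessary so that $(1,1) \leq f_\bfa(1,0)$), there exists $r' > r$ such that any geodesic ray $\gamma$ which can be $f_\bfa(1,0)$-redirected to $\alpha$ at radius $r'$ satisfies $[\gamma] \in \calU(\bfa, r)$. Let $t_{r'}$ be the first time $\alpha$ exits the ball of radius $r'$. Since $\alpha_n \to \alpha$ pointwise and both are $1$-Lipschitz, for $n$ large enough we have $d(\alpha_n(t), \alpha(t)) \leq 1$ for all $t \in [0, t_{r'}]$. Applying Lemma \ref{onestep} with $\qq = (1,0)$, $t_0 = t_{r'}$ and $C = 1$ produces a $(1, 1)$-quasi-geodesic ray that coincides with $\alpha_n$ up to radius $r'$ and eventually coincides with $\alpha$; that is, $\alpha_n$ can be $(1,1)$-redirected, and hence $f_\bfa(1,0)$-redirected, to $\alpha$ at radius $r'$. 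Lemma \ref{Lem:Inclusion-Criterion} then yields $\bfa_n \in \calU(\bfa, r)$ for all sufficiently large $n$. Since this holds for every $r>0$, $\bfa_n \to \bfa$, proving sequential compactness and thus compactness.

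The only delicate point, which I would highlight as the main technical step, is arranging the redirecting constants: Lemma \ref{Lem:Inclusion-Criterion} demands a redirection with constants no worse than $f_\bfa(1,0)$, while Lemma \ref{onestep} delivers $(1,1)$-redirection. The remark following Lemma \ref{Lem:F} (and the freedom noted in the statement of Lemma \ref{Lem:Inclusion-Criterion}) tells us that enlarging $f_\bfa$ does not change the resulting topology, so we are free to assume $f_\bfa(1,0) \geq (1,1)$, and the argument goes through cleanly.
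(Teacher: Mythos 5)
Your proof is correct and follows essentially the same route as the paper: metrizability via second countability (Proposition~\ref{secondcountable}), regularity (Proposition~\ref{regular}), Hausdorffness from symmetry (Corollary~\ref{Cor:Symmetric} and Theorem~\ref{sym-Haus}) and Urysohn, then compactness via sequential compactness using pointwise limits of geodesic central elements together with Lemma~\ref{onestep} and Lemma~\ref{Lem:Inclusion-Criterion}. Your remark on reconciling the $(1,1)$-redirection from Lemma~\ref{onestep} with the $f_\bfa(1,0)$ threshold is the same point the paper handles by noting $f_\bfb(1,0) < F_\bfb(1,0)$ via Equation~\eqref{actualtopology}.
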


\begin{proof}
As we saw in the beginning of the subsection, $X$ satisfies Assumptions 0, 1 and 2 and hence has a well-defined 
$\partial X$. We showed in \propref{secondcountable} that $\partial X$ is second countable and in 
\propref{regular} we showed that that $\partial X$ regular. Also,  $\preceq$ is symmetric by 
\corref{Cor:Symmetric} and hence Hausdorff by \lemref{sym-Haus}. 
Urysohn Metrization Theorem implies that $\partial X$ is metrizable. 

A metrizable topological space is compact if and only if it is sequentially compact. 
Let $\{\bfa_i\}$ be a sequence of equivalence classes in $\partial X$ and consider 
the associated geodesic representatives $\{ \alpha_0^i\}$. Since $X$ is proper, there exists a subsequence 
which we again index as  $\{ \alpha_0^i\}$ that converges point-wise to a geodesic $\beta_0$. 
We will show that $\bfa_i \to \bfb$, that is, 

For a given $r>0$, let $r'>0$ be as in \lemref{Lem:Inclusion-Criterion}. For $i$ large enough, 
\lemref{onestep} implies that $\alpha_0^i$ can be $f_\bfb(1,0)$--redirected to $\beta_0$. 
Again by Equation~\ref{actualtopology},  $f_\bfb(1,0) < F_\bfa(1,0)$ and together with \lemref{Lem:Inclusion-Criterion}, we have that $\bfa_i \in \calU(\bfb, r)$. Since this holds for every $r>0$,
we have $\bfa_i \to \bfb$. Therefore $\partial X$ is compact. 
\end{proof}

\section{A geometric characterization of the Bowditch boundary}
In this section we examine the case when $X$ is a Cayley graph of a relatively hyperbolic group pair $(G, \calP)$ 
where $G$ is a group and $\calP$ is a collection of mono-directional subgroups. We show that, in this case, the 
quasi-redirecting boundary is naturally homeomorphic to the Bowditch boundary of  $(G, \calP)$. 
That is, for this class of groups, the Bowditch boundary can be constructed purely from the geometry of $X$ 
without referencing the group structure or the dynamics of the group action. We begin by reviewing relative 
hyperbolic groups and the Bowditch boundary following \cite{Bowditch}. 

\begin{definition}
Fix a finite generating set $S$ once and for all and let $\Cay(G)$ denote the Cayley graph of $G$ with respect to this 
generating set. We refer to the groups $P \in \calP$ as \emph{peripheral} subgroups. Let $\calA$
be the set of subgraphs of of $\Cay(G)$ associates to cosets of groups in $\calP$. Namely, 
for $P \in \calP$ and $g \in G$, $A_{P, g}$ is the induced subgraph of $\Cay(G)$ with vertex set $gP$. 
We form the \emph{coned-off} Cayley graph, denoted $K(G)$ or simply $K$,  by adding a vertex $*p_A$ for each 
$A \in \calA$, and adding edges of length $\frac 12$ from $*p_A$ to each vertex of $A$. 
Since $\Cay(G)$ is a subgraph of $K$, for any two vertices $v, w \in \Cay(G)$, we have 
\begin{equation}\label{shorter}
d_{K} (v, w) \leq d_{\Cay(G)}.
\end{equation}
\end{definition}

\begin{definition} 
 A graph is \emph{fine} if for each integer $n$, every edge belongs to only finitely many simple cycles of length $n$. If the coned-off Cayley graph is hyperbolic and is fine, then $G$ is \emph{relatively hyperbolic} relative to $\calP$. 
 
 A key property of relative hyperbolic group is the \emph{Bounded Coset Penetration} \cite{Farb98} which we state now. An oriented path $\ell \in K$ is said to \emph{penetrate} $A \in \calA$ if it passes through the cone point $*p_A$ of $A$; 
its \emph{entering} and \emph{exiting} vertices are the vertices immediately before and after $*p_A$ on $\ell$. The path is \emph{without backtracking}
 if once it penetrates $A \in \calA$, it does not penetrate $A$ again.  If for each $q\geq 1$ there is a constant $a = a(q)$ such that if $\zeta$ and $\zeta'$ are $(q, 0)$--quasi-geodesics without backtracking in $K$ and with the same endpoints, then
 
\begin{enumerate}
\item if $\zeta$ penetrates some $A \in \calA$, but $\zeta'$ does not, then the distance between the entering and exiting vertices of $\zeta$ in $A$ is at most $a(q)$; and

\item if $\zeta$ and $\zeta'$ both penetrate $A \in \calA$, then the distance between the entering vertices of $\zeta$ and $\zeta'$ in $A$ is at most $a(q)$, and similarly for the exiting vertices.
\end{enumerate}

\end{definition}

Now we define the Bowditch boundary for relatively hyperbolic groups. Let $\partial K$ denote the Gromov boundary of $K$.  Let $V(K)$ denote the vertex set of $K$, 
 let $V_\infty K=  \{  *p_A, A \in \calA\}$ and let $\triangle K = V_\infty (K) \cup \partial K$. 
 
 \begin{definition}
 For $v, w \in (V(K) \cup \partial K)$, let $[v, w]_K$ denote a geodesic segment (or a geodesic ray) in $K$ 
 connecting $v$ to $w$. Given any $v \in(V(K) \cup \partial K)$ and  a finite set $W\subseteq V(K)$, we write
\[ 
m(v, W) = \Big\{ w \in \triangle K \ST   W \cap [v, w]_K  \subseteq \{v \} 
\quad \text{for every geodesic $[v, w]_K$} \Big\}.
\]
The Bowditch boundary $\partial_B G$ of the relative hyperbolic group $G$ is the set $\triangle K$
equipped with a topology that is generated by the neighborhoods of the form $m(v, W)$. 
\end{definition}

Every geodesic in $K$ can be associated to some geodesic in $\Cay(G)$.  Let $\ell$ be a path in $K$, a \emph{lift} of $\ell$, denoted $\overline{\ell}$, is a path formed from $\ell$ by replacing edges incident to vertices in $V_{\infty}(K)$ with a geodesic in $\Cay(G)$.

\begin{lemma}\label{ellgeodesic}
There exists a uniform bound $\delta_0$ such that, given any  geodesic line or segment $\ell \subset K$ where $|\ell| \geq 3$,  
there exists a geodesic line  or segment $\overline{\ell}_0$ in $\Cay(G)$ such that, when considered as a subset of $K$,  
$\overline{\ell}_0$ is contained in a $\delta_0$--neighborhood of $\ell$ in $K$. If an endpoint of $\ell$ is $*p_A$, then  
$\overline{\ell}_0$  can be chosen to start at any vertex in $N_{M_0}(A)$. Furthermore, every $(1,0)$--transition point of 
$\overline{\ell}_0$ is $\delta_0$--close  in $\Cay(G)$ to some vertex of $\ell$.
\end{lemma}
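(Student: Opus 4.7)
The plan is to first construct a path $\overline{\ell}$ in $\Cay(G)$ that lifts $\ell$ in a natural way, then replace it with a genuine geodesic $\overline{\ell}_0$, and finally invoke the Bounded Coset Penetration property (BCP) to control distances first in $K$ and then in $\Cay(G)$.

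First, I would construct $\overline{\ell}$ from $\ell$ as follows. Wherever $\ell$ passes through a cone vertex $*p_A$ with entering vertex $a^- \in A$ and exiting vertex $a^+ \in A$, I replace the two half-edges through $*p_A$ by a $\Cay(G)$-geodesic from $a^-$ to $a^+$ that lies in $A$. All remaining edges of $\ell$ already lie in $\Cay(G)$. If an endpoint of $\ell$ is itself a cone vertex $*p_A$, then replace it by any prescribed vertex $v \in N_{M_0}(A)$ and adjoin a short $\Cay(G)$-geodesic from $v$ to the neighbor of $*p_A$ along $\ell$. Viewed as a path in $K$, the lift $\overline{\ell}$ is uniformly close to $\ell$: interior edges coincide with edges of $\ell$, each inserted geodesic piece lies in some $A$ that is within $K$-distance $1$ of the corresponding cone vertex on $\ell$, and each endpoint modification is within $K$-distance $M_0+1$ of the $*p_A$ on $\ell$.

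Second, let $\overline{\ell}_0$ be any $\Cay(G)$-geodesic with the same endpoints as $\overline{\ell}$. To compare $\overline{\ell}_0$ with $\ell$, I cone off $\overline{\ell}_0$ in $K$: whenever $\overline{\ell}_0$ enters some $A \in \calA$ and travels within $A$ before leaving, replace that subsegment by the two half-edges through $*p_A$. The result is a path $\ell'$ in $K$ which, after pruning any repeated cone visits, is a uniform quasi-geodesic without backtracking in $K$ with the same endpoints as $\ell$. Applying BCP to $\ell$ and $\ell'$, they penetrate the same cosets up to bounded offset at entering and exiting vertices, and applying hyperbolicity of $K$ to compare the connecting portions in $\Cay(G)$ yields a constant $\delta_0$ with $\overline{\ell}_0 \subset N_{\delta_0}^K(\ell)$.

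Third, I address the transition-point claim. Let $x$ be a $(1,0)$-transition point of $\overline{\ell}_0$, so $x \notin \deep_{M_0, c_0}(\overline{\ell}_0)$ by \defref{Def:transition}. By the previous paragraph there is a vertex $y$ of $\ell$ with $d_K(x, y) \leq \delta_0$. If $y$ is a regular vertex of $\Cay(G)$, then a shortest $K$-path realizing this distance uses at most $\delta_0$ cone edges; each excursion through a cone vertex $*p_B$ corresponds to a subsegment of $\overline{\ell}_0$ inside $B$, and since $x$ is not $(M_0,c_0)$-deep in $B$, this subsegment has $\Cay(G)$-length bounded in terms of $M_0$ and $c_0$. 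If instead $y = *p_B$ is a cone vertex on $\ell$, then $x$ lies within $K$-distance $\delta_0 + 1$ of a regular vertex of $B$ on $\ell$; again, since $x$ is not deep in $B$, its nearest entering or exiting vertex of $\overline{\ell}_0$ in $N_{M_0}(B)$ is within bounded $\Cay(G)$-distance of $x$, and that vertex lies near the corresponding entering or exiting vertex of $\ell$ in $B$ by BCP. Either way, $x$ is within a uniformly bounded $\Cay(G)$-distance of a regular vertex of $\ell$, and enlarging $\delta_0$ absorbs the constant.

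The main obstacle will be this last step: $K$-closeness between $\overline{\ell}_0$ and $\ell$ a priori allows traversals through cone vertices that correspond to arbitrarily long detours in $\Cay(G)$. The key observation is that transition points of $\overline{\ell}_0$ are exactly where $\overline{\ell}_0$ has not been absorbed into any coset, so the short $K$-paths issuing from such points can be rerouted through $\Cay(G)$ without paying a large price. Executing this rerouting carefully, and synthesizing the constants $M_0, c_0$, the BCP constant $a(1)$, the hyperbolicity constant of $K$, and the $K$-closeness bound $\delta_0$ into a single uniform $\delta_0$, is the delicate part of the argument.
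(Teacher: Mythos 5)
There is a genuine gap in the case where $\ell$ is an infinite geodesic line or ray, which is precisely the case the paper's proof spends most of its effort on. Your construction of $\overline{\ell}_0$ is ``any $\Cay(G)$-geodesic with the same endpoints as $\overline{\ell}$,'' but when $\ell$ is bi-infinite the lift $\overline{\ell}$ is a bi-infinite quasi-geodesic in $\Cay(G)$ with no endpoints, so this step produces nothing. One must instead take a sequence of longer and longer $\Cay(G)$-geodesic segments $\overline{\ell}_i$ with endpoints on $\overline{\ell}$ and extract a limit by Arzel\`a--Ascoli; and for that limit to exist (i.e.\ for the segments not to drift off to infinity) one needs an anchor: since $|\ell| \geq 3$, $\ell$ contains a vertex $v$ of $\Cay(G)$, and Proposition~\ref{sis12} guarantees each $\overline{\ell}_i$ has a $(1,0)$--transition point within uniform distance $L$ of $v$. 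This anchoring-plus-limit argument is the heart of the paper's proof and is entirely absent from your proposal. Relatedly, your appeal to BCP compares quasi-geodesics ``with the same endpoints,'' which again only makes sense for segments; for the infinite case the closeness of $\overline{\ell}_0$ to $\ell$ in $K$ has to be inherited from the approximating segments (the paper outsources the segment case to \cite[Lemma 8.13]{Hru10} and \cite[Proposition 1.14]{sistopaper} rather than re-proving it via BCP, but your BCP route for segments is plausible).

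A secondary problem is in your transition-point argument. You bound the $\Cay(G)$-length of each cone-vertex excursion of the short connecting $K$-path from $x$ to $y$ by saying that ``each excursion through a cone vertex $*p_B$ corresponds to a subsegment of $\overline{\ell}_0$ inside $B$'' and then invoking the fact that $x$ is not $(M_0,c_0)$-deep. But the connecting path is not a subpath of $\overline{\ell}_0$, so the non-deepness of $x$ along $\overline{\ell}_0$ says nothing about how far apart the entering and exiting vertices of the \emph{connecting path} are inside $B$; a $K$-path of length $\delta_0$ can still make arbitrarily long jumps in $\Cay(G)$ through a single cone vertex. Controlling this requires the relative fellow-traveling machinery (Proposition~\ref{sis12} / \cite[Lemma 8.13]{Hru10}) applied to $\overline{\ell}_0$ itself, not to the connecting path.
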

\begin{proof}
Since vertices in $V_\infty(K)$ are not adjacent to each other in $K$,  $|\ell|\geq 3$ implies $\ell$ contains at least one vertex of $\Cay(G)$. By \cite[Proposition 1.14]{sistopaper}, there exists bounded constants $\qq_1$ such. that every $\ell$ has a lift $\overline{\ell}$ that is a $\qq_1$--quasi-geodesic in $\Cay(G)$. If  $\overline{\ell}$ is a finite quasi-geodesic segment then by Assumption 0, there exists a geodesic connecting its end vertices and by \cite[Lemma 8.13]{Hru10} $\overline{\ell}$ is in a bounded neighborhood of $\ell$ in $K$. Otherwise,  $\overline{\ell}$ is a infinite quasi-geodesic line or ray, then there exists an infinite set of longer and longer geodesic segments $\{ \overline{\ell}_i \}$ in $\Cay(G)$ with endpoints on $\overline{\ell}$ such that the end points converges to both ends of $\overline{\ell}$. Since $|\ell| \geq 3$, $\ell$ contains at least one vertex $v$ not in $*p_A$ and thus by  Proposition~\ref{sis12} (2) there exists a transition point  $v_i \in  \overline{\ell}_i $ such that for all $i$
\[
d_{\Cay(G)}(v, v_i) \leq L.
\] 
Thus by ${\rm Arzel\grave{a}}$--Ascoli Theorem, up to a subsequence, the set  $\{ \overline{\ell}_i \}$ converges to geodesic ray or line which we denote
$\overline{\ell}_0$. By construction the projection of  $\overline{\ell}_0$ to $K$ is in a bounded neighborhood of $\ell$. The last claim follows from \cite[Lemma 8.13]{Hru10} all transition points of $\overline{\ell}_0$ is boundedly close to points of $\ell$.
\end{proof}

Lastly we recall the \emph{relative thin triangle} property \cite[Definition 3.11]{sis12}  and by \cite[Theorem 1.1]{sis12}, the condition holds for geodesic triangles in $\Cay(G)$. 

\begin{proposition} \label{Prop:thin}
There exists a constant $\delta_1$ such that the following holds. 
For point $x, y, z \in \Cay(G)$ consider a geodesic triangle $(x, y, z)$ and let $w$ be a 
$(1,0)$--transition point along $[x,y]$. Then there exists $w' \in [x, z] \cup [z, y]$ such that 
$d_{\Cay(G)}(w, w') \leq \delta_1$. 
\end{proposition}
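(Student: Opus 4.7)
The plan is to transfer thinness from the coned-off Cayley graph $K$ (which is $\delta_K$-hyperbolic by relative hyperbolicity of $(G,\calP)$) back to $\Cay(G)$, using the transition-point hypothesis on $w$ to absorb the loss between the two metrics.

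First, I would form a geodesic triangle in $K$ by choosing $K$-geodesics $\ell_{xy}, \ell_{xz}, \ell_{yz}$ connecting the three pairs of vertices. Each such $K$-geodesic admits a $\Cay(G)$-lift by \lemref{ellgeodesic}, and because the endpoints are already vertices of $\Cay(G)$, these lifts are $\Cay(G)$-geodesics with the same endpoints as the originals. By part (II) of \propref{sis12} applied with $\qq = (1,0)$, the $(1,0)$-transition sets of $[x,y]$ and of the lift of $\ell_{xy}$ are within Hausdorff distance $D(1,0)$ in $\Cay(G)$; hence, up to an additive constant, I may assume from the start that $[x,y], [x,z], [y,z]$ are themselves lifts of $K$-geodesics. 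The last assertion of \lemref{ellgeodesic} then produces a non-cone vertex $w_K$ of $\ell_{xy}$ with $d_{\Cay(G)}(w, w_K) \leq \delta_0$.

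Next, thinness of the $K$-triangle gives a vertex $v_K$ of $\ell_{xz} \cup \ell_{yz}$ with $d_K(w_K, v_K) \leq \delta_K$, and after at most one edge perturbation I may take $v_K$ to be a vertex of $\Cay(G)$ rather than a cone vertex. Let $\sigma$ be a $K$-geodesic from $w_K$ to $v_K$, of $K$-length at most $\delta_K$, and let $\overline{\sigma}$ be a $\Cay(G)$-lift of $\sigma$ obtained via \lemref{ellgeodesic}. The interior of $\sigma$ visits at most $\delta_K$ cone vertices, one for each peripheral coset $A \in \calA$ crossed by $\overline{\sigma}$.

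The main obstacle is to bound the $\Cay(G)$-length of $\overline{\sigma}$, since $K$-closeness does not imply $\Cay(G)$-closeness in general: a single traversal of a cone vertex $*p_A$ in $\sigma$ may correspond to an arbitrarily long detour across $A$. The transition hypothesis on $w$ is precisely what rules this out. If $\overline{\sigma}$ were to travel for a very long $\Cay(G)$-distance inside some $A \in \calA$, then by Bounded Coset Penetration applied to $\overline{\sigma}$ and an initial subsegment of $[x,y]$ rerouted through $A$, the original geodesic $[x,y]$ would also have to traverse a long segment inside $N_{M(1,0)}(A)$ passing close to $w_K$; since $d_{\Cay(G)}(w, w_K) \leq \delta_0$, this would force $w$ into the $(M(1,0), c(1,0))$-deep part of $[x,y]$, contradicting that $w$ is a $(1,0)$-transition point. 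Summing the resulting uniform bounds over the at most $\delta_K$ peripheral cosets crossed by $\sigma$, I obtain $d_{\Cay(G)}(w_K, v_K) \leq \delta_1'$ for a constant $\delta_1'$ depending only on $\delta_K$, $c(1,0)$, $M(1,0)$, and the Bounded Coset Penetration function $a$. Setting $w' = v_K$ and $\delta_1 = \delta_0 + \delta_1'$ completes the argument.
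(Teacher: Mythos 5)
The paper does not actually prove \propref{Prop:thin}: it is quoted directly from Sisto \cite[Theorem 1.1 and Definition 3.11]{sis12}, so there is no in-paper argument to compare yours against. Your attempt to supply a proof from first principles is therefore welcome, and its first half is sound: replacing the sides by lifts of $K$-geodesics at the cost of $D(1,0)$ via part (II) of \propref{sis12}, locating a vertex $w_K$ of $\ell_{xy}$ with $d_{\Cay(G)}(w,w_K)\leq \delta_0$ via \lemref{ellgeodesic}, and invoking $\delta_K$-thinness in $K$ are all legitimate reductions. You also correctly identify the real difficulty, namely that $K$-proximity does not control $\Cay(G)$-proximity.

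The resolution you offer for that difficulty, however, has a genuine gap. Bounded Coset Penetration compares two quasi-geodesics \emph{with the same endpoints}, and the two objects you feed it --- the lift $\overline{\sigma}$ from $w_K$ to $v_K$, and ``an initial subsegment of $[x,y]$ rerouted through $A$'' --- do not share endpoints and the second is not a well-defined path. More importantly, the intended contradiction does not materialize: the deep penetration of the coset $A$ happens along $\sigma$, a connecting path between two different sides of the triangle, and nothing forces the side $[x,y]$ itself to traverse a long segment of $N_{M(1,0)}(A)$. A concrete scenario: $[x,y]$ crosses $N_{M(1,0)}(A)$ in a short arc near $w_K$ (so $w$ is honestly a $(1,0)$-transition point), while $\ell_{xz}$ penetrates $A$ and jumps a huge $\Cay(G)$-distance across it; then $w_K$ is $K$-close to $\ell_{xz}$ only through the cone point $*p_A$, the nearest vertex $v_K$ on the other sides may be $\Cay(G)$-far from $w_K$, and no contradiction with transience of $w$ is available. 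What the proposition asserts in that situation is that some \emph{other} point $w'$ --- e.g.\ the entering vertex of $\ell_{xz}$ into $A$ --- is $\Cay(G)$-close to $w$, and establishing that requires a BCP comparison between the triangle's sides themselves (or the saturation/quasi-convexity machinery of \cite{DS05, sis12}), not between $\overline{\sigma}$ and $[x,y]$. As written, the final bound $d_{\Cay(G)}(w_K,v_K)\leq\delta_1'$ is unjustified and is in fact false in general; the argument needs to be restructured to produce $w'$ rather than to bound the distance to $v_K$. Given that the statement is a known theorem of Sisto, citing it (as the paper does) or reworking the last step along the lines above would both be acceptable fixes.
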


By Osin-Sapir \cite[Theorem 9.1]{DS05}, $\Cay(G)$ is an asymptotically tree-graded space with respect to 
$\calA$. Assume for the remainder of this section that each $A \in \calA$ is mono-directional and we have that 
$\Cay(G)$ is an ATM space. Thus the quasi-redirecting boundary $\partial G$ exists. 
Recall from Proposition~\ref{Prop:Assump} and Proposition~\ref{Prop:Assump-A}, each equivalence class in 
$\partial G$ contains a central element that is a geodesic ray. 

\begin{definition}\label{defxi}
 Define a map 
\[
\xi: \partial G \to \partial_B G
\] 
as follows. Let $\bfa \in \partial G$ and  $\alpha_0 \in \bfa$ be the central element of $\bfa$. 
If $\alpha_0$ is not transient, then by Lemma~\ref{non-transient-class} there exists a set $A \in \calA$ such that a tail of $\alpha_0$ is in a bounded neighborhood of $A$. In this case we define \[\xi(\bfa) := *p_A.\]

Otherwise, $\alpha_0$ is transient. By the construction and hyperbolicity of $K$, $\alpha_0$ is an unbounded unparameterized quasi-geodesic in $K$
and hence converges to a point $\hat{\alpha_0}$ in $\partial K$. We define
\[\xi(\bfa) : = \hat{\alpha_0}. \]
\end{definition}

\begin{lemma}
The map $\xi \from  \partial G \to \partial_B G $ is a bijection. 
\end{lemma}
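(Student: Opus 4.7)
The plan is to establish well-definedness, injectivity, and surjectivity by splitting into the transient and non-transient cases, using the transition-point technology of \propref{sis12} together with the hyperbolicity of the coned-off Cayley graph $K$.

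\textbf{Well-definedness.} For a non-transient class $\bfa$, \lemref{non-transient-class} together with \propref{Prop:Assump-A} guarantees that every ray in $\bfa$ is eventually contained in a bounded neighborhood of a single $A \in \calA$, so $\xi(\bfa) = *p_A$ is unambiguous. For a transient class $\bfa$, I need to show that any two geodesic representatives $\alpha_0, \alpha_0' \in \bfa$ converge to the same point of $\partial K$. Since $\alpha_0 \preceq \alpha_0'$, for each large $r$ there is a uniform $\qq'$--ray quasi-redirecting $\alpha_0$ to $\alpha_0'$. By \corref{RFT}, the transition points of $\alpha_0$ up to radius $r$ lie within $D(\qq')$ of transition points of $\alpha_0'$, and \lemref{ellgeodesic} then shows that the images of $\alpha_0$ and $\alpha_0'$ in $K$ fellow travel, hence converge to the same Gromov boundary point.

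\textbf{Injectivity.} The image distinguishes non-transient from transient classes (cone points vs.\ $\partial K$), so it suffices to treat the two cases separately. If $\xi(\bfa) = \xi(\bfb) = *p_A$, then central rays of $\bfa$ and $\bfb$ both eventually lie in a bounded neighborhood of $A$, and \propref{Prop:Assump-A} yields $\bfa = \bfb$. If $\xi(\bfa) = \xi(\bfb) = \hat v \in \partial K$, let $\alpha_0 \in \bfa$ and $\beta_0 \in \bfb$ be transient geodesic central elements. In $K$ they are unparameterized quasi-geodesic rays converging to the same boundary point, so by hyperbolicity of $K$ they fellow travel: for each $r$ there exist vertices $v_r \in \alpha_0$ and $w_r \in \beta_0$, both of large $\Cay(G)$--norm, with $d_K(v_r, w_r)$ uniformly bounded. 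Lifting via \lemref{ellgeodesic} and using \propref{Prop:thin} on geodesic triangles in $\Cay(G)$ together with \corref{RFT}, one finds a $(1,0)$--transition point $x \in \alpha_0$ and a transition point $y \in \beta_0$ with $d_{\Cay(G)}(x,y) \le \tfrac{1}{2} \Norm{x}$ and $\Norm{x} \to \infty$. Part (II) of \lemref{surgery} then produces a $(9,0)$--quasi-geodesic ray that coincides with $\alpha_0$ up to $x$ and then follows $\beta_0$, giving $\alpha_0 \preceq \beta_0$; by symmetry $\beta_0 \preceq \alpha_0$ and hence $\bfa = \bfb$.

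\textbf{Surjectivity.} For each $A \in \calA$, Assumption 0 applied to the mono-directional subspace $A$ provides a quasi-geodesic ray inside $A$; the class of this ray is non-transient and maps to $*p_A$. For $\hat v \in \partial K$, choose a $K$--geodesic ray $\ell \subset K$ with $\ell(0) = \go$ converging to $\hat v$. By \lemref{ellgeodesic}, $\ell$ admits a lift $\bar\ell_0$ that is a geodesic ray in $\Cay(G)$ lying in a bounded $K$--neighborhood of $\ell$. Since $\hat v \notin V_\infty(K)$, the ray $\bar\ell_0$ cannot be eventually trapped in a neighborhood of any single $A$, so it is transient, and as a quasi-geodesic in $K$ it converges to $\hat v$; thus $\xi([\bar\ell_0]) = \hat v$.

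\textbf{Main obstacle.} The delicate step is injectivity on the transient part: translating $K$--fellow travel of two geodesic rays into the existence of uniform $\Cay(G)$--quasi-geodesics that redirect one ray to the other. The argument must combine the hyperbolicity of $K$, the lifting lemma \lemref{ellgeodesic}, the matching of transition points via \corref{RFT} and \propref{Prop:thin}, and the concatenation surgery \lemref{surgery}(II); all other parts are either definitional or follow directly from \propref{Prop:Assump} and \propref{Prop:Assump-A}.
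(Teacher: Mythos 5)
Your proposal is correct and follows essentially the same route as the paper: split into cone points versus $\partial K$ points, use \propref{Prop:Assump-A} for the non-transient case, show the two transient central geodesics come boundedly close in $\Cay(G)$ infinitely often and apply Part (II) of \lemref{surgery}, and obtain surjectivity by lifting $K$--geodesics (for cone points the paper, like you, concatenates $[\go,\go_A]$ with a ray in $A$, citing \cite{DS05} to see this is a uniform quasi-geodesic). The only substantive difference is that for transient injectivity the paper derives the bounded closeness directly from bounded coset penetration rather than from \propref{Prop:thin} and \corref{RFT}, which is an equivalent mechanism.
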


\begin{proof}
Suppose $\xi(\bfa) = \xi(\bfb)$ for $\bfa, \bfb \in \partial G$ and let $\alpha_0$ and $\beta_0$ be the central 
elements.  If $\xi(\bfa) = \xi(\bfb)$ is a vertex in $V_\infty(K)$, then $\alpha_0$ and $\beta_0$ are eventually 
contained in a bounded neighborhood of $A$ for some $A \in \calA$. Lemma~\ref{Prop:Assump-A} shows 
$\alpha_0 \sim \beta_0$ in $\partial G$ and hence $\bfa = \bfb$. 

Otherwise, $\xi(\bfa) = \xi(\bfb)$ is a point in $\partial K$. Then $\alpha_0$ and $\beta_0$ are
in a bounded Hausdorff distance from each other in $K$. But, by bounded coset penetration,  $\alpha_0$ and $\beta_0$
have to enter the neighborhood of each $A \in \calA$ near (in $\Cay(G)$--metric) the point $\go_A$. 
Therefore, $\alpha_0$ and $\beta_0$ come boundedly close to each (in $\Cay(G)$--metric)
infinitely often. Part (II) of \lemref{surgery} implies that $\alpha_0 \sim \beta_0$ and hence $\bfa = \bfb$. 

To see that the map is surjective, we also have two cases. Let $v \in V_\infty (K)$ be a point in the Bowditch boundary and let $A$ be the associated set in $\calA$. Let $\alpha$ be a quasi-geodesic ray that  connects $[\go, \go_A]$ with a geodesic ray starting at  $\go_A$ and lie entirely in $A$. By \cite[Lemma 4.19]{DS05} $\alpha$ is a bounded constant quasi-geodesic ray in the class of $\partial A$. Then it follows that $\xi([\alpha]) = v$. Otherwise, let $v$ be a point in $\partial K$. Since $K$ is hyperbolic, there exists an equivalence class of quasi-geodesic rays associated with $v$ and in fact there exists a geodesic 
representative in this class (for instance by ${\rm Arzel\grave{a}}$-Ascoli Theorem), which we refer to as $\alpha$. Since $\alpha$ is a geodesic ray in $K$, by \cite[Proposition 1.14]{sistopaper}, there 
exists a bounded constant quasi-geodesic ray  $\alpha'$ in $\Cay(G)$ that is a lift of $\alpha$. 
We claim that, for $\bfa=[\alpha']$, we have 
\[ \xi(\bfa) = v. \]
Indeed, the central element $\alpha_0$ of $\bfa$ is a geodesic in $\Cay(G)$, and an unparameterized  
quasi-geodesic in $K$. Thus it stays in a bounded neighborhood of $\alpha$ and hence 
converges to $v$. This finishes the proof. 
 \end{proof}

We now show that $\xi$ and $\xi^{-1}$ are both continuous. First we show that for every $v \in \Delta(K)$ 
and every finite subset $W \subset V(K)$, $m(v, W)$ is open in $\partial G$. It suffices to verify this for when 
$W$ has one element as a finite intersection of open sets is open.

\begin{lemma}
For every $\bfb \in \partial G$ and $p \in V(K)$ there exists $r>0$ such that  
\[
\xi(\calU(\bfb, r)) \subset  m(\xi(\bfb), p). 
\]
Therefore, $\xi$ is continuous. 
\end{lemma}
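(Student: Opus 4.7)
Set $v := \xi(\bfb)$. If $p = v$, then $m(v,p) = \triangle K$ and the statement is trivial, so assume $p \neq v$ henceforth. I proceed by contradiction and compactness: suppose there exist $r_n \to \infty$ and $\bfc_n \in \calU(\bfb, r_n)$ with $\xi(\bfc_n) \notin m(v,p)$. Let $\gamma_0^n$ be the central geodesic of $\bfc_n$. By properness of $\Cay(G)$, a subsequence of $\gamma_0^n$ converges pointwise to a geodesic ray $\gamma^\infty$.

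I first claim $\gamma^\infty \in \bfb$. For each fixed $t$ and each $n$ with $r_n \geq t$, by definition of $\calU(\bfb, r_n)$ there is a $F_\bfb(1,0)$--ray $\zeta_n$ with $\zeta_n|_{r_n} = \gamma_0^n|_{r_n}$ and $\zeta_n \eventual \beta_0$. Since $\gamma_0^n \to \gamma^\infty$ pointwise, for $n$ large we have $d(\gamma_0^n(s), \gamma^\infty(s)) \leq 1$ for $s \leq t$. Applying \lemref{onestep} to $\zeta_n$ and $\gamma^\infty$ at time $t$ produces an $(F_\bfb(1,0)+(0,1))$--ray quasi-redirecting $\gamma^\infty$ to $\beta_0$ at radius $t$. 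As $t$ was arbitrary, $\gamma^\infty \preceq \beta_0$, and \corref{Cor:Symmetric} gives $\gamma^\infty \simeq \beta_0$, so $\gamma^\infty \in \bfb$.

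Since $p \neq v$, the set $m(v,p)$ is, by construction, a Bowditch neighborhood of $v$; so it suffices to show $\xi(\bfc_n) \to v$ in $\partial_B G$ in order to contradict $\xi(\bfc_n) \notin m(v,p)$. I split into the two cases of \defref{defxi}. In the transient case $v \in \partial K$, the ray $\gamma^\infty$ is an unparameterized $K$--quasi-geodesic ray converging to $v$, so $d_K(\go, \gamma^\infty(t)) \to \infty$. Since $\gamma_0^n$ agrees with $\gamma^\infty$ on ever longer initial segments, the Gromov products at $\go$ in $K$ between $v$ and $\xi(\bfc_n)$ diverge, so $\xi(\bfc_n) \to v$ in the Gromov topology on $\partial K$, which coincides with the Bowditch topology restricted there. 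In the non-transient case $v = *p_A$, if $\bfc_n = \bfb$ the containment is trivial; otherwise $\gamma_0^n$ initially follows $\gamma^\infty$ into $N_M(A)$ for a $\Cay(G)$--length growing with $n$ (using \lemref{Lem:sat-convex}) and only then exits. Fix a $K$--geodesic $\ell_n$ from $*p_A$ to $\xi(\bfc_n)$ passing through $p$; by \lemref{ellgeodesic} it admits a $\Cay(G)$--lift $\overline\ell_n$ which is a $\qq_1$--quasi-geodesic whose transition points $K$--shadow the vertices of $\ell_n$. Bounded coset penetration applied to $\overline\ell_n$ together with the long stay of $\gamma_0^n$ in $N_M(A)$ forces the entry vertex of $\overline\ell_n$ into $A$ to lie at $\Cay(G)$--distance from $\go$ growing with $n$; consequently $\ell_n$ must eventually lie outside the fixed finite set $\{p\}$ in $K$, contradicting $p \in \ell_n$.

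The main obstacle I expect is the non-transient case: controlling $K$--geodesics exiting a parabolic vertex $*p_A$ requires the full force of bounded coset penetration together with the mono-directional hypothesis on peripheral subgroups (needed to uniformly handle the subcase where $\bfc_n$ is itself non-transient and peripheral to some $A' \neq A$). The transient case is by comparison a routine consequence of hyperbolic stability in the fine hyperbolic graph $K$.
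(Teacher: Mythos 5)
Your overall architecture (contradiction plus Arzel\`a--Ascoli on central geodesics, then a case split on whether $\xi(\bfb)$ is a cone point) is workable, and your non-transient case is essentially the paper's Case II with an unnecessary compactness wrapper: the paper gets the ``long stay of the central geodesic of $\bfc$ in $N_M(A)$'' directly from the definition of $\calU(\bfb,r)$ with $r\gg \Norm{\go_A}+\Norm{p}$, with no limiting argument needed. The genuine gap is in your transient case. Having reduced the problem to ``$\xi(\bfc_n)\to v$ in $\partial_B G$,'' you verify convergence only in the Gromov topology on $\partial K$ and assert that this ``coincides with the Bowditch topology restricted there.'' That identification is nowhere established in the paper, is not trivial for a fine hyperbolic graph, and --- more fatally --- does not even apply to your sequence: a neighborhood $\calU(\bfb,r)$ of a transient class $\bfb$ contains non-transient classes $\bfc_n$, for which $\xi(\bfc_n)=*p_{A_n}\in V_\infty(K)$ is a cone point and not a point of $\partial K$ at all. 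Since convergence in $\partial_B G$ is by definition ``eventually inside every $m(v,W)$,'' your reduction is circular unless you supply an independent criterion that covers all of $\triangle K$, and you have not done so.

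The repair is to argue directly with the definition of $m(v,p)$, which is what the paper does: choose $r$ so that $(\beta_0)_r$ is a $(1,0)$--transition point of $\beta_0$ whose $K$--distance from $\go$ far exceeds $d_K(\go,\pi_{\beta_0}(p))$; then for any $\bfa\in\calU(\bfb,r)$ the central geodesic $\alpha_0$ passes within $D(9,0)$ of $(\beta_0)_r$ (Corollary~\ref{RFT}), and thinness of the ideal quadrilateral on $\big((\beta_0)_r,\xi(\bfb),\xi(\bfa),q\big)$ in $K$ forces every $K$--geodesic from $\xi(\bfa)$ to $\xi(\bfb)$ to stay $K$--far from $p$, hence $\xi(\bfa)\in m(\xi(\bfb),p)$. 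Alternatively, your Gromov-product computation can be salvaged without any topological identification: if $(\xi(\bfb)\cdot\xi(\bfc_n))_\go > d_K(\go,p)+2\delta$ then hyperbolicity already implies every $K$--geodesic between these two points misses the ball $B_K(\go,d_K(\go,p))$, and this argument treats cone points and points of $\partial K$ uniformly. As written, though, the transient case does not close.
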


\begin{proof}
Let the geodesic ray $\beta_0$ be the central element of $\bfb$. 
\subsection*{Case I} Assume that  $\bfb$ is transient. 
Consider $\beta_0$ as a subset of $K$ and let $\pi_{\xi(\bfb)}(p)$ be the closest point projection 
of $p$ to $\beta_0$ in $K$ (see \figref{Fig:quad}). 
Since $K$ is hyperbolic, $\pi_{\xi(\bfb)}(p)$ has a bounded diameter in $K$. 
Since $\bfb$ is transient, $\beta_0$ has transition point that are arbitrarily far from $\go$. 
Choose $r>0$ such that, $(\beta_0)_r$ is a $(1,0)$--transition point of $\beta_0$ and 
\begin{equation}\label{case91}
d_K(\go, (\beta_0)_r) \gg d_K(\go, \pi_{\xi(\bfb)}(p))+D(9,0)+2\delta,
\end{equation}
where $\delta$ is the hyperbolicity constant of $K$, $D(9,0)$ is as in Corollary~\ref{RFT} and 
$d_K(\go, \pi_{\xi(\bfb)}(p))$ is the maximum distance in $K$ between any point in $\pi_{\xi(\bfb)}(p)$ to $\go$. 

Let $\bfa \in  \calU(\bfb, r)$ and let $\alpha_0$ be the central element in $\bfa$. Since $(\beta_0)_r$ is a transition 
point, there exists points $q \in \alpha_0$ such that  
 \[
 d(q, (\beta_0)_r) < D((9,0)),
 \]
 Thus $\Norm{q} \geq r - D((9,0))$. Since $K$ is hyperbolic, there exists either a geodesic $\ell$ in $K$ connecting
$\xi(\bfa)$ to $\xi(\bfb)$. The line $\ell$ is an edge in the ideal quadrilateral $((\beta_0)_r, \xi(\bfb), \xi(\bfa), q)$ (see Figure~\ref{Fig:quad})
hence it stays in a bounded neighborhood of 
\[
\beta_0|_{\geq r} \cup \alpha_0|_{\geq r} \cup [(\beta_0)_r, q].
\] 
Hence, $\ell$ is far from $p$ in $K$ and hence does not pass through $p$. Therefore, 
$\xi(\bfa) \in m(\xi(\bfb), p)$.

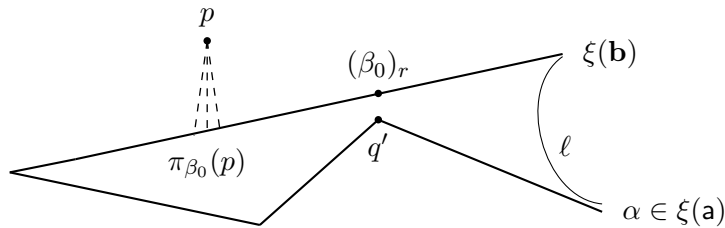
\begin{figure}[H]
\centering
\begin{tikzpicture}[scale=0.7]
\tikzstyle{vertex} =[circle,draw,fill=black,thick, inner sep=0pt, minimum size=2pt] 
\node[vertex] at (4.5,9)[label=above:$p$]{};

\draw[thick] (2,6.75) -- (11.25,8.75);
\draw[thick] (2,6.75) -- (0.75,6.5);
\draw [line width=0.5pt, dashed] (4.5,9) -- (4.25,7.2);
\draw [line width=0.5pt, dashed] (4.5,9) -- (4.5,7.25);
\draw [line width=0.5pt, dashed] (4.5,9) -- (4.75,7.25);
\node at (4.5,7.35) [label=below:$\pi_{\beta_0}(p)$]{};
\draw [thick](0.75,6.5) -- (5.5,5.5);
\draw[thick] (5.5,5.5) -- (7.75,7.5);
\node at  (11.25,8.75)[label=right:$\xi(\bfb)$]{};
\node[vertex] at  (7.75,8)[label=above:$(\beta_0)_r$]{};
\node[vertex] at  (7.75,7.5)[label=below:$q'$]{};
\draw[thick] (7.75,7.5) -- (12,5.75);
\node at (12,5.75)[label=right:$\alpha \in \xi(\bfa)$]{};
 \draw(12,5.9) to[bend left=70,](11.25,8.7){};
\node at (10.8, 7) [label=right:$\ell$]{};
\end{tikzpicture}
\caption{A transition point $(\beta_0)_r$ separates the point $p$ and any geodesic line that connects $\xi(\bfb)$ and $\xi(\bfa)$.}
\label{Fig:quad}
\end{figure}

\subsection*{Case II} Suppose otherwise that $\bfb$ is not transient. By Lemma~\ref{transient-class} 
there exists a unique set $A \in \calA$ such that $\xi(\bfb) = *p_A$.  Let $\beta_0$ be the central element of $\bfb$. Let 
\[
r \gg  2\big(\Norm{\go_A}+ \Norm{p} \big).
\]
Let $\bfa \in \calU(\bfb, r)$ and let $\alpha_0$ be the central element of $\bfa$. Then $\alpha_0$ can be 
$f_\bfb(1,0)$--redirect to $\beta_0$ at radius $r$. Let $\Ge \in A$ be the point near where $\alpha_0$ 
leaves the $M_0$--neighborhood of $A$. 
 
Consider any geodesic segment or ray $\ell$ in $K$ connecting $\xi(\bfa)$ to $*p_A$. By  \cite[Proposition 8.13]{Hru10}, 
$\ell$ enters $N_{\tau(f_\bfb(\qq))}(A)$ at a point that is boundedly close to $\Ge$. Since $*p_A$ is the final point 
in $\ell$, $*p_A$ does not appear in interior of $\ell$ and hence, for any other vertex $x$ in $\ell$, we have 
$\Norm{x} \geq \Norm{\Ge} - D(1,0)$. This implies $\Norm{x} \gg \Norm{p}$ and hence $\ell$ does not pass through $p$. 
Therefore, 
\[
 \bfa \in m(\xi(\bfb), p)
\]
and hence $\calU(\bfb, r) \subset m(\xi(\bfb), p)$. 
\end{proof}

\begin{lemma} 
For any $\bfb \in \partial G$ and $r>0$, there exists a finite set of vertices $W \subset V(K)$ such that 
for every $\bfa$ 
 \[ \xi(\bfa) \in m(\xi(\bfb), W) \qquad \Longrightarrow \qquad \bfa \in \calU(\bfb, r).\]
Therefore, $\xi^{-1}$ is continuous. 
\end{lemma}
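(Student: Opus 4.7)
The plan is to reduce to Lemma~\ref{Lem:Inclusion-Criterion}: given $r>0$, fix $r'>r$ as furnished by that lemma and enlarge $r'$ if necessary so that $(\beta_0)_{r'}$ is a $(1,0)$--transition point of $\beta_0$ in $\Cay(G)$, where $\beta_0$ is the geodesic central element of $\bfb$. The task is then to design a finite $W\subseteq V(K)$ such that, whenever $\xi(\bfa)\in m(\xi(\bfb),W)$, the geodesic central element $\alpha_0$ of $\bfa$ passes within a bounded $\Cay(G)$--distance of $(\beta_0)_{r'}$; Part (II) of Lemma~\ref{surgery} then produces the $f_\bfb(1,0)$--redirection required by Lemma~\ref{Lem:Inclusion-Criterion}, yielding $\bfa\in\calU(\bfb,r)$. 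I split into two cases based on the dichotomy of Proposition~\ref{Prop:Assump} versus Proposition~\ref{Prop:Assump-A}.

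\textbf{Case I ($\bfb$ transient.)} Let $C$ be a constant absorbing $\delta_0$ from Lemma~\ref{ellgeodesic}, $D(1,0)$ from Proposition~\ref{sis12}, the hyperbolicity constant of $K$, and the bounded coset penetration constant. Take $W$ to consist of all $\Cay(G)$--vertices in the annulus $\{v:|d_{\Cay(G)}(\go,v)-r'|\leq C\}$ lying outside the $C$--ball of $(\beta_0)_{r'}$; this is finite by local finiteness of $\Cay(G)$. If $\xi(\bfa)\in m(\xi(\bfb),W)$, pick a $K$--geodesic $\ell$ from $\xi(\bfa)$ to $\xi(\bfb)$, which avoids $W$ by hypothesis, and lift it via Lemma~\ref{ellgeodesic} to a $\Cay(G)$--quasi-geodesic $\bar\ell_0$ whose transition points are $\delta_0$--close in $\Cay(G)$ to vertices of $\ell$. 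Since $\bar\ell_0$ and $\beta_0$ share the end $\xi(\bfb)$, the transition-point matching of asymptotically tree-graded spaces (applying Corollary~\ref{RFT} to long subsegments sharing a common distant endpoint) forces $\bar\ell_0$ to have a transition point within $D(1,0)$ of $(\beta_0)_{r'}$; the associated vertex of $\ell$ then lies within $\delta_0+D(1,0)\leq C$ of $(\beta_0)_{r'}$, and avoidance of $W$ places it in the gate. Thin-ness of the ideal triangle $(\alpha_0,\beta_0,\ell)$ in the hyperbolic space $K$ then forces $\alpha_0$ to pass boundedly close in $K$ to $(\beta_0)_{r'}$; applying the transition-point matching once more between $\alpha_0$ and a lift transfers this to bounded $\Cay(G)$--closeness, completing the case.

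\textbf{Case II ($\bfb$ non-transient, $\xi(\bfb)=*p_A$.)} Here $\beta_0$ eventually lies in a bounded neighborhood of $A$, entering it near $\go_A$ as in Proposition~\ref{Prop:Assump-A}. Take $W$ to be the set of $\Cay(G)$--vertices on the sphere of radius $r'$ outside a large $\Cay(G)$--ball about $\go_A$. If $\xi(\bfa)\in m(\xi(\bfb),W)$, a $K$--geodesic $\ell$ from $\xi(\bfa)$ to $*p_A$ ends at the cone vertex, and by bounded coset penetration applied to its lift (Lemma~\ref{ellgeodesic}), $\ell$ enters the $M_0$--neighborhood of $A$ at a vertex boundedly close to $\go_A$; avoidance of $W$ constrains $\alpha_0$ to likewise enter the saturation of $\beta_0$ near $\go_A$. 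The argument of Proposition~\ref{Prop:Assump-A}, invoking quasi-convexity of saturations (Lemma~\ref{Lem:sat-convex}) and mono-directionality of $A$, then yields the required quasi-redirection of $\alpha_0$ to $\beta_0$. The main obstacle throughout is the delicate matching between $K$--geodesics avoiding the finite wall $W$ and their $\Cay(G)$--lifts: $W$ must act as a thick separator in $\Cay(G)$, forcing paths through a prescribed gate, while remaining finite. The annular (Case I) or spherical (Case II) construction above, minus a small gate near $(\beta_0)_{r'}$ or $\go_A$, exploits local finiteness of $\Cay(G)$ to achieve both simultaneously.
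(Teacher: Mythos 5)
Your high-level plan (reduce to Lemma~\ref{Lem:Inclusion-Criterion}, pass to lifts via Lemma~\ref{ellgeodesic}, and use transition points together with the relative thin-triangle property) is the same as the paper's, but your construction of $W$ is essentially inverted, and this breaks the argument. The paper takes $W$ to be a \emph{ball around} the chosen transition point $(\beta_0)_{r_1}$ (resp.\ a ball around $\go$ in the non-transient case), so that a geodesic $\ell$ avoiding $W$ must stay \emph{far from} $(\beta_0)_{r_1}$; Proposition~\ref{Prop:thin} then says $(\beta_0)_{r_1}$ is $\delta_1$--close to $\alpha_0\cup\overline{\ell}_0$, the $\overline{\ell}_0$ alternative is excluded because it would put $\ell$ inside $W$, and closeness to $\alpha_0$ is exactly what feeds Part (II) of Lemma~\ref{surgery}. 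Your $W$ (annulus minus a gate around $(\beta_0)_{r'}$) has the opposite effect: when $\bfa$ is \emph{far} from $\bfb$, i.e.\ $\alpha_0$ and $\beta_0$ diverge at a radius much smaller than $r'$, the geodesic $\ell$ from $\xi(\bfb)$ to $\xi(\bfa)$ travels back along $\beta_0$ past radius $r'$ toward the divergence point, so it passes precisely through your gate, avoids your $W$, and hence $\xi(\bfa)\in m(\xi(\bfb),W)$ while $\bfa\notin\calU(\bfb,r)$. Your $W$ therefore fails to detect exactly the classes it is supposed to exclude.

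Two of the intermediate steps are also false as stated. First, the claim that since $\overline{\ell}_0$ and $\beta_0$ share the end $\xi(\bfb)$, Corollary~\ref{RFT} forces $\overline{\ell}_0$ to have a transition point within $D(1,0)$ of $(\beta_0)_{r'}$: that corollary requires segments with the \emph{same endpoints}, and two rays sharing one ideal endpoint only have matching transition points beyond the radius where they come together, which can be arbitrarily larger than $r'$. Second, the thinness of the ideal triangle $(\alpha_0,\beta_0,\ell)$ in $K$ only places a vertex of $\ell$ near the union $\alpha_0\cup\beta_0$; a vertex of $\ell$ in the gate is already close to $\beta_0$, so no conclusion about $\alpha_0$ follows. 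Finally, note that a finite set of $\Cay(G)$--vertices in an annulus is not a separator in $K$ at all: a $K$--geodesic can skip the annulus entirely by passing through a cone point $*p_A$ for a peripheral coset straddling it. The paper sidesteps all of these issues by making $W$ a metric ball and arguing by contradiction through the lift, rather than trying to force $\ell$ through a prescribed gate.
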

\begin{proof}
Let the geodesic ray $\beta_0$ be the central element of $\bfb$.

\subsection*{Case I} Suppose that $\beta_0$ is 
a transient geodesic ray. Given any $r$, let $r'$ be as in \lemref{Lem:Inclusion-Criterion} and 
let $(\beta_0)_{r_1}$ be a transition point of $\beta_0$ where $r_1 \geq 2 r'$. Let $\delta_0$ be the constant from Lemma~\ref{ellgeodesic} and let $\delta_1$ be the constants of Proposition~\ref{Prop:thin} accordingly. Let
\begin{equation}\label{r1}
 r_2 > \delta_0+ \delta_1.
 \end{equation}
Let 
\[
W = B((\beta_0)_{r_1}, r_2) \subset \Cay(G) \subset K
\]  
be the ball of radius $r_2$ in $\Cay(G)$ centered at the vertex $\beta_{r_1}$. However, we consider 
$W$ as a subset of $K$. Note that Since $\Cay(G)$ is proper, $|B((\beta_0)_{r_1}, r_2) | < \infty$.

Consider any element $\bfa \in \partial_B G$ such that $\xi(\bfa) \in m(\xi(\bfb), W)$. That is, there exists a 
bi-infinite geodesic line $\ell$ in $K$ from $\xi(\bfa)$ to $\xi(\bfb)$ avoiding $W$.  Since $\bfb$ is transient $|\ell| \geq 3$, 
by Lemma~\ref{ellgeodesic} there exists  $\overline{\ell}_0$ whose projection to $K$ is boundedly close to $\ell$. If $\alpha_0$ 
is transient, then the geodesics $\alpha_0$ (the central element of $\bfa$) $\beta_0$ and $\overline{\ell}_0$ form an ideal 
geodesic triangle in $\Cay(G)$. If $\alpha_0$ is not transient, then there exists $A_\bfa$ such that $\alpha_0$ eventually stays 
in $A_\bfa$. Then by Lemma~\ref{ellgeodesic} we can chose the starting point of $\overline{\ell}_0$ to be any vertex in 
$\alpha_0 \cap N_{M_0}(A_\bfa)$, and we have that $\alpha_0$, $\beta_0$ and $\overline{\ell}_0$ also form a semi-ideal 
geodesic triangle. \propref{Prop:thin} implies that $(\beta_0)_{r_1}$ is $\delta_1$--close to either a transition point of either 
$\alpha_0$ or a transition point  in$\overline{\ell}_0$. 

If $(\beta_0)_{r_1}$ is $\delta_1$--close to any transition point  of $\overline{\ell}_0$, then by Lemma~\ref{ellgeodesic}, all transition points of $\overline{\ell}_0$ is $\delta_0$ close to points of $\ell$, we have that $(\beta_0)_{r_1}$ is $(\delta_1+\delta_0)$-close to points of $\ell$ in the metric of $\Cay(G)$. This contradicts the choice of $r_2$. Thus $(\beta_0)_{r_1}$ is only $\delta_1$--close to a transition point in $\alpha_0$. This implies that  
$\alpha_0$ can be $(3,0)$--redirected to $\beta_0$ at radius $r_1/2 \geq r'$. Observe that $(3,0) < f_\bfb((1,0)) +(0,1)$. Thus \lemref{Lem:Inclusion-Criterion}
implies that $\bfa \in \calU(\bfb, r)$. 

\subsection*{Case II} Suppose on the other hand $\bfb$ is not transient. 
Let $r_0$ be such that $\beta|_{\geq r_0}$ is in a bounded neighbourhood of $A$ and $\xi(\bfb) = *p_A$.  Let \[f_G(\qq) : = \max\{ (9q, Q), f_A(\qq) \forall  A \in \calA\}.\] Such a maximum function exists because all $A$'s are translations of a finite set of subgroups. For 
a give $r>0$ let $r'$ be as in \lemref{Lem:Inclusion-Criterion} and let 
\[
r'' : = R_\bfb( f_\bfb(1,0)+(0,1), f_\bfb(\qq_0), r').
\]
\[
r_1 =\max \{ 2r'', 2r_0\}.
\]
Let 
\[
R \gg  r_1+\delta_0+D(f_G(1,0))
\]
and let $W = B(\go, R)$ be the ball of radius $R$ in $\Cay(G)$ centered at $\go$. 
Again, $W$ contains finite number of vertices. We now think of $W$ as a subset of $K$. 

Let $\xi(\bfa) \in m(\xi(\bfb), W)$ with $\alpha_0$ as a central element. If $\xi(\bfa) = \xi(\bfb)$, the case is trivial. 
Thus we assume that $\alpha_0$ eventually leaves the mono-directional set $A$ at a point $x$ with a $(1,0)$--transition 
point for $\alpha_0$. 

Since $\xi(\bfa) \in m(\xi(\bfb), W)$, there exists a geodesic ray or segment $\ell$ in $K$ that connects 
$\xi(\bfa)$ to $\xi(\bfb)$ and is disjoint from $W$. Again since $\bfb \neq \bfa$, $|\ell| \geq 3$. Thus by Lemma~\ref{ellgeodesic} there exists geodesic line $\overline{\ell}_0$  whose projection to $K$ is boundedly close to $\ell$. In particular, we can choose for $\alpha_0$ to start at a vertex $x' \neq x$ in $A$. Since $\alpha_0$ and $\overline{\ell}_0$ are both geodesic lines connecting $\xi(\bfb)$ to $\xi(\bfa)$, by Lemma~\ref{sis12}, there exists a $(1,0)$-transition point  $p \in \overline{\ell}_0$ such that 
\[d (p, x) \leq D(f_\bfa(1,0)).\]
Furthermore, by by Lemma~\ref{ellgeodesic},
\[ d(p, \ell) \leq \delta_0.\]
Therefore,
\[d(x, \ell) \leq D(f_\bfa(1,0)) + \delta_0.\]
Since $\ell$ avoid $W$,  all points of $x$ has $\Cay(G)$-norm greater than $R$ and thus 
\[ \Norm{x} \geq R-D(f_\bfa(1,0)) + \delta_0 \geq R-D(f_G(1,0)) + \delta_0 \geq r_1\geq r''.\] 
Since $x \in A$ there exists a $\qq_0$--ray in $A$ which can be $f_\bfb(\qq_0)$ redirected to $\beta_0$ after $r''$.Thus $\alpha_0$ can be  $f_\bfb(\qq_0)$-redirected to $\beta_0$ at 
\[ r'' = R_\bfb( f_\bfb(1,0)+(0,1), f_\bfb(\qq_0), r'),\]
 thus $\alpha_0$ can be $(f_\bfb(1,0)+(0,1))$--redirected to $\beta_0$ at radius $r'$. Thus \lemref{Lem:Inclusion-Criterion} implies that $\bfa \in \calU(\bfb, r)$. And we are done. 
\end{proof}

The combination of all of the preceding results proves the homeomorphism:
\begin{theorem}\label{bowditch-app}
Let  $G$ be a relatively hyperbolic group with respect to subgroups $P_1, P_2,...P_k$. Assume that the Cayley graphs of the subgroups $P_i$'s are mono-directional sets, then  the quasi-redirecting boundary $\partial G$ is homeomorphic to $\partial_B G$.
\end{theorem}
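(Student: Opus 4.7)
The plan is to observe that the theorem is essentially the packaging of the four preceding lemmas about the map $\xi \colon \partial G \to \partial_B G$ from Definition~\ref{defxi}. Since $\Cay(G)$ is asymptotically tree-graded with respect to the cosets of $\{P_i\}$ by \cite[Theorem 9.1]{DS05}, and by hypothesis each peripheral Cayley graph is mono-directional, $\Cay(G)$ is an ATM space in the sense of Definition~\ref{Def:ATM}. Consequently \thmref{Thm:Nice} applies: $\partial G$ is compact and Hausdorff. Likewise $\partial_B G$ is compact and Hausdorff by Bowditch's classical construction. Therefore it will suffice to check that $\xi$ is a continuous bijection, and the theorem will follow from the standard fact that a continuous bijection between compact Hausdorff spaces is automatically a homeomorphism.

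First, I would recall the bijectivity lemma established immediately after Definition~\ref{defxi}. Injectivity splits along the two cases of the definition: if $\xi(\bfa)=\xi(\bfb)=*p_A$, then both central geodesics eventually live in a bounded neighborhood of $A$, so \propref{Prop:Assump-A} forces $\bfa=\bfb$; if $\xi(\bfa)=\xi(\bfb)\in \partial K$, Bounded Coset Penetration guarantees the two central geodesics fellow-travel in $\Cay(G)$ along infinitely many transition points, and Part~(II) of \lemref{surgery} yields $\bfa=\bfb$. Surjectivity onto $V_\infty(K)$ is produced by concatenating $[\go,\go_A]$ with a ray in $A$, and surjectivity onto $\partial K$ uses \cite[Proposition~1.14]{sistopaper} to lift a Gromov-geodesic ray in $K$ to a bounded-constant quasi-geodesic ray in $\Cay(G)$.

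Second, I would invoke the two continuity lemmas proved just above the theorem statement. The forward continuity lemma shows that for each $\bfb\in\partial G$ and each $p\in V(K)$, a sufficiently deep neighborhood $\calU(\bfb,r)$ satisfies $\xi(\calU(\bfb,r))\subset m(\xi(\bfb),p)$. Since finite intersections of basic Bowditch neighborhoods are themselves basic, this proves continuity of $\xi$. The reverse continuity lemma produces, for each $r>0$, a finite set $W\subset V(K)$ with $\xi^{-1}(m(\xi(\bfb),W))\subset\calU(\bfb,r)$, proving continuity of $\xi^{-1}$.

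The genuinely nontrivial work has already been done inside those two continuity lemmas, so nothing further is required here: the forward direction uses a transition point of $\beta_0$ as a coarse obstruction to a geodesic in $K$ joining $\xi(\bfa)$ and $\xi(\bfb)$ passing through $p$, combined with \corref{RFT} to force a transition point of $\alpha_0$ near that of $\beta_0$ and then Part~(II) of \lemref{surgery}; the reverse direction uses the relative thin-triangles property of \propref{Prop:thin} together with \lemref{ellgeodesic} to transfer a transition point on the lifted bi-infinite geodesic to a transition point on $\alpha_0$, followed by \lemref{Lem:Inclusion-Criterion} to produce the required quasi-redirection at radius $r$. Assembling these four ingredients — bijectivity, continuity of $\xi$, continuity of $\xi^{-1}$, and compact Hausdorffness of both spaces — completes the proof of \thmref{bowditch-app}.
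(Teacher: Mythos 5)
Your proposal is correct and follows essentially the same route as the paper, which simply assembles the bijectivity lemma and the two continuity lemmas preceding the theorem to conclude that $\xi$ is a homeomorphism. The extra observation that one could instead use compactness and Hausdorffness of both spaces to deduce continuity of $\xi^{-1}$ from continuity of $\xi$ is a valid shortcut, but it is not needed since the paper (and you) already prove continuity of $\xi^{-1}$ directly.
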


\begin{proof}
Since the map $\xi \from \partial X \to \partial_B X$ is 1-1, onto, and both $\xi$ and $\xi^{-1}$ is continuous, we conclude that $\xi \from \partial G \to \partial_B G$ is a homeomorphism.
\end{proof}

\section{The hairy parking lot}\label{counterexample}
In this section we analyze the quasi-redirecting boundary of a metric space $X$ which demonstrates how
$\partial X$ could be non-Hausdorff when the space is not an ATM space.  The space $X$ we construct 
here is a proper CAT(0) metric space and it is a modification of an example first analyzed by 
Cashen \cite{cashen}. We show that the partial relation in $P(X)$ is not symmetric which means 
$\partial X$ is not Hausdorff. However, $\partial X$ is still compact. We also use to this example to justify 
our definition of the topology for $\partial X$ (see Subsection \ref{Sec:Out-Topology}) . 

Let $\EE^2$ be the Euclidean plane and $B(1)$ be the open ball of radius 1 around the origin. 
Let $Y = \EE^2 - B(1)$. In the polar coordinate in $\EE^2$, we can write 
\[
Y = \{ (\rho, \theta) \st \rho \geq 1\}.
\] 
Let $Z$ be the union of $Y$ and an infinite number of rays, attached at 
a net of points in $Y$ (say, the points in $Y$ with integer Cartesian coordinates). Then $Z$ resembles 
the plane with a hole in the center and infinitely many hairs attached. Let $X = \widetilde Z$ be the universal cover of $Z$.
We fix a lift of the point $(1,0)$ in $Z$ and denote it by $\go$. This defines a polar coordinate for $\widetilde Y \subset X$
where $\theta \in (-\infty, \infty)$, $\rho \geq 1$ and $\go = (1,0)$. 

Note that $X$ is CAT(0). We fix 3 geodesic rays in $X$ (note that the first coordinate is the radius and 
the second coordinate is the angle): 
\begin{align*} 
\alpha_+ \from \RR_+ \to X, && \alpha_+(t) &= (1,t)\\
\alpha_- \from \RR_+ \to X, && \alpha_-(t) &= (1,-t)\\
\zeta \from \RR_+ \to X, && \zeta(t) &= (t+1, 0)
\end{align*} 
Also, for every hair attached at the point $(\rho, \theta)$ (which we refer to as the $(\rho, \theta)$--hair) we pick a quasi-geodesic 
$\gamma_{\rho,\theta}$ exiting this hair, namely, 
\[
\gamma_{\rho, \theta} \from \RR_+ \to X, \qquad \gamma_{\rho, \theta}(t) =
\begin{cases} 
(1, t) & t \in [0, \theta] \\
(t-\theta + 1, \theta) & t \in [\theta, \theta + \rho-1]\\
\text{exits along the hair} & t \geq \theta+\rho -1
\end{cases}.
\]
This is a uniform quasi-geodesic ray and the geodesic ray exiting the $(\rho, \theta)$--hair stays
uniformly close to $\gamma_{\rho, \theta}$, however, $\gamma_{\rho, \theta}$ is easier to describe. 
Let $\bfa_\pm = [\alpha_\pm]$, $\bfz = [\zeta]$ and $\bfc_{\rho,\theta} = [\gamma_{\rho, \theta}]$. 

We claim that 
\[
P(X) = \big\{ \bfa_+, \bfa_-, \bfz \big\} \ \cup \ \big\{ \bfc_{r, \theta} \big\} _{\text{$(r,\theta)$--hairs}} . 
\]
Also, $\bfa_\pm \preceq \bfz$ and otherwise, no other classes are comparable. 
To see this, we note that all quasi-geodesic rays that exit the $(\rho, \theta)$--hair are in the same
class (since the tails of these rays coincide) and are not comparable to other quasi-geodesics (since 
the hairs have only one point of contact with the rest of the space). We only need to show that every 
quasi-geodesic ray that stays in $\widetilde Z$ can be quasi-redirected to $\zeta$. This can be 
done using a logarithmic spiral. We do this explicitly for the $\alpha_+$. For $T \geq 1$, define 
\begin{equation} \label{Eq:log-spiral} 
\alpha_T\from \RR_+ \to X, \quad \alpha_T(t)=
\begin{cases} 
\alpha_+(t) = (1, t) & t \in [0, T] \\
\big((t-T+1), T - \ln(t-T+1) \big) & t \in \left[T,  (e^T+ T -1)\right]\\
\big((t-T+1), 0\big) & t \in \big[(e^T + T -1), \infty\big) 
\end{cases}.
\end{equation} 
To see that $\alpha_T$ is a quasi-geodesic, note that, for $t \in \big[T, (e^T + T -1)\big]$, we have 
\begin{equation} \label{Eq:Norm}
\left\Vert \frac{d}{dt}\alpha_T(t)  \right\Vert \leq 
\left| \frac{d\rho}{dt} \right| + \rho  \left| \frac{d\theta}{dt} \right| 
= 1 + (t-T+1) \cdot \frac{1}{(t-T+1)}  \leq 2. 
\end{equation} 
For other values of $t$, $\Norm{\dot{\alpha_T}(t)} =1$.  
To see the lower bound, we use the fact that 
\[
d\big( (r, \theta), (r', \theta') \big) \geq \max\big(|r-r'|, |\theta- \theta'|\big) \geq \frac 12 \big( |r-r'|+  |\theta- \theta'| \big). 
\] 
For example, we can estimate the distance between $\go$ and $\alpha_T(t)$, for $t \in [T, (e^T + T -1)]$, by
\begin{equation} \label{Eq:Lower}
d (\go, \alpha_T(t)) \geq \frac 12  \left( (t-T)  + (T - \ln(t-T+1)) \right) 
= \frac 12( t- \ln(t-T+1)). 
\end{equation} 
Recall that, for $A > 0$, we have (the right hand side is the equation of a tangent line to the graph of $y=\ln(x)$) 
\[
\ln(x) \leq \frac{x-A}{A} + \ln(A). 
\]
Therefore, setting $x = (t-T+1)$ and $A=2$, we have 
\begin{equation} \label{Eq:ln} 
t- \ln(t-T+1)) \geq t - \left(\frac{t-T-1}{2}\right)  - \ln(2)
\geq \frac t2 - \ln(2). 
\end{equation} 
Combining \eqref{Eq:Lower} and \eqref{Eq:ln} (and rounding up the additive error) we get
\[
d (\go, \alpha_T(t)) \geq \frac t4 -1. 
\]
A similar calculation for other pairs of points shows that $\alpha_T$ is a $(4,1)$--ray. 

Since the family $\alpha_T$ gives uniform quasi-redirections from $\alpha_+$ to $\zeta$ 
at every radius, we can conclude $\bfa_+ \preceq \bfz$. The proof of $\bfa_- \preceq \bfz$ is similar.
However, as it was noted in \cite{cashen}, the geodesic $\alpha_+$ is Morse. Hence, 
any quasi-geodesic with end points on $\alpha_+$ stays in a bounded neighborhood of $\alpha_+$. 
Thus, $\zeta$ cannot be quasi-redirected to $\alpha_+$. The same also holds for $\alpha_-$. 
Hence, $\bfz$, $\bfa_+$ and $\bfa_-$ are distinct classes. 

The same argument as above shows that every $\gamma_{\rho, \theta}$ can be $(4,1)$--redirected to $\zeta$
at radius comparable to $(\rho+\theta)$. That is, for a sequence $\bfc_{\rho_n, \theta_n}$, we have 
\[
\bfc_{r_n, \theta_n} \to \bfz  \qquad \Longleftrightarrow \qquad  (\rho_n + \theta_n) \to \infty.
\]
In fact, if $(\rho+\theta) \geq r$ then $\bfc_{\rho, \theta} \in \calU(\zeta, r)$ and hence
$P(X) -  \calU(\zeta, r)$ is finite. In particular, $\partial X$ is compact.

\subsection{Is there an Out-Topology?} \label{Sec:Out-Topology}
Our goal throughout this paper has been to define a simple natural analogue of Gromov boundary. 
The idea that $P(X)$ should be considered as the set of points at infinity seems quite natural to us
since the directions that can be quasi-redirected to each other do not represent truly different
directions. We then defined a notion of a cone topology on these classes essentially by saying that classes that 
are close to $\bfa \in P(X)$ are those that can still be redirected to $\bfa$ at a large radius. 

However, a priori, one could define the cone topology in the opposite way, namely, classes that are close to $\bfa \in P(X)$ 
are those that $\bfa$ can be redirected to at a large radius. It seems like we have made an arbitrary 
choice here that needs justification. Why should a neighborhood be define by quasi-redirection towards 
$\bfa$ (the In-topology) as opposed to away from $\bfa$ (the Out-topology)? Is there a well-defined notion of Out-topology?

To be more precise, let $X$ be a metric space where the assumptions 0, 1 and 2 hold. 
We can attempt to define neighborhoods for the Out-topology as before: 
For $\bfa \in P(X)$, $r>0$ and $F \from [1,\infty) \times [0,\infty) \to [1,\infty) \times [0,\infty)$ , define 
\begin{align*}
 \Uout(\bfa, r, F) := \Big  \{  \Gb \in P(X) \cup X \, \ST 
&\text{ every $\qq$--ray $\alpha \in \bfa$ can be $F_\bfa(\qq)$--redirected } \\
 & \, \text{to central element $\beta_0 \in \Gb$ at radius $r$.}  \Big \}   
\end{align*}
In \secref{Sec:boundary}, the choice of $F_\bfa$ was not important as long as $F_\bfa$
was large enough due to \lemref{Lem:F} which is why $F$ is not included in the notation
$\calU(\bfa, r)$. However,  the analogue of \lemref{Lem:F}
does not hold for sets $\Uout$. 

To see this in the hairy parking lot example, we consider such neighborhoods around $\bfa_+$ 
for a given function $F$. It turns out, if we enlarge $F$ slightly, say set $F'= F+ (1,0)$, 
then $\Uout(\bfa_+, r, F)$ does not contain $\Uout(\bfa_+, R, F')$ no matter how large $R$ is. 

In fact, let us analyze what subset of $X$ can be reached via a $(q,Q)$--ray that matches $\alpha_+$
up to a radius $r$. Of course, all point $(\rho, \theta)$
where $\theta \geq r$ can be reached. But to make $\theta$ smaller, we have to move 
away from the set $\{\rho=1\}$ at a linear rate. We argue that the
most efficient way is along a log spiral as in \eqref{Eq:log-spiral}. Consider, for $A>0$
and $f \from \RR_+ \to \RR_+$, the ray 
\begin{equation} \label{Eq:log-spiral-2} 
\gamma \from \RR_+ \to X, \qquad \gamma(t)=
\begin{cases} 
\alpha_+(t) = (1, t) & t \in [0, r] \\
\big( A \cdot (t-r+1) , r- f(t-r) \big) & t \geq r
\end{cases}.
\end{equation} 
This path follows $\alpha_+$ up to a radius $r$ and then moves away from the set $\{\rho=1\}$
at a constant speed of $A$ while reducing $\theta$ by a function $f$. Let us analyze how fast $\gamma$
can unwind around $\alpha_+$ (that is, how large $f$ can be) and stay a quasi-geodesic. 
As in \eqref{Eq:Norm}, for $t \geq r$, we have
\[
\Norm{\dot{\gamma}(t)} \leq 
\left| \frac{d\rho}{dt} \right| + \rho  \left| \frac{d\theta}{dt} \right| 
= A + (t-r +1) f'(t-r). 
\]
If $\gamma$ is a $\qq$--ray, $\Norm{\dot{\gamma}(t)}$ has to be bounded with a bound depending on $\qq$. That is, for a constants $B_\qq$
depending on $\qq$, we have  
\[
f'(t-r) \leq \frac {B_\qq}{t-r+1} \quad \Longrightarrow \qquad 
f(t-r) \leq B_\qq \ln(t-r+1). \tag{$f(0) =0$} 
\]
Note that $B_\qq$ is increasing as a function of $\qq$. 

If $(\theta, \rho)$ is a point on $\gamma$, for $\theta \in (-\infty, r]$, we have 
\begin{align*}
\theta =  r- f(t-r) \geq r -  B_\qq \ln(t-r+1) 
&& \Longrightarrow && \ln(t-r+1) &\geq \frac{r- \theta }{B_\qq}  \\
&& \Longrightarrow  && (t -r+1) &\geq e^{\frac{r- \theta}{B_\qq}}. 
\end{align*}
and 
\begin{equation} \label{gamma}
\rho = A \cdot (t-r+1) \geq A \cdot e^{\frac{r- \theta}{B_\qq}} \geq  C_{r,\qq} \cdot e^{\frac{- \theta}{B_\qq}}.
\end{equation}
For some function $C_{r, \qq}$ depending on $r$ and $\qq$. 
Similarly, if $\gamma'$ is a $\qq'$--ray that matches $\alpha_+$ up to a radius $R$
for any $(\rho, \theta)$ on such a geodesic we have 
\begin{equation} \label{gamma'}
\rho \geq  C_{R,\qq'} \cdot e^{\frac{- \theta}{B_{\qq'}}}.
\end{equation}
If $\qq=(q,Q)$, $\qq'= (q', Q')$ and $q' >q$, we have $B_{\qq'}  > B_{\qq}$. Therefore, no matter how large 
$R$ is compared to $r$, as $\theta \to -\infty$, 
\[
C_{R,\qq'} \cdot  e^{\frac{- \theta}{B_{\qq'}}} \leq  C_{r,\qq} e^{\frac{- \theta}{B_\qq}}.
\]
That is, the set of point that could be on $\gamma'$ is not contained in the set of point
that could be on $\gamma$ and, for every $R>r$, we can find a point $(\rho, \theta)$ where 
\eqref{gamma} holds but \eqref{gamma'} does not.  If $(\rho, \theta)$ is any such point, then 
\[
\bfc_{\rho, \theta} \not \in \Uout(\bfa, r, F_\bfa)
\qquad \text{but} \qquad
\bfc_{\rho, \theta} \in \Uout(\bfa, R, F').
\]

This causes some complication with the definition of Out-topology. 
For the sets $\Uout$ to define a topology, in analogy with \lemref{Lemma:Ubeta}, we need to know that 
the following:  There is a family of functions $F_\param$ and, for every $r > 0$ and every $\bfb \in \Uout(\bfa, r\,)$,
there is $R>0$ such that 
\[
 \Uout(\bfb, R , F_\bfb) \subset \Uout(\bfa, r, F_\bfa).
\]
The argument above can be modified to show that this does not hold in general for 
arbitrarily large functions $F_\param$. For example, we can set $\bfa=\bfa_+$, and choose $\gamma_{1,R}$ to be a hair
attached to $\alpha_+$ at a distance $R$ from $\go$. Then 
$\gamma_{1,R} \in \Uout(\bfa_+, R)$, but if $F_{\bfc_{1,R}}$ is even slightly larger than 
$F_\bfa$, we have 
\[
 \Uout(\bfc_{1,R}, R , F_{\bfc_{1,R}}) \not \subset \Uout(\bfa, r, F_\bfa).
\]
That is, if we want to have a Out-Topology, we have to choose the functions  $F_\param$ in a very delicate way
and it is not clear if this is possible. And the Out-topology would change if you changed the functions 
$F_\bfa$. 

Another issue is that, even if it was possible to define an Out-topology, the restriction of the Out-topology
to natural subspaces such as the sublinearly Morse boundary or even the contracting boundary (see \cite{cashenmackay})
does not match the previously defined topologies on these spaces. For the sake of brevity, we leave
the details to reader except to mention that, similar calculations as above shows that, for every $F$, it is possible 
to find a sequence $\rho_n, \theta_n, R_n$ such that 
\[
R_n \to \infty, \quad \theta_n \to -\infty \quad\text{and}\quad  \bfc_{\rho_n, \theta_n} \in \Uout(\alpha_+, R_n , F). 
\]
However, if $\theta_n \to -\infty$, the sequence $\bfc_{\rho_n, \theta_n}$ does not converge to 
$\alpha_+$ in the usual topology of the Morse-boundary (following either \cite{cashenmackay} or \cite{QRT2}).

\section{The Croke-Kleiner Group}
In this section, we show that the hairy parking lot example (Section~\ref{counterexample}) is not an artificial example and indeed the same phenomena  happens in a finitely presented group, in particular, in right angled Artin groups. As a special case, 
we closely examine the Croke-Kleiner Group acting on the universal cover of the  Salvetti Complex (denoted, as usual, by $X$). 
Our main goal is to prove \thmref{Thm:CK}. The proof showcases the rich structure of space
of quasi-geodesic rays in the Croke-Kleiner Group. We also give a classification of the set $P(X)$
by examining the growth rate of the \emph{excursion} of geodesic rays (see \thmref{Thm:Classification}). 
In the hairy parking lot example, every direction is either sublinearly Morse (in fact Morse) or non-Hausdorff point. However, 
it turns out that, in the Croke-Kleiner Group, there exists unexpected equivalence classes of directions that  are rank-one and QI-invariant, but are not sublinearly Morse. In addition, these points are not the non-Hausdorff points of the space. This adds to the knowledge that rank-one is not a good predictor of QI-invariant, as a class of rank-one directions is exhibited to not be QI-invariant in  \cite{Qin16}. Lastly, this example shows that neither 
the Bass-Serre trees nor the hierarchically hyperbolic structures of such spaces is in general accurate in identifying QI-invariant directions at boundary at infinity.

In the interest of brevity, the exact calculation of the constants of quasi-geodesic rays constructed in this section 
are omitted since these calculations are standard and are similar to the arguments presented in the paper so far. 

\subsection*{The Croke-Kleiner Group}\label{Sec:CK}
For background on the Croke-Kleiner group, see \cite{crokekleiner, CK02} and \cite{Qin16}. Here we follow the notation from 
\cite{crokekleiner}. The Croke-Kleiner group is a group $G$
with the following presentation: 
 \[
 G = \big\langle \,  a, b, c, d \st [a, b], [b, c], [c, d]  \, \big\rangle.
 \] 
 Consider a tori complex as follows. 
 
 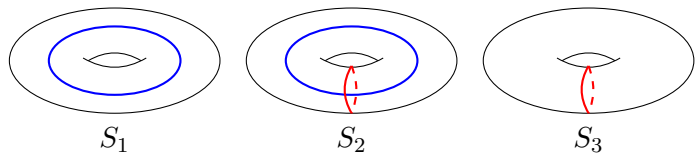
\begin{figure}[ht]
\begin{center}
\begin{tikzpicture}[scale=0.35]

\draw (-1,0) to [bend left] (1,0);
\draw (-1.2,0.1) to [bend right] (1.2,0.1);
\draw (0,0) ellipse (4 and 2);
\draw [thick,blue] (0,0) ellipse (2.5 and 1.3);

\draw (8,0) to [bend left] (10,0);
\draw (7.8,0.1) to [bend right] (10.2,0.1);
\draw (9,0) ellipse (4 and 2);
\draw [thick,blue] (9,0) ellipse (2.5 and 1.3);
\draw [thick,red,bend right=30] (9,-0.2) to (9,-2);
\draw [thick,red,dashed,bend left=20] (9,-0.2) to (9,-2);

\draw (17,0) to [bend left] (19,0);
\draw (16.8,0.1) to [bend right] (19.2,0.1);
\draw (18,0) ellipse (4 and 2);
\draw [thick,red,bend right=30] (18,-0.2) to (18,-2);
\draw [thick,red,dashed,bend left=20] (18,-0.2) to (18,-2);

\node at (0,-3) {$S_1$};
\node at (9,-3) {$S_2$};
\node at (18,-3) {$S_3$};

\end{tikzpicture}
\end{center}
\caption{the tori are glued along the curves isometrically .}
\label{}
\end{figure}
Let $\go$ be a 0-cell. Let $a, b, c, d$ be four oriented 1-cells,
 each an isometric copy of $[0,1]$, whose boundary is $\go$. Let $S_1$ be a torus that is an isometric copy of 
 $[0, 1] \times [0, 1]$, where the opposite sides are glued together to the loops $a$ and  $b$. Likewise, let $S_2$ be 
 a Euclidean square torus with 1-skeleton $b, c$ and $S_3$ be a Euclidean square torus with 1-skeleton $c, d$.  
 This tori complex is the \emph{Salvetti complex} associated with this group. The universal cover of the Salvetti complex is the main space we study 
 in this section and we denote  it by $X$. 
In this section, we show 

\begin{theorem}\label{Thm:CK}
The space $X$ satisfies Assumptions 0-2 and hence $\partial X$ is defined. 
The relation $\preceq$ is not symmetric and $P(X)$ has a unique maximal element. 
Therefore, $\partial X$ is not Hausdorff. 
\end{theorem}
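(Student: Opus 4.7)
The first step is to verify the three base assumptions on $X$. Assumption 0 is immediate from \lemref{Lem:cocompact}, since the Croke-Kleiner group acts cocompactly on $X$. For Assumptions 1 and 2 I would exploit the block decomposition of $X$: it is a union of isometrically embedded Euclidean planes (the lifts of the tori $S_1, S_2, S_3$) glued along geodesic lines (the lifts of $b$ and $c$), organized into a Bass-Serre tree $T$. Each block is mono-directional by \propref{product}, so two $\qq$--rays that remain in a single block lie in the same quasi-redirecting class, with a redirecting function depending only on $\qq$. Every quasi-geodesic ray in $X$ either eventually stays in one block or traces an infinite ray in $T$, and in both cases a geodesic representative of the class is obtained as a subsequential limit of geodesic segments $[\go, \alpha(t_n)]$ (cf.\ \lemref{limit-geo}), giving Assumption 1 with $\qq_0 = (1,0)$. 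Assumption 2 then follows from a surgery argument that combines the mono-directionality of each block with bounded detours across gluing lines between adjacent blocks, via \lemref{surgery} and \lemref{onestep}.

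The second step is to exhibit a unique maximum $\bfz \in P(X)$, that is, a class satisfying $\bfa \preceq \bfz$ for every $\bfa \in P(X)$ (uniqueness of a maximum is automatic). The plan is to construct a $\qq_0$--ray $\zeta$ together with a universal function $f$ such that every $\qq$--ray in $X$ can be $f(\qq)$--redirected to $\zeta$ at every radius. The construction mirrors the log-spiral of \secref{counterexample}: within each Euclidean block one can spiral from any direction to any other, with quasi-geodesic constants depending only on the angular gap, and for rays in different blocks one chains intra-block spirals together via bounded detours across gluing lines. A convenient choice for $\zeta$ is a ray along a lifted gluing line (a lift of $b$, say, shared by adjacent lifts of $S_1$ and $S_2$), since its two-sided accessibility from both adjacent blocks provides the flexibility needed; for blocks further away in $T$ the redirection is built by an inductive surgery over the finitely many gluing lines that must be crossed. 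Transitivity (\lemref{L:Transitive}) and the uniform control of constants then give that $\bfz := [\zeta]$ is the unique maximum of $P(X)$.

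Asymmetry of $\preceq$ then reduces to exhibiting a class $\bfa \in P(X)$ distinct from $\bfz$ to which $\zeta$ cannot be redirected. Such classes come from sublinearly Morse rays in $X$, for instance non-backtracking rays in the Bass-Serre tree that cross blocks transversally without dwelling; these are minimal in $P(X)$ by \propref{SubMorsemin}. For such a Morse class $\bfa$, any redirection of $\zeta$ to $\alpha \in \bfa$ would force tails of $\zeta$ into a bounded neighborhood of $\alpha$, contradicting the dense spiralling behavior built into $\zeta$; hence $\bfz \not\preceq \bfa$ while $\bfa \preceq \bfz$. Non-Hausdorffness of $\partial X$ is then a formal consequence: since $\bfa \preceq \bfz$ uniformly at every radius for every $\bfa \in P(X)$, we have $\bfa \in \calU(\bfz, r) \cap \partial X$ for every $r>0$, so $\calU(\bfz, r) \cap \partial X = \partial X$ and the only open neighborhood of $\bfz$ in $\partial X$ is $\partial X$ itself; in particular $\bfz$ cannot be separated from any other point. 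The main obstacle, absorbing most of the technical work of \secref{Sec:CK}, is Step 2: explicitly constructing the multi-block spirals and tracking how quasi-geodesic constants accumulate through successive intra-block spirals and inter-block gluing-line crossings, so as to extract a single function $f_\bfz$ independent of the class $\bfa$ being redirected to $\zeta$.
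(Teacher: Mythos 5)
Your overall route matches the paper's: Assumption 0 via \lemref{Lem:cocompact}, the block/Bass--Serre-tree decomposition, a ray $\zeta$ along a lifted $b$--axis made into the unique maximum $\bfz$ by spiralling constructions, sublinearly Morse directions (minimal by \propref{SubMorsemin}) as witnesses that $\preceq$ is not symmetric, and the observation that $\calU(\bfz,r)\cap\partial X=\partial X$ for all $r$ so $\bfz$ cannot be separated from anything. That last step is in fact stated more transparently in your write-up than in the paper, which only gestures at \thmref{sym-Haus}.

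The genuine gap is in your verification of Assumptions 1 and 2. You claim a geodesic representative of each class is ``obtained as a subsequential limit of geodesic segments $[\go,\alpha(t_n)]$ (cf.\ \lemref{limit-geo})'', but \lemref{limit-geo} only yields a geodesic $\alpha_0$ with $\alpha_0\preceq\alpha$; it does \emph{not} give $\alpha\preceq\alpha_0$, and hence does not put $\alpha_0$ in the class $[\alpha]$. The paper's own Example~\ref{Ex:No-Geodesic} shows this reverse redirection can fail in a general proper geodesic space, so it must be proved here. The difficulty is concrete: a transient $\qq$--ray $\alpha$ with itinerary $(A_i)$ may enter each block $A_i$ at a point whose distance from the combinatorial path through the gluing points $x_i$ grows linearly in $\Norm{\alpha(u_i)}$, in which case the nearest-point surgery of Part (II) of \lemref{surgery} (which needs $d((\alpha_0)_r,\alpha)\le r/2$) never applies, and no amount of ``bounded detours across gluing lines'' will redirect $\alpha$ to $\alpha_0$ with constants depending only on $\qq$. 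The paper resolves this with a dichotomy (\propref{Prop:zeta-or-geodesic}): either $\alpha$ comes within $r/2$ of $(\alpha_0)_r$ for arbitrarily large $r$ and the surgery goes through, or else the entry distances $\ell_k$ grow fast enough that a slow logarithmic spiral started along $\zeta$ can overtake $\alpha$ block by block, forcing $\alpha\in\bfz$ --- in which case $\zeta$ itself is the geodesic representative. Without this case analysis you cannot conclude Assumption 1 (a $\qq_0$--representative in every class) or Assumption 2 (a uniform redirecting function $f_\bfa$), and this dichotomy is the main technical content of the section rather than a routine application of \lemref{surgery} and \lemref{onestep}. A secondary, smaller imprecision: the blocks are not Euclidean planes but convex sets isometric to (tree)$\times\RR$, and your claim that a failed redirection of $\zeta$ to a Morse ray would ``force tails of $\zeta$ into a bounded neighborhood'' should instead invoke the weak Morse property to place the initial segments $\zeta|_r$ in a \emph{sublinear} neighborhood of $\alpha$, contradicting $[\alpha]=[\alpha]_s$ from \propref{SubMorsemin}.
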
 

Note that Assumption 0 holds by \lemref{Lem:cocompact}. To check  Assumptions 1 and 2, we need to identify elements 
of $P(X)$, choose a geodesic representative $\alpha_0$ in each class $\bfa \in P(X)$ and show that every $\qq$--ray 
$\alpha\in \bfa$ can be $f_\bfa(\qq)$--redirected to $\alpha_0$ for some function $f_\bfa$ depending only on the class $\bfa$. 
 
 \subsection*{Blocks and the Bass-Serre tree}
A \emph{block} in $X$ is a convex infinite subset of $X$ that is a lift of either  $S_1 \cup S_2$, 
or $S_2 \cup S_3$. Thus a block is isometric to the universal cover of the Salvetti complex of either of the following groups 
\[ 
G_1 = \big\langle a, b, c \st [a, b], [b, c] \, \big\rangle \qquad \text{or} \qquad 
G_2 =\big \langle b, c, d \st [b, c], [c, d] \, \big\rangle. 
\]
In other words, each blocks comes with a co-compact action of a conjugate copy of either $G_1$ of $G_2$. 
A \emph{flat} in $X$ is a lift of $S_1, S_2$ or $S_3$. Given a pair of blocks, their intersection in $X$ is 
either empty or a \emph{flat} (in fact, always a lift of $S_2$). That is, a flat comes with a compact action of 
a conjugate of the group $ \big\langle b, c \st [b, c] \, \big\rangle = \ZZ^2$.  We refer to these as $bc$--flats. 

One can construct a graph where vertices are blocks and two vertices are 
connected by an edge if and only if two blocks intersect in a flat. The resulting graph, which we denote by $T$, 
is the Bass-Serre tree associated with amalgamated product decomposition of $G$:
\[
G = G_1 *_{ \langle \, b, c \, \st \, [b, c] \, \rangle} G_2.
\]
For $x \in X$, the \emph{$a$--axis} passing through $x$ is the bi-infinite geodesic ray passing through points
$x \cdot a^n$, for $n \in \ZZ$. The $b$--axis, $c$--axis and $d$--axis are similarly defined. 

Note that blocks have a product structure.  A $G_1$--block containing a point 
$x \in X$ is a Euclidean product of the tree generated by $\langle \, a, c \, \rangle$ and the $b$--axis 
centered at the point $x$. Similarly, a $G_2$--block containing $x \in X$ is a Euclidean product of the 
tree generated by $\langle \, b, d \, \rangle$ and the $c$--axis centered at the point $x$. 
We can use this product structure to quasi-redirect quasi-geodesics to each other. 
For example, the arguments in \propref{product} can be used to prove the following lemma. 

\begin{lemma} \label{Lem:block-redirect}
For every $\qq \in [1, \infty]\times [0, \infty)$ and $\rho>0$, there is a 
$\qq' \in [1, \infty]\times [0, \infty)$ such that the following holds. 
Let $B$ be a block, $R \geq (1+\rho) \cdot r >0$ be a pair of radii and $\alpha$ and $\beta$ be two 
$\qq$--rays. Assume $\alpha_r \in B$ and that $\beta|_{\geq R}$ starts at a point in $B$. Then, $\alpha$
can be $\qq'$--redirected to $\beta$ at radius $r$. 
\end{lemma}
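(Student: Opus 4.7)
The plan is to adapt the argument of \propref{product} to the block $B$. By the product structure noted just before the lemma, $B$ is isometric to a Euclidean product $T \times L$, where $T$ is the Cayley graph of the free group on $\{a,c\}$ or $\{b,d\}$ (a tree) and $L$ is the $b$-- or $c$--axis (a line). Both factors are proper geodesic metric spaces satisfying Assumption 0, so the routing inside the proof of \propref{product} will apply within $B$. My goal is to stitch together $\alpha|_r$, a quasi-geodesic segment $\sigma$ inside $B$ from $x := \alpha(t_r)$ to $y := \beta(T_R)$, and the tail $\beta[T_R, \infty)$, and then verify that the concatenation is a uniform quasi-geodesic ray eventually coinciding with $\beta$.

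To build $\sigma$, I will set coordinates on $B$: let $p \in B$ be a point nearest $\go$, and write $x = (t_x, \ell_x)$ and $y = (t_y, \ell_y)$ in product coordinates centered at $p$. Mimicking the routing in \propref{product}, I will choose intermediate points $x', z, y' \in B$ so that the norms of consecutive junctions are approximately $r$, $r + \rho r / 3$, $r + 2\rho r / 3$, $R$, and so that each consecutive pair of junctions differs in only one of the two product coordinates. This is possible because the shell of radii $[r, R]$ in $B$ has width at least $\rho r$, enough room to nest three separated detours. Setting $\sigma$ to be the resulting concatenation $x \to x' \to z \to y' \to y$, the verification that $\sigma$ is a $\qq''$--quasi-geodesic segment with $\qq''$ depending only on $\qq$ and $\rho$ will be line-for-line the calculation in \propref{product}, using that non-adjacent pieces of $\sigma$ are separated by at least $\rho r / 3$ in one factor, so the triangle inequality already gives lower bounds on distances matching the parametrization.

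I will then set $\gamma := \alpha|_r \cup \sigma \cup \beta[T_R, \infty)$. Because $\alpha$ first enters $B$ at $x$, every point of $\alpha|_r$ has nearest-point projection onto $\sigma$ at (or near) $x$; analogously at $y$. Two applications of Part~(I) of \lemref{surgery}, one at each junction, will then promote $\gamma$ to a $\qq'$--quasi-geodesic ray with $\qq'$ depending only on $\qq$ and $\rho$. Since $\gamma|_r = \alpha|_r$ by construction and $\gamma$ is eventually equal to $\beta$, $\gamma$ will realize the desired $\qq'$--redirection of $\alpha$ to $\beta$ at radius $r$.

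The main obstacle will be controlling the quasi-geodesic constants of $\sigma$ (and hence of $\gamma$) as $\rho$ shrinks. When $R \gg r$, as in \propref{product} where $R = 4r$, the detour points can be placed at radii $2r$ and $3r$ and the constants are easy to estimate. When $\rho$ is small and the shell $[r, R]$ is narrow, the separation between detour points is only of order $\rho r$, so the ratio of the length of $\sigma$ to the displacement $d(x, y)$ grows like $1/\rho$. This will force $\qq'$ to depend on $\rho$, but will keep it bounded for each fixed $\rho > 0$, exactly as the statement of the lemma requires.
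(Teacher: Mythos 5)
Your proposal is correct and takes essentially the route the paper intends: the paper gives no separate proof of this lemma, stating only that the argument of \propref{product} applies, and your adaptation --- routing a connecting segment $\sigma$ through the product structure of $B$ with junctions staggered across the shell of width $\rho r$, then concatenating with $\alpha|_r$ and the tail of $\beta$, with non-adjacent pieces separated by $\gtrsim \rho r$ --- is exactly that, including the correct observation that $\qq'$ must be allowed to degrade like $1/\rho$. The one step whose justification you should repair is the junction at $x=\alpha_r$: the hypothesis is only that $\alpha_r\in B$, not that $\alpha$ first enters $B$ at $x$, and even that would not give the nearest-point-projection claim for all of $\alpha|_r$; instead, arrange the first piece of $\sigma$ to increase $\Norm{\cdot}$ at a definite linear rate away from $B_r$ while $\alpha|_r$ stays in $B_r$, which supplies the missing lower bound exactly as in the proof of \propref{product}.
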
 

That is, we can transition from $\alpha|_r$ to $\beta|_{\geq R}$ as long as there is buffer between 
them that has a product structure and a thickness that is a linear function of $r$. 

\subsection*{The unique maximal element}
We now start analyzing the set $P(X)$. Let $\go$ also denote the point in $X$ associated with 
the identity element in $G$. Let $\zeta \from \RR_+ \to X$ be a geodesic ray in $X$ such that, 
for a positive integer $n$, 
\[
 \zeta(n) = \go \cdot b^n.
\] 
That is, $\zeta$ follows the positive $b$--axis passing through $\go$. 
We start by showing that $\bfz=[\zeta]$ is the unique maximal element in $P(X)$.
The choice of $b$ here is arbitrary. The geodesic $\zeta_c$ following the $c$--axis is in the same block 
at $\zeta$ and hence $\zeta_c \in \bfz$. Similarly, if $\zeta_a$ and $\zeta_d$ are geodesics following 
the $a$--axis and the $d$--axis respectively, we have $\zeta_a \in [\zeta]$ since $\zeta$ and 
$\zeta_a$ are in the same block and $\zeta_d \in [\zeta_c] = [\zeta]$ since $\zeta_c$ and $\zeta_d$ 
are in the same block. 
Recall that $\qq$--rays of $X$ are assumed to be a continuous and emanating from $\go$.  
 
 \begin{proposition} \label{Prop:zeta-is-top}
 Let $\alpha$ be a $\qq$--ray in $X$. Then $\alpha$ can be $\qq'$--redirected to $\zeta$ where
 $\qq'$ depends only on $\qq$. In particular,  $\alpha \preceq \zeta$. 
 \end{proposition}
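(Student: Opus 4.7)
For each $r>0$, I will construct explicitly a single ``spiral'' quasi-geodesic ray $\gamma_r$ that coincides with $\alpha$ on $[0,t_r]$ and eventually coincides with $\zeta$, and whose quasi-geodesic constants depend only on $\qq$. This is the Croke-Kleiner analogue of the log-spiral construction of \secref{counterexample}: the product structure of each block plays the role of the annular region in the hairy parking lot, and the Bass-Serre tree $T$ supplies the combinatorial ``angular'' direction that the spiral must traverse.

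First I would set up the geometry along the Bass-Serre tree. Let $B_k$ be the block containing $\alpha_r$, let $B_0$ be the $G_1$-block at $\go$ (which contains $\zeta$), and let $B_0,B_1,\dots,B_k$ be the unique geodesic in $T$ from $B_0$ to $B_k$, with consecutive blocks meeting in $bc$-flats $F_i=B_{i-1}\cap B_i$. Fix a buffer $\rho>0$, depending only on $\qq$, and set radii $r_i:=(1+\rho)^{k-i}r$, so that $r_k=r$ and $r_0=(1+\rho)^kr$. For each $i\in\{1,\dots,k\}$ I would choose $p_i\in F_i$ with $\Norm{p_i}=r_{i-1}$; such a point exists since each $F_i$ is an unbounded $2$-flat.

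Next I build $\gamma_r$ block by block. Within each block $B_i$ I use its product structure ($T_{ac}\times\mathbb{R}_b$ for a $G_1$-block, or $T_{bd}\times\mathbb{R}_c$ for a $G_2$-block) together with \lemref{Lem:block-redirect} to obtain a uniform quasi-geodesic sub-segment $\sigma_i$ from the incoming point of norm $r_i$ (which is $\alpha_r$ when $i=k$, and $p_{i+1}$ otherwise) to the outgoing point $p_i$ of norm $r_{i-1}$. Within $B_0$, I pick $q\in\zeta$ with $\Norm{q}=(1+\rho)r_0$ and construct $\sigma_0$ from $p_1$ to $q$. Concatenating yields
\[
\gamma_r \;:=\; \alpha|_r \,\cup\, \sigma_k \,\cup\, \sigma_{k-1} \,\cup\, \cdots \,\cup\, \sigma_0 \,\cup\, \zeta_{[q,\infty)}.
\]

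The main obstacle is to verify that $\gamma_r$ is a quasi-geodesic with constants $\qq'$ depending only on $\qq$ and $\rho$, independently of $k$ and $r$; the subtlety is precisely that a naive iteration of \lemref{Lem:block-redirect} through $k$ blocks could let the constants blow up. The upper bound on $d(\gamma_r(s),\gamma_r(t))$ is routine, since each $\sigma_i$ has length at most $r_{i-1}+r_i$ by the triangle inequality, and the total length is bounded by a geometric series $\sum_i(r_{i-1}+r_i)$ that is a uniform multiple of $r_0$, the final radius reached. For the lower bound I would combine two observations: (i) the geometric spacing $r_i=(1+\rho)^{k-i}r$ forces consecutive $\sigma_i$ to lie in essentially disjoint radial annuli, so the Bass-Serre projection $\pi_T\circ\gamma_r$ is coarsely monotone along the spiral portion, and points in non-adjacent $\sigma_i$'s are separated by a definite tree-distance; (ii) within each block $B_i$, \lemref{Lem:block-redirect} yields uniform within-block quasi-geodesic estimates, and these combine additively across the distinct blocks $B_i$ along a geodesic in $T$. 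Together these give a single linear lower bound on $d(\gamma_r(s),\gamma_r(t))$ in terms of $t-s$ with constants depending only on $\qq$ and $\rho$, completing the verification that $\alpha$ can be $\qq'$-redirected to $\zeta$ at every $r>0$, and hence $\alpha\preceq\zeta$.
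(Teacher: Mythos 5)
Your proposal is correct and follows essentially the same route as the paper: both build, for each $r$, a spiral quasi-geodesic that retraces the Bass--Serre geodesic from the block containing $\alpha_r$ back to the base block, with the radius growing exponentially (your factor $(1+\rho)$ per block versus the paper's roughly doubling powers of $b$ and $c$), using the product structure of blocks to keep the constants uniform in $k$ and $r$ before landing on $\zeta$. The only cosmetic difference is that the paper first normalizes $\alpha$ to follow a $b$--axis and then undoes the syllable decomposition $v_1w_1\cdots v_kw_k$ word by word, whereas you phrase the same surgery geometrically via points $p_i$ on the separating $bc$--flats; the level of detail in your verification of the lower quasi-geodesic bound matches the paper's own (admittedly sketched) treatment.
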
 
 
\begin{proof} 
After perturbing $\alpha$ by a bounded amount, we can assume that $\alpha$ lies in the $1$--skeleton of
the universal cover of the Salvetti complex. In fact, we assume, there is a sequence $s_n \in \{ a, b, c, d\}$, for $n \in \NN$, 
such that $\alpha(n) = s_1 \dots s_n$. Let $r>0$ be given. We construct a quasi-geodesic ray redirecting 
$\alpha$ to $\zeta$ at radius $r$. 
 
Using \lemref{Lem:block-redirect}, we can find a quasi-geodesic ray $\alpha'$, with constants depending only on 
$\qq$, where $\alpha_r = \alpha'_r$ 
and such that $\alpha'$ eventually follows some $b$--axis. That is, there is $T>0$, such that for $t \geq T$, 
we have $\alpha'(t+1) = \alpha'(t) \cdot b$. 

Again, we can assume that $\alpha'$ lies in the $1$--skeleton of the universal cover of the Salvetti complex.  
We now construct a spiral quasi-geodesic redirecting $\alpha'$ to $\zeta$. This is, in spirit, the same
idea as \eqnref{Eq:log-spiral-2}. Meaning, we try to backtrack along the segment $\alpha'[0,T]$,
however, to stay a quasi-geodesic, we move exponentially far away from $\alpha'$ in each block. 
To be more precise, let 
\[
\alpha'(T) =v_1 w_1 \dots v_k w_k,\quad
\text{where} \quad v_i \not \in G_2  \quad\text{and}\quad  w_i \not \in G_1.
\] 
It is possible that we have to start with a word in $G_2$ or end with a word in $G_1$, but in 
these cases the construction is similar. For $t \in [0,2T]$, we define 
$\gamma(t) = \alpha'(t)$. That is, 
\[
\gamma(2T) = v_1 w_1 \dots v_k w_k \cdot b^T.
\]
We start by moving along the $c$--axis to 
\[
\gamma(4T) = v_1 w_1 \dots v_k w_k \cdot b^T \cdot c^{2T} = 
v_1 w_1 \dots v_k\cdot c^{2T} \cdot w_k b^T. 
\]
This is still a quasi-geodesic. At the worst case moving along the $c$--axis cancels with 
$w_k$. But the presence of $b^T$ in the end ensures that, any point along this segment is at least 
a distance $T$ from $\alpha'[0,T]$. Now we can undo $w_k b^T$ to get:
\[
\gamma(5T+|w_k|) = v_1 w_1 \dots v_k \cdot c^{2T}. 
\]
This is also a quasi-geodesic since the distance between this segment and $\gamma[0, 2T]$
is at least $T$. We can now proceed in this way: first we add a large power of $b$ to get 
\[
\gamma(9T+|w_k|) = v_1 w_1 \dots w_{k-1} v_k \cdot c^{2T} \cdot b^{4T}= 
 v_1 w_1 \dots w_{k-1}  \cdot b^{4T} \cdot v_k c^{2T},
\]
and then we undo $v_k c^{2T}$ to get
\[
\gamma(11T+|w_k| +|v_k|) = v_1 w_1 \dots w_{k-1}  \cdot b^{4T}.
\]
As before, the large power of $b$ ensures that this path is still a quasi-geodesic. We can let 
$T_1 = 11T+|w_k| +|v_k|$ and proceed inductively. To establish the pattern, we repeat the above
procedure one more time. We will add $c^{T_1}$ and undo $w_{k-1} b^{4T}$ to reach
\[
\gamma(2T_1 + 4 T + |w_{k-1}|) =  v_1 w_1 \dots w_{k-2} v_{k-1}  \cdot c^{T_1}.
\]
We then add $b^{2T_1}$ and undo $v_{k-1}  \cdot c^{T_1}$ to reach 
\[
\gamma(5T_1 + 4 T + |w_{k-1}| + |v_{k-1}|) =  v_1 w_1 \dots w_{k-2}  \cdot b^{2T_1}.
\]
This is the same at multiplying $\gamma(T_1)$ on the right by 
\[
c^{T_1} \cdot b^{-4T} \cdot w_{k-1}^{-1} \cdot b^{2T_1} \cdot c^{-T_1} \cdot v_{k-1}^{-1}
\]
one letter at a time. Note that the powers of $b$ do not cancel since $w_k \not \in G_1$. 
And then we set \[T_2 = 5T_1 + 4 T + |w_{k-1}| + |v_{k-1}|\] and proceed as before. When all $v_i$
and $w_i$ disappear, we have $\gamma(T_k)$ is on $\zeta$, and we can continue along
$\zeta$ for $t \geq T_k$. 
\end{proof} 

\subsection*{Recurrent quasi-geodesic rays}
We define a map $\Psi_\alpha \from \RR_+ \to T$ (recall that $T$ is the Bass-Serre tree) as follows: 
Let $A_0 \in T$ be the $G_1$--blocks containing $\go$. Let the $u_1>0$ be the supremum of times
$t$ such that $\alpha(t) \in A_0$ and let $A_1 \not = A_0$ be the block $\alpha$ enters immediately after
it exists $A_0$. We now define $u_i$ and $A_i$ inductively. Let $u_i > u_{i-1}$ be the supremum of times 
$t$ such that $\alpha(t)$ is contained in the block $A_{i-1}$ and let $A_i \not = A_{i-1}$ be the block 
$\alpha$ enters immediately afterwards. Now define 
\[
\Psi_\alpha(t) = A_i \qquad\text{for}\qquad t \in [u_i, u_{i+1}).
\]  
This is a quasi-Lipschitz map where contestants depend on $\qq$. 

It is possible that $u_i = \infty$ for some $i$. This means that $\alpha$ visits the block $A_i$
infinitely many times. In this case, we say $\alpha$ is \emph{recurrent}. 
Otherwise, we say $\alpha$ is \emph{transient}. 

\begin{lemma} 
If $\alpha$ is recurrent, then $\alpha \in \bfz$. 
\end{lemma}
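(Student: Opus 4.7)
By \propref{Prop:zeta-is-top}, $\bfz$ is the unique maximum of $P(X)$, so $\alpha \preceq \zeta$ automatically. To conclude $\alpha \in \bfz$, I need to prove the reverse relation $\zeta \preceq \alpha$: for each radius $r>0$, produce a $\qq'$--ray $\gamma_r$ (with $\qq'$ independent of $r$) satisfying $\gamma_r|_r = \zeta|_r$ and $\gamma_r \eventual \alpha$.

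Fix a block $A$ and a sequence $t_n \to \infty$ with $p_n := \alpha(t_n) \in A$; in particular $\Norm{p_n} \to \infty$. After swapping the roles of $b$ and $c$ if necessary, assume $A$ is a $G_1$--block, so its flat factor runs in the $b$--direction, parallel to $\zeta$. Two cases arise depending on whether $A$ coincides with the block $A_0$ containing $\go$.

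\textbf{Case 1:} $A = A_0$. Then $\zeta|_r \subset A_0 = A$ for every $r$, while $\alpha|_{\geq t_n}$ begins at $p_n \in A$. Choosing $n$ so that $\Norm{p_n} \geq 2r$, \lemref{Lem:block-redirect} with $B = A$ and $\rho = 1$ yields a uniform quasi-redirection of $\zeta$ to $\alpha$ at radius $r$.

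\textbf{Case 2:} $A \neq A_0$. Here $\zeta$ and $A$ lie in different blocks of the Bass--Serre tree, so \lemref{Lem:block-redirect} does not apply directly; the argument is two--phase. In Phase I, for each $r$, produce a uniform quasi-geodesic segment $\eta_r$ with $\eta_r|_r = \zeta|_r$ whose endpoint $q_r$ lies inside $A$ with $\Norm{q_r}$ a uniform multiple of $r$. In Phase II, apply \lemref{Lem:block-redirect} with $B = A$ to concatenate $\eta_r$ with the tail $\alpha|_{[t_n, \infty)}$, choosing $n$ large so that $\Norm{p_n} \geq (1+\rho)\Norm{q_r}$. The construction in Phase I is a spiral in the spirit of \propref{Prop:zeta-is-top} but run in reverse: from $\zeta(r) = \go b^r$, append $c^S$ for $S$ a large multiple of $r$ (legal since $[b,c]=1$); then cancel $b^r$ to arrive at $\go c^S$, which sits far from $\zeta|_r$ inside $A_0$. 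From $\go c^S$, traverse the unique geodesic of blocks from $A_0$ to $A$ in the Bass--Serre tree, inserting at each block transition another large commutator buffer using whichever of $[b,c]=1$ or $[c,d]=1$ is available in the current block. The endpoint $q_r$ lands inside $A$.

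The main obstacle will be verifying that the segment $\eta_r$ from Phase I is a quasi-geodesic with constants independent both of $r$ and of the combinatorial distance from $A_0$ to $A$ in the Bass--Serre tree. The control is analogous to the spiral estimates in the proof of \propref{Prop:zeta-is-top} and in the hairy parking lot (Section~\ref{counterexample}): at each step, the commutator power is chosen large enough relative to the current norm that the new sub--segment is metrically far from all previously traced portions, giving a uniform quasi-geodesic inequality. Once $\eta_r$ is in hand, Phase II is immediate from \lemref{Lem:block-redirect}, concluding that $\zeta \preceq \alpha$ and hence $\alpha \in \bfz$.
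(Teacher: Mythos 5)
Your proof is correct and follows essentially the same route as the paper: the paper also reduces to showing $\zeta \preceq \alpha$, uses the spiral construction from \propref{Prop:zeta-is-top} to travel from $\zeta$ into the block that $\alpha$ visits infinitely often, and then applies \lemref{Lem:block-redirect} to latch onto the tail of $\alpha$. The only cosmetic difference is that the paper factors the argument through an auxiliary ray $\beta$ following the $b$--axis in that terminal block and concludes by transitivity ($\zeta \preceq \beta \preceq \alpha$), whereas you concatenate your spiral segment with the tail of $\alpha$ directly and treat the case $A = A_0$ separately.
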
 

\begin{proof} 
Let $A_i$ be the terminal block for $\alpha$. Let $\beta$ be a quasi-geodesic that 
eventually stays in $B$ following the $b$--axis. \lemref{Lem:block-redirect} implies that 
$\beta \preceq \alpha$. Also, we can use the argument of \propref{Prop:zeta-is-top}  to build 
a spiral from $\zeta$ to $\beta$.  That is, $\zeta \preceq \beta$. Hence, $\zeta \preceq \alpha$. 
But we already know $\alpha \preceq \zeta$ by \propref{Prop:zeta-is-top} . This finishes the proof. 
\end{proof} 

\subsection*{Geodesic representatives in each class}
Now assume $\alpha$ is not recurrent. Note that the sequence $A_i$ is an embedded path in
$T$, that is, a sequences of vertices where $A_i$ is adjacent to $A_{i+1}$ without
repeating. Since $T$ is a tree, this path is a geodesic in $T$ limiting to some end $\xi$ of $T$. 
By \cite[Corollary 5.26]{CK02}, there is at least one geodesic ray $\alpha_0 \in X$ whose itinerary 
is the sequence $A_i$. We show that $\alpha_0$ can always be redirected to $\alpha$. First, 
we recall the following lemma that follows from \cite[Proposition 3.10]{Qin16} and \cite[Section 4.1]{CK02}.

\begin{lemma} \cite{Qin16, CK02} \label{Lem:Sector} 
Let $x,y$ be two points in $A_{i+2}$, let $[\go, x']$ and $[\go, y']$ be subsegments of geodesic 
segments $[\go, x]$ and $[\go, y]$ such that $x', y' \in A_i$. Let $[\go, z]$ be the intersection
$[\go, x'] \cap [\go, y']$ (possibly $z = \go$). Then, there is an embedded Euclidean triangle
in $X$ where the edges are the geodesic segments $[z, x']$, $[z, y']$ and $[x',y']$. 
\end{lemma}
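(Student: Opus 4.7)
The plan is to induct on the block index $k$ with $z \in A_k$, and at each stage produce an isometric embedding of the relevant triangle into a Euclidean plane. First I would recall that $X$ is a CAT(0) cube complex (the universal cover of the Salvetti complex of the right-angled Artin group on the path $a$--$b$--$c$--$d$), so CAT(0) geodesics are unique, and each block $A$ carries an isometric product decomposition $A = T_A \times L_A$ where $T_A$ is a tree and $L_A$ is a real line (the $\langle a, c\rangle$-tree times the $b$-axis for a $G_1$-block, and analogously for $G_2$-blocks). Adjacent blocks meet in a 2-dimensional $bc$-flat $A_k \cap A_{k+1}$.

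For the base case $z \in A_i$, set $z = (t_z, r_z)$, $x' = (t_x, r_x)$, $y' = (t_y, r_y)$ in the decomposition $A_i = T_{A_i} \times L_{A_i}$. Since $[\go, x']$ and $[\go, y']$ agree up to $z$ and then diverge inside $A_i$, the tree geodesics $[t_z, t_x]$ and $[t_z, t_y]$ in $T_{A_i}$ leave $t_z$ in different directions and are therefore disjoint away from $t_z$. Placing them on opposite rays of the $x$-axis of $\RR^2$ based at $0 = t_z$ and using the second coordinate for $L_{A_i}$ gives an isometric embedding of the three vertices into $\RR^2$; since the product metric matches the Euclidean metric under this identification, the convex hull of the three images is the desired Euclidean triangle.

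For the inductive step $z \in A_k$ with $k < i$, the segments $[z, x']$ and $[z, y']$ exit $A_k$ at possibly distinct points $\hat x, \hat y$ on the flat $A_k \cap A_{k+1}$. I would first apply the base-case argument inside $A_k$ to the triangle $(z, \hat x, \hat y)$, producing a planar picture in which $[\hat x, \hat y]$ lies along a segment on the image of the flat. Then applying the inductive hypothesis to the configuration over the block sequence $A_{k+1}, \dots, A_i$ (with the edge $[\hat x, \hat y]$ now playing the role of a ``base'' inside the next flat) yields a second Euclidean picture, which can be glued to the first along the common segment $[\hat x, \hat y]$ to produce a single planar picture of $(z, x', y')$.

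The main obstacle will be verifying that the unfolding across the $bc$-flats is globally isometric rather than merely locally so, since the tree-coordinate branchings in consecutive blocks must match consistently. The key ingredient is CAT(0) convexity of the shared flats together with the fact, established in \cite{CK02} and refined in \cite{Qin16}, that a CAT(0) geodesic from $z$ terminating at a point several blocks away passes through each intermediate flat at a point determined by orthogonal projection. This forces the direction of travel inside $A_{k+1}$ to be determined by the exit point on $A_k \cap A_{k+1}$, which is exactly what makes the successive planar pictures compatible, so that $(z, x', y')$ sits inside a single 2-dimensional Euclidean sector of $X$.
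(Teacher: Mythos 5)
The paper does not actually prove this lemma: it is imported as a black box from \cite[Proposition 3.10]{Qin16} and \cite[Section 4.1]{CK02}, so there is no in-paper argument to compare yours against step by step. Judged on its own terms, your proposal has a genuine gap: it never uses the hypothesis that $x$ and $y$ lie in $A_{i+2}$, two blocks beyond the block containing $x'$ and $y'$, and without that hypothesis the statement you prove in the base case is false. Concretely, in a single block $B = T \times \RR$ ($T$ the $\langle a,c\rangle$--tree, $L^2$ product metric), take $z = \go = (e,0)$, $x' = (a^2, 1)$, $y' = (ac, -1)$. The two geodesics from $z$ meet only at $z$, yet the Alexandrov angle at $z$ is $\arccos(3/5)$ (both geodesics begin inside the flat strip $[e,a]\times\RR$ with directions $(1,\tfrac12)$ and $(1,-\tfrac12)$), while the comparison angle for side lengths $\sqrt5,\sqrt5,\sqrt8$ is $\arccos(1/5)$; since these disagree, no flat triangle spans these three sides. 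This also exposes the invalid inference in your base case: divergence of the two geodesics at $z$ does \emph{not} force their tree projections to branch at $t_z$ (they branch at the median of $t_z,t_x,t_y$, which here is $a \neq e$), so the union of the two tree geodesics is a genuine tripod and cannot be laid out on a line. What rescues the lemma is precisely the unused hypothesis: because $[\go,x]$ and $[\go,y]$ continue into $A_{i+2}$, each must cross the wall strip of $A_{i+1}$, hence both must pass through one and the same line of $F_i = A_i\cap A_{i+1}$ (the bridge vertex in the tree factor of $A_{i+1}$ crossed with its line factor), and likewise through the exit line of the wall of $A_i$. This funnelling of both geodesics through common lines in the intermediate flats is the template/quarter-plane structure of \cite[Section 4.1]{CK02}, and it is exactly what rules out configurations like the tripod example above.

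The inductive step also does not close as written. Your inductive hypothesis concerns a triangle whose apex is the divergence point, but after cutting along $A_k\cap A_{k+1}$ the remaining configuration has the segment $[\hat x,\hat y]$ as its near side and $x',y'$ as its far vertices --- a quadrilateral, to which the hypothesis does not apply; and gluing two planar developments along $[\hat x,\hat y]$ yields an isometrically embedded region only if the development is globally isometric, which is the whole point at issue. A workable induction should be run on a stronger statement, for instance that the union of all geodesics from $\go$ to the entrance line of the wall of $A_{j}$ develops isometrically onto a Euclidean sector; the lemma then follows because $z$, $x'$ and $y'$ all lie in that sector when $j = i+1$. Finally, the assertion that a geodesic meets each intermediate flat ``at a point determined by orthogonal projection'' is not the right mechanism --- it meets each flat along the bridge lines of the relevant tree factors --- and the proposal never addresses why the resulting flat triangle is \emph{embedded}.
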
 

\begin{lemma} \label{Lem:geodesic-to-alpha}
For $\alpha$ and $\alpha_0$ as above, we have $\alpha_0 \preceq \alpha$. In particular, all
geodesics with itinerary $A_i$ are in the same class. 
\end{lemma}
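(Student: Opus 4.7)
The strategy is to exhibit, for every $r>0$, a uniform quasi-geodesic ray $\gamma_r$ with $\gamma_r|_r = \alpha_0|_r$ that eventually coincides with $\alpha$, the quasi-geodesic constants depending only on $\qq$. Because there can be several geodesic rays sharing the itinerary $\{A_i\}$, I split the argument into two pieces: first I produce a redirection to $\alpha$ from a limit geodesic $\alpha_0'$ manufactured from $\alpha$ itself, and then I quasi-redirect $\alpha_0$ to $\alpha_0'$ using the embedded Euclidean sectors supplied by \lemref{Lem:Sector}. Transitivity (\lemref{L:Transitive}) will then assemble the two pieces.

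For the first piece, take a sequence $T_n \to \infty$ and consider the geodesic segments $[\go, \alpha(T_n)]$. By Arzel\`a--Ascoli, a subsequence converges on compact sets to a geodesic ray $\alpha_0'$ based at $\go$. Since $\alpha$ is transient, the block containing $\alpha(T_n)$ marches deeper along the itinerary as $n$ grows, forcing $\alpha_0'$ to carry the itinerary $\{A_i\}$. Form
\[
\gamma_n \;:=\; [\go, \alpha(T_n)] \cup \alpha[T_n, \infty).
\]
Transience also implies that $\alpha[T_n, \infty)$ is trapped in ever-deeper blocks, so that $\alpha(T_n)$ is within bounded distance of a closest point of $\alpha[T_n, \infty)$ to $\go$. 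Part~(I) of \lemref{surgery} then shows that $\gamma_n$ is a uniform quasi-geodesic ray with constants depending only on $\qq$. For $n$ large, $[\go, \alpha(T_n)]$ agrees with $\alpha_0'$ up to radius $r$, hence $\gamma_n|_r = \alpha_0'|_r$, yielding $\alpha_0' \preceq \alpha$.

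For the second piece, let $k$ be the index with $\alpha_0(r) \in A_k$ and choose $m$ large. Apply \lemref{Lem:Sector} with $i = k+2m-2$, taking $x$ to be a point of $\alpha_0$ deep inside $A_{k+2m}$ and $y$ the analogous point of $\alpha_0'$. The lemma produces a divergence vertex $z$ of the initial segments of $\alpha_0$ and $\alpha_0'$, together with points $x', y' \in A_{k+2m-2}$ such that $[z, x'], [z, y'], [x', y']$ bound an embedded Euclidean triangle. Define
\[
\gamma \;:=\; \alpha_0[0, t_{x'}] \cup [x', y'] \cup \alpha_0'[t_{y'}, \infty),
\]
where $t_{x'}, t_{y'}$ are the times at which $\alpha_0, \alpha_0'$ reach $x', y'$. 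Because $k+2m-2 \geq k$ we have $t_{x'} \geq r$, so $\gamma|_r = \alpha_0|_r$. The Euclidean triangle inequality controls $d(x', y')$ in terms of $\Norm{x'} - \Norm{z}$ and $\Norm{y'} - \Norm{z}$, while the embedding ensures that $[x', y']$ meets $\alpha_0$ at $x'$ and $\alpha_0'$ at $y'$ with angles compatible with a genuine Euclidean triangle. Part~(I) of \lemref{surgery}, applied at the two concatenation points, then shows $\gamma$ is a uniform quasi-geodesic with constants independent of $m$. Transitivity gives $\alpha_0 \preceq \alpha_0' \preceq \alpha$, and interchanging $\alpha_0$ and $\alpha_0'$ in this step produces the reverse inequality, proving the final assertion that all geodesic rays with itinerary $\{A_i\}$ represent the same class.

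The principal obstacle is the uniform control of the quasi-geodesic constants in the second piece: as $m \to \infty$ the Euclidean triangle provided by \lemref{Lem:Sector} grows without bound, and we nonetheless need the constants of $\gamma$ to depend only on $\qq_0$. The \emph{embedding} clause of \lemref{Lem:Sector} is essential here, because without a genuine embedded Euclidean triangle the shortcut $[x', y']$ could interact pathologically with the incoming $\alpha_0$ and outgoing $\alpha_0'$, and the nearest-point surgery of \lemref{surgery} would not deliver uniform constants. Once this control is established, the two pieces assemble via transitivity into the desired quasi-redirection of $\alpha_0$ to $\alpha$.
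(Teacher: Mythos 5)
Your overall architecture is the right one and is essentially the paper's: use nearest-point surgery (Part (I) of \lemref{surgery}) to attach a geodesic segment issuing from $\go$ to a deep tail of $\alpha$, then use the embedded Euclidean sectors of \lemref{Lem:Sector} to pass from $\alpha_0$ into that segment, and assemble by transitivity. Your insertion of the intermediate limit geodesic $\alpha_0'$ is a harmless repackaging of \lemref{limit-geo}. The one imprecision in that first piece is the claim that transience forces $\alpha(T_n)$ to lie within bounded distance of a closest point of $\alpha[T_n,\infty)$ to $\go$: this is false for a general $\qq$--ray, whose tail can dip back toward $\go$ by an amount comparable to $\Norm{\alpha(T_n)}$, so Part (I) of \lemref{surgery} does not apply at $\alpha(T_n)$. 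The fix is to concatenate at the last-exit points, i.e.\ to use $[\go,x_n]\cup\alpha|_{\geq R_n}$ exactly as in \lemref{limit-geo}; this is cosmetic.

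The genuine gap is in your second piece. Your transition path runs along $\alpha_0$ all the way out to $x'\in A_{k+2m-2}$ and then traverses the entire side $[x',y']$ of the Euclidean triangle, and you assert uniform constants via Part (I) of \lemref{surgery} and the embeddedness of the triangle. Neither suffices: $x'$ and $y'$ are not nearest-point projections of anything relevant, and an embedded Euclidean triangle can be arbitrarily lopsided. Concretely, two geodesics from $\go$ with the same itinerary may leave the first block in nearly opposite $b$--directions, reaching the first wall at $b$--coordinates $+N$ and $-N$ with $N$ huge compared to the distance from $\go$ to that wall; then $\Norm{x'}\approx\Norm{y'}\approx N$ while the midpoint of $[x',y']$ has norm comparable to the (small) distance from $\go$ to the wall, yet its time parameter along your $\gamma$ is about $2N$. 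So $\gamma$ is not a quasi-geodesic with constants independent of $m$, and letting $m\to\infty$ (as you must, to redirect at every radius $r$) destroys the uniformity that \defref{Def:Redirection} requires. This is exactly why the paper does not traverse two sides of the sector: it leaves $\alpha_0$ at radius comparable to $r$ (a point of $[z,y']$ just beyond $(\alpha_0)_r$) and cuts through the interior of the flat toward $x'$ in the manner of \propref{product}, so that the connecting path has length $O(\Norm{x'})$ while never descending below radius comparable to $r$ along the way. To repair your argument, replace $[z,x']\cup[x',y']$ by such an interior cut at radius $r$ (or prove a quantitative non-degeneracy of the CK sectors that your path would need, which is not available from \lemref{Lem:Sector} as stated).
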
 

\begin{figure}[ht]
\begin{tikzpicture}[scale=0.9]
 \tikzstyle{vertex} =[circle,draw,fill=black,thick, inner sep=0pt,minimum size=2pt] 
 
 \fill[black!15] (1,0) -- (4,1.5) -- (4,0) -- cycle;
 \node[vertex] (a) at (0,0)  [label=180:$\go$]   {};

 \node  at (11.6 , 2) {$\alpha|_{\geq R}$};
 \draw [thick] (0,0)--(11, 0) node [right] {$\alpha_0$};
 \node[vertex] (x) at (5, 2) [label=-90:$x$]  {};
 \node[vertex] (z) at (1,0) [label=-90:$z$]  {};
 \node[vertex] (x') at (4, 1.5) [label=90:$x'$]  {};
 \node[vertex] (y') at (4, 0) [label=-90:$y'$]  {};
 \node[vertex] (ar) at (2, 0) [label=-90:$(\alpha_0)_r$]  {};

 \draw [dashed, thick](1,0)--(5,2);
 \draw [dashed, thick] (x')--(y');
 \draw [red, thick] (2,0)--(3,0) -- (3,1) -- (4, 1.5);
 
  \pgfsetlinewidth{1pt}
  \pgfsetplottension{.75}
  \pgfplothandlercurveto
  \pgfplotstreamstart
  \pgfplotstreampoint{\pgfpoint{5 cm}{2cm}}  
  \pgfplotstreampoint{\pgfpoint{6cm}{2.8cm}}   
  \pgfplotstreampoint{\pgfpoint{7 cm}{2.2cm}}
  \pgfplotstreampoint{\pgfpoint{8 cm}{3.4cm}}
  \pgfplotstreampoint{\pgfpoint{9 cm}{2.2cm}}
  \pgfplotstreampoint{\pgfpoint{10 cm}{3.2cm}}
  \pgfplotstreampoint{\pgfpoint{11cm}{2cm}}
  \pgfplotstreamend
  \pgfusepath{stroke}  
\end{tikzpicture}
\caption{$\alpha_0$ can be redirected to $\alpha$ at radius $r$.}
\label{Fig:triangle}
\end{figure}
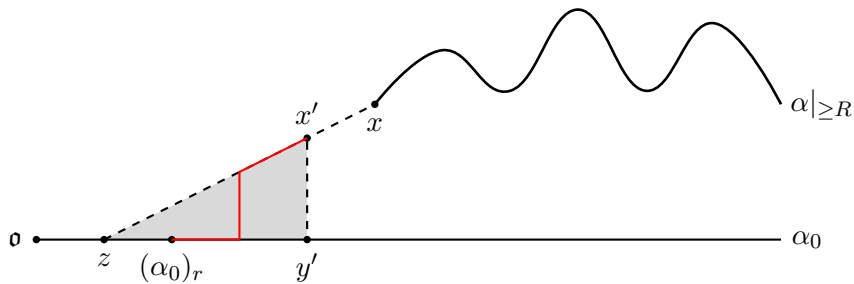

\begin{proof} 
Pick $r>0$. Let $(\alpha_0)_r$ be in $A_i$. We  choose $R>2r$ and $j> i+2$ such that $\alpha|_{\geq R}$
starts at a point $x \in A_j$ and there is a point $y'$ on $\alpha_0 \cap A_{j-2}$ with $\Norm{y'} \geq 2r$. 
It follows from Part (I) of \lemref{surgery} that the concatenation $\gamma=[\go, x] \cup \alpha|{\geq R}$ is a 
$(3q, Q)$--quasi-geodesic. Also, by \lemref{Lem:Sector}, for a point $x' \in [\go, x] \cap A_{j-2}$, 
there is a flat triangle containing points $z, x'$ and $y'$. If $\Norm{z} \geq r$, then $\gamma$ already 
gives a redirection from $\alpha_0$ to $\alpha$ at radius $r$. Otherwise, the Euclidean triangle 
can be used, similar to \thmref{product}, to redirect $\alpha_0$ to $\gamma$ at radius $r$. 
(See \figref{Fig:triangle}). This finishes the proof. 
\end{proof} 

\begin{proposition} \label{Prop:zeta-or-geodesic}
For every $\qq$, there is $\qq'$ such that the following holds. 
Assume $\alpha$ is a $\qq$-ray that is transient with itinerary $A_i$ and let $\alpha_0$ be a geodesic 
with the same itinerary. Then either $\alpha \in \bfz$ or $\alpha$ can be $\qq'$--redirected to $\alpha_0$.
\end{proposition}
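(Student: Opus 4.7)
The plan is to analyze the given $\qq$--ray $\alpha$ alongside the geodesic $\alpha_0$ with which it shares the itinerary $(A_i)$, and then dichotomize based on how much $\alpha$ drifts from $\alpha_0$ in the abelian directions of the shared blocks. For each $i \geq 1$, let $F_i = A_{i-1} \cap A_i$ be the separating $bc$--flat. Fix a point $p_i$ where $\alpha_0$ crosses $F_i$ and a point $p_i'$ where $\alpha$ crosses $F_i$, and define the in-flat drift $\delta_i := d(p_i, p_i')$. This quantity measures the abelian discrepancy accumulated by $\alpha$ while crossing $A_{i-1}$; the proof is reduced to understanding how $\delta_i$ grows compared to $\Norm{p_i}$.

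\textbf{Case 1: drifts are small relative to depth.} Suppose there exist arbitrarily large indices $i$ for which $\delta_i \leq \tfrac{1}{2} \Norm{p_i}$. Given $r>0$, I would pick such an $i$ with $\Norm{p_i} \gg r$, set $x = p_i'$ and $y = p_i$, and apply \lemref{Lem:Sector} to obtain an embedded Euclidean triangle with vertices $z$, $x'$, $y'$, where the base vertex $z$ lies on the common initial portion of the geodesics $[\go, p_i]$ and $[\go, p_i']$. A triangle-inequality argument then gives $\Norm{z} \geq \Norm{p_i} - \delta_i \geq \tfrac{1}{2}\Norm{p_i} \gg r$. Using this flat triangle, I would build the desired redirection: follow $\alpha$ until it reaches a point near $x' \in A_{i-2}$, cut across the Euclidean segment $[x', y']$ of the triangle to land on $\alpha_0$ at $y'$, and then continue along $\alpha_0$ afterwards. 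Part~(I) of \lemref{surgery}, together with the flatness of the triangle, guarantees this concatenation is a $\qq'$--quasi-geodesic ray with $\qq'$ depending only on $\qq$, so $\alpha$ is $\qq'$--redirected to $\alpha_0$ at radius $r$.

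\textbf{Case 2: drifts are large infinitely often.} Suppose instead that $\delta_i / \Norm{p_i}$ does not go to zero, so $\alpha$ makes abelian excursions of linear (or larger) size inside infinitely many blocks. In this case I would show $\zeta \preceq \alpha$, which combined with $\alpha \preceq \zeta$ from \propref{Prop:zeta-is-top} forces $\alpha \in \bfz$. Given $r>0$, I follow $\zeta$ along its $b$--axis up to radius $r$, then transition through the blocks $A_0, A_1, \dots, A_i$ --- where $i$ is chosen so that $\delta_i$ exceeds a large multiple of $r$ --- via a block-by-block logarithmic spiral modelled on \eqnref{Eq:log-spiral} and the construction in the proof of \propref{Prop:zeta-is-top}. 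Once the spiral has reached the block in which $\alpha$ has made its large flat excursion, \lemref{Lem:block-redirect} supplies a uniform quasi-geodesic transition that joins the spiral to the tail $\alpha|_{\geq R}$, producing a uniform quasi-redirection of $\zeta$ to $\alpha$ at radius $r$.

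The main obstacle will be Case 2: controlling the spiral uniformly across multiple blocks. In \propref{Prop:zeta-is-top} the spiral lives inside a single block, and its constants come from a clean one-block calculation; here the spiral must traverse $i$ blocks of the Bass-Serre tree, matching exponentially growing abelian coordinates to the sequence of flats $F_1, \dots, F_i$, and I must verify that the resulting $\qq'$ depends only on $\qq$ and not on $i$. A subsidiary obstacle in Case 1 is ensuring that the bound $\Norm{z} \geq \Norm{p_i} - \delta_i$ really holds: the divergence point $z$ of $[\go, p_i]$ and $[\go, p_i']$ could in principle lie much closer to $\go$ than $A_{i-2}$ if $\alpha$ had large drifts at earlier levels. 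Handling this will likely require a telescoping selection of indices where the cumulative drift is small, iterating \lemref{Lem:Sector} between consecutive such indices to propagate the triangle-based redirection down to the desired radius.
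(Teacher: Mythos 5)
Your overall dichotomy (either $\alpha$ comes back close to $\alpha_0$ and can be surgered onto it, or it drifts away in every block and a spiral from $\zeta$ catches it, forcing $\alpha\in\bfz$) is the same strategy the paper uses, but both halves of your argument have real gaps. In Case 1, note first that your hypothesis already says $d\big((\alpha_0)_R,\alpha\big)\le \delta_i\le \tfrac12\Norm{p_i}$ for $R=\Norm{p_i}$ along a sequence $R\to\infty$ (since $\alpha_0$ is a geodesic from $\go$, the point $p_i$ \emph{is} $(\alpha_0)_R$); Part (II) of \lemref{surgery} then gives the $(9q,Q)$--redirection of $\alpha$ to $\alpha_0$ immediately, which is exactly what the paper does. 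Your flat-triangle detour is both unnecessary and broken: the vertex $x'$ lies on the \emph{geodesic} $[\go,p_i']$, not on the quasi-geodesic $\alpha$, so ``follow $\alpha$ until it reaches a point near $x'$'' has no justification; and the bound $\Norm{z}\ge\Norm{p_i}-\delta_i$ is false --- inside a Euclidean region two geodesics from $\go$ separate immediately, so $[\go,x']\cap[\go,y']$ can be $\{\go\}$ even when $\delta_i$ is small (that is precisely why \lemref{Lem:Sector} must produce a possibly large triangle). The flat triangle is the right tool for the \emph{opposite} direction, $\alpha_0\preceq\alpha$, which is \lemref{Lem:geodesic-to-alpha}; it does not help here.

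In Case 2 the hypothesis ``$\delta_i/\Norm{p_i}$ does not go to zero'' is too weak for the spiral to work. A spiral that stays a $\qq'$--quasi-geodesic must multiply its radius by a definite factor $1+\rho$ each time it crosses a block, so after traversing $A_0,\dots,A_i$ it sits at radius roughly $(1+\rho)^i r$; to hand it off to $\alpha$ via \lemref{Lem:block-redirect} you need $\alpha$'s entry radius into $A_i$ to dominate $(1+\rho)^{i+1}r$, i.e.\ to grow \emph{exponentially in $i$}, not merely to ``exceed a large multiple of $r$.'' That exponential lower bound on the entry times $u_k$ holds only when the drift is linearly large at \emph{every} level (giving $u_{k+1}-u_k\ge\rho_0\,(u_k+|w_k|)$ for some $\rho_0$ depending on $\qq$), which is the exact negation of your Case 1, and one must then run the spiral at a rate $\rho<\rho_0$ and verify the race between $(1+\rho)^k$ and $u_k$. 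This quantitative comparison is the heart of the paper's proof of the second alternative and is absent from your proposal; with drift large only along a sparse subsequence of levels, $u_k$ need not grow exponentially and the spiral overshoots $\alpha$. So the correct repair is: take Case 1 exactly as you state it (proved by \lemref{surgery}(II)), and let Case 2 be its precise complement, where the every-level drift forces the exponential growth the spiral needs.
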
 

\begin{proof} 
We use Part (II) of \lemref{surgery}. If there is a sequence of radii $r_i \to \infty$ such that 
$d_X((\alpha_0)_{r_i}, \alpha) \leq r_i/2$, then $\gamma$ can be $(9q, Q)$--redirected to $\alpha_0$
and we are done. Otherwise, there is $r_0$ such that, for every $r \geq r_0$, 
\begin{equation} \label{Eq:r2}
d_X((\alpha_0)_r, \alpha) \geq r/2.
\end{equation}
We show that, in this case, $\zeta$ can be redirected to $\alpha$. Since $\alpha$ can always be redirected to 
$\zeta$, this implies that $\alpha \in \bfz$.

By \cite[Section 4.1]{CK02}, the itinerary of $\alpha_0$ determines the combinatorial path. 
Using the language of \cite{CK02}, let $x_k$ be the points on the $bc$-flat shared by $A_k$ and $A_{k+1}$
that determines the quarter-planes on the walls. Let $w_k \in G$ be the word such that $x_k \cdot w_k = x_{k+1}$. 
Then the concatenation of words $w_k$ is a quasi-geodesic rays $\alpha_1$ with uniform constants $\qq_0$ and with 
the same itinerary as $\alpha_0$. That is, we can write the $\qq_0$--ray  $\alpha_1$ as an infinite sequence 
\[
w_1 w_2 w_3 \dots
\qquad\text{where}\qquad w_{2i+1} \not \in G_2  \qquad\text{and}\qquad w_{2i} \not \in G_1,
\]
or perhaps we start with $G_2$ instead of $G_1$. 
Since we assume in this paper without loss of generality that quasi-geodesics are continuous, it follows from the template structure of $X$ that the entry point of $\alpha$ into $A_k$ is aligned 
with $x_k$ either along the $b$--axis of the $c$--axis. That is, there are powers $p_k \in \ZZ$ such that 
\[
\alpha(u_k) = x_{k} \cdot b^{p_k} \qquad\text{for even $k$ and}\qquad 
\alpha(u_k) = x_{k} \cdot c^{p_k} \qquad\text{for odd $k$,}
\]
or, again, with opposite parity. 

Let $\alpha^{A_i}_1$ be the point where  $\alpha_1$ enters $A_i$, note that if a $\qq$-ray satisfies  Equation \eqref{Eq:r2},  $p_k$ is comparable and differ by a uniform multiplicative constant to the
$d(\alpha^{A_k}_1, \alpha(u_k))$. Thus we observe that $p_k$ (powers of $b$ or $c$) comparable and differ by a uniform multiplicative constant to $u_k$. Recall also that 
from $\alpha(u_k)$ to $\alpha(u_{k+1})$, the path $\alpha$ has to replace the powers of $b$ with the powers of $c$ (or vice versa) 
and  also it has to travel along $w_k$.  Therefore, there is a constant $\rho_0>0$ 
depending on $\qq$ such that, 
for $k>0$ large enough, 
\begin{equation} \label{Eq:uk}
u_{k+1} - u_k \geq \rho_0 \cdot (u_k + |w_k|).
\end{equation} 

Let $r>0$ be given. We attempt to build a spiral quasi-geodesic $\gamma$ chasing after $\alpha$
following the outline of the proof of \propref{Prop:zeta-is-top} . 
We build the quasi-geodesic $\gamma$ inductively. For $t \in [0,r]$, define 
$\gamma(t) = \zeta(t) = b^t$. Then we proceed exactly as in the proof of \propref{Prop:zeta-is-top}  trying 
to get to the block $A_i$ as fast possible while staying a quasi-geodesic. To accomplish this, the length of 
time $\gamma$ 
spends in each $A_i$ has to grow exponentially. However, we do not have to double the length 
each time; the powers of $b$ and $c$ in the proof of \propref{Prop:zeta-is-top}  can grow as a 
slow exponential function. And how slow these powers grow will determine the quasi-geodesic constants 
of $\gamma$.

That is, for every $\rho>0$, there is $\qq_\rho$, such that the amount of time $\gamma$
stays in each $A_i$ grows by a factor of $(1+\rho)$ only, and then we can proceed along
the word $w_i$ with a speed of $1/\rho$.  Here $\qq_\rho \to \infty$ as $\rho \to 0$. However, for a fixed $\rho$, 
we can build a spiral that visits the block $A_k$ at time $t_k$ where
\begin{equation} \label{Eq:tk}
t_0 = r \qquad\text{and}\qquad t_{k+1} \leq (1+\rho) \cdot t_k + \rho \cdot |w_k|. 
\end{equation} 
If we choose $\rho < \rho_0$, Equations \eqref{Eq:uk} and \eqref{Eq:tk} imply the sequence $t_k$ grows more 
slowly than the sequence $u_k$. That is, for $k$ large enough 
\[
\Norm{\alpha(u_{k+1})} \geq 2 \Norm{\gamma(t_k)}. 
\]
Now \lemref{Lem:block-redirect} implies that $\gamma$ can be redirected to $\alpha$. Since this is true for every $r$, 
$\zeta$ can be redirected to $\alpha$ and, in view of \propref{Prop:zeta-is-top} , $\alpha \in \bfz$. 
\end{proof} 

\begin{corollary} \label{Cor:Geod-Rep} 
For every quasi-geodesic ray $\alpha$, the class $[\alpha]$ contains a geodesic representative and 
$\alpha$ can be uniformly redirected to this geodesic. 
\end{corollary}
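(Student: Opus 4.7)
The plan is to prove the corollary by a case split on whether the ray $\alpha$ is recurrent or transient, invoking the three main results of the section (Proposition~\ref{Prop:zeta-is-top}, Lemma~\ref{Lem:geodesic-to-alpha}, and Proposition~\ref{Prop:zeta-or-geodesic}) to supply the geodesic representative and the uniform redirection constant in each case. The geodesic ray $\zeta$ (following the $b$--axis) will serve as the geodesic representative of the maximal class $\bfz$, and the geodesic rays produced by \cite[Corollary~5.26]{CK02} from itineraries in the Bass-Serre tree will serve as geodesic representatives in every other class.

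First, I would fix a $\qq$--ray $\alpha$ and consider the map $\Psi_\alpha \from \RR_+ \to T$ constructed before Lemma~\ref{Lem:geodesic-to-alpha}. If $\alpha$ is recurrent, then the lemma preceding Proposition~\ref{Prop:zeta-or-geodesic} shows $[\alpha] = \bfz$, and Proposition~\ref{Prop:zeta-is-top} produces a uniform $\qq'$ (depending only on $\qq$) so that $\alpha$ can be $\qq'$--redirected to the geodesic $\zeta$ at every radius. Hence $\zeta \in [\alpha]$ is the desired geodesic representative.

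If $\alpha$ is transient, its sequence of blocks $A_0, A_1, A_2, \dots$ forms an infinite embedded ray in the Bass-Serre tree $T$, limiting to some end $\xi \in \partial T$. By \cite[Corollary~5.26]{CK02} there exists a geodesic ray $\alpha_0$ in $X$ whose itinerary is precisely this sequence $(A_i)$. Proposition~\ref{Prop:zeta-or-geodesic} then provides a dichotomy with uniform constant $\qq'(\qq)$: either $\alpha \in \bfz$, in which case we are again back in the recurrent case and $\zeta$ is the geodesic representative with the redirection supplied by Proposition~\ref{Prop:zeta-is-top}; or $\alpha$ can be $\qq'$--redirected to $\alpha_0$. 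In the latter case, Lemma~\ref{Lem:geodesic-to-alpha} gives the reverse redirection $\alpha_0 \preceq \alpha$ with constants depending only on the constants of $\alpha_0$ (i.e.\ on $(1,0)$). Combining the two via transitivity (\lemref{L:Transitive}) yields $\alpha \simeq \alpha_0$, so $\alpha_0$ is a geodesic representative of $[\alpha]$, and the final redirection constant $\qq''$ depends only on $\qq$.

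Since the three inputs each furnish redirecting constants that depend only on $\qq$ (and not on $\alpha$ itself), the final constant in every case is a function of $\qq$ alone, which is exactly the meaning of ``uniform redirection'' as used for Assumption~2 in \secref{Sec:Assumptions}. The main conceptual obstacle has already been handled in Proposition~\ref{Prop:zeta-or-geodesic}: the subtle point is that a transient ray that stays within linearly-growing distance from some geodesic $\alpha_0$ of the same itinerary can be redirected to $\alpha_0$ via Part~(II) of \lemref{surgery}, whereas a transient ray that escapes this linear neighborhood forces, via the spiral construction, a redirection of $\zeta$ back to $\alpha$ and thus collapses the class into $\bfz$. Once this dichotomy is in hand, the corollary is a clean assembly, so no additional work is required beyond organizing the cases.
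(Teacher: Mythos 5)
Your proposal is correct and follows essentially the same route as the paper: the case $\alpha\in\bfz$ is handled by Proposition~\ref{Prop:zeta-is-top} with $\zeta$ as the geodesic representative, and otherwise Lemma~\ref{Lem:geodesic-to-alpha} together with Proposition~\ref{Prop:zeta-or-geodesic} shows $[\alpha]=[\alpha_0]$ for a geodesic $\alpha_0$ with the same itinerary, with uniform redirecting constants. Your version merely spells out the recurrent/transient case split a bit more explicitly than the paper's two-line assembly.
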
 

\begin{proof} 
Every quasi-geodesic can be uniformly quasi-redirected to $\zeta$. Hence, the Corollary holds if $\alpha \in \bfz$.
Otherwise,  by \lemref{Lem:geodesic-to-alpha}, $\alpha_0 \preceq \alpha$ and by \propref{Prop:zeta-or-geodesic} 
$\alpha$ can be uniformly quasi-redirected to $\alpha_0$. That is, $[\alpha_0] = [\alpha]$ and the corollary holds. 
\end{proof} 

\subsection*{Excursion}
For a transient quasi-geodesic ray $\alpha$, whether or not $\zeta$ can be quasi-redirected to $\alpha$ 
depends the excursion function of $\alpha$, namely, the amount of progress $\alpha$ makes in each block.
We now make this precise. We work with the combinatorial path introduces in the previous section. 
That is, we assume $\alpha_1$ is an infinite sequence 
\[
w_1 w_2 w_3 \dots
\qquad\text{where}\qquad w_{2i+1} \not \in G_2   \qquad\text{and}\qquad w_{2i} \not \in G_2.
\]
We say the excursion is \emph{sublinear with respect to the distance in $X$} if 
\[
\lim_{i \to \infty}\frac{ |w_k|}{\sum_{i=1}^{k-1} |w_i|} =0.
\]
In \cite[Theorem A.12]{QRT1}, these class of rays were studied and it was proven than 
they are sublinearly Morse. As we saw in \secref{Sec:kappa}, every element of a sublinearly Morse
class $[\alpha_1]$ has to fellow travel $\alpha_1$ sublinearly. Therefore, for such quasi-geodesics, 
$[\alpha_1] \not = \bfz$. 

\begin{proof}[Proof of \thmref{Thm:CK}]
Assumption 0 follows from \lemref{Lem:cocompact} and Assumption 1 and 2 follow from 
\corref{Cor:Geod-Rep}. By \propref{Prop:zeta-is-top}, $\bfz$ is the unique maximal element in $P(X)$. 
And since the sublinearly Morse direction are different from $\bfz$, $P(X)$ has more than one point. 
That is $\preceq$ is not symmetric and, by \thmref{sym-Haus}, $\partial X$ is not Hausdorff. 
\end{proof} 

However, it turns out that sublinear excursion with respect to the distance in $X$ does not
give a characterization of directions that are different from $\bfz$. Indeed, we say the excursion of 
$\alpha_1$ is \emph{sub-exponential with respect to the distance in $T$} if 
\[
\lim_{i \to \infty}\frac{\log |w_i|}{i} =0.
\]

\begin{theorem} \label{Thm:Classification} 
For a transient quasi-geodesic ray $\alpha_1$, $[\alpha_1] \not = \bfz$ if and only if 
the excursion of $\alpha_1$ is sub-exponential with respect to the distance in $T$. 
\end{theorem}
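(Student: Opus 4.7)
The plan is to prove both implications using Proposition~\ref{Prop:zeta-or-geodesic} together with the following optimality statement for its spiral construction, which serves as the main auxiliary lemma: \emph{$(\ast)$ For every $\qq''$ there exist $\rho'' = \rho''(\qq'')>0$ and $c = c(\qq'')>0$ such that any $\qq''$--ray $\gamma$ with $\gamma|_r = \zeta|_r$ visits each block $A_k$ at a time $t_k^\gamma \geq c(1+\rho'')^k r$.} This lower bound is the Croke--Kleiner analog of the log-spiral calculation carried out in Section~\ref{counterexample} (see equation~\eqref{gamma}). Write $u_k = \sum_{i<k}|w_i|$, so that sub-exponentiality of the excursion in the Bass--Serre tree is equivalent to $u_k = e^{o(k)}$.

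\emph{For $\log|w_k|/k \to 0 \Rightarrow [\alpha_1] \neq \bfz$:} Assume for contradiction $\zeta \preceq \alpha_1$. Applying Proposition~\ref{Prop:zeta-or-geodesic} to $\alpha = \alpha_1$, case~2 would yield $u_{k+1}^{\alpha_1} - u_k^{\alpha_1} \geq \rho_0(u_k^{\alpha_1} + |w_k|)$; for the combinatorial $\alpha_1$ the left-hand side equals $|w_k|$ up to uniform constants, forcing $|w_k| \geq c\,u_k^{\alpha_1}$ and exponential growth of $u_k$, contradicting the hypothesis. Hence we are in case~1: $\alpha_1 \preceq \alpha_0$, and together with Lemma~\ref{Lem:geodesic-to-alpha} this gives $[\alpha_1] = [\alpha_0]$, so also $\zeta \preceq \alpha_0$. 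Applying $(\ast)$ to a uniform $\qq''$--redirection of $\zeta|_r$ to a tail of the geodesic $\alpha_0$ and using that the arc-length parameter on a geodesic equals the radius, we obtain $u_k^{\alpha_0} \geq c(1+\rho'')^k r$ for every $k$ past the landing time; letting $r \to \infty$ gives $\liminf u_k^{\alpha_0}/(1+\rho'')^k = \infty$. Since the geodesic is no longer than the combinatorial path, $u_k^{\alpha_0} \leq u_k^{\alpha_1} \leq k\max_{i<k}|w_i|$, which forces $\limsup |w_i|^{1/i} \geq (1+\rho'')^{1/2}$, contradicting sub-exponentiality.

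\emph{For $\limsup \log|w_i|/i > 0 \Rightarrow [\alpha_1] = \bfz$:} Choose $\epsilon>0$ and a subsequence $k_j \to \infty$ with $|w_{k_j}| \geq e^{\epsilon k_j}$, and after thinning assume $k_{j+1} \geq 2 k_j$. Pick $\rho>0$ small enough that $(1+\rho)^2 < e^\epsilon$ and that the margin needed by Lemma~\ref{Lem:block-redirect} is satisfied. For any $r>0$, the spiral construction in the proof of Proposition~\ref{Prop:zeta-or-geodesic} produces, starting from $\zeta|_r$, a $\qq_\rho$--ray $\gamma$ visiting $A_k$ at times $t_k \leq (1+\rho)^k r + \rho\sum_{i<k}(1+\rho)^{k-1-i}|w_i|$. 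The thinning guarantees that at $k=k_j$ the dominant term in the sum is $(1+\rho)^{k_j-1-k_{j-1}}|w_{k_{j-1}}|$, while the non-subsequence entries contribute at most a uniformly convergent geometric tail; a direct comparison then shows $t_{k_j}/u_{k_j+1}^{\alpha_1} < 1/(1+\rho)$ for $j$ large enough depending on $r$. Lemma~\ref{Lem:block-redirect} now jumps $\gamma$ into the tail of $\alpha_1$ through the block $A_{k_j}$, producing a $\qq_\rho'$--redirection of $\zeta$ to $\alpha_1$ with $\qq_\rho'$ independent of $r$. Hence $\zeta \preceq \alpha_1$ and $[\alpha_1] = \bfz$.

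The main obstacle is rigorizing the lower bound $(\ast)$ uniformly in $r$ and $k$. The underlying mechanism is the same phenomenon that in Section~\ref{counterexample} obstructs the existence of a well-defined Out-topology: a quasi-geodesic anchored deep along $\zeta$ at radius $r$ in the block $A_0$ must extend its radius by a controlled multiplicative factor every time it crosses one of the $bc$--flats $A_{k-1} \cap A_k$, with the factor governed by its quasi-geodesic constants; iterating across $k$ transitions delivers the exponential growth $(1+\rho'')^k r$. Making this rigorous using only the tree-times-$\RR$ product structure of each block, together with the quasi-Lipschitz projection to the Bass--Serre tree, is the technical heart of the proof.
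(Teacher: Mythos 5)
Your overall architecture mirrors the paper's (an exponential lower bound on block-arrival times for the ``only if'' direction, a spiral plus a catch-up via Lemma~\ref{Lem:block-redirect} for the ``if'' direction), but the auxiliary lemma $(\ast)$ on which your first direction rests is false as stated, and it is exactly the part you defer as ``the technical heart.'' The claim that \emph{every} $\qq''$--ray anchored along $\zeta|_r$ must reach the block $A_k$ at time at least $c(1+\rho'')^k r$, with $\rho''$ depending only on $\qq''$, fails when the walls themselves recede exponentially with a small base. Concretely, for the itinerary determined by $\alpha_1 = a^{2r}d^{m_1}a^{m_2}\cdots$ with $m_i = 2r(1+\epsilon)^i$, the path $\go \to b^r \to a^{2r}b^r \to a^{2r} \to a^{2r}d^{m_1}\to\cdots$ (follow $\zeta$ to radius $r$, slide over to the wall through $a^{2r}$ at cost $2r$, undo the $b^r$ offset, then ride $\alpha_1$ with zero offset) is a quasi-geodesic with constants independent of $\epsilon$, and it reaches the $k$-th wall at time $\approx 4r+\sum_{i<k}m_i \asymp \tfrac{r}{\epsilon}(1+\epsilon)^k$, which is $o\big((1+\rho'')^k r\big)$ whenever $\epsilon<\rho''$. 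The reason is that the only available constraints are the paper's \eqref{Eq:Low} and \eqref{Eq:High}, and \eqref{Eq:High} reads $\ell_k \geq \rho_\qq t_k - \sum_{i<k}|w_i|$: the offset $\ell_k$, and hence the multiplicative growth of $t_k$, is forced only when $\sum_{i<k}|w_i|$ is small compared to $\rho_\qq t_k$. So the exponential lower bound is not a local fact about individual wall crossings; it is the induction \eqref{Eq:Estimates}, which consumes the sub-exponentiality hypothesis at every step and requires $r$ large depending on the excursion data. Your plan to extract $(\ast)$ from the block product structure alone, in analogy with the log-spiral estimate of Section~\ref{counterexample}, cannot succeed, because the parking lot has no analogue of receding walls. (Your preliminary reduction via ``case 2'' of Proposition~\ref{Prop:zeta-or-geodesic} is coherent but does not advance the proof: it only re-establishes $[\alpha_1]=[\alpha_0]$, which is already Corollary~\ref{Cor:Geod-Rep}, and the real content of the direction is still $\zeta\not\preceq\alpha_0$.)

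There is a secondary gap in your ``if'' direction. You catch up to $\alpha_1$ at indices $k_j$ of a thinned subsequence along which $|w_{k_j}|\ge e^{\epsilon k_j}$, and you claim the remaining terms of $\sum_{i<k_j}(1+\rho)^{k_j-1-i}|w_i|$ form a convergent geometric tail. But indices outside your subsequence carry no upper bound whatsoever: if, say, $|w_{k_{j-1}}|=e^{100k_{j-1}}$ while $|w_{k_j}|=e^{\epsilon k_j}$ with $k_j=2k_{j-1}$, the spiral must spend time at least $\rho\,|w_{k_{j-1}}|\gg |w_{k_j}|$ merely traversing the block $A_{k_{j-1}}$, so it arrives at $A_{k_j}$ far too late for Lemma~\ref{Lem:block-redirect} to apply there. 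The repair is the paper's quantifier arrangement \eqref{Eq:quantifiers}: for each $r$ take $k$ to be the \emph{first} index with $|w_k|\ge r(1+\rho)^k$; minimality bounds all earlier excursions by $r(1+\rho)^i$, which controls the spiral's arrival time and leaves $\ell_k = r\rho(1+\rho)^k < |w_k|$, so the jump onto $\alpha_1$ goes through.
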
 

\begin{proof} 
First we show that if the excursion of $\alpha_1$ is sub-exponential with respect to the distance in $T$
then $[\alpha_1] \not = \bfz$. We need to show that $\zeta$ cannot be quasi-redirected to $\alpha_1$. 
That is, for every $\qq$, there is an $r>0$ such that there is no $\qq$--ray $\gamma$
with $\gamma|_r = \zeta|_r$ that is eventually equal to $\alpha_1$. 
 
Assume, for contradiction that such $\gamma$ exists for every $r$.  
The proof is similar to \propref{Prop:zeta-or-geodesic} with $\gamma$ in this proof playing the role of 
$\alpha$ in the previous one. As before, let $x_k$ be the points on the $bc$--flat 
shared by $A_k$ and $A_{k+1}$ that determines the quarter-planes on the walls. In particular 
$x_{k+1} = x_k \cdot w_k$. Let $t_k$ be the first time $\gamma$ enters the $bc$--flat
containing $x_k$ and let $\ell_k = d(\gamma(t_k), x_k)$. Then 
$\gamma(t_k) \cdot  x_k^{-1}$ is either a power $b$ or a power of $c$ depending on parity. 
That is, if the difference is a power of $b$ for $x_k$ then $\gamma(t_{k+1}) \cdot  x_{k+1}^{-1}$ 
is a power of $c$ and vice versa. Hence, during the interval $[t_k, t_{k+1}]$, the path $\gamma$
has to add $\ell_{k+1}$ powers of $c$, undo $\ell_k$ powers of $b$ and travel along $w_k$. 
Since $\gamma$ is a $\qq$--ray, there is $\rho_\qq>0$ depending on $\qq$ such that 
\begin{equation} \label{Eq:Low}
t_0 = r \qquad\text{and}\qquad 
t_{k+1}-t_k \geq \rho_\qq \cdot (\ell_k + \ell_{k+1} + |w_k|). 
\end{equation} 
Another way to travel to $\gamma(t_k)$ is to go along the segments $w_i$ to $x_k$ and then
a distance of $\ell_k$ in the $bc$--plane containing $x_k$. Again, since $\gamma$ is 
a $\qq$--ray, we have 
\begin{equation} \label{Eq:High} 
\left(\ell_k + \sum_{i=1}^{k-1} |w_i| \right) \geq \rho_\qq \cdot t_k. 
\end{equation} 
Let 
\[
\rho_1 = \frac{\rho_\qq}2 \qquad\text{and}\qquad 0 < \rho_0 < \rho_1. 
\]
We will show by induction that, for $r$ large enough, 
\begin{equation}  \label{Eq:Estimates} 
t_k \geq r \cdot (1 + \rho_1)^k \qquad\text{and}\qquad
\ell_k \geq r \cdot \frac{\rho_q}2 \cdot (1 + \rho_1)^k. 
\end{equation} 
The base case follows immediately from \eqref{Eq:Low} and \eqref{Eq:High}. 
Assuming these inequalities for $k$, we have (from \eqref{Eq:Low}) that 
\[
t_{k+1} \geq t_k + \rho_\qq \cdot \ell_k \geq
 r \cdot (1 + \rho_1)^k  + r \cdot \frac{\rho_q^2}2 \cdot (1 + \rho_1)^k =  r \cdot (1 + \rho_1)^{k+1}. 
\]
On the other hand, since the excursion of $\alpha_1$ is sub-exponential with respect to the distance in $T$,
there is $C$ depending on $\rho_0$ such that $|w_i| \leq C \cdot (1+\rho_0)^i$. Hence, 
\[
\sum_{i=1}^{k-1} |w_i|  \leq C \sum_{i=1}^{k-1} \frac{(1+\rho_0)^k - 1)}{(1+\rho_0) -1} 
\leq \frac{C}{\rho_0} \cdot (1+ \rho_0)^k.
\]
Choose $r$ large enough such that 
\[
\frac{r \cdot \rho_\qq}2 \geq \frac{C}{\rho_0}. 
\]
Then, by \eqref{Eq:High}, we have 
\[
\ell_{k+1} \geq \rho_q \cdot t_{k+1} - \sum_{i=1}^{k} |w_i|   \geq 
\rho_q \cdot   r \cdot (1 + \rho_1)^{k+1} -\frac{C}{\rho_0} \cdot (1+ \rho_0)^{k+1} 
\geq r \cdot \frac{\rho_q}2 \cdot (1 + \rho_1)^{k+1}. 
\]
This finishes the proof of \eqref{Eq:Estimates}. Since $\rho_0 < \rho_1$, this implies that
$\gamma$ arrives in $A_i$ long after $\alpha_1$ has left $A_i$ and the distance between $\gamma$
and $\alpha_1$ goes to infinity (instead of zero). Thus, there does not exit a $\qq$--ray 
redirecting $\zeta$ to $\alpha_1$. But this holds for every $\qq$. Hence, $\alpha_1 \not \in \bfz$. 

We now prove the other direction. Assume that the excursion of $\alpha_1$ is not sub-exponential with respect to 
the distance in $T$. This implies that 
\begin{equation} \label{Eq:quantifiers} 
\exists \rho>0 \quad \forall r>0 \quad \exists \, k >0 \quad \text{such that} \quad
|w_k| \geq r  \cdot (1+\rho)^k. 
\end{equation} 
Otherwise, 
\[
\forall \rho>0 \quad \exists r>0 \quad \forall k >0 \quad \text{we have} \quad
|w_k| \leq r \cdot \rho \cdot (1+\rho)^k,
\]
which implies 
\[
\forall \rho>0 \quad \lim_{k \to \infty}\frac{\log |w_k|}{k} \leq \rho 
\qquad \Longrightarrow \qquad  \lim_{k \to \infty}\frac{\log |w_k|}{k}=0.
\]

Let $\rho$ be as in \eqref{Eq:quantifiers} and $r>0$ be given. We construct a $\qq$--ray 
$\gamma$ (where $\qq$ depends on $\rho$) that quasi-redirect $\zeta$ to $\alpha_1$
at radius $r$. Let $k$ be the first index where the inequality in \eqref{Eq:quantifiers} holds. That is, 
\[
|w_k| \geq r \cdot (1+\rho)^k
\qquad\text{and}\qquad |w_i| < r \cdot (1+\rho)^i, 
\qquad\text{for}\qquad 1 \leq i <k. 
\]
Set $t_0 = r$ and, for $i=1, \dots, (k-1)$ set 
\[
t_i = r \cdot (1 + \rho) ^i
\qquad\text{and}\qquad
\ell_i = r \cdot \rho \cdot (1 + \rho)^i
\]
Then, assuming $\rho$ is small enough such that 
\[
\rho + \rho(1+\rho) + 1 \leq 2, 
\] 
we have 
\begin{align*} 
t_i + \frac{\rho}2(\ell_i + \ell_{i+1} + |w_i| ) 
& \leq  r \cdot (1 + \rho) ^i  + r\cdot \frac{\rho}2 \Big( \rho \cdot (1 + \rho)^i + \rho \cdot (1 + \rho)^{i+1} +  (1+\rho)^i\Big)  \\
& \leq  r \cdot (1 +  \rho) ^i  + r\cdot \rho \cdot (1 + 4 \rho)^i  =  t_{i+1}. 
\end{align*} 
Therefore, for an appropriate $\qq$ depending on $\rho$, there is enough time for a $\qq$--ray $\gamma$ that starts a distance
$\ell_i$ from $x_i$ to reach a point that is distance $\ell_{i+1}$ from $x_{i_l}$ while maintaining a distance $\ell_i$
from the $\alpha_1$. Since $\ell_i \geq \rho\, t_i$, this path is indeed a quasi-geodesic. Note that 
$\rho/2$ is playing the role of $\rho_\qq$ in \eqref{Eq:Low}. 

We have shown that there exists a quasi-geodesic ray $\gamma$ where $\gamma|_r = \zeta|_r$
such that $\gamma$ that reached $A_k$ at time $t_k$ and a distance from $x_k$ is 
\[
\ell_k = r \cdot \rho \cdot (1+\rho)^k < r \cdot (1+\rho)^k = |w_k|. 
\]
Hence, by \lemref{Lem:block-redirect}, we can connect $\gamma$ to $x_{k+1}$ and continue along $\alpha_1$
while staying a quasi-geodesic. This gives a redirection from $\zeta$ to $\alpha_1$ at radius $r$. 

Since this holds for every $r$, we can conclude that if the excursion of $\alpha_1$ is not sub-exponential with respect 
to the distance in $T$ then $\alpha_1 \in \bfz$. 
\end{proof} 

\subsection*{Description of $P(X)$}
Let us review what we know about $P(X)$ so far. To every transient quasi-geodesic ray $\alpha$, we can associated
an itinerary $A_i$, which is an infinite embedded path in $T$ exiting an end $\xi$. Given such $\xi$, 
there is a preferred quasi-geodesic $\alpha_1$ exiting $\xi$ that passes through the points $x_k$. 
We set $w_k = x_{k+1} \cdot x_k^{-1}$ and refer to $|w_k|$ as the excursion of $\alpha_1$ in the block $A_i$. 
Then $[\alpha_1]$ is different from $\bfz$ if and only if the excursion of $\alpha_1$ is sub-exponential with 
respect to the distance in $T$. That is, every class in $P(X)$ is either $\bfz$ or $[\alpha_1]$ for such $\alpha_1$. 
To finish the description of $P(X)$ we need to show all these classes are different. 

\begin{lemma} \label{Lem:different-or-zeta}
Let $\alpha$ and $\beta$ be transient quasi-geodesic rays with different itineraries. If $\beta \preceq \alpha$
then $\zeta \preceq \alpha$. 
\end{lemma}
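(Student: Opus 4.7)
If $\beta \in \bfz$, then $\zeta \sim \beta$, and transitivity of $\preceq$ (\lemref{L:Transitive}) immediately gives $\zeta \preceq \beta \preceq \alpha$. Henceforth assume $\beta \notin \bfz$; by \thmref{Thm:Classification}, the excursion of $\beta$'s canonical representative is sub-exponential in the Bass-Serre tree $T$. Let $A_N$ be the last block common to the itineraries of $\alpha$ and $\beta$, and let $F_\beta, F_\alpha \subset A_N$ be the $bc$-flats through which $\beta$ and $\alpha$, respectively, exit $A_N$ into their distinct branches. Let $\qq'$ witness $\beta \preceq \alpha$, so that for each $R > 0$ there is a $\qq'$-ray $\gamma_R$ with $\gamma_R|_R = \beta|_R$ and $\gamma_R$ eventually equal to $\alpha$.

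For $R$ large enough that $\beta|_R$ extends past $F_\beta$ into $\beta$'s branch, the itinerary of $\gamma_R$ in $T$ must traverse $A_N$ twice: first alongside $\beta$, exiting through $F_\beta$ at time $\tau^{\mathrm{out}}_R$; then returning through $F_\beta$ at time $\tau^{\mathrm{back}}_R$ before exiting through $F_\alpha$ into $\alpha$'s branch. Both $\gamma_R(\tau^{\mathrm{out}}_R)$ and $p_R := \gamma_R(\tau^{\mathrm{back}}_R)$ lie on $F_\beta$, while the intervening subsegment of $\gamma_R$ has length at least $2R - O(1)$ (the round-trip into $\beta$'s branch). The $\qq'$-quasi-geodesic property then gives
\[
d_{F_\beta}\bigl(p_R,\, \gamma_R(\tau^{\mathrm{out}}_R)\bigr) \,\geq\, \frac{2R - O(1)}{q'} - Q',
\]
and since $\gamma_R(\tau^{\mathrm{out}}_R)$ stays uniformly close to $\beta$'s canonical exit point $x_N^\beta$ on $F_\beta$, we conclude $d_{F_\beta}(p_R, x_N^\beta) \to \infty$ as $R \to \infty$. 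The flat $F_\beta$ is a $\ZZ^2$-plane spanned by its $b$- and $c$-axes, so after passing to a subsequence we may assume the shift of $p_R$ from $x_N^\beta$ grows along the $b$-axis direction of $F_\beta$, swapping $\zeta$ for the $c$-axis representative of $\bfz$ if necessary.

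For each $r > 0$, choose $R = R(r)$ so the shift is at least $r$, and let $g \in G$ be the unique group element with $g \cdot (A_0, F_0, \go) = (A_N, F_\beta, x_N^\beta)$, where $A_0$ is the block at $\go$ of the same type as $A_N$ and $F_0$ is the $bc$-flat in $A_0$ containing $\zeta$; uniqueness follows from the freeness of the $G$-action on the vertex set of its Cayley graph, and existence from the transitivity of the $G$-action on blocks of each type. By construction, $g$ carries $\zeta$ onto the $b$-axis of $F_\beta$ through $x_N^\beta$, so $g^{-1} \cdot p_R$ lies on $\zeta$ at distance at least $r$ from $\go$. The translated subsegment $g^{-1} \cdot \gamma_R[\tau^{\mathrm{back}}_R, \infty)$ is a $\qq'$-ray starting at $g^{-1} \cdot p_R \in \zeta$ and eventually equal to $g^{-1} \cdot \alpha$. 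Concatenating $\zeta|_r$ with this translate (absorbing the bounded mismatch via \lemref{onestep}) yields a $\qq''$-ray $\eta_r$ with $\eta_r|_r = \zeta|_r$ and $\eta_r$ eventually equal to $g^{-1} \cdot \alpha$, so $\zeta \preceq [\eta_r]$ in $P(X)$. Maximality of $\bfz$ (\propref{Prop:zeta-is-top}) forces $[\eta_r] = \bfz$, whence $\eta_r$ is transient with non-sub-exponential excursion in $T$ by \thmref{Thm:Classification}. Because the asymptotic excursion of $\eta_r$ agrees with that of $g^{-1} \cdot \alpha$, and the isometry $g$ preserves excursion patterns, $\alpha$ itself has non-sub-exponential excursion; thus $\alpha \in \bfz$ by \thmref{Thm:Classification}, and $\zeta \preceq \alpha$. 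The chief obstacle throughout is the alignment step, where the growing shift of $p_R$ must lie along an axis of $F_\beta$ mapped by $g^{-1}$ onto $\zeta$; this is resolved by the subsequence/axis-swap argument described above, permissible because $\bfz$ is a single equivalence class containing all axis-type representatives at $\go$.
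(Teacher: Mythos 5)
Your strategy diverges from the paper's: the paper builds, for each $r$, a spiral $\gamma$ with $\gamma|_r=\zeta|_r$ that follows $\alpha$'s itinerary and arrives in the last common block $A_k$ at a \emph{controlled small} scale $\ell_k$, then chooses $R\gg \ell_k$ so that the redirecting ray $\beta'$ re-enters $A_k$ at a much larger scale, and splices the two inside $A_k$ using the product structure (\lemref{Lem:block-redirect}); the scale separation is what makes the splice a uniform quasi-geodesic. You instead try to splice $\zeta$ (transported by a group element $g$) directly onto the tail $\gamma_R[\tau^{\mathrm{back}}_R,\infty)$ at the return point $p_R$, and then transfer the conclusion back to $\alpha$ through \thmref{Thm:Classification}. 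The transfer step and the observation that $\Norm{p_R}\to\infty$ are fine, but the splice itself — the crux of the argument — has two genuine gaps.

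First, $p_R$ is only shown to be far from $x_N^\beta$ inside the flat $F_\beta\cong\RR^2$; passing to a subsequence along which the $b$-coordinate dominates does not put $p_R$ \emph{on} the $b$-axis, so ``$g^{-1}\cdot p_R$ lies on $\zeta$'' is unjustified, and the mismatch between $\zeta$ and $g^{-1}p_R$ is the $c$-coordinate of $p_R$, which can be unbounded — \lemref{onestep} does not apply. Second, and more seriously, even if $p_R$ lay on the axis, the concatenation $\zeta|_{\Norm{p_R}}\cup g^{-1}\gamma_R[\tau^{\mathrm{back}}_R,\infty)$ need not be a quasi-geodesic with uniform constants: $\gamma_R$ reached $p_R$ only after a round trip of parameter length roughly $2R/q'$, so its tail has enough quasi-geodesic slack to backtrack toward $\go$ by a definite fraction of $\Norm{p_R}$ (e.g.\ travelling within $F_\beta$ from $b^{m_R}c^{n_R}$ back toward $b^{m_R}$ before exiting through $F_\alpha$), whereas your new prefix reaches $p_R$ in parameter time only about $\Norm{p_R}$ and has no slack; the resulting path can revisit a point after a detour of length comparable to $\Norm{p_R}$. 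This is exactly the difficulty the paper's argument is engineered to avoid: by meeting the redirecting ray inside the common block with a \emph{linear buffer} ($R\gg\ell_k$), \lemref{Lem:block-redirect} provides room to interpolate between the two scales while remaining a uniform quasi-geodesic. To repair your proof you would need to replace the single-point splice at $p_R$ by such a buffered transition — at which point you have essentially reconstructed the paper's argument, and the detour through $g$ and \thmref{Thm:Classification} becomes unnecessary.
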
 

\begin{proof} 
Let $A_i$ be the itinerary for $\alpha$, $B_i$ be the itinerary for $\beta$ and let $k$ be the largest index where
$A_k = B_k$. Let $r>0$ be given. Consider a quasi-geodesic ray $\gamma$ constructed as before
such that $\gamma|_r = \zeta|_r$ and $\gamma$ follows the itinerary $A_i$. Let $t_k$ be the first time
$\gamma(t_k) \in A_k$ and $\ell_k - d(\gamma(t_k), x_k)$. Now choose $R \gg \ell_k$ and consider 
a quasi-geodesic $\beta'$ quasi-redirecting $\beta$ to $\alpha$ at radius $R$. Then $\beta'$ arrives at and leaves 
$A_k$ much later than $\gamma$. Hence, by \lemref{Lem:block-redirect}, we can redirect $\gamma$ to 
$\beta'$, that is, construct a quasi-geodesic ray $\gamma'$ where $\gamma[0, t_k\ = \gamma'[0, t_k]$
and $\gamma'$ is eventually equal to $\beta'$. Since $\beta'$ is eventually equal to $\alpha$, $\gamma'$
quasi-redirects $\zeta$ to $\alpha$ at radius $r$. This can be done for every $r$ with uniform constants. Hence
$\zeta \preceq \alpha$. 
\end{proof} 

\begin{corollary} 
If $\alpha_1$ and $\beta_1$ are transient quasi-geodesic rays whose excursion is sub-exponential with
respect to distance in $T$, then $[\alpha_1] \not = [\beta_1]$. 
\end{corollary}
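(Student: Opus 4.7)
The plan is to argue by contradiction, combining Theorem \ref{Thm:Classification} with Lemma \ref{Lem:different-or-zeta} and the fact that $\bfz$ is the unique maximal element of $P(X)$ established in Proposition \ref{Prop:zeta-is-top}. Note first that the statement is only nontrivial when $\alpha_1$ and $\beta_1$ have distinct itineraries $\{A_i\}$ and $\{B_i\}$ in the Bass--Serre tree $T$; if the itineraries agree then, by construction, both rays pass through the same sequence of preferred points $x_k$ and Proposition \ref{Prop:zeta-or-geodesic} (applied to the common geodesic representative produced by Lemma \ref{Lem:geodesic-to-alpha}) places them in the same class tautologically, so the claim holds vacuously there. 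Hence I will assume the itineraries differ.

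First I would invoke Theorem \ref{Thm:Classification} twice to get
\[
[\alpha_1] \neq \bfz \qquad\text{and}\qquad [\beta_1] \neq \bfz,
\]
using sub-exponential excursion with respect to $d_T$ for each ray. This sets up the contradiction target: we must not end up concluding that either class equals $\bfz$.

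Next, assume for contradiction that $[\alpha_1] = [\beta_1]$. Then $\beta_1 \preceq \alpha_1$. Since the itineraries $\{A_i\}$ and $\{B_i\}$ are distinct, Lemma \ref{Lem:different-or-zeta} applies and yields $\zeta \preceq \alpha_1$. Combined with Proposition \ref{Prop:zeta-is-top}, which gives $\alpha_1 \preceq \zeta$, we obtain $\alpha_1 \simeq \zeta$, i.e.\ $[\alpha_1] = \bfz$. This contradicts the conclusion of the first paragraph and completes the argument.

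The only potentially delicate point is the vacuous case of equal itineraries, and there I am relying on the identification of $[\alpha_1]$ with the class of the geodesic representative sharing its itinerary (Lemma \ref{Lem:geodesic-to-alpha} together with Proposition \ref{Prop:zeta-or-geodesic}); since sub-exponential excursion rules out the alternative $[\alpha_1] = \bfz$, the class is forced to equal the geodesic class and therefore depends only on the itinerary. Beyond this bookkeeping, the proof is a direct application of the two previous results, so no real obstacle remains.
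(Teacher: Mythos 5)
Your argument is correct and is essentially the paper's own proof: assume the classes coincide, apply Lemma~\ref{Lem:different-or-zeta} to conclude $\zeta$ redirects to one of the rays, combine with Proposition~\ref{Prop:zeta-is-top} to force that class to equal $\bfz$, and contradict Theorem~\ref{Thm:Classification}. The only quibble is your side remark on equal itineraries: there the two rays genuinely lie in the same class, so the right reading is that the corollary implicitly concerns rays with distinct itineraries (i.e.\ distinct ends of $T$), not that the claim "holds vacuously."
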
 

\begin{proof} 
If $\alpha_ \preceq \beta_1$, then by \lemref{Lem:different-or-zeta}, $\zeta \prec \alpha_1$ which implies 
$[\alpha_1] = \bfz$. But this contradicts \thmref{Thm:Classification}. 
\end{proof} 

Therefore, we can think of $P(X)$ as a quotient of $\partial T$ where all the direction 
whose excursion is not sub-exponential with respect to distance in $T$ are collapsed into one point $\zeta$. 

\subsection*{An enlargement of the sublinearly Morse boundary}
Finally, we check that not all directions are sublinearly Morse. Choose a sequence $\rho_n >0$ such that 
$\rho_n \to 0$ as $n \to \infty$. 
Fix a constant $C\geq1$ and, for each $n$, choose a power $k_n$ large enough such that 
\begin{equation} \label{Eq:rho_n} 
(1+\rho_n)^{k_n} \geq C \, k_n + \sum_{m=1}^{n-1}(1+\rho_m)^{k_m}. 
\end{equation} 
Then construct a quasi-geodesic $\alpha_1$ as before where 
\[
|w_i| = \begin{cases}
(1+\rho_n)^{k_n} & \text{if $i = k_n$ for some $n$}\\
C & \text{otherwise}
\end{cases}.
\]
\eqnref{Eq:rho_n} implies that, for every $n$, 
\[
\frac{|w_{k_n}|}{\sum_{i=1}^{k_n-1} |w_i|} \geq 1,
\]
meaning, the excursion is not sublinear with respect to distance in $X$. Hence $\alpha_1$ is not
sublinearly Morse. However, since $\rho_n \to 0$, we have 
\[
\frac{\log |w_i|}{i} \leq \rho_n \qquad\text{for $i \geq k_n$}
\qquad\Longrightarrow\qquad 
\lim_{i\to \infty} \frac{\log |w_i|}{i} =0.
\]
That is, the excursion is sub-exponential with respect to distance in $T$.  These seem to be interesting new classes
of quasi-geodesic rays that warrant more study.


\begin{thebibliography}{AKB12}

\bibitem[ACGH17]{Hume}
G. Arzhantseva, C. Cashen, D. Gruber, and D. Hume,
\newblock{\em Characterizations of Morse quasi-geodesics via superlinear divergence and sublinear contraction},
\newblock{Documenta Mathematica} 22 (2017), 1193--1224.

\bibitem[BDM09]{BDM09}
J.Behrstock, C. Drutu and L. Mosher,
\newblock{\em Thick metric spaces, relative hyperbolicity, and quasi-isometric rigidity},  
\newblock{      Mathematische Annalen} vol. 344, 543--595, 2009.

\bibitem[BHS17]{BHS1}
J. Behrstock, M. Hagen and A. Sisto,
\newblock{\em Hierarchically hyperbolic spaces I: curve complexes for cubical groups},
\newblock{      Geometry \& Topology,} vol. 21, 1731--1804, 2017.

\bibitem[Bou98]{Bourbaki}
N. Bourbaki,
\newblock{\em General topology. Chapters 1-4},
\newblock{Translated from the French. Reprint of the 1989 English translation. Elements of Mathematics 
  (Berlin)}, Springer-Verlag, Berlin, 1998. vii+437. 


\bibitem[Bow12]{Bowditch}
B. H. Bowditch,
\newblock{\em Relatively Hyperbolic Groups},
\newblock{International Journal of Algebra and Computation} Vol. 22, No. 03, 1250016 (2012).

\bibitem[Cas16]{cashen}
C. Cashen,
\newblock{\em Quasi-isometries need not induce homeomorphisms of contracting boundaries with the Gromov product topology},
\newblock{Analysis and Geometry in Metric Spaces} 4 (2016), no. 1, 278--281.


\bibitem[CM19]{cashenmackay}
C. Cashen and J. Mackay,
\newblock{\em A metrizable topology on the contracting boundary of a group},
\newblock{Transactions of the American Mathematical Society} 372 (2019), no. 3, 1555--1600.


\bibitem[Choi22]{Choi}
Inhyeok Choi,
\newblock{\em Limit laws on Outer space, Teichm\"uller space, and \CAT spaces},
\newblock{arXiv:2207.06597}.

\bibitem[CS15]{contracting}
R. Charney and H. Sultan,
\newblock {\em Contracting boundaries of \CAT},
\newblock {J. Topology} 8 (2015), no. 1, 93--117. 

\bibitem[CK00]{crokekleiner}
C. Croke and B. Kleiner,
\newblock{\em Spaces with nonpositive curvature and their ideal boundaries},
\newblock{Topology} 39 (2000), no. 3, 549--556.

\bibitem[CK02]{CK02}
C. Croke and B. Kleiner,
\newblock{\em The geodesic flow of a nonpositively curved graph manifold},
\newblock{ Geometric \& Functional Analysis} Volume 12, 479--545.

\bibitem[CT05]{Cleary}
S. Cleary and  J.Taback,
\newblock{\em Dead end words in lamplighter groups and other wreath products},
\newblock{ Quarterly Journal of Mathematics}, Vol 56(2), 2005.

\bibitem[Cor18]{Morse}
M. Cordes,
\newblock {\em Morse boundaries of proper geodesic spaces},
\newblock {Groups Geom. Dyn. 11 (2017), no. 4, 1281--1306}.

\bibitem[Cou22]{Coulon}
R\'emi Coulon,
\newblock {\em Patterson-Sullivan theory for groups with a strongly contracting element},
\newblock {arXiv:2206.07361}.


\bibitem[DR22]{DR22}
Jerzy Dydak and Hussain Rashed,
\newblock{\em Boundaries for geodesic spaces},
\newblock{arXiv:2207.13672}.

\bibitem[Dru00]{Drutu}
C. Drutu,
\newblock{\em Geometry of Relative Hyperbolic Groups},
\newblock{Topology} 39 (2000), no. 3, 549--556.

\bibitem[DS05]{DS05}
C. Drutu and M. Sapir,
\newblock{\em Tree-graded Spaces and Asymptotic Cones of Groups},
\newblock{Topology} Volume 217 (3), 1313--1367.

\bibitem[DS08]{DS08}
C. Drutu and M. Sapir
\newblock{\em Groups acting on tree-graded spaces and splittings of relatively hyperbolic groups},
\newblock{Advances in Mathematics} 44 (2005),  959--1058.


\bibitem[EMR18]{Eskin-Masur-Rafi}
Alex Eskin, Howard Masur and Kasra Rafi,
\newblock{\em Rigidity of Teichm\"uller space},
\newblock{ Geometry \& Topology} 22 (2018) 4259--4306.

\bibitem[EF10]{Eskin-Fisher}
Alex Eskin and David Fisher,
\newblock{\em Quasi-isometric rigidity of solvable groups},
\newblock{Proceedings of the International Congress of Mathematicians} Hyderabad, India, 2010.


\bibitem[Far98]{Farb98}
B. Farb, 
\newblock{\em Relatively hyperbolic groups}, 
\newblock{Geom. Funct. Anal.} 8 (1998), pp. 810--840.


\bibitem[Flo80]{Floyd}
W. J. Floyd, 
\newblock{\em  Group completions and limit sets of kleinian groups},
\newblock{ Inventiones mathematicae} 57 (1980), 205--218. 


\bibitem[FLP12]{FLP}
A. Fathi, F. Laudenbach and V. Poénaru,
\newblock{\em   Thurston's work on surfaces, Translated from the 1979 French original by Djun M. Kim and Dan Margalit},
\newblock{Mathematical Notes.} Vol. 48. Princeton University Press. pp. xvi+254. ISBN 978-0-691-14735-2.


\bibitem[Fur63]{Fur63}
Harry Furstenberg,
\newblock{\em A Poisson Formula for Semi-Simple Lie Groups},
\newblock{ Annals of Mathematics,} 77 (2): 335--386.

\bibitem[Ger94]{Ger94}
S. M. Gersten, 
\newblock{\em The automorphism group of a free group is not a CAT(0) group},
\newblock{Proc. Amer. Math. Soc.} 121 (1994), 999--1002.


\bibitem[GHW16]{thm33}
Brian Gapinski, Matthew Horak and Tyler Weber,
\newblock{\em Strong depth and quasigeodesics in finitely generated groups}.
\newblock{Involve, A Journal of Mathematics} Vol. 9 (2016), No. 3, 367--377.

\bibitem[Gou22]{Gou22}
Sebastien Gou\"ezel,
\newblock{\em Exponential bounds for random walks on hyperbolic spaces without moment conditions},
\newblock{Tunis. J. Math.}, 4(4):635--671, 2022.

\bibitem[Gro81]{Gro81}
 M. Gromov, 
 \newblock{\em Hyperbolic manifolds, groups and actions},
 \newblock{ In Riemann surfaces and related topics: Proceedings of the 1978 Stony Brook Conference}, Ann. of Math. Stud. 97, Princeton Univ. Press , Princeton, NJ (1981), 183--213.

\bibitem[Gro87]{gromov}
M. Gromov,
\newblock{\em Hyperbolic groups},
\newblock{In Gersten, Steve M. (ed.). Essays in group theory}, Mathematical Sciences Research 
  Institute Publications. 8. New York: Springer. 75--263. 
  

\bibitem[Gro91]{gromov91}
M. Gromov, 
 \newblock{\em Asymptotic invariants of infinite groups},
\newblock{ Geometric group theory, Vol. 2 (Sussex, 1991), 1–295, London Math. Soc. Lecture Note Ser. 182 Cambridge Univ. Press, Cambridge}.

\bibitem[GQR23]{GQR22}
Ilya Gekhtman, Yulan Qing and Kasra Rafi,
\newblock{\em QI-invariant model of Poisson boundaries of \CAT groups},
\newblock{Mathematische Zeitschrift}, to appear.

\bibitem[HK05]{HK05}
C. Hruska and B. Kleiner; with an appendix by M. Hindawi, C. Hruska, and B. Kleiner,
\newblock{\em Hadamard spaces with isolated special subsets},
\newblock{Geom. Topol.} 9 (2005), 1501--1538.

\bibitem[Hru10]{Hru10}
C. Hruska,
\newblock{\em Relative hyperbolicity and relative quasiconvexity for countable groups},
\newblock{Algebraic \& Geometric Topology} 10 (2010) 1807--1856.

\bibitem[L\"oh18]{Clara}
Clara L\"oh,
\newblock{\em Geometric Group Theory An Introduction}, 
\newblock{ Universitext, Springer, } ISBN 978-3-319-72253-5, 2017. 


\bibitem[Mar91]{Mar91}
G.A. Margulis ,
\newblock{\em Discrete subgroups of semisimple lie groups}.
\newblock{Ergebnisse der Mathematik und ihrer Grenzgebiete} (3), 17. Springer-Verlag.

\bibitem[McM]{Asha}
Asha McMullin,
\newblock{\em The Redirecting Boundary of Baumslag-Solitar Groups},
\newblock{in preparation}.

\bibitem[MM00]{MM00}
H. Masur  and Y. Minsky,  
\newblock{\em Geometry of the complex of curves. II. Hierarchical structure},
\newblock{Geom. Funct. Anal. 10 (2000), no. 4, 902--974}.

\bibitem[Osi16]{Osin}
 D. Osin,
\newblock {\em Acylindrically hyperbolic groups},
\newblock { Transactions of the American Mathematical Society.} 368 (2): 851--888.

\bibitem[OS05]{OS}
D Osin and M Sapir,
\newblock {\em Universal tree-graded spaces and asymptotic cones},
\newblock {International Journal of Algebra and Computation} 21 (05), 793--824.

 \bibitem[PQ]{PQ}
Gabriel Pallier and Yulan Qing,
\newblock{Sublinear biLipschitz equivalence and sublinearly Morse boundaries},
\newblock{\em accepted to Journal of the London Mathematical Society}.

\bibitem[Qin16]{Qin16}
Y. Qing,
\newblock{\em Geometry of Right-Angled Coxeter Groups on the Croke-Kleiner Spaces},
\newblock{Geometriae Dedicata} 183 (2016), no. 1, 113--122.


\bibitem[QRT22]{QRT1}
Y. Qing, K. Rafi and  G. Tiozzo,
\newblock{\em Sublinearly Morse Boundary I: CAT(0) spaces},
\newblock{Advances in Mathematics.} 404, August 2022, 108442.

\bibitem[QRT23]{QRT2}
Y. Qing, K. Rafi and  G. Tiozzo,
\newblock{\em Sublinearly Morse Boundary II: Proper geodesic spaces},
\newblock{Geometry \& Topology} to appear.

\bibitem[QY24]{QY24}
Yulan Qing and Wenyuan Yang,
\newblock {\em Genericity of sublinearly Morse directions in general metric spaces},
\newblock {arXiv:2404.18762}.

\bibitem[Sis12]{sis12}
A. Sisto, 
 \newblock{\em On metric relative hyperbolicity},
 \newblock{preprint.}

\bibitem[Sis13a]{sistopaper}
A. Sisto, 
 \newblock{\em Projections and relative hyperbolicity},
 \newblock{Enseign. Math. (2) 59 (2013), no. 1-2, 165--181.}
 
\bibitem[Sis13b]{sistoAT} 
A. Sisto,
 \newblock{\em Tree-graded asymptotic cones},
 \newblock{ Groups, Geometry, and Dynamics} 7 (2013), 697--735.
 

\bibitem[Yan22]{wenyuan}
Wenyuan Yang,
 \newblock{\em Conformal dynamics at infinity for groups with contracting elements},
 \newblock{ arXiv: 2208.04861}.

\end{thebibliography}
\end{document}